\title[Classification of $q$-pure $q$-weight maps]
{Classification of $q$-pure $q$-weight maps over finite dimensional Hilbert
spaces}
\author{Christopher Jankowski}
\address{Christopher Jankowski, School of Mathematics,
Georgia Institute of Technology,
686 Cherry St, Atlanta, GA 30332.}
\email{cjankowski3@gatech.edu}
\author{Daniel Markiewicz}
\address{Daniel Markiewicz,
Department of Mathematics,
Ben-Gurion University of the Negev,
P.O.B. 653, Beersheva 84105,
Israel.}
\email{danielm@math.bgu.ac.il}
\author{Robert T. Powers}
\address{Robert T. Powers,
Department of Mathematics,
University of Pennsylvania,
David Rittenhouse Lab.,
209 South 33rd Street ,
Philadelphia, PA 19104-6395.}
\email{rpowers@math.upenn.edu}
\keywords{$E_0$-semigroups, $CP$-Flows, $q$-pure, $q$-positive}
\subjclass[2010]{Primary 46L55, 46L57}
\date{\today}
\newtheorem{thm}{Theorem}[section]
\newtheorem{lem}[thm]{Lemma}
\newtheorem{defn}[thm]{Definition}
\numberwithin{equation}{section}
\begin{document}

\begin{abstract}
An $E_0$-semigroup of $B(H)$ is a one parameter strongly continuous
semigroup of $*$-endomorphisms of $B(H)$ that preserve the identity.  Every
$E_0$-semigroup that possesses a strongly continuous intertwining semigroup
of isometries is cocycle conjugate to an $E_0$-semigroup induced by the
Bhat induction of a $CP$-flow over a separable Hilbert space $K$.  We say
an $E_0$-semigroup $\alpha$ is $q$-pure if the $CP$-subordinates $\beta$ of
norm one (i.e.  $\Vert\beta_t(I)\Vert = 1$ and $\alpha_t-\beta_t$ is
completely positive for all $t \geq 0$) are totally ordered in the sense that
if $\beta$ and $\gamma$ are two $CP$-subordinates of $\alpha$ of norm one,
then $\beta \geq \gamma$ or $\gamma \geq \beta$.  This paper shows how to
construct and classify all $q$-pure $E_0$-semigroups induced by $CP$-flows
over a finite-dimensional Hilbert space $K$ up to cocycle conjugacy.
\end{abstract}

\maketitle
\begin{center}  Introduction \end{center}

An $E_0$-semigroup of $B(H)$ is a one parameter strongly
continuous semigroup of $*$-endomorphisms of $B(H)$ (the set of
all bounded operators on a Hilbert space $H$) that preserve the
identity, (i.e. $\alpha_t(I) = I$ for all $t \geq 0)$.  Two
$E_0$-semigroups $\alpha$ and $\beta$ of $B(H_1)$ and $B(H_2)$ are
said to be conjugate if there is a unitary operator $U$ from $H_1$
onto $H_2$ so that $\beta_t(UAU^{-1}) =  U\alpha_t(A)U^{-1}$ for
all $A \in B(H_1)$ and $t \geq 0$.  If $\alpha$ is an
$E_0$-semigroup of $B(H)$ we say $U = \{ U(t):t \geq 0\}$ is a
cocycle for $\alpha$ if the operators $U(t)$ are strongly
continuous in $t$ and satisfy the cocycle relation $U(t+s) =
U(t)\alpha_t(U(s))$ for $t,s \geq 0$.  The cocycle is said to be
unitary if the $U(t)$ are unitary operators.  Two $E_0$-semigroups
$\alpha$ and $\beta$ are said to be cocycle conjugate if there is
a unitary cocycle $U(t)$ for $\alpha$ so that the $E_0$-semigroup
$\gamma$ given by $\gamma_t(A) = U(t)\alpha_t(A)U(t)^{-1}$ for $A
\in B(H)$ and $t \geq 0$ is conjugate with $\beta$.  The main
problem in the theory of $E_0$-semigroups is to classify them up
to cocycle conjugacy.  For a discussion of $E_0$-semigroups we
refer to the book of Arveson \cite{arv-monograph}.

An $E_0$-semigroup $\alpha$ of $B(H)$ is said to be spatial if there is a
strongly continuous one parameter semigroup of isometries $U(t)$ which
intertwine $\alpha$ so that $U(t)A = \alpha_t(A)U(t)$ for $A \in B(H)$ and
$t \geq 0$.  For spatial $E_0$-semigroups there is an integer valued index
$n = 0,1,\cdots $ and $\infty$ first suggested by Powers \cite{P1} and
later correctly defined by Arveson \cite{arv-monograph}.  Arveson showed that
the index is additive under taking tensor products so if $\alpha$ and $\beta$
are spatial $E_0$-semigroups of index $n$ and $m$ then the tensor product
$\alpha_t\otimes \beta_t$ is spatial and of index $n + m$.  If an
$E_0$-semigroup can be reconstructed from its intertwining semigroups it is
said to be completely spatial.  These are the $E_0$-semigroups of
type~I$_n$ where $n$ is the index.  Arveson showed that the index $n$ is a
complete cocycle conjugacy invariant for the $E_0$-semigroups of type~I
(i.e. if $\alpha$ and $\beta$ are of type~I$_n$ then $\alpha$ and $\beta$
are cocycle conjugate.)  Like the theory of factors the $E_0$-semigroups of
type~I are well understood.

An $E_0$-semigroup that is spatial but not completely spatial is said to be
of type~II and $E_0$-semigroups which are not spatial are said to be of
type~III.  In this paper we focus on what we call the $q$-pure
$E_0$-semigroups of type~II.  If $\alpha$ is an $E_0$-semigroup of $B(H)$
we say $C = \{ C(t):t \geq 0\}$ is a local cocycle for $\alpha$ if $C$
satisfies the cocycle condition $C(t+s) = C(t)\alpha_t(C(s))$ for $t,s \geq
0$ and $C(t)$ commutes with $\alpha_t(A)$ for $A \in B(H)$ (i.e., $C(t) \in
\alpha_t(B(H))^{\prime})$.  Note if $C$ is a positive local cocycle for
$\alpha$ and $s \leq 0$ then $D(t) = e^{st}C(t)$ for $t > 0$ is also a
positive local cocycle for $\alpha$.  We can exclude these trivial subordinates
by only considering local cocycle of norm one (i.e. $\Vert C(t)\Vert =
1$ for $t \geq 0$).  If $C_1$ and $C_2$ are local cocycles for $\alpha$
then we say $C_1 \geq C_2$ if $C_1(t) \geq C_2(t)$ for all $t \geq 0$.  The
positive local cocycles and their order structure is a cocycle conjugacy
invariant for $\alpha$.  We say an $E_0$-semigroup is $q$-pure if the
positive local cocycles of norm one are totally ordered.  This means if $C_1$
and $C_2$ are positive local cocycles of norm one then either $C_1(t) \geq
C_2(t)$ or $C_2(t) \geq C_1(t)$ for all $t \geq 0$.  If $\alpha$ is a $q$-pure
spatial $E_0$-semigroup then $\alpha$ is of index zero which means that either
$\alpha$ is of type~I$_o$ so $\alpha_t(A) = U(t)AU(t)^{-1 }$ for $U$ a
strongly continuous one parameter unitary group or $\alpha$ is an
$E_0$-semigroup of type~II$_o$.

In this paper we begin the classification of the $q$-pure spatial
$E_0$-semigroups.  We classify all the $q$-pure spatial $E_0$-semigroups
that come from boundary weight maps over finite dimensional spaces.  We
find that all such $q$-pure $q$-weight maps are cocycle conjugate to
$q$-weight maps of range rank one.  These $q$-pure range rank one $q$-weight
maps were first constructed by Powers in \cite{powers-holyoke} and then by
Jankowski \cite{jankowski1} in the case of weight maps given by a pair
$(\phi ,\nu )$ where $\phi$ is a completely positive map of the
$(n \times n)$-matrices into themselves and $\nu$ is a pure weight on
$B(L^2(0,\infty ))$.  The surprising result of Jankowski was that the
$q$-weight map one constructs can be $q$-pure when $\phi$ has a one dimensional
range.  This result runs counter to intuition in that a map $\phi (A) =
tr(A)I$ (which is in a sense the least pure completely positive map being
the average of all pure maps) yields a $q$-pure weight map.  In \cite{jmp1} we
classified all the range rank one $q$-weight maps over a finite dimensional
Hilbert space $K$ up to cocycle conjugacy and in this paper we show that in
the case of $q$-pure $q$-weight maps over a finite dimensional Hilbert space
are all cocycle conjugate to $q$-pure range rank one $q$-weight maps.

What is surprising about this result is that a few years ago these problems
seemed intractable.  Even in the case of a two dimensional Hilbert space
$\mathbb C^2$ it was more of a hope than a belief that $q$-pure weight maps
were cocycle conjugate to range rank one $q$-weight maps.  It took months of
analysis to produce a single example of an non Shur $q$-pure $q$-weight map in
the case of a Hilbert space $K$ of dimension two.  It seemed then that the
structure of these boundary weight maps was so complex any sort of
classification was out of the question.  This paper shows how to construct
and classify all $q$-pure $E_0$-semigroups coming from boundary weight maps
over finite dimensional Hilbert spaces up to cocycle conjugacy.  Whether this
analysis carries over to the infinite dimensional case is the next burning
question.

\section{Boundary weight maps}

Each $q$-weight map over $K$ uniquely defines a $CP$-flow over
$K$.  A $CP$-flow $\alpha_t$ over $K$ is a strongly continuous one
parameter semigroup of completely positive contractions of $B(H) =
B(K\otimes L^2(0,\infty ))$ into itself that is intertwined by translation.
Specifically, if $U(t)$ is the translation isometry given by
$(U(t)F)(x) = F(x-t)$ for $t \geq 0$ where $F \in H$ is
represented by a $K$-valued function $F(x)$ then to say $\alpha_t$
is intertwined by $U(t)$ means that $U(t)A = \alpha_t(A)U(t)$ for
$A \in B(H)$ and $t \geq 0$.  A $CP$-flow $\alpha_t$ is unital if
$\alpha_t(I) = I$ for $t \geq 0$.  By the Bhat induction theorem
\cite{bhat-dilation} each unital $CP$-flow gives rise to a spatial
$E_0$-semigroup, where spatial means the $E_0$-semigroup is of
type~I or II.  It follows then that each unital $q$-weight map
over $K$ gives rise to a $E_0$-semigroup which is determined up to
cocycle conjugacy.  Two such $E_0$-semigroups are cocycle
conjugate if the $q$-weight maps are cocycle conjugate which is
explained below.  All this is explained in \cite{powers-CPflows}
in excruciating detail.  Fortunately, for our purposes we will
simply use the results and work directly with the $q$-weights.
With this said we begin.

In this paper all Hilbert spaces $H$ are assumed to be separable.  Finite
dimensional Hilbert spaces of dimension $n$ are denoted by $\mathbb C^n$.  The
inner product $(f,g)$ with $f,g \in H$ is linear in $g$ and conjugate
linear in $f$.

Of utmost importance the concept of a completely positive map.  If $H$ and
$K$ are Hilbert spaces a mapping $\phi$ of $B(H)$ into $B(K)$ is said to be
completely positive if for all $n \in \mathbb N$  if $A_i \in B(H)$ and $f
_i \in K$ for $i = 1,\cdots ,n$ then
\begin{equation*}
\sum_{i,j=1}^n (f_i,\phi (A^*_iA_j)f_j) \geq 0.
\end{equation*}
An alternative definition is as follows.  If $\phi$ is a linear mapping of
$B(H)$ into $B(K)$ we say $\phi$ is positive if $\phi (A)$ is positive if
$A$ is positive.  We denote by $\phi_n = \iota_n \otimes \phi$ the mapping of
$(n \times n)$-matrices with entries in $B(H)$ into $(n \times n)$-matrices
with entries in $B(K)$ given by
\begin{equation*}
\phi_n\left(\left[\begin{matrix} A_{11}&A_{12}&\cdots &A_{1n}
\\
A_{21}&A_{22}&\cdots &A_{2n}
\\
\cdots&\cdots&\cdots&\cdots
\\
A_{n1}&A_{n2}&\cdots &A_{nn}
\end{matrix} \right]\right)=
\left[\begin{matrix} \phi (A_{11})&\phi (A_{12})&\cdots &\phi (A_{1n})
\\
\phi (A_{21})&\phi (A_{22})&\cdots &\phi (A_{2n})
\\
\cdots&\cdots&\cdots&\cdots
\\
\phi (A_{n1})&\phi (A_{n2})&\cdots &\phi (A_{nn})
\end{matrix} \right].
\end{equation*}
A mapping $\phi$ is $n$-positive if $\phi_n(A)$ is positive if $A$ is
positive.  A mapping $\phi$ is completely positive if $\phi_n$ is positive
for all $n = 1,2,\cdots$.

An important estimate for completely positive maps is that if $\phi$
is a completely positive mapping of $B(H)$ into $B(K)$ then
\begin{equation*}
\phi (A)^*\phi (A) \leq \phi (A^*A)\Vert\phi (I)\Vert  \leq \Vert A\Vert^2
\Vert\phi (I)\Vert\phi(I)
\end{equation*}
for $A \in B(H)$.  We will refer to the inequality on the left as the
Schwarz inequality for completely positive maps.  This estimate is
derived by observing that
\begin{equation*}
\phi_2(
\left[\begin{matrix}
I&A
\\
A^*&A^*A
\end{matrix} \right])=
\left[\begin{matrix}
\phi (I)&\phi (A)
\\
\phi (A^*)&\phi (A^*A)
\end{matrix} \right] \leq
\left[\begin{matrix}
\Vert\phi (I)\Vert I&\phi (A)
\\
\phi (A^*)&\phi (A^*A)
\end{matrix} \right]
\end{equation*}
so the matrix of operators on the right is positive.  Taking the
operator norm of the Schwarz inequality gives
$\Vert\phi (A)\Vert \leq \Vert A\Vert\medspace \Vert\phi (I)\Vert$ so the
norm of a completely positive map $\phi$ as a map is the norm
$\Vert\phi (I) \Vert$.

Every normal completely positive map $\phi$ from $B(H)$ to $B(K)$ can be
expressed in the form
$$
\phi (A) = \sum_{k\in J} S_kAS_k^*
$$
where $J$ is a countable index set and the $S_k$ are bounded linear
operators from $H$ to $K$ which are linearly independent, by which we
mean if $c_k \in \mathbb C$ for $k \in J$ so that
$$
\sum_{k\in J} \vert c_k\vert^2 < \infty
$$
and
$$
S = \sum_{k\in J} c_kS_k,
$$
then $S = 0$ if and only if $c_k = 0$ for all $k \in J$.  If one has a
second decomposition of $\phi$ in the above manner and $J^{\prime}$ is the
index set for the second decomposition then there is a one to one mapping
of $J$ onto $J^{\prime}$.  The number of elements of $J$ is called the
index of $\phi$.  If the index set has one element we say $\phi$ is pure.
If $\phi$ is a completely positive map of $B(H)$ into $B(K)$ of the above
form and $\psi$ is a map of $B(H)$ into $B(K)$ of the form $\psi (A) =
SAS^*$ where $S$ is a linear operator from $H$ to $K$ then $\phi - \psi$ is
completely positive if and only if there are complex numbers $c_k$ for $k
\in J$ so that
$$
S = \sum_{k\in J} c_kS_k\qquad \text{and} \qquad \sum_{k\in J} \vert
c_k\vert ^2 \leq 1.
$$

We mention that given a map $\phi$ of $B(H)$ into $B(K)$ we will assume it
is normal unless stated otherwise.

We introduce the notation we will be using through out this paper.  First
let $K$ be a Hilbert space and let $H = K \otimes L^2(0,\infty )$, so $H$
can be thought of as Lebesgue measurable $K$-valued functions $F(x)$ for $x
\in [0,\infty )$ with inner product
$$
(F,G) = \int_0^\infty (F(x),G(x)) \,dx.
$$
We define $\Lambda$ as the mapping from $B(K)$ to $B(H)$ given by
$$
(\Lambda (A)F)(x) = e^{-x}AF(x)
$$
for $F \in H,\medspace A \in B(K)$ and $x \in [0,\infty )$.  We often use
the operator $\Lambda (I)$ which will we often simply write as $\Lambda$.
We hope the reader is not confused as $\Lambda$ can refer to the mapping
$\Lambda$ and the operator $\Lambda (I)$ but the difference can be inferred
from the context.  In almost all cases where we use $\Lambda$ to denote
$\Lambda (I)$ it occurs in the form $I - \Lambda$ in which case $\Lambda$
is clearly $\Lambda (I)$ since $I$ is the unit operator so when we write
the expression $I - \Lambda$ it always means $I - \Lambda (I)$.  We will
denote the identity mapping by $\iota$ so when we write $\iota + \omega
\Lambda$ then $\Lambda$ denotes the mapping $\Lambda$.

Given a Hilbert space $K$ we define $\mathfrak A (K)$ as the set of operators
in $B(H)$ where $H = K \otimes L^2(0,\infty )$ of the form
$$
A = (I - \Lambda )^{\tfrac{1}{2}} B(I - \Lambda )^{\tfrac{1}{2}}
$$
with $B \in B(H)$.  The norm on $\mathfrak A (K)$ is given by
$$
\Vert A\Vert_+ =\Vert (I-\Lambda )^{-\tfrac{1}{2}} A(I-\Lambda )^{-\tfrac{1}{2}}
\Vert
$$
where $\Vert\cdot\Vert$ is the usual norm on $B(H)$.  We denote by $\mathfrak A
(K)_*$ the linear functionals $\eta$ on $\mathfrak A (K)$ so that the
functional
$$
\rho (A) = \eta ((I-\Lambda )^{\tfrac{1}{2}}  A(I-\Lambda )^{\tfrac{1}{2}})
$$
for $A \in B(H)$ is $\sigma$-weakly continuous on $B(H)$.  Such a linear
functional $\eta$ is called a boundary weight or $b$-weight.  Now each
element of $\mathfrak A (K)_*$ is automatically bounded in the
$\Vert\cdot\Vert_+$ norm given by
$$
\Vert\eta\Vert_1 = \sup\{\vert\eta (A)\vert :A \in \mathfrak A (K),\medspace
\Vert A\Vert_+ \leq 1\} .
$$
When we say an element $\eta \in \mathfrak A (K)_*$  is bounded we mean it is
bounded with respect to the ordinary Hilbert space norm.  So $\eta$ is
bounded if
$$
\Vert\eta\Vert = \sup\{\vert\eta (A)\vert :A \in \mathfrak A (K),\medspace
\Vert A\Vert \leq 1\} < \infty .
$$

Recall $H = K \otimes L^2(0,\infty )$ so we can think of $H$ as $K$-valued
functions of $x$.  We define $L_+^2(0,\infty )$ as the set of Lebesgue
measurable functions $f$ so that
$$
\Vert f\Vert_+^2 = \int_0^\infty (1 - e^{-x})\vert f(x)\vert^2 \,dx <
\infty
$$
and $\Vert\cdot\Vert_+$ denotes the norm on $L_+(0,\infty )$.  We denote by
$H _+ = K \otimes L_+^2(0,\infty )$ where we can think of elements $F,G \in
H_+$ as $K$-valued functions where the inner product is given by
$$
(F,G)_+ = \int_0^\infty (1 - e^{-x})(F(x),G(x)) \,dx.
$$
Note each $F \in H_+$ can be written as $F = (I - \Lambda )^{-\tfrac{1}{2}}G$
with $G \in H$.

Note each $\eta \in \mathfrak A (K)_*$ can be expressed in the form
$$
\eta (A) = \sum_{k\in J} (F_k,AG_k)
$$
where
$$
\sum_{k\in J} \Vert F_k\Vert_+^2 = \sum_{k\in J} \Vert (I-\Lambda
)^{-\tfrac{1}{2} }F_k\Vert^2 < \infty
$$
and
$$
\sum_{k\in J} \Vert G_k\Vert_+^2 = \sum_{k\in J} \Vert (I-\Lambda
)^{-\tfrac{1}{2} }G_k\Vert^2 < \infty
$$
and $J$ is a countable index set.  Note if $\eta \in \mathfrak A (K)_*$ and
$\eta$ is bounded then $\eta$ can be expressed in the form
$$
\eta (A) = \sum_{k\in J} (F_k,AG_k)
$$
where
$$
\sum_{k\in J} \Vert F_k\Vert^2 < \infty\qquad \text{and} \qquad \sum_{k\in
J} \Vert G_k\Vert^2 < \infty
$$
so $\eta$ can be thought of as an element of the predual of $B(K\otimes
L^2(0, \infty ))$ (so $\eta \in B(K\otimes L^2(0,\infty ))_*)$.

We denote by $B(K) \otimes \mathfrak A (K)_*$ the set of all linear mappings
$\phi$ of $\mathfrak A (K)$ into $B(K)$ so that for all $f,g \in K$ the linear
functional $(f,\phi (A)g)$ is in $\mathfrak A (K)_*$.  Again we say such a
$\phi$ is bounded if
$$
\Vert\phi\Vert = \sup\{\Vert\phi (A)\Vert :A \in \mathfrak A (K),\medspace
\Vert A\Vert \leq 1\} < \infty .
$$

Next we introduce the cut off notation.  Recall $H = K \otimes L^2(0,\infty)$.
We define for $0 \leq a < b \leq \infty$ the projection $E(a,b)$ on
$H$ as the hermitian projection onto functions $F \in H$ with support in
the closed interval $[a,b]$ or $[a,\infty )$ in the case where $b =
\infty$.   If $A \in B(K)\otimes\mathfrak A (K)$ or $A \in B(K \otimes
L^2(0,\infty ))$ and $t > 0$ we denote by $A \vert_t  = E(t,\infty
)AE(t,\infty )$.  If $\eta$ is a $b$-weight we denote by $\eta \vert_t $
the functional
$$
\eta \vert_t (A) = \eta (E(t,\infty )AE(t,\infty ))
$$
for all $A \in B(K \otimes L^2(0,\infty ))$ and $t > 0$.  Note that for all
$t > 0$ if $A \in B(K \otimes L^2(0,\infty ))$ then we have $A\vert_t \in
 \mathfrak A (K)$ and if $\eta \in \mathfrak A (K)_*$ then $\eta \vert_t \in
B(K\otimes L^2(0,\infty ))_*$.

Finally we will denote the identity mapping $A \rightarrow A$ of $B(K)$
into itself by $\iota$ so $\iota (A) = A$ for all $A \in B(K)$.

Armed with this notation we can now define a $q$-weight map over $K$.

\begin{defn}
Suppose $K$ is a separable Hilbert space.  A $q$-weight map over $K$ is a
completely positive element $\omega \in B(K) \otimes \mathfrak A (K)_*$ so
that for each $t > 0$ the mapping $(\iota + \omega \vert_t \Lambda )$ of
$B(K)$ into itself is invertible (i.e.  this mapping has both a right and
left inverse) and the mapping
$$
\pi_t^\# (A) = (\iota + \omega \vert_t \Lambda )^{-1}\omega \vert_t (A)
$$
is a completely positive contractive normal linear mapping of $B(H) =
B(K\otimes L^2(0,\infty ))$ into $B(K)$.  The $q$-weight map $\omega$ over $K$
is unital if $\omega (I-\Lambda ) = I$.  The mappings $\pi_t^\#$ are called
the generalized boundary representation of $\omega$.  If $\eta$ is an
element of $B(K) \otimes \mathfrak A (K)_*$ we denote the fact that $\eta$ is
completely positive by writing $\eta \geq 0$ and we denote the fact that
the generalize boundary representation $\psi_t^\#$ constructed from $\eta$ as
shown above is completely positive for all $t > 0$ by writing $\eta \geq_q 0$.
\end{defn}

Given an $\omega \in B(K) \otimes \mathfrak A (K)_*$ to check that $\omega$ is
a $q$-weight map it is only necessary to check that the generalized boundary
representation $\pi_{t_k}^\#$ of $\omega$ is completely positive and
$\pi_{t_k}^\# (I) \leq I$ for any sequence of $t_k > 0$ so that $t_k
\rightarrow 0+$ as $k \rightarrow \infty$.  This is because if $\pi_t^\#$
is a completely positive contraction then $\pi_s^\#$ is a completely
positive contraction for $s \geq t$ so it is only necessary to check the
condition for small $t$.  We caution the reader that in checking that a
weight map is a $q$-weight map that knowing that the limit
of $\pi_t^\# (I)$ as $t \rightarrow 0+$ is $B$ and $B \leq I$ does not
imply $\pi _t^\# (I) \leq I$ for $t > 0$.  The important result of
\cite{powers-CPflows} is that every unital $q$-weight map over $K$ uniquely
defines a spatial $E_0$-semigroup of $B(H)$ and up to cocycle conjugacy every
spatial $E_0$-semigroup of $B(H)$ comes from a unital $q$-weight map over a
Hilbert space $K$.

\begin{defn}
Suppose $K$ is a separable Hilbert space and
$\omega$ and $\eta$ are $q$-weight maps over $K$ with generalized boundary
representations $\pi_t^\#$ and $\psi_t^\#$, respectively.  We say $\eta$ is a
$q$-subordinate of $\omega$ (denoted $\omega \geq_q \eta )$ if for all $t >
0$ we have $\pi_t^\# - \psi_t^\#$ is completely positive.  We say a
$q$-weight map $\omega$ is $q$-pure if the $q$-subordinates of $\omega$ are
totally ordered so if $\omega \geq_q \eta_1 \geq_q 0$ and $\omega \geq_q
\eta_2 \geq _q 0$ then either $\eta_1 \geq_q \eta_2$ or $\eta_2 \geq_q
\eta_1$.
\end{defn}

Again to check that $\eta$ is a $q$-subordinate of $\omega$ it is only
necessary to check that $\pi_{t_k}^\# - \psi_{t_k}^\#$ is completely
positive for any sequence $t_k > 0$ so that $t_k \rightarrow 0+$ as $k
\rightarrow \infty$ in that if $\pi_t^\# \geq \psi_t^\#$ then $\pi_s^\#
\geq \psi_s^\#$ for all $s \geq t$.  Again the fact that $\pi_t^\# -
\psi_t^\#$ is completely positive in the limit does not imply that $\eta$
is a $q$-subordinate of $\omega $.

If $\omega$ is a unital $q$-weight map and $\eta_1$ and $\eta_2$ are
$q$-subordinates of $\omega$ then $\eta_1$ and $\eta_2$ are uniquely
associated with a positive contractive local cocycles $C_1$ and $C_2$ of
norm one of the $E_0$-semigroup $\alpha$ induced by $\omega$ so $0 \leq
C_1(t) \leq I,\medspace 0 \leq C_2(t) \leq I$ and $C_1(t) \geq C_2(t)$ for
all $t > 0$ if and only if $\eta_1 \geq_q \eta_2$.  Conversely, if $C$ is a
positive contractive local cocycle of norm one for the $E_0$-semigroup
$\alpha$ associated with $\omega$ then there is a $q$-subordinate $\eta$ of
$\omega$ uniquely associated with $C$ so there is a one to one order
preserving mapping from contractive local cocycles of norm one for $\alpha$
onto the $q$-subordinates of $\omega $.

The next theorem summarizes results in \cite{powers-CPflows} and shows how to
compute the index of the $E_0$-semigroup induced by a $q$-weight map.

\begin{thm}
Suppose $\omega$ is a $q$-weight map over a Hilbert space $K$ and
$\pi_t^\#$ is the generalized boundary representation of $\omega$. Then for
each $s > 0$ the map $A \rightarrow \pi_t^\# \vert_s (A)$ is non decreasing
in $t$ for $0 < t < s$.  There is a completely positive $\sigma$-weakly
continuous mapping $\pi_o^\#$ of $B(K \otimes L^2(0,\infty ))$ into $B(K)$
so that for each $s > 0$ the map $A \rightarrow \pi_t^\# \vert_s (A)$ for
$0 < t < s$ converges in the $\sigma$-strong operator topology to $\pi_o^\#
\vert_s (A)$ as $t \rightarrow 0+$.  The mapping $\pi _o^\#$ is called the
normal spine of $\omega$.  If $\omega$ is unital the index of the
$E_0$-semigroup induced by $\omega$ is the rank of $\pi_o^\#$ as a
completely positive map.  We say the index of $\omega$ is the index of the
normal spine of $\omega$.  In particular we say $\omega$ is of index zero
if normal spine of $\omega$ is zero.
\end{thm}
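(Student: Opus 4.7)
The theorem collects several facts about the generalized boundary representation; I would organize a proof around three separate goals, making heavy use of \cite{powers-CPflows} for the deepest technical steps.

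First, I would record the algebraic identity
\[
\pi_t^\#|_s(A) = R_t\,\omega|_s(A), \qquad 0<t\le s,
\]
where $R_t := (\iota+\omega|_t\Lambda)^{-1}$, which follows because $(A|_s)|_t = A|_s$ when $t\le s$, so $\omega|_t(A|_s) = \omega|_s(A)$. This separates the $t$-dependence (in $R_t$) from the $s$-dependence (in $\omega|_s$). I would next observe that $T_t := \omega|_t\Lambda$ is monotone non-increasing in $t$: since $\Lambda(Y)$ commutes with every $E(t,\infty)$, for $t_1\le t_2$ one has $(T_{t_1}-T_{t_2})(Y) = \omega(\Lambda(Y)(E(t_1,\infty)-E(t_2,\infty)))$, which is manifestly CP in $Y$. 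The resolvent identity then yields
\[
\pi_{t_2}^\#|_s - \pi_{t_1}^\#|_s = R_{t_2}(T_{t_1}-T_{t_2})\pi_{t_1}^\#|_s
\]
for $t_1<t_2\le s$. That this difference is CP, even though $R_{t_2}$ need not itself be a CP map on $B(K)$, is the substantive monotonicity claim, which I would cite from \cite{powers-CPflows}.

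Next, I would construct the normal spine. For each $s>0$ and $A\ge 0$, monotonicity makes $\{\pi_t^\#|_s(A)\}_{0<t<s}$ a decreasing net (as $t\to 0+$) of positive elements of $B(K)$, bounded above by $\|A\|I$ since each $\pi_t^\#$ is contractive. Hence the $\sigma$-strong limit $\phi_s(A) := \lim_{t\to 0+}\pi_t^\#|_s(A)$ exists; linearity extends it to a normal CP map $\phi_s : B(H)\to B(K)$, and the compatibility $\phi_{s'}(A|_s) = \phi_s(A)$ for $s'\le s$ follows from $(A|_s)|_{s'} = A|_s$. To assemble $\{\phi_s\}_{s>0}$ into one CP, $\sigma$-weakly continuous map $\pi_o^\#$ on $B(H)$ satisfying $\pi_o^\#|_s = \phi_s$, I would invoke the corresponding construction in \cite{powers-CPflows}: since $E(s,\infty)$ tends to $I$ strongly as $s\to 0+$ and the $\phi_s$ are uniformly contractive, the monotone-limit properties of normal CP maps deliver the required extension.

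Finally, the identification of the index of the $E_0$-semigroup induced by a unital $\omega$ with the rank of $\pi_o^\#$ is proved in \cite{powers-CPflows}; I would cite it directly, with the remaining statements being definitional. The main obstacle I anticipate is the monotonicity step: the natural resolvent factorization $R_{t_2}(T_{t_1}-T_{t_2})R_{t_1}$ is not manifestly a composition of CP maps, since a small finite-dimensional example using a rank-one projection already shows that $R_t$ need not be CP. So the positivity of the increment must be extracted from the deeper CP-flow semigroup structure in \cite{powers-CPflows} rather than from the resolvent identity alone.
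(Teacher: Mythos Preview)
The paper does not prove this theorem at all: it is stated explicitly as a summary of results from \cite{powers-CPflows}, with no argument given in the text. Your proposal is therefore entirely consistent with the paper's treatment---both defer the substantive content to \cite{powers-CPflows}---and your sketch is in fact more detailed than anything the paper offers for this statement. Your identification of the genuine obstacle (that $R_t=(\iota+\omega|_t\Lambda)^{-1}$ is not CP in general, so the resolvent factorization $R_{t_2}(T_{t_1}-T_{t_2})R_{t_1}$ does not immediately give complete positivity of the increment) is accurate, and deferring that step to the cited reference is exactly what the paper does implicitly.

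One small remark on your factorization: later in the paper (Theorem~4.1(iv) and its proof) the authors do record, for the finite-dimensional skeleton, the identity
\[
(\iota+\phi_t)^{-1}(\phi_s-\phi_r)(A)=\pi_t^\#\bigl(E(s,r)\Lambda(A)E(s,r)\bigr),\qquad 0<t\le s\le r,
\]
which shows $R_t(T_s-T_r)$ is CP \emph{provided} $t\le s$. In your increment $R_{t_2}(T_{t_1}-T_{t_2})$ the outer index $t_2$ exceeds the inner index $t_1$, so this particular identity does not apply directly; the monotonicity in the direction you need really does require the deeper CP-flow machinery from \cite{powers-CPflows}, as you correctly flag.
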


It follows that to understand all $E_0$-semigroups of type~II$_o$ one only
has to understand all unital $q$-weight maps over a Hilbert space $K$ of index
zero.  The next theorem shows how to determine whether two $q$-weight maps of
index zero induce $E_0$-semigroups that are cocycle conjugate.  First we
define $q$-corners between two $q$-weight maps.

\begin{defn}
Suppose $\omega_1$ and $\omega_2$ are
$q$-weight maps over the Hilbert spaces $K_1$ and $K_2$, respectively.  Let $K$
be the direct sum of $K_1$ and $K _2$ so $K = K_1 \oplus K_2$.  Note every
operator in $B(K)$ can be uniquely written in matrix form
$$
A = \left[\begin{matrix} A_{11}&A_{12}
\\
A_{21}&A_{22}
\end{matrix} \right]
$$
where $A_{ij}$ is a bounded linear operator from $K_j$ to $K_i$ for $i,j =
1,2$.  Similarly every operator in $B(K \otimes L^2(0,\infty ))$ and every
operator in $\mathfrak A (K)$ can be written in matrix form where $A_{ij}$ is a
bounded linear operator from $K_j\otimes L^2(0,\infty )$ to $K_i\otimes
L^2(0,\infty )$.  Let $E_i$ be the hermitian projection of $K = K_1 \oplus
K_2$ onto $K_i$ for $i = 1,2$.  Now consider the $b$-weight map on
$\mathfrak A (K)$ given by

$$
\omega ( \left[\begin{matrix} A_{11}&A_{12}
\\
A_{21}&A_{22}
\end{matrix} \right]) =
\left[\begin{matrix} \omega_1(A_{11})&\gamma (A_{12})
\\
\gamma^*(A_{21})&\omega_2(A_{22})
\end{matrix} \right]
$$
where $\gamma \in B(K) \otimes \mathfrak A (K)_*$ and
$$
\gamma (A) = E_1\gamma ((E_1\otimes I)A(E_2\otimes I))E_2
$$
for $A \in \mathfrak A (K)$ so $\gamma$ only depends on the $A_{12}$ entry of
$A$ and $\gamma (A)$ is a matrix with all zero entries except for the upper
right entry.  We mean by $\gamma^*$ the mapping given by $\gamma^*(A) =
(\gamma (A^*))^*$.  We say $\gamma$ is a corner from $\omega_1$ to
$\omega_2$ if $\omega$ given above is completely positive.  We say $\gamma$
is a $q$-corner from $\omega_1$ to $\omega_2$ if $\omega$ is $q$-positive,
$\omega \geq_q 0$.

We say $\gamma$ is a maximal corner from $\omega_1$ to $\omega_2$ if
$\eta$ is a subordinate $\eta$ of $\omega$ of the form
$$
\eta ( \left[\begin{matrix} A_{11}&A_{12}
\\
A_{21}&A_{22}
\end{matrix} \right]) =
\left[\begin{matrix} \omega_1^{\prime}(A_{11})&\gamma (A_{12})
\\
\gamma^*(A_{21})&\omega_2(A_{22})
\end{matrix} \right]
$$
then $\omega_1^{\prime} = \omega_1$.  We say $\gamma$ is a maximal
$q$-corner from $\omega_1$ to $\omega_2$ if $\eta$ is a $q$-subordinate of
$\omega$ of the form
$$
\eta ( \left[\begin{matrix} A_{11}&A_{12}
\\
A_{21}&A_{22}
\end{matrix} \right]) =
\left[\begin{matrix} \omega_1^{\prime}(A_{11})&\gamma (A_{12})
\\
\gamma^*(A_{21})&\omega_2 (A_{22})
\end{matrix} \right]
$$
then $\omega_1^{\prime} = \omega_1$.

We say $\gamma$ is a hyper maximal corner from $\omega_1$ to $\omega_2$
if $\eta$ is a subordinate $\eta$ of $\omega$ of the form
$$
\eta ( \left[\begin{matrix} A_{11}&A_{12}
\\
A_{21}&A_{22}
\end{matrix} \right] ) =
\left[\begin{matrix} \omega_1^{\prime}(A_{11})&\gamma (A_{12})
\\
\gamma^*(A_{21})&\omega_2^{\prime}(A_{22})
\end{matrix} \right]
$$
then $\omega_1^{\prime} = \omega_1$ and $\omega_2^{\prime} = \omega_2$.  We
say $\gamma$ is a hyper maximal $q$-corner from $\omega_1$ to $\omega_2$ if
$\eta$ is a $q$-subordinate of $\omega$ of the form
$$
\eta ( \left[\begin{matrix} A_{11}&A_{12}
\\
A_{21}&A_{22}
\end{matrix} \right]) =
\left[\begin{matrix} \omega_1^{\prime}(A_{11})&\gamma (A_{12})
\\
\gamma^*(A_{21})&\omega_2^{\prime}(A_{22})
\end{matrix} \right]
$$
then $\omega_1^{\prime} = \omega_1$ and $\omega_2^{\prime} = \omega_2$.
\end{defn}

\begin{thm}
Suppose $\omega_1$ and $\omega_2$ are unital
$q$-weight maps over $K_1$ and $K_2$ of index zero, respectively.  Then the
$E_0$-semigroups induced by $\omega_1$ and $\omega_2$ are cocycle conjugate
if and only if there is a hyper maximal $q$-corner from $\omega_1$ to
$\omega_2$.
\end{thm}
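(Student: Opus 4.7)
The plan is to reduce this theorem to the corresponding characterization of cocycle conjugacy between unital $CP$-flows established in \cite{powers-CPflows}. Recall that each unital $q$-weight map $\omega$ of index zero over $K$ corresponds uniquely to a unital $CP$-flow $\alpha^\omega$ over $K$, whose Bhat induction is the $E_0$-semigroup in the statement, and that $q$-subordinates of $\omega$ are in order-preserving bijection with the positive contractive local cocycles of norm one of the induced $E_0$-semigroup. The block construction in the preceding definition translates a $q$-corner $\gamma$ between $\omega_1$ and $\omega_2$ into a corresponding corner at the $CP$-flow level; hyper maximal $q$-corners are precisely designed to correspond to hyper maximal flow corners.

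For the $(\Leftarrow)$ direction, given a hyper maximal $q$-corner $\gamma$, I would first form the block $q$-weight map $\omega$ on $K = K_1 \oplus K_2$. It is unital because $I-\Lambda$ is block diagonal and $\omega_i(I-\Lambda) = I_{K_i}$, and it has index zero because its normal spine is block diagonal with zero entries (the off-diagonal $\gamma$ contributes nothing to the spine, and the diagonal spines vanish by assumption). Thus $\omega$ induces an $E_0$-semigroup $\alpha$ of type~II$_o$. The orthogonal projections $E_1, E_2$ give a canonical decomposition of $\alpha$ whose corners are unitarily equivalent to the $E_0$-semigroups of $\omega_1, \omega_2$, and hyper maximality of $\gamma$ provides the rigidity needed to extract a unitary cocycle intertwining these two corners, producing the desired cocycle conjugacy.

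For the $(\Rightarrow)$ direction, given a cocycle conjugacy between the induced $E_0$-semigroups, I would reverse the construction: the unitary cocycle together with the intertwining isometries lets one build a single $CP$-flow on $K_1 \oplus K_2$ whose diagonal cutdowns are $\alpha^{\omega_1}$ and $\alpha^{\omega_2}$. Translating this $CP$-flow back to a boundary weight map (by inverting the resolvent relation $\pi_t^\# = (\iota + \omega\vert_t \Lambda)^{-1}\omega\vert_t$) yields a block $q$-weight map on $K_1 \oplus K_2$ with $\omega_1, \omega_2$ on the diagonal and an off-diagonal entry $\gamma$ that is a $q$-corner by construction. Hyper maximality of $\gamma$ then follows because unitarity of the cocycle produces a Schwarz-type equality at the level of the generalized boundary representations, forcing any $q$-subordinate of the block $\omega$ whose off-diagonal entry equals $\gamma$ to retain the full diagonals $\omega_1, \omega_2$.

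The main obstacle, both in this reduction and in the underlying $CP$-flow theorem, is the hyper-maximality verification rather than the mere construction of a $q$-corner. The delicate point is to show that if $\pi_t^\#$ is the generalized boundary representation of the block $\omega$ and $\psi_t^\#$ is that of a candidate $q$-subordinate whose off-diagonal entry equals that of $\pi_t^\#$, then the Schwarz inequality applied to the block $2 \times 2$ matrix of completely positive maps $\pi_t^\# - \psi_t^\#$ forces its diagonal entries to vanish. This must be argued uniformly in $t > 0$, not merely in the limit $t \to 0^+$, since the paper itself cautions that the limit is too weak to recover $q$-positivity; finite dimensionality of $K_1, K_2$ enters through finite Kraus-type decompositions of the completely positive maps involved, which provide the compactness underlying the argument.
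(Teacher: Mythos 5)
The paper itself does not prove this theorem: it is stated as imported background, with the proof living in \cite{powers-CPflows} (and restated in \cite{jmp1}), so there is no internal argument to measure yours against. Your outline does follow the architecture of that proof --- form the block $q$-weight map on $K_1\oplus K_2$, use the order-preserving bijection between $q$-subordinates and positive contractive local cocycles of norm one, and translate the corner $\gamma$ into an intertwiner at the level of the induced $E_0$-semigroups --- and your observations that the block map is unital and of index zero are correct.

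However, both decisive steps are either missing or would fail as stated. In the $(\Leftarrow)$ direction, the sentence ``hyper maximality of $\gamma$ provides the rigidity needed to extract a unitary cocycle'' is the entire content of the hard implication, not a step of it: one must show that $\gamma$ corresponds to a contractive local cocycle $S(t)$ intertwining the two diagonal corners of the induced $E_0$-semigroup, that $S(t)^*S(t)$ and $S(t)S(t)^*$ are positive local cocycles dominated by the identity, and that if either were a proper subordinate one could manufacture a $q$-subordinate of the block $\omega$ with the same corner $\gamma$ but strictly smaller diagonal, contradicting hyper maximality; only then is $S$ unitary and the conjugacy obtained. In the $(\Rightarrow)$ direction your stated mechanism is wrong: if $\psi_t^\#$ is the generalized boundary representation of a $q$-subordinate with the same corner, then $\pi_t^\#-\psi_t^\#$ is a completely positive block map whose off-diagonal blocks vanish, and the Schwarz inequality places no constraint at all on the diagonal of such a map (the block-diagonal map with blocks $\phi$ and $0$ is completely positive with zero corner and arbitrary completely positive $\phi$), so applying Schwarz to the difference cannot force the diagonals to agree. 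The constraint has to come from the complete positivity of the subordinate $\psi_t^\#$ itself: its fixed off-diagonal block, being implemented by a unitary cocycle, forces via the $2\times 2$ Schwarz inequality a lower bound $\omega_1'\geq\omega_1$ and $\omega_2'\geq\omega_2$, which combined with subordination yields equality. As written, your verification of hyper maximality does not close, and the extraction of the unitary cocycle in the converse direction is asserted rather than argued.
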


Note that if $\omega$ is a $q$-weight map over $K$ then $\omega$ is a hyper
maximal $q$-corner from $\omega$ to $\omega$ so every $q$-weight map over $K$
is cocycle conjugate to itself.  Technically cocycle conjugacy refers to
$E_0$-semigroups which correspond to unital $q$-weight maps but we extend the
notion of cocycle conjugacy to arbitrary $q$-weight maps by saying the
$q$-weight maps $\omega_1$ and $\omega_2$ over $K_1$ and $K_2$, respectively,
are cocycle conjugate if there is a hyper maximal $q$-corner from
$\omega_1$ to $\omega_2$.  An important word of caution is that in the non
unital case we do not know that cocycle conjugacy is an equivalence relation.

\section{$Q$-pure $q$-weight maps}

In this section we discuss the notion of $q$-pure $q$-weight maps over a
Hilbert space $K$.  We recall from the last section.

\begin{defn}
We say a $q$-weight map is $q$-pure if its
$q$-subordinates are totally ordered.
\end{defn}

We believe the next theorem is true for $q$-pure $q$-weight maps over $K$ where
$K$ is infinite dimensional but so far we only have a proof for the case
when $K$ is finite dimensional (i.e. $K = \mathbb C^p$ for $p$ a positive
integer.)

\begin{thm}
Suppose $\omega$ is $q$-pure $q$-weight map over $\mathbb
C ^p$ and $\rho$ is a faithful normal state on $B(\mathbb C^p)$.  Then the
mapping $\eta (\rho ) \rightarrow \eta (\rho )(I-\Lambda )$ is a one to one
mapping of the $q$-subordinates $\eta$ of $\omega$ onto the interval $[0,\omega
(\rho )(I-\Lambda )]$.  Furthermore, if $\eta$ is a $q$-subordinate of
$\omega$ then the range of $\eta$ is contained in the range of $\omega$.
\end{thm}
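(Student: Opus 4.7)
The strategy is to first reduce $q$-subordination to ordinary CP subordination, and then combine the total ordering coming from $q$-purity with the faithfulness of $\rho$ to obtain injectivity and, via a completeness argument, surjectivity.

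The key preliminary step is: if $\omega_1$ and $\omega_2$ are $q$-weight maps over $K$ whose generalized boundary representations satisfy $\pi_{1,t}^\# \leq \pi_{2,t}^\#$ for all $t > 0$, then $\omega_1 \leq \omega_2$ as CP maps of $\mathfrak A(K)$ into $B(K)$. Starting from the defining relation $(\iota + \omega_i|_t \Lambda)\pi_{i,t}^\# = \omega_i|_t$ and the estimate $\|\pi_{i,t}^\# \Lambda\| \leq e^{-t}$ (valid because $\pi_{i,t}^\#$ factors through the cutoff by $E(t,\infty)$, on which $\Lambda(I)\leq e^{-t}I$, while $\pi_{i,t}^\#$ is contractive), the Neumann expansion
$$
\omega_i|_t = \sum_{n=0}^{\infty} \pi_{i,t}^\# (\Lambda \pi_{i,t}^\#)^n
$$
converges in norm. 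Each summand is CP and monotone in $\pi_{i,t}^\#$, so $\omega_1|_t \leq \omega_2|_t$; letting $t\to 0+$ with the $\sigma$-weak approximation $A|_t \to A$ for $A \in \mathfrak A(K)$ yields $\omega_1 \leq \omega_2$. Applied to $\eta \leq_q \omega$, this gives $\eta \leq \omega$ as CP maps.

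For injectivity, take $q$-subordinates $\eta_1, \eta_2$ of $\omega$ with $\eta_1(\rho)(I-\Lambda) = \eta_2(\rho)(I-\Lambda)$. By $q$-purity, one of the orderings holds; say $\eta_1 \leq_q \eta_2$. The preceding step then yields that $\mu := \eta_2 - \eta_1 \in B(K) \otimes \mathfrak A(K)_*$ is CP. Since $\rho(\mu(I-\Lambda)) = 0$ with $\mu(I-\Lambda) \geq 0$, faithfulness of $\rho$ forces $\mu(I-\Lambda) = 0$. Passing to the normal CP map $\tilde\mu(B) := \mu((I-\Lambda)^{1/2} B (I-\Lambda)^{1/2})$ on $B(H)$, which satisfies $\tilde\mu(I) = 0$, the norm identity $\|\tilde\mu\| = \|\tilde\mu(I)\|$ for CP maps derived in the excerpt forces $\tilde\mu = 0$, hence $\eta_1 = \eta_2$.

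Surjectivity is what I expect to be the main obstacle. The map $\eta \mapsto \eta(\rho)(I-\Lambda)$ is strictly monotone and takes $0$ and $\omega$ to the endpoints. For $c$ in the interior, I would define $\eta_c$ as the pointwise supremum of all $q$-subordinates $\eta'$ satisfying $\eta'(\rho)(I-\Lambda) \leq c$, and $\eta^c$ as the corresponding infimum. Two points must then be settled: (i) that $\eta_c$ and $\eta^c$ are themselves $q$-subordinates of $\omega$, requiring closure of the class under monotone limits via a compactness argument that exploits the finite-dimensionality of $B(K)$ as the target of $\pi_t^\#$; and (ii) that $\eta_c = \eta^c$, so the chain of $q$-subordinates admits no jump at $c$. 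The genuine analytic content lies in (ii), and should be handled using the bijective order-preserving correspondence between $q$-subordinates of $\omega$ and positive contractive local cocycles of norm one of the associated $E_0$-semigroup recalled in the excerpt, together with the continuity structure of such cocycles.

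For the range containment, apply Arveson's Radon-Nikodym theorem after transferring to the normal CP maps $\Omega(B) := \omega((I-\Lambda)^{1/2} B (I-\Lambda)^{1/2})$ and $\tilde\eta(B) := \eta((I-\Lambda)^{1/2} B (I-\Lambda)^{1/2})$ on $B(H)$: with $\Omega(B) = V^*\pi(B)V$ a minimal Stinespring dilation, $\tilde\eta(B) = V^* T \pi(B) V$ for some $T \in \pi(B(H))'$ with $0 \leq T \leq I$. For any $f \in K$, $TVf$ lies in the dilation space $\overline{\pi(B(H))VK}$; approximating $TVf = \lim \sum \pi(B_n)Vg_n$ yields $\tilde\eta(B)f = \lim \sum \Omega(BB_n)g_n$, placing $\tilde\eta(B)f$ in the closure of $\text{range}(\Omega)\cdot K = \text{range}(\omega)\cdot K$. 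Since $K$ is finite-dimensional, $\text{range}(\omega)$ is closed in $B(\C^p)$, giving $\text{range}(\eta) \subseteq \text{range}(\omega)$.
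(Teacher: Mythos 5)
Your reduction of $q$-subordination to CP subordination via the Neumann series and your injectivity argument are correct and essentially match the paper's (order the two subordinates by $q$-purity, observe the difference is completely positive, and use faithfulness of $\rho$). The genuine gaps are in the other two assertions. For surjectivity, what you offer is a plan, not a proof: the statement $\eta_c=\eta^c$ (no jump in the chain) is precisely the analytic content of the theorem, and the appeal to ``the continuity structure of local cocycles'' does not supply it --- local cocycles are strongly continuous in the time parameter $t$, which says nothing about continuity along the order parameter of the totally ordered family of subordinates, and nothing a priori excludes a gap in such a chain. The paper's route is constructive: for each target value $s$ it forms the one-parameter deformation $\eta_{(t,\lambda)}=\lambda(\iota-\lambda\pi_t^\#\Lambda)^{-1}\pi_t^\#$, applies the intermediate value theorem in $\lambda$ for each small fixed $t$ to arrange $\eta_{(t,\lambda_t)}(\rho)(I-\Lambda)=s$, and then takes a weak limit along an ultrafilter, verifying directly that the limit is a $q$-subordinate attaining the value $s$. (This part does not even need $q$-purity.)

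The range-containment argument fails for two reasons. First, what your Stinespring computation yields is that the \emph{vector} $\eta(B)f$ lies in the closed span of the vectors $\omega(C)g$; this constrains the ranges of the operators $\eta(B)$ as subspaces of $K$, not the location of the operators $\eta(B)$ inside $B(\mathbb C^p)$, which is what ``range of $\eta$ contained in range of $\omega$'' means here. Second, and more fundamentally, no argument using only the CP-ordering $\eta\leq\omega$ can work, because CP subordination does not preserve operator-space ranges: on $B(\mathbb C^2)$ take $\Omega(B)=e_{11}Be_{11}+e_{22}Be_{22}$; then $\tfrac12\iota$ is a CP subordinate of $\Omega$ (the Choi matrix of $\Omega-\tfrac12\iota$ is positive semidefinite), yet its range is all of $B(\mathbb C^2)$ while $\Omega$ has range the diagonal matrices. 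The containment in the theorem is a consequence of the $q$-structure: the paper shows that, by the injectivity and surjectivity already established, every $q$-subordinate of a $q$-pure $\omega$ is a weak limit of the maps $\eta_{(t,\lambda)}=\lambda\pi_t^\#+\lambda^2\pi_t^\#\Lambda\pi_t^\#+\cdots$, each of which has range inside the range of $\pi_t^\#$, which coincides with the range of $\omega$ for small $t$ by finite-dimensionality. So the range statement is downstream of the very surjectivity construction your proposal leaves open.
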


\begin{proof}  Assume the hypothesis. Suppose $\eta$ and $\nu$ are $q$-weight
maps so that $\omega \geq_q \eta \geq_q \nu \geq_q 0$.  Since $\eta \geq \nu$
we have $\eta (\rho )(I-\Lambda ) \geq \nu (\rho )(I-\Lambda )$ and if
$(\eta-\nu )(\rho)(I-\Lambda ) = 0$ it follows that $\eta = \nu$.  Hence, we see
the mapping $\eta \rightarrow \eta (\rho )(I-\Lambda )$ is one to one.  Next we
show there are no gaps, which is to say that for every $s \in [0,\omega (\rho )
(I-\Lambda)]$, there is a $q$-subordinate $\nu$ of $\omega$ so that $\nu (\rho )
(I-\Lambda) = s$.  If $s = \omega (\rho )(I-\Lambda )$ then $\nu = \omega$
provides the example and if $s = 0$ then $\nu = 0$ provides the example.  We
consider the case where $s \in (0,\omega (\rho )(I-\Lambda ))$.  Let $\pi_t^\#$
be the generalized boundary representation of $\omega$ and let $\eta_{(t,
\lambda )} = \lambda (\iota - \lambda\pi_t^\#\Lambda )^{-1}\pi_t^\#$ for
$\lambda \in [0,1]$ and $t > 0$.  Note that $\eta_{(t,\lambda )}$ is a
$q$-subordinate of $\omega \vert_t $ and as $\lambda$ goes from 1 to 0,
$\eta_{(t,\lambda )}(\rho)(I-\Lambda )$ goes continuously from $\omega \vert_t
(\rho )(I-\Lambda )$ to zero.  Since $s < \omega (\rho )(I-\Lambda )$ and
$\omega \vert_t (\rho)(I-\Lambda ) \rightarrow \omega (\rho )(I-\Lambda )$ as
$t \rightarrow 0+$ we have that there is a $t_o$ so that $\omega \vert_t (\rho )
(I-\Lambda ) \geq s$ for $t \leq t_o$.  Hence, for each $t \in (0,t_o]$ there
is a $\lambda = \lambda_t$ so that $\eta_{(t,\lambda )}(\rho )(I-\Lambda ) = s$.
To simplify notation we denote this $\eta_{(t,\lambda )}$ by $\eta_t$.
Calculating $\eta_t$ in terms of $\omega$ we find

$$
\eta_t = \lambda_t(\iota + (1-\lambda_t)\omega \vert_t \Lambda )^{-1
}\omega \vert_t
$$
for $t > 0$.

Let $\{ t_k:k = 1,2,\cdots \}$ be decreasing sequence tending
to zero (so $t_k \rightarrow 0$ as $k \rightarrow \infty$) and let $\sigma$
be an ultrafilter.  To further simplify notation we let $\eta_k =
\eta_{t_k}$.  Since $\eta_k \in \mathfrak A (K)_*$ and we have the bound
$\Vert\eta_k(I-\Lambda )\Vert \leq 1$ the $\eta_k$ are in a compact set in
the weak topology so $\lim_\sigma \eta_k = \nu$ exists.

We claim $\nu$ is the desired $q$-subordinate of $\omega$.  Since for $k$
sufficiently large we have $\eta_k(\rho )(I-\Lambda ) = s$ it follows that
$\nu (\rho )(I-\Lambda ) = s$.  To show $\nu$ is a $q$-subordinate of $\omega$
we must show that for each $t \in (0,t_o)$ that $(\iota+\nu \vert_t \Lambda
)^{-1}$ exists and $\pi_t^\# \geq (\iota+\nu \vert_t \Lambda )^{-1}\nu \vert_t
0$.  Let $\phi_{kt}^\#$ be the generalized boundary representation of
$\eta_k$.  Note for each $t > 0$ and $k$ sufficiently large we have
$\pi_t^\# \geq \phi_{kt}^\# \geq 0,\medspace \pi_t^\#\Lambda \geq
\phi_{kt}^\#\Lambda$ and $I \geq \pi_t^\#\Lambda (I) \geq
\phi_{kt}^\#\Lambda (I)$.  Now we have
$$
(\iota + \omega \vert_t \Lambda )^{-1} = \iota - \pi_t^\#\Lambda\qquad
\text{and } \qquad (\iota + \eta_k\vert_t )^{-1} = \iota -
\phi_{kt}^\#\Lambda.
$$
Since the mappings above are mappings of finite dimensional linear spaces
into themselves the weak limit of such maps is also the strong limit and,
therefore, the limit of the composition of two such maps is the composition
of the limits so, for example,
$$
\lim_\sigma \phi_k\psi_k = \lim_\sigma\phi_k \lim_\sigma\psi_k
$$
and we will freely use this in our computations.  Note that
$$
(\iota + \eta_k \vert_t \Lambda )^{-1} = \iota - \phi_{kt}^\# \Lambda.
$$
Then if
$$
\psi_t = \lim_\sigma (\iota - \phi_{kt}^\# \Lambda)
$$
we see that
$$
(\iota+\nu \vert_t \Lambda )\psi_t = \lim_\sigma (\iota+\eta_k \vert_t \Lambda
)(\iota+\eta_k \vert_t \Lambda )^{-1} = \lim_\sigma \iota = \iota
$$
and
$$
\psi_t(\iota+\nu \vert_t \Lambda ) = \lim_\sigma (\iota+\eta_k \vert_t \Lambda
)^{-1}(\iota+\eta_k \vert_t \Lambda ) = \lim_\sigma \iota = \iota
$$
and we conclude that $(\iota+\nu \vert_t \Lambda )^{-1}$ exists and
\begin{align*}
(\iota +\nu \vert_t \Lambda )^{-1}\nu \vert_t  &= \lim_\sigma (\iota -
\phi_{kt}^\#\Lambda ) (\iota - \phi_{kt}^\#\Lambda )^{-1} \phi_{kt}^\#
\\
&= \lim_\sigma \phi_{kt}^\#
\end{align*}
and since $\pi_t^\# \geq \phi_{kt}^\# \geq 0$ for $k$ sufficiently large we
conclude that the generalized boundary representation $\phi_t^\#$ for $\nu$
exist and satisfies
$$
\pi_t^\# \geq \phi_t^\# \geq 0
$$
and, hence, $\nu$ is $q$-positive and $\nu$ is a $q$-subordinate of $\omega
$.

To complete the proof we show that the range of any $q$-subordinate is
contained in the range of $\omega$.  Recall that
$$
\eta_{(t,\lambda )} = \lambda (\iota - \lambda\pi_t^\#\Lambda )^{-1}\pi_t^\#
=\lambda\pi_t^\# + \lambda^2\pi_t^\#\Lambda\pi_t^\# + \cdots
$$
so we see the range of $\eta_{(t,\lambda )}$ is contained in the range of
$\pi _t^\#$ which is contained in the range of $\omega$.  Since the range
of $\eta_{(t,\lambda )}$ is contained in the range of $\omega$ for all $t >
0$ and $\lambda \in [0,1]$ and any $q$-subordinate $\nu$ of $\omega$ is the
limit of $\eta_{(t,\lambda )}$ it follows that the range of $\nu$ is
contained in the range of $\omega.$ \end{proof}

Consider $\omega$ a $q$-weight map over $\mathbb C$ so $\omega \in \mathfrak A
(\mathbb C ) _*$ and $\omega (I - \Lambda ) \leq 1$.  In Theorem 3.9 of
\cite{powers-holyoke} is was shown how to find all $q$-subordinates of $\omega$.

\begin{thm}
Suppose $\omega$ is a $q$-weight map over $\mathbb C$ and
$\rho$ is a positive normal functional on $B(L^2(0,\infty ))$ so $\rho \in
B(L^2(0,\infty ))_* \subset \mathfrak A (\mathbb C )_*$ and $\rho (I) < \infty$
and $\omega \geq \rho$ so $\omega (A) \geq \rho (A)$ for all positive $A
\in \mathfrak A (\mathbb C )$.  Then $\eta = \lambda (1 + \rho (\Lambda
))^{-1}(\omega - \rho )$ for $0 \leq \lambda \leq 1$ is a $q$-subordinate
of $\omega$.  Conversely, suppose $\eta$ is a non zero $q$-subordinate of
$\omega$ then there is a positive normal functional $\rho \in
B(L^2(0,\infty ))_*$ (so $\rho (I) < \infty )$ and a real number $\lambda
\in (0,1]$ so that $\eta = \lambda (1 + \rho (\Lambda ))^{-1}(\omega -
\rho )$.  Furthermore, if $\omega$ is unbounded then $\rho$ and $\lambda$
are unique.
\end{thm}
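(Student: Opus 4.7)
Specializing to $K = \mathbb C$ collapses the generalized boundary representation to a scalar formula: since $\omega \vert_t \Lambda$ acts on $B(\mathbb C) = \mathbb C$ as multiplication by $\omega \vert_t(\Lambda)$, one has $\pi_t^\#(A) = \omega \vert_t(A)/(1+\omega \vert_t(\Lambda))$ and $\phi_t^\#(A) = \eta \vert_t(A)/(1+\eta \vert_t(\Lambda))$, reducing both directions of the theorem to scalar inequalities. For the forward direction, substitute $\eta = \lambda(1+\rho(\Lambda))^{-1}(\omega-\rho)$ into the formula for $\phi_t^\#$ and clear denominators. The inequality $\phi_t^\# \geq 0$ reduces to $(\omega-\rho)\vert_t \geq 0$, which follows from $\omega \geq \rho$; and $\pi_t^\# \geq \phi_t^\#$ reduces, after cross-multiplication, to an expression in which every summand is manifestly nonnegative, using $1 - \lambda \geq 0$ together with $\rho(\Lambda) - \rho \vert_t(\Lambda) \geq 0$. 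These are routine algebraic verifications.

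For the converse, let $\eta$ be a nonzero $q$-subordinate of $\omega$ with boundary representation $\phi_t^\#$. Cross-multiplying $\pi_t^\# \geq \phi_t^\# \geq 0$ gives
$$
(1+\eta \vert_t(\Lambda))\,\omega \vert_t(A) \;\geq\; (1+\omega \vert_t(\Lambda))\,\eta \vert_t(A) \;\geq\; 0
$$
for positive $A$. Passing $t \to 0+$, using that $\omega \vert_t(A) \to \omega(A)$ and $\eta \vert_t(A) \to \eta(A)$ for $A \in \mathfrak A(\mathbb C)_+$ (a consequence of the $\sigma$-weak continuity built into the definition of $\mathfrak A(\mathbb C)_*$), yields $\omega \geq c_0 \eta$ on $\mathfrak A(\mathbb C)_+$ with $c_0 = (1+\omega(\Lambda))/(1+\eta(\Lambda))$. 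I would then set
$$
c^* := \sup\{\, c \geq 0 : \omega(A) \geq c\,\eta(A)\ \text{for every } A \geq 0 \text{ in } \mathfrak A(\mathbb C) \,\},
$$
which satisfies $c_0 \leq c^* \leq \omega(\Lambda)/\eta(\Lambda)$, and take $\rho := \omega - c^* \eta$ together with $\lambda$ determined by $c^* = (1+\omega(\Lambda))/(\lambda+\eta(\Lambda))$. The bounds on $c^*$ force $\lambda \in (0,1]$, and direct back-substitution recovers $\eta = \lambda(1+\rho(\Lambda))^{-1}(\omega-\rho)$.

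The main obstacle is proving that $\rho$ is bounded, i.e., $\rho(I) < \infty$. This is automatic when $\omega$ is bounded; when $\omega$ is unbounded the subtraction $\omega - c^* \eta$ must cancel the unbounded tail of $\omega$ exactly. I would establish this by exploiting the pointwise inequalities $\omega \vert_t \geq c_t \eta \vert_t$ for every $t > 0$ (with $c_t = (1+\omega \vert_t(\Lambda))/(1+\eta \vert_t(\Lambda))$), together with the maximality built into the definition of $c^*$, to force the leading-order behavior of $c^* \eta$ to match that of $\omega$. The uniqueness statement when $\omega$ is unbounded then follows because any two admissible pairs would differ by a bounded positive functional, whose existence is incompatible with $\omega$ being unbounded and both candidate $\rho$'s being bounded.
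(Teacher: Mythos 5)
First, a remark on context: the paper itself does not prove this statement; it is quoted from Theorem 3.9 of \cite{powers-holyoke}, so there is no in-paper argument to compare against. Your forward direction is correct: over $\mathbb C$ the boundary representation is the scalar $\pi_t^\# (A) = \omega \vert_t (A)/(1+\omega \vert_t (\Lambda ))$, and after cross-multiplication both $\phi_t^\# \geq 0$, $\phi_t^\# (I) \leq 1$ and $\pi_t^\# \geq \phi_t^\#$ reduce to sums of nonnegative terms, using $\omega \vert_t \geq \rho \vert_t$, $\omega \vert_t (I-\Lambda ) \leq 1$, and $\lambda (1+\rho \vert_t (\Lambda ))/(1+\rho (\Lambda )) \leq \lambda \leq 1$. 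Your uniqueness argument is also sound: two representations give $\rho_2 - \rho_1 = (d_1-d_2)\eta$ with $d_i = \lambda_i^{-1}(1+\rho_i(\Lambda ))$, and $d_1 \neq d_2$ would make $\eta$, hence $\omega$, bounded.

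The gap is in the converse, at exactly the step you flag as the main obstacle. When $\omega$ is unbounded your $c_0 = (1+\omega (\Lambda ))/(1+\eta (\Lambda ))$ is of the form $\infty /\infty$, and, more seriously, the two ingredients you name --- the pointwise inequalities $\omega \vert_t \geq c_t\eta \vert_t$ and the maximality of $c^*$ --- do not force $\rho = \omega - c^*\eta$ to be bounded or $\lambda \leq 1$. Maximality only says that for every $\varepsilon > 0$ some positive $A$ satisfies $\rho (A) < \varepsilon\eta (A)$, which is perfectly compatible with $\rho$ unbounded (picture $\omega$ as a sum of two ``independent'' unbounded pieces with $c^*\eta$ matching only one of them); pushing that route gives at best $\rho \vert_t (\Lambda ) = o(\eta \vert_t (\Lambda ))$, not boundedness. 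The missing idea is to test the subordination inequality on the specific positive operators $\Lambda \vert_t - \Lambda \vert_s = \Lambda E(t,s)$ for $t < s$: writing $u_t = 1+\omega \vert_t (\Lambda )$ and $v_t = 1+\eta \vert_t (\Lambda )$, the inequality $v_t(u_t-u_s) \geq u_t(v_t-v_s)$ simplifies to $u_t/v_t \geq u_s/v_s$, so $c_t = u_t/v_t$ is nondecreasing as $t \rightarrow 0+$. Setting $c^* = \lim_{t\rightarrow 0+} c_t = \sup_t c_t$ (finite because $c_t\eta (A) \leq \omega (A)$ for any fixed positive $A$ supported away from $0$ with $\eta (A) > 0$), one gets $\omega \geq c^*\eta$, and then $\rho \vert_t (\Lambda ) = v_t(c_t-c^*) + c^* - 1 \leq c^*-1$, whence $\rho \vert_t (I) \leq (c^*-1) + \omega \vert_t (I-\Lambda ) \leq c^*$ for all $t > 0$; thus $\rho$ is bounded, and $\lambda = (1+\rho (\Lambda ))/c^* \leq 1$ falls out of the same estimate. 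Without this monotonicity, or an equivalent device singling out the right test elements, the argument does not close.
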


One sees that an unbounded $q$-weight map $\omega$ over $\mathbb C$ is $q$-pure
if and only if $\rho$ is a subordinate of $\omega$ (so $\omega (A) \geq \rho
(A) \geq 0$ for all positive $A \in \mathfrak A (\mathbb C ))$ and $\rho$ is
bounded then $\rho = 0$.  This notion of purity comes up again and again
as a condition for the $q$-purity of a $q$-weight map.  For this reason we will
give this notion a name.

\begin{defn}
Suppose $K$ is a separable Hilbert space and $\mu$ is a $b$-weight over $K$
(i.e. $\mu \in \mathfrak A (K)_*$) then $\mu$ is strictly infinite if $\mu$ has
no bounded suborinates.  Similarly if $\phi$ is a $B(K)$ valued completely
positive $b$-weight map on $\mathfrak A (\mathbb C^p)$ (so $\phi \in B(K)
\otimes \mathfrak A (\mathbb C^p)_*$ and $\phi \geq 0)$ we say $\phi$ is
strictly infinite if $\phi$ has no bounded non zero subordinates so if $\eta$
is a non zero subordinate of $\phi$ (i.e. $\eta$ is another $B(K)$ valued
completely positive $b$-weight map on $\mathfrak A (\mathbb C^p)$ and $\phi
\geq \eta )$ then $\eta$ is unbounded.
\end{defn}

In this terminology we see that an unbounded $q$-weight map $\omega$ over
$\mathbb C$ is $q$-pure if and only if it is strictly infinite.  In \cite{jmp1}
we discussed range rank one $q$-weight maps over $\mathbb C^p$ and our results
are summarized in the following theorems.

\begin{thm}
Suppose $\omega$ is a $q$-weight map of range rank one
over $K$ of index zero where $K$ is a separable Hilbert space so $\omega$
can be expressed in the form $\omega (\rho )(A) = \rho (T)\mu (A)$ for
$\rho \in B (K )_*$ and $A \in \mathfrak A (\mathbb C^p)$
where $T$ is a positive operator of norm one and $\mu$ is a positive element of
$\mathfrak A (K \otimes L^2(0,\infty ))_*$ and $\mu (I -\Lambda (T)) \leq 1$
and $\mu (I) = \infty$. Then $\omega$ is $q$-pure if and only if the following
three conditions are met.
\begin{enumerate}[(i)]
\item $T$
is a projection.
\item $\mu$ is strictly infinite.
\item If $e \in B (K )$ is an hermitian rank one
projection with $T \geq e$ then $\mu (\Lambda (e)) = \infty$.
\end{enumerate}
\end{thm}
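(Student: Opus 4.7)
The plan is to exploit the range rank one structure of $\omega$ to compute its generalized boundary representation explicitly, and to analyze the $q$-subordinates from that explicit formula. Writing $\omega(A) = \mu(A) T$, the map $\omega|_t \Lambda : B(K) \to B(K)$ is the rank-one linear map $B \mapsto \mu|_t(\Lambda(B))\, T$, and the rank-one inversion of $\iota + \omega|_t \Lambda$ yields
$$
\pi_t^\#(A) = \frac{\mu|_t(A)}{1 + \mu|_t(\Lambda(T))}\, T,
$$
so the image of $\pi_t^\#$ always lies in the line $\mathbb{C}\, T$. Since $\omega$ has index zero, Theorem 1.3 forces $\mu|_t(\Lambda(T)) \to \infty$ as $t \to 0^+$.

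For the forward direction I would prove the contrapositive: if any of (i), (ii), or (iii) fails, construct two $q$-subordinates of $\omega$ which are not comparable. If $T$ is not a projection, use the spectral theorem to produce positive $T_1, T_2 \leq T$ with $T_1 \not\leq T_2$ and $T_2 \not\leq T_1$, and build range-rank-one $q$-subordinates of $\omega$ using the $T_i$ in place of $T$ with denominators adjusted accordingly. If $\mu$ has a nonzero bounded subordinate, invoke the scalar-valued result of Theorem 2.3 to generate a two-parameter family of $q$-subordinates of $\omega$ in which incomparable pairs appear. If a rank-one projection $e \leq T$ satisfies $\mu(\Lambda(e)) < \infty$, exploit this finiteness to renormalize and produce a $q$-subordinate whose range lies along $T - e$ rather than $T$; such a subordinate cannot be comparable with a generic $\lambda \omega$.

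For the reverse direction, assume (i)--(iii). By the second conclusion of Theorem 2.2, every $q$-subordinate $\eta$ of $\omega$ has range contained in $\mathbb{C}\, T$, so $\eta(A) = \nu(A)\, T$ for a positive $b$-weight $\nu$ with $\nu \leq \mu$. The $q$-positivity condition $\pi_t^\# \geq \psi_t^\# \geq 0$ then becomes a scalar inequality relating $\nu|_t$ and $\mu|_t$, to which a scalar-valued analysis along the lines of Theorem 2.3 applies. Using the strict infiniteness of $\mu$ (condition (ii)) and $\mu(\Lambda(e)) = \infty$ for every rank-one $e \leq T$ (condition (iii)), one rules out, respectively, subordinates obtained by subtracting a bounded $\rho$ from $\mu$ and subordinates obtained by restricting $\mu$ to a proper subspace of $TK$; this forces $\nu = \lambda \mu$ for some $\lambda \in [0,1]$, producing a totally ordered one-parameter family.

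The hardest step is the last reduction in the reverse direction, namely showing that $\nu$ must be a scalar multiple of $\mu$. Theorem 2.2 only gives the coarse structural fact that $\eta$ is range rank one along $T$, and translating this into a sharp statement about $\nu$ requires combining the explicit rank-one formula for $\pi_t^\# - \psi_t^\#$ with a Radon--Nikodym-type analysis of scalar $b$-weights, adapted to the projection $T$, showing that under conditions (ii) and (iii) the only positive $b$-weights $\nu$ satisfying the required $q$-subordinate inequalities are $\lambda \mu$. It is here that the three conditions feed in essentially: (i) makes the denominator of $\pi_t^\#$ behave properly, (ii) eliminates bounded perturbations of $\mu$, and (iii) eliminates directional perturbations supported on rank-one pieces of $T$.
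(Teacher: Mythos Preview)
The paper does not actually prove this theorem; it is quoted from \cite{jmp1} (stated here as Theorem~2.5) and used only as input. So there is no proof in the present paper to compare your proposal against directly.

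That said, your reverse direction contains a genuine circularity. You invoke Theorem~2.2 to conclude that every $q$-subordinate $\eta$ of $\omega$ has range contained in $\mathbb{C}\,T$, but the hypothesis of Theorem~2.2 is that $\omega$ is $q$-pure --- exactly what you are trying to prove. (It is also stated only for $K=\mathbb{C}^p$, not for general separable $K$.) Without that hypothesis, nothing forces a $q$-subordinate to have range in $\mathbb{C}\,T$: the CP map $\pi_t^\#(A)=c_t\,\mu|_t(A)\,T$ has, when $\mathrm{rank}\,T>1$, many pure CP subordinates whose range lies along a single rank-one piece of $T$, and Theorem~5.6 converts these into $q$-subordinates of $\omega$ whose range is not $\mathbb{C}\,T$. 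Condition~(iii) is precisely what kills these, but that has to be shown directly, not via Theorem~2.2. Once you do show every $q$-subordinate is of the form $\nu(\cdot)\,T$, your reduction to a scalar problem is reasonable, though note that $\mu$ lives on $\mathfrak{A}(K)$, not $\mathfrak{A}(\mathbb{C})$, so Theorem~2.3 does not apply verbatim.

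Your forward-direction sketch is plausible in outline but the constructions are not nailed down. In the case ``$T$ not a projection,'' replacing $T$ by $T_i\leq T$ does not automatically yield a $q$-subordinate: one needs $\dfrac{T_i}{1+\mu|_t(\Lambda(T_i))}\leq\dfrac{T}{1+\mu|_t(\Lambda(T))}$, and since $T_i\leq T$ makes the left denominator \emph{smaller}, this inequality can fail. The actual argument requires a more careful choice tied to the spectral decomposition of $T$ and the asymptotics of $\mu|_t(\Lambda(\cdot))$ as $t\to 0^+$.
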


\begin{thm}
Suppose $\omega$ and $\eta$ are $q$-pure range
rank one $q$-weight maps over $K_1$ and $K_2,respectively$, where $K_1$ and
$K_2$ are separable Hilbert spaces so $\omega (\rho )(A) = \rho (T_1)\mu (A)$
for $A \in \mathfrak A (K)$ and $\rho \in B(K_1)_*$ and $\eta (\rho )(A) =
\rho (T_2)\nu (A)$ for $A \in \mathfrak A (K_2)$ and $\rho \in B(K_2)_*$ and
where $T_1$ and $T_2$ are hermitian projections and $\mu$ and $\nu$ are
$q$-pure $q$-weight maps.  Then $\omega$ and $\eta$ are cocycle conjugate (so
there is a hyper maximal $q$-corner from $\omega$ to $\eta$ ) if and only if
there is a partial isometry $U$ from $K _1$ to $K_2$ and a $\lambda > 0$ so that
$U^*U=T_1$, $UU^* = T_2$, and $\mu$ and $\nu$ can be expressed in the
form
$$
\mu (A) = \sum_{k\in J} (f_k,Af_k)\qquad \text{and} \qquad \nu (B) =
\sum_{k \in J} (g_k,Bg_k)
$$
for $A \in \mathfrak A (K_1)$ and $B \in \mathfrak A (K_2)$ with $g_k = \lambda
(U \otimes I)f_k + h_k$ where $h_k \in K_2\otimes L^2(0,\infty )$ for $k \in
J$ and
$$
\sum_{k\in J} \Vert h_k\Vert^2 < \infty .
$$
\end{thm}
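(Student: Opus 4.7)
The plan is to prove the forward and backward implications separately, drawing on the classification of range rank one $q$-weight maps in \cite{jmp1} together with the characterization of $q$-purity in Theorem~3.5. The structural observation guiding the whole argument is that since $\omega$ and $\eta$ are range rank one with $T_1, T_2$ projections, a hyper maximal $q$-corner between them ought also to have range rank one and to carry the data of an intertwiner between the supports of $T_1$ and $T_2$.

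For the ``if'' direction, given $U, \lambda$ and $\{h_k\}$, I would define a combined $b$-weight map $\omega_{\mathrm{tot}}$ on $K_1 \oplus K_2$ whose upper-right block is
$$
\gamma(A_{12}) = \lambda \sum_{k \in J} (f_k, A_{12}\, g_k)\, U^{*},
$$
with the diagonal blocks being $\omega$ and $\eta$. Complete positivity would follow from a direct vector realization using vectors that glue $f_k$ and $g_k$ via $U$. To upgrade to $q$-positivity I would compute the generalized boundary representation using a Schur-complement formula on $(\iota + \omega_{\mathrm{tot}}\vert_t \Lambda)^{-1}$ and verify positivity block by block for $t$ small; the scalar $\lambda \leq 1$ plays the role of a contractive coupling parameter keeping the off-diagonal dominated by the geometric mean of the diagonals. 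Hyper maximality is then extracted from Theorem~3.2: any $q$-subordinate of $\omega_{\mathrm{tot}}$ with the same off-diagonal $\gamma$ has its diagonal entries bounded below by the Schwarz inequality forced by $\gamma$, and the $q$-purity of $\omega, \eta$ together with the strict infiniteness of $\mu, \nu$ obliges these bounds to be tight.

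For the ``only if'' direction, assume a hyper maximal $q$-corner $\gamma$ is given. I would first apply the range statement of Theorem~3.2 to $\omega_{\mathrm{tot}}$, showing that the range of $\gamma$ lies in the range of $\omega_{\mathrm{tot}}$; combined with the range rank one structure of the diagonals, this forces $\gamma(A_{12}) = \tau(A_{12})\, S$ for some scalar $b$-weight $\tau$ and some $S \in B(K_2, K_1)$. Positivity of the coefficient matrix $\left[\begin{smallmatrix} T_1 & S \\ S^{*} & T_2 \end{smallmatrix}\right]$, forced by applying complete positivity to elements of $B(K) \otimes I$, yields $S = \lambda U^{*}$ with $U$ a partial isometry satisfying $U^{*}U \leq T_1$ and $UU^{*} \leq T_2$; hyper maximality then forces equality in both inclusions, for otherwise one could shrink the diagonal supports. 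Identifying $\tau$ through its vector representation and comparing with the decompositions of $\mu$ and $\nu$ yields $g_k = \lambda (U \otimes I) f_k + h_k$, the $\ell^2$-summability of $\{h_k\}$ being precisely what separates an admissible cocycle perturbation from a genuine change of $q$-weight.

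The principal obstacle is the backward direction, specifically the reduction of the corner $\gamma$ to range rank one. While the diagonal entries are range rank one by hypothesis, the off-diagonal is a priori only constrained by the Schwarz inequality for complete positivity, which a priori admits a much richer family of corners. Reducing to rank one requires delicately combining the range-inclusion statement of Theorem~3.2 with the strict infiniteness of $\mu, \nu$ and the divergence condition $\mu(\Lambda(e)) = \infty$ from Theorem~3.5~(iii); the latter is what rules out bounded perturbations to the corner that could otherwise produce higher-rank structure while remaining $q$-positive and hyper maximal.
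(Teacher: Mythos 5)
A preliminary but important point: this paper does not prove the statement. Immediately after the theorem it says ``In \cite{jmp1} the above theorem was proved in the finite dimensional case,'' and the result is used here only as an imported input (and only for finite dimensional $K_1$, $K_2$). So there is no in-paper proof to compare your proposal against, and I can only judge it on its own terms. Your overall frame --- assemble the $2\times 2$ map on $K_1\oplus K_2$, construct the corner explicitly for the ``if'' direction, and analyze the structure of an arbitrary hyper maximal $q$-corner for the ``only if'' direction --- is the right one, and your identification of the backward direction as the hard part is accurate. (Minor point: the results you cite as Theorems 3.2 and 3.5 are Theorems 2.2 and 2.5 in this paper.)

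Within that frame there are two genuine problems. First, the proposed corner $\gamma(A_{12})=\lambda\sum_{k}(f_k,A_{12}g_k)U^*$ is not even a corner when $\lambda>1$, and the theorem permits arbitrary $\lambda>0$. Take $K_1=K_2$, $U=I$, $h_k=0$, so $g_k=\lambda f_k$, and test complete positivity on the positive element with all four blocks equal to a positive $P$ and on the vector $u\oplus u$ with $T_1u=u$: positivity of the resulting $2\times 2$ numerical matrix requires $\lambda^4\mu(P)^2\leq \lambda^2\mu(P)^2$, i.e.\ $\lambda\leq 1$. The coupling coefficient in the corner must be $1$; the $\lambda$ belongs in the relation $g_k=\lambda(U\otimes I)f_k+h_k$, not as a prefactor on $\gamma$, and your remark that ``$\lambda\leq 1$ keeps the off-diagonal dominated by the geometric mean'' indicates the construction is off. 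Second, in the ``only if'' direction the range-inclusion statement cannot deliver what you want: the range of $\gamma$ lies in the range of the $2\times 2$ map tautologically, and Theorem 2.2 concerns $q$-subordinates of a $q$-\emph{pure} map, which the $2\times 2$ map is not. What complete positivity together with the Kraus decompositions of the rank one diagonals actually yields is $\gamma(A)=\sum_{k,l}(f_k,Ag_l)M_{kl}$ for a contractive array of operators $M_{kl}\in B(K_2,K_1)$, which is far from range rank one. Collapsing this to $\tau(A)S$, proving $U^*U=T_1$ and $UU^*=T_2$ from hyper maximality, and above all extracting $g_k=\lambda(U\otimes I)f_k+h_k$ with $\sum_k\Vert h_k\Vert^2<\infty$ from $q$-positivity --- the step where the $\ell^2$ perturbation actually appears --- constitute the real content of the theorem, and the proposal defers all of them. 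As written it is a plausible outline with the decisive steps missing and one step that fails.
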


In \cite{jmp1} the above theorem was proved in the finite dimensional case.
In this paper we will only make use of this theorem in the case
where $K_1$ and $K_2$ are finite dimensional.

\section{Completely positive and conditionally positive maps}

This section we discuss completely positive maps with special emphasis on
completely positive maps of the $(p \times p)$-matrices $B(\mathbb C ^p)$ into
themselves.  We denote by $\mathcal{S}(\mathbb C^p)$ the space of all hermitian
linear maps of $B(\mathbb C^p)$ into itself.
If $\phi \in \mathcal{S} (\mathbb C^p)$
we denote by $\phi_{ij}$ the $(i,j)$-entry of the matrix $\phi (A)$.  Given
such a mapping we define the super matrix associated with $\phi$ as
$$
S_{injm} = \phi_{ij}(e_{nm})
$$
where $e_{nm}$ are the complete set of matrix units for $B(\mathbb C^p)$ i.e.
$(e_{nm}x)_k = \delta_{nk}x_m$ for $n,m,k \in \{ 1,\cdots ,p\}$ for
$x \in \mathbb C^p$ and $\delta_{mk}$ is the Kronecker delta which equals one
for $m=k$ and zero otherwise.  Note the super matrix $S$ is a
$(p^2 \times p^2)$-matrix.  A very useful result of Choi is that $\phi$ is
completely positive if and only if the super matrix $S$ is positive.  Also the
rank of a completely positive map of $B(\mathbb C^p)$ into itself is equal to
the rank of its super matrix.

We consider ways of representing completely positive maps $\phi$ of $B(\mathbb
C^p)$ into itself.  Since the combined action of $B(\mathbb C^p)$ on $B(\mathbb
C^p)$ by both right and left multiplication is irreducible it follows that
every linear mapping $L$ of $B(\mathbb C^p)$ into itself can be written in the
form
$$
L(A) = \sum_{i=1}^m B_iAC_i
$$
for $A \in B(\mathbb C^p)$ where the $B_i,C_i \in B(\mathbb C^p)$ for $i =
1,\cdots ,m$.  If we require $L$ to be hermitian so that $L(A^*) = L(A)^*$
for $A \in B(\mathbb C^p)$ then we have
$$
L(A) = \tfrac{1}{2} \sum_{i=1}^m B_iAC_i + C_i^*AB_i^*
$$
and since
$$
BAC + C^*AB^* = \tfrac{1}{2} ((B+C^*)A(B+C^*)^* - (B-C^*)A(B-C^*)^*)
$$
we see that every hermitian $L$ can be written in the form
$$
L(A) = \sum_{i=1}^m r_i X_iAX_i^*
$$
for $A \in B(\mathbb C^p)$, where $r_1, \ldots, r_m \in \mathbb{R}$.  Now let
$G$ be the unitary group $SU(p)$ and let $\mu$ be Haar measure on $G$.  We note
that for $A \in B(\mathbb C^p)$ we have
$$
\int_G U_gAU_g^* d\mu (g)  = tr(A)I
$$
where $tr$ is the trace normalized so that $tr(I) = 1$.  Now given an
hermitian linear map $L$ of $B(\mathbb C^p)$ into itself we define $\Theta_L$
as
$$
\Theta_L(A) = \int_G L(U_gA)U_g^* d\mu (g)
$$
for $A \in B(\mathbb C^p)$.  If $L$ is of the form given above then we have
$$
\Theta_L(A) = \int_G \sum_{i=1}^m r_i X_iU_gAX_i^*U_g^* d\mu (g) = \sum_{i=1}^m
r_i tr(X_i^*A)X_i
$$
for $A \in B(\mathbb C^p)$.  Considering $B(\mathbb C^p)$ as a Hilbert space
$B(\mathbb C ^p)\otimes B(\mathbb C^p)$ with inner product $(A,B)_1 = tr(A^*B)$
we see that $\Theta_L$ is an hermitian linear operator and as such it can
be diagonalized so that we can write $L$ in the form
\begin{equation}\label{3.1}
L(A) = \sum_{i=1}^m \lambda_iS_iAS_i^*
\end{equation}
for $A \in B(\mathbb C^p)$ where $S_i \in B(\mathbb C^p)$ and the $\lambda_i\in
\mathbb R$ and $tr(S_i^*S_j) = \delta_{ij}$ for $i,j = 1,\cdots ,m$ and $m
\leq p^2$.  The numbers $\lambda_i$ are the eigenvalues of $\Theta_L$ and
the $S_i$ are the associated normalized eigenvectors.

We show that $L$ is completely positive if and only if $\lambda_i \geq 0$
for $i = 1,\cdots ,m$.  First we note that if $\lambda_i \geq 0$ in
equation (3.1) then $L$ is the sum of completely positive terms and, thus,
$L$ is completely positive.  To prove the converse suppose $\lambda_q < 0$
in the sum (3.1).  Let $h_i$ be an orthonormal basis for $\mathbb C^p$ and let
$A_i$ be the operator on $\mathbb C^p$ given by $A_if = h_1(h_i,S_qf)$ for $f
\in \mathbb C^p$.  We compute the sum
$$
\sum_{i,j=1}^p (h_i,L(A_i^*A_j)h_j) = \sum_{k=1}^m \sum_{i,j=1}^p
\lambda_k(h _i,S_kA_i^*A_jS_k^*h_j).
$$
Now we have
$$
\sum_{j=1}^p A_jS_k^*h_j = \sum_{j=1}^p h_1(h_j,S_qS_k^*h_j) =  p \cdot
tr(S_qS_k ^*) h_1 = p \delta_{qk} h_1.
$$
Combining this result with the above equation we find
$$
\sum_{i,j=1}^p (h_i,L(A_i^*A_j)h_j)  = p^2 \lambda_q.
$$
We see that if $\lambda_q < 0$ then $L$ is not completely positive.  Hence,
we have shown that $L$ is completely positive if and only if $\lambda_i
\geq 0$ for each term in equation (3.1).

The norm on $\mathcal{S} (\mathbb C^p)$ is given by
$$
\Vert\phi (A)\Vert = \sup \{\Vert\phi (A)\Vert :\Vert A\Vert \leq 1\} .
$$
For a completely positive map $\phi$ we have $\Vert\phi\Vert = \Vert\phi
(I) \Vert$.  In general it is hard to compute the norm so we will often use
a softer norm, the Hilbert Schmidt norm, which is the Hilbert Schmidt norm of
the super matrix, so
$$
\Vert\phi\Vert_{H.S.}^2 = \frac {1} {p^2} \sum_{j,k,r,s=1}^p
\vert\phi_{jkrs }\vert^2.
$$
One checks that if $\phi$ is of the form given in (3.1) then
$$
\Vert\phi\Vert_{H.S.}^2 = \sum_{i=1}^p \lambda_i^2.
$$
The topology given by the Hilbert Schmidt norm and the norm is equivalent
since
\begin{equation}
p^{-3/2}\Vert\phi\Vert  \leq \Vert\phi\Vert_{H.S} \leq
\Vert\phi\Vert
\end{equation}
and note that for the identity $\iota (\iota (A) = A$ for $A \in B(\mathbb
C^p))$ we have $\Vert\iota\Vert_{H.S} = \Vert\iota\Vert$ and if $\phi (A) =
tr(A)e$ for $A \in B(\mathbb C^p)$ where $e$ is a rank one projection we have
$p^{-3/2}\Vert\phi\Vert = \Vert\phi\Vert_{H.S}$.  To prove the right
inequality we note that if $\{ U_i:i = 1,\cdots ,p^2\}$ is a orthonormal
basis for $B( \mathbb C^p)$ of unitaries so $tr(U_i^*U_j) = \delta_{ij}$ then
$\Vert\phi (U_i)\Vert_{H.S.  }^2 \leq \Vert\phi\Vert^2$ so we have
$$
\Vert\phi\Vert_{H.S.}^2 = \frac {1} {p^2}  \sum_{i=1}^{p^2} \Vert\phi (U_i)
\Vert_{H.S.}^2\leq \frac {1} {p^2}  \sum_{i=1}^{p^2} \Vert\phi\Vert^2 =
\Vert \phi\Vert^2
$$
and to prove the left hand inequality we note that if $A_o \in B(\mathbb C^p)$
and $\Vert A_o\Vert = 1$ and $\Vert\phi (A_o)\Vert = \Vert\phi\Vert$ then
there are unit vectors in $\mathbb C^p$ so that $(f,\phi (A_o)g) =
\Vert\phi\Vert$.  Now suppose $e_1$ and $e_2$ are the one dimensional
hermitian projections so that $e_1f = f$ and $e_2g = g$ and we see that if
$\psi (A) = e_1\phi (A)e_2$ then $\Vert\psi\Vert = \Vert\phi\Vert$ and
$\Vert \psi\Vert_{H.S.} \leq \Vert\phi\Vert_{H.S.}$ so we see that if we
want to construct a $\phi$ with $\Vert\phi\Vert = 1$ of smallest Hilbert
Schmidt norm we should consider only $\phi$ that have one dimensional range
that is a multiple of a rank one operator and since the Hilbert Schmidt
norm of $\phi$ and $\phi\cdot U$ where $U$ is unitary are the same it is
enough to consider a mapping $\phi$ of the form $\phi (A) = tr(XA)e$ for $A
\in B(\mathbb C^p)$ and we have $\Vert\phi\Vert = tr((X^*X)^{\tfrac{1}{2}})$ and
\begin{align*}
\Vert\phi\Vert_{H.S.}^2&= \frac {1} {p^2} \sum_{i,j=1}^p p \cdot tr (\phi
(e_{ij})^*\phi (e_{ij})) = \frac {1} {p^2} \sum_{i,j=1}^p \vert tr(Xe_{ij})
\vert^2
\\
&= \frac {1} {p^4}  \sum_{i,j=1}^p \vert x_{ij}\vert^2 = \frac {1} {p^3}
\Vert X\Vert_{H.S.}^2.
\end{align*}
Now if $(X^*X)^{\tfrac{1}{2}}$ has eigenvalues $\lambda_i$ for $i = 1,\cdots ,p$
then
$$
\Vert\phi\Vert = \frac {1} {p} \sum_{i=1}^p \lambda_i\qquad \text{and}
\qquad \Vert X\Vert_{H.S.}^2 = \frac {1} {p}  \sum_{i=1}^p \lambda_i^2
$$
so to minimize $\Vert\phi\Vert_{H.S.}$ subject to the fact that
$\Vert\phi\Vert$ is given one sets all the $\lambda_i$ equal so $\lambda_i
= \Vert\phi\Vert$ and so $\Vert X\Vert_{H.S.}= \Vert\phi\Vert$.  Then we
have
$$
\Vert\phi\Vert_{H.S.}^2 = \frac {1} {p^3} \Vert X\Vert_{H.S.}^2 = \frac {1}
{p^3}\Vert\phi\Vert^2
$$
which establishes the left hand inequality of (3.2).

Next we consider the norm of the product of elements in $\mathcal{S} (\mathbb
C^p)$.  Clearly we have $\Vert\phi\psi\Vert \leq
\Vert\phi\Vert\cdot\Vert\psi\Vert $.  To figure out the effect of the
product on the Hilbert Schmidt norm consider $\phi \in \mathcal{S}(\mathbb C^p)$
and $A \in B(\mathbb C^p)$ so we have
\begin{align*}
\Vert\phi (A)\Vert_{H.S.}^2= tr(\phi (A)^*\phi (A)) &\leq \Vert\phi
(A)\Vert ^2 \leq \Vert\phi\Vert^2\Vert A\Vert^2 = \Vert\phi\Vert^2\Vert
A^*A\Vert
\\
&\leq p\Vert\phi\Vert^2tr(A^*A) = p\Vert\phi\Vert^2\Vert A\Vert_{H.S.}^2.
\end{align*}

If $\phi \in \mathcal{S} (\mathbb C^p)$ we denote by
$$
B_{ij} = \phi (e_{ij})\qquad \text{and} \qquad (f_i,\phi (A)f_j) = p \cdot
tr(\Omega _{ij}A)
$$
where $f_i$ and $e_{ij}$ are the standard basis and matrix units for $\mathbb
C ^p$ and $B(\mathbb C^p)$.  Then we note
$$
\Vert\phi\Vert_{H.S.}^2 = \frac {1} {p} \sum_{i,j=1}^p \Vert
B_{ij}\Vert_{H.S.  }^2 = \frac {1} {p} \sum_{i,j=1}^p
\Vert\Omega_{ij}\Vert^2.
$$

Then we note that the Hilbert Schmidt norm of the product $\psi\phi$ of two
elements in $\mathcal{S} (\mathbb C^p)$ is given by
\begin{align*}
\Vert\psi\phi\Vert_{H.S.}^2 &= \frac {1} {p} \sum_{i,j=1}^p \Vert\psi
(B_{ij })\Vert_{H.S.}^2
\\
&\leq \Vert\psi\Vert^2 \sum_{i,j=1}^p \Vert B_{ij}\Vert_{H.S.}^2=
p\Vert\psi \Vert^2\Vert\phi\Vert_{H.S.}^2.
\end{align*}

Hence, we have $\Vert\psi\phi\Vert_{H.S.} \leq \sqrt{p} \Vert\psi\Vert\cdot
\Vert\phi\Vert_{H.S.}$for $\psi ,\medspace \phi \in \mathcal{S} (\mathbb C^p)$.
An example where the inequality is an equality is when
$$
\psi (A) = p\cdot tr(Ae_{11})e_{11}\qquad \text{and} \qquad \phi (A) =
tr(A)I
$$
for $A \in B(\mathbb C^p)$.

Next we will prove the similar inequality, namely, that
$$\Vert\phi\psi\Vert_{H.S.} \leq \sqrt{p} \Vert\psi\Vert\cdot\Vert\phi\Vert_
{H.S.}$$ for $\psi ,\phi \in \mathcal{S} (\mathbb C^p)$.  Now for $\psi
\in \mathcal{S} (\mathbb C^p)$ we have
$$
\psi (A) = \sum_{k=1}^m \lambda_kS_kAS_k^*
$$
with $S_k \in B(\mathbb C^p)$ and $tr(S_k^*S_n) = \delta_{kn}$ and the
$\lambda _k$ real.  We denote by $\tilde \psi \in \mathcal{S} (\mathbb C^p)$
the mapping
$$
\tilde \psi (A) = \sum_{k=1}^m \lambda_kS_k^*AS_k
$$
for $A \in B(\mathbb C^p)$.  Note that for $A,B \in B(\mathbb C^p)$ we have
$$
tr(A^*\psi (B)) = tr(\tilde \psi (A)^*B).
$$
Then for $\psi ,\medspace \phi \in \mathcal{S} (\mathbb C^p)$ we have
$$
\Vert\phi\psi\Vert_{H.S.}^2 = \frac {1} {p} \sum_{i,j=1}^p \Vert\tilde \psi
(\Omega_{ij})\Vert_{H.S.}^2.
$$
So we need to estimate $\Vert\tilde \psi (A)\Vert_{H.S.}$ in terms of the
Hilbert Schmidt norm of $A$.  Now we have $\Vert\tilde \psi
(A)\Vert_{H.S.}^2\leq p\Vert\tilde \psi \Vert^2\Vert A\Vert_{H.S.}^2$ but
the norm of $\psi$ and $\tilde \psi$ need not be equal as when $\psi (A) =
p\cdot tr(Ae_{11})I$ then $\Vert\psi\Vert = 1$ but $\Vert\tilde \psi\Vert =
p$.  Now we have
\begin{align*}
\Vert\tilde \psi (A)\Vert_{H.S.} & = \sup\{ Re(tr(B^*\tilde \psi (A))):B
\in B(\mathbb C^p),\medspace \Vert B\Vert_{H.S.}\leq 1\}
\\
&= \sup\{ Re(tr(\psi (B)^*A)):B \in B(\mathbb C^p),\medspace \Vert
B\Vert_{H.S.  }\leq 1\}
\\
&\leq \sup\{\Vert\psi (B)\Vert_{H.S.}\Vert A\Vert_{H.S.}:B \in B(\mathbb
C^p), \Vert B\Vert_{H.S.}\leq 1\}
\\
&\leq \sup\{ p^{\tfrac{1}{2}}\Vert\psi\Vert\cdot\Vert B\Vert_{H.S.}\Vert
A\Vert_{H.S.  }:B \in B(\mathbb C^p),\medspace \Vert B\Vert_{H.S.}\leq 1\}
\\
&= p^{\tfrac{1}{2}} \Vert\psi\Vert\cdot\Vert A\Vert_{H.S}
\end{align*}
and, hence, we have
\begin{align*}
\Vert\phi\psi\Vert_{H.S.}^2 &= \frac {1} {p} \sum_{i,j=1}^p \Vert\tilde
\psi (\Omega_{ij})\Vert_{H.S.}^2
\\
&\leq \sum_{i,j=1}^p \Vert\psi\Vert^2\Vert\Omega_{ij}\Vert_{H.S.}^2 =
p\Vert \psi\Vert^2\Vert\phi\Vert_{H.S.}^2.
\end{align*}
Hence, we have shown that
\begin{equation} 
\Vert\psi\phi\Vert_{H.S.} \leq
\sqrt{p}\Vert\psi\Vert\cdot\Vert\phi\Vert_{H.S.  }\qquad \text{and} \qquad
\Vert\phi\psi\Vert_{H.S.} \leq \sqrt{p}\Vert\psi\Vert
\cdot\Vert\phi\Vert_{H.S.}
\end{equation}
for $\psi ,\medspace \phi \in \mathcal{S} (\mathbb C^p)$.

Next we turn to conditionally positive maps.  A mapping of $B(K)$ into
itself is said to be conditionally positive if $\phi$ is hermitian (so
$\phi (A^*) = \phi (A)^*$ for $A \in B(K))$ and if $A_i \in B(K)$ and $f _i
\in K$ for $i = 1,\cdots ,n$ and
$$
\sum_{i=1}^n A_if_i = 0\qquad \text{then} \qquad \sum_{i,j=1}^n (f_i,\phi
(A^*_iA_j)f_j) \geq 0.
$$
We really should call such maps conditionally completely positive maps but we
will use the shorter term, conditionally positive, and hope the reader will
remember we mean the longer expression.  We say a linear mapping $\phi$ of
$B(K)$ into itself is conditionally negative if $-\phi$ is conditionally
positive.  We say such a map $\phi$ is conditionally zero if both $\phi$ and
$-\phi$ are conditionally positive.

If $K = \mathbb C^p$ then one discovers that a mapping $\phi$ is conditionally
positive if and only if the super matrix $S$ associated with $\phi$ is
hermitian and
$$
(F,SF) \geq 0\qquad \text{for} \qquad F \in \mathbb C^p \otimes \mathbb C^p
\qquad \text{with} \qquad \sum_{i=1}^p f_{ii} = 0.
$$
Note this is not the same as saying that the super matrix is conditionally
positive which would be

$$
(F,SF) \geq 0\qquad \text{for} \qquad F \in \mathbb C^p \otimes \mathbb C^p
\qquad \text{with} \qquad \sum_{i,j=1}^p f_{ij} = 0.
$$
We consider the Shur product $A \circ B$ of two matrices in $B(\mathbb C^p)$
given by
$$
(A \circ B)_{ij} = a_{ij}b_{ij}
$$
for $i,j \in \{ 1,\cdots ,p\}$.  We note that the mapping $\psi_A$ given by
$$
\psi_A(B) = A \circ B
$$
is completely positive if and only if $A$ is positive and $\psi_A$ is
conditionally positive if and only if $A$ is conditionally positive by
which we mean $(f,Af) \geq 0$ for all $f \in \mathbb C_o^p$ where
$$
\mathbb C_o^p = \{ f \in \mathbb C^p:\sum_{i=1}^p f_i = 0 \}.
$$

Next we give a brief discussion of ways to represent conditionally positive
maps of $B(\mathbb C^p)$ into itself.  We will use a slightly different
representation.  Looking at equation (3.1) we note that each of the
matrices $S_i$ can expressed as
$$
S_i = s_iI + S_i^{\prime}
$$
where $tr(S_i^{\prime}) = 0$.   One can then write equation (3.1) with
terms that have trace zero.  The resulting $S_i^{\prime}$ will not
necessarily be orthonormal but by choosing a new basis one can write
equation (3.1) in the form
\begin{equation} 
L(A) = sA+YA+AY^*+K_L(A)\qquad \text{where} \qquad K_L(A) = \sum_{i=1}^m
\lambda _iX_iAX_i^*
\end{equation}
where $s$ is real the $\lambda_i$ are real and $Y$ and the $X_i$ are of
trace zero.  The $X_i$ can be chosen so that tr(X$_i^*X_j) = \delta_{ij}$
for $i,j = 1,\cdots ,m$.  We call $K_L$ the internal part of $L$ and $s$ the
coefficient of the identical part of $L$.  We note that $s$ and $Y$ are
uniquely determined since
\begin{equation*}
\Theta_L(I) = \int L(U_g)U_g^* d\mu (g) = sI + Y
\end{equation*}
so $s$ and $Y$ are uniquely determined.  Note $s$ is determined
since $Y$ is of trace zero.

Once $s$ and $Y$ are determined one can form the map
\begin{equation*}
K_L(A) = L(A) - sA - YA - AY^* = \sum_{i=1}^m \lambda_iX_iAX_i^*
\end{equation*}
so $K$ the internal part of $L$ and $s$ the coefficient of the identical part
of $L$ are uniquely determined.

Next we note that $L$ is conditionally positive if and only if
$\lambda_i \geq 0$ for each $i = 1,\cdots ,m$ (i.e. $L$ is
conditionally positive if and only if $K_L$ is completely
positive). Note if $\lambda_i \geq 0$ in (3.4) then $K_L$ is a
completely positive map and one checks that the first three terms
of (3.4) are conditionally zero.  Hence, if the $\lambda_i \geq 0$
the mapping $L$ of (3.4) is the sum of a completely positive map
and a conditionally zero map so it is conditionally positive.

Next we show that if one of the $\lambda_i$ is negative then $L$ is not
conditionally positive.  Suppose then that at least one of the $\lambda_i$,
say $\lambda_q$, in (3.4) is negative.  Let $h_i$ be an orthonormal basis
for $\mathbb C ^p$ and let $A_i$ be the operator given by $A_if =
h_1(h_i,X_qf)$ for $f \in \mathbb C ^p$.  We have
\begin{equation}
\sum_{i=1}^p A_ih_i = \sum_{i=1}^p h_1(h_i,X_qh_i) = p \cdot tr(X_q) h_1 = 0.
\end{equation}

Now if $L$ is conditionally positive we have
$$
\sum_{i,j=1}^p (h_i,L(A_i^*A_j)h_j)  = \sum_{k=1}^m \sum_{i,j=1}^p \lambda
_k(h_i,X_kA_i^*A_jX_k^*h_j) \geq 0.
$$
where in the computation to the right of the equal sign we used the fact that
condition (3.5) causes the first three terms in equation (3.4) to vanish.  Now
we have
$$
\sum_{j=1}^p A_jX^*_kh_j = \sum_{j=1}^p h_1(h_j,X_qX_k^*h_j) =  p
tr(X_qX_k^*) h_1 = p \delta_{kq} h_1.
$$
Combining this result with the above equation we find
$$
\sum_{i,j=1}^p (h_i,L(A_i^*A_j)h_j)  = p^2 \lambda_q.
$$
We see that if $\lambda_q < 0$ then $L$ is not conditionally positive.

We note that if $L$ is conditionally zero then the $\lambda_i$ in the sum
(3.4) are both positive and negative so we see that if $L$ is conditionally
zero then $L$ is of the form
$$
L(A) = sA + YA + AY^*
$$
for $A \in B(\mathbb C^p)$ with $s$ real and $Y \in B(\mathbb C^p)$ of trace
zero.  Note if $L$ is conditionally negative and $L$ is completely positive
then $L$ is of the above form with $Y = 0$ and $s \geq 0$.  To see this,
note that if $L$ satisfies these conditions then $L$ is conditionally zero,
so $L$ is of the above form, and one checks that if $L(e) \geq 0$ for all
rank one projections then $s \geq 0$ and $Y$ is a multiple of the identity.
Since $Y$ has trace zero we have $Y = 0$ so $L(A) = sA$ for $A \in
B(\mathbb C ^p)$ with $s \geq 0$.

Given an element $\phi \in \mathcal{S} (\mathbb C^p)$ we will want to represent
it the form $\phi = s\iota + \psi$ where $\psi$ is small.  Now in general it
is difficult to compute the norm of an element of $\mathcal{S} (\mathbb C^p)$
so we will use the Hilbert Schmidt norm.  We point out an advantage of equation
(3.4) for representing elements of $\mathcal{S} (\mathbb C^p)$ is that if
$\phi$ is of the form (3.4) and you wish to write $\phi = s\iota + \psi$ where
$\psi$ has the smallest Hilbert Schmidt norm then $s$ is precisely the $s$ in
equation (3.4) so $\psi$ is simply the expression in equation (3.4) with
the $sA$ term omitted.

Note if $\phi$ is given by equation (3.4) then one computes
$$
\Vert\phi\Vert_{H.S.}^2 = s^2 + 2tr(Y^*Y) + \sum_{i=1}^m \lambda_i^2
$$
and if we write $\phi = s\iota + \psi$ then
$$
\Vert\psi\Vert_{H.S.}^2 = 2tr(Y^*Y) + \sum_{i=1}^m \lambda_i^2.
$$

Next we consider the question of when a map $\phi \in \mathcal{S} (C^p)$
expressed in the form of equation (3.4) is completely positive.  We claim
the map
$$
L(A) = sA + YA + AY^* + \sum_{i=1}^m \lambda_iX_iAX_i^*
$$
for $A \in B(\mathbb C^p)$ is completely positive if and only if the
$\lambda_i$ are positive and there are complex numbers $c_i$ so that
\begin{equation} 
Y = \sum_{i=1}^m c_iX_i\qquad \text{and} \qquad \sum_{i=1}^m \vert c_i\vert
^2/\lambda_i \leq s.
\end{equation}

To see this assume the conditions on $Y$ and $s$ are satisfied.  Let $r$ be
the square root of the second sum above so $0 \leq r^2 \leq s$.  Then we
have
$$
L(A) = (s-r^2)A + (rI+Y/r)A(rI+Y^*/r) + \sum_{i=1}^m \lambda_iX_iAX_i^* - r
^{-2}YAY^*.
$$
Now
$$
Y/r = \sum_{i=1}^m (\lambda_i^{-\tfrac{1}{2}}c_i/r)\lambda_i^{\tfrac{1}{2}} X_i
= \sum _{i=1}^m b_i\lambda_i^{\tfrac{1}{2}} X_i
$$
where $b_i = \lambda_i^{-\tfrac{1}{2}}c_i/r$ for $i = 1,\cdots ,m$.   The last
two terms in the expression for $L$ are completely positive if and only if
$$
\sum_{i=1}^m \vert b_i\vert^2 \leq 1
$$
and we have
$$
\sum_{i=1}^m \vert b_i\vert^2 = r^{-2} \sum_{i=1}^m \vert
c_i\vert^2/\lambda _i  = r^{-2}\cdot r^2 = 1
$$
so $L$ is the sum of three completely positive maps so $L$ is completely
positive.

In the other direction let $h_i$ be an orthonormal basis for $\mathbb C^p$ and
let $A_i$ be the operator given by $A_if = (h_i,(zI + Z)f)h_1$ for $f \in
\mathbb C ^p$ where $z \in \mathbb C$ and $Z \in B(\mathbb C^p)$ with $tr(Z) =
0$ of our choosing.  We compute
\begin{align*}
\sum_{i,j=1}^p (h_i,L(A_i^*A_j)h_j) &= \sum_{i,j=1}^p s(h_i,A_i^*A_jh_j)
\\
&  + \sum_{i,j=1}^p (h_i,YA_i^*A_jh_j) + \sum_{i,j=1}^p
(h_i,A_i^*A_jY^*h_j)
\\
&\qquad + \sum_{k=1}^m \sum_{i,j=1}^p
\lambda_k(h_i,X_k A_i^*A_jX_k^*h_j).
\end{align*}
Since $Z$ has trace zero we have
$$
\sum_{j=1}^p A_jh_j= h_1(h_j(zI+Z)h_j) = pzh_1
$$
and using this we find
\begin{align*}
\sum_{i,j=1}^p (h_i,L(A_i^*A_j)h_j)  &= sp^2 \vert z\vert^2 +
2p^2Re(\overline {z}tr(Y^*Z))
\\
&\qquad + \sum_{k=1}^m \sum_{i,j=1}^p \lambda_k(h_i,X_kA_i^*A_jX_k^*h_j).
\end{align*}
We compute
$$
\sum_{i=1}^p A_iX_k^*h_i = \sum_{i=1}^p h_1(h_i,(zI + Z)X_k^*h_i)  = p \cdot
tr(X_k^*Z)h_1
$$
so we find
\begin{align*}
\frac {1} {p^2} \sum_{i,j=1}^p (h_i,L(A_i^*A_j)h_j) &= s\vert z\vert^2 +
2Re(\overline {z}tr(Y^*Z))
\\
&\qquad + \sum_{k=1}^m \lambda_ktr(Z^*X_k)tr(X_k^*Z).
\end{align*}
Since this expression must be positive for all complex $z$ we find
$$
\vert tr(Y^*Z)\vert^2 \leq s \sum_{k=1}^m \lambda_ktr(Z^*X_k)tr(X_k^*Z).
$$
Now considering $B(\mathbb C^p)$ as a Hilbert space with inner product $(A,B)
= tr(A^*B)$ we see that the matrices $X_k$ are orthogonal vectors so we can
uniquely express $Y$ in the form
$$
Y = \sum_{k=1}^m c_kX_k  + W
$$
where $W$ is orthogonal to $X_k$ for $k = 1,\cdots ,m$. Note that since $Y$
and the $X_k$ have trace zero then $W$ is of trace zero.  Setting $Z = W$
in the above inequality we find $tr(W^*W) = 0$ so $W = 0$.  Then setting
$$
Z = \sum_{k=1}^m \lambda_k^{-1}c_kX_k
$$
we find
$$
( \sum_{k=1}^m \vert c_k\vert^2/\lambda_k )^2 \leq s \sum_{k=1}^m \vert c_k
\vert^2/\lambda_k
$$
and so we have the desired inequality
$$
\sum_{k=1}^m \vert c_k\vert^2/\lambda_k \leq s.
$$

A useful inequality concerning $Y$ for completely positive maps of the form
(3.4) is the following.  Suppose
$$
L(A) = sA + YA + AY^* + \rho (A)\qquad \text{where} \qquad \rho (A) =
K_L(A) = \sum_{i=1}^m \lambda_iX_iAX_i^*
$$
for $A \in B(\mathbb C^p)$ and $L$ is completely positive.  Then the following
$(2 \times 2)$-matrix with entries in $B(\mathbb C^p)$ is positive so
\begin{equation}
\left[\begin{matrix} sI&Y^*
\\
Y&\rho (I)
\end{matrix} \right]
\geq 0.
\end{equation}
To see this we note since $L$ is completely positive there are complex
numbers $c_k$ for $k = 1,\cdots ,m$ so that
$$
Y = \sum_{i=1}^m c_iX_i\qquad \text{and} \qquad \sum_{i=1}^m \vert c_i\vert
^2/\lambda_i = r \leq s.
$$
Then we have
$$
\sum_{i=1}^m \left[\begin{matrix} \overline{c_i}\lambda_i^{-\tfrac{1}{2}} I
\\
\lambda_i^{\tfrac{1}{2}} X_i
\end{matrix} \right]
\left[\begin{matrix} c_i\lambda_i^{-\tfrac{1}{2}}I&\lambda_i^{\tfrac{1}{2}}
X_i^*
\end{matrix} \right] =
\left[\begin{matrix} rI&Y^*
\\
Y&\rho (I)
\end{matrix} \right]
\geq 0
$$
and since $s \geq r > 0$ the result follows.

Another useful fact about completely positive map in $\mathcal{S} (\mathbb C^p)$
is this.  Suppose $\phi \in \mathcal{S} (\mathbb C^p)$ is completely positive
and $\Vert \phi (I)\Vert < 1$.  Then $\iota - \phi$ is invertible and its
inverse is completely positive.  This is seen as follows.  Since $\phi$ is
completely positive $\Vert\phi\Vert = \Vert \phi (I)\Vert < 1$ and, hence,
the series
$$
1 + \Vert\phi\Vert + \Vert\phi\Vert^2 + \cdots
$$
converges and since $\Vert\phi^n\Vert \leq \Vert\phi\Vert^n$ it follows
that
$$
(\iota - \phi )^{-1} = \iota + \phi + \phi^2 + \cdots
$$
where the series converges in norm.  Since $\phi^n$ is completely positive
for each $n$ it follows that $(\iota - \phi )^{-1}$ is completely positive.

We now prove a technical lemma which is of crucial importance in the next
section.
\begin{lem} 
Suppose $\epsilon > 0$.  Then there is a
$\delta > 0$ so that if $\xi_1$ and $\xi_2$ are completely positive maps in
$\mathcal{S} (\mathbb C^p)$ so that
$$
\Vert\xi_1(I) + \xi_2(I)\Vert < 1\qquad \text{and} \qquad \Vert\iota -
(\iota - \xi_2)^{-1}\xi_1\Vert < \delta
$$
then
$$
\iota - \xi_2 = \kappa (\iota + \eta )
$$
with $\kappa > 0$ and $\Vert\eta\Vert < \epsilon$.
\end{lem}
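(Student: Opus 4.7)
The plan is a proof by contradiction combined with compactness. The decisive idea is to avoid fixing $\kappa$ a priori (for instance as the identity-coefficient of $\iota-\xi_2$), and instead to take $\kappa = \|\iota-\xi_2\|$ and work with the normalized direction $Q := (\iota-\xi_2)/\kappa$. This treats uniformly both the bounded-$\kappa$ regime and the degenerate regime where $\xi_2\to\iota$ forces $\kappa\to 0$; once one knows $Q$ is close to $\iota$, the required decomposition is immediate.

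Suppose the lemma fails for some $\epsilon>0$. For each $n\ge 1$ one obtains completely positive $\xi_1^{(n)},\xi_2^{(n)}\in\mathcal{S}(\mathbb{C}^p)$ with $\|\xi_1^{(n)}(I)+\xi_2^{(n)}(I)\|<1$ and $\|\iota-(\iota-\xi_2^{(n)})^{-1}\xi_1^{(n)}\|<1/n$, but admitting no decomposition $\iota-\xi_2^{(n)}=\kappa(\iota+\eta)$ with $\kappa>0$ and $\|\eta\|<\epsilon$. The bound $\|\xi_1^{(n)}(I)+\xi_2^{(n)}(I)\|<1$ rules out $\xi_2^{(n)}=\iota$, so $r_n:=\|\iota-\xi_2^{(n)}\|>0$. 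Set $Q_n:=(\iota-\xi_2^{(n)})/r_n$, a hermitian element of $\mathcal{S}(\mathbb{C}^p)$ of norm one. By finite-dimensionality I extract a subsequence with $Q_n\to Q_*$, $\|Q_*\|=1$. I aim to show $Q_*=\iota$; then for large $n$ in the subsequence $\|Q_n-\iota\|<\epsilon$, and $\iota-\xi_2^{(n)}=r_n Q_n = r_n(\iota+(Q_n-\iota))$ furnishes the forbidden decomposition, the desired contradiction.

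Identifying $Q_*$ rests on two features expressed through the super matrix $S^{\phi}$ of Section~3. First, writing $T_n:=\iota-(\iota-\xi_2^{(n)})^{-1}\xi_1^{(n)}$, so $\|T_n\|<1/n$, one has $\xi_1^{(n)}=(\iota-\xi_2^{(n)})(\iota-T_n)$, hence
$$
\frac{\xi_1^{(n)}}{r_n} \;=\; Q_n - Q_n T_n.
$$
Since $\xi_1^{(n)}/r_n$ is completely positive and $\|Q_n T_n\|\le 1/n\to 0$, the limit $Q_*$ is a limit of CP maps, so $S^{Q_*}\ge 0$. Second, $\xi_2^{(n)}=\iota-r_n Q_n$ is CP, so $S^{\iota} - r_n S^{Q_n}\ge 0$. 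Now $S^{\iota}$ is the rank-one positive operator $|v\rangle\langle v|$ on $\mathbb{C}^p\otimes\mathbb{C}^p$ with $|v\rangle = \sum_{i=1}^p |i\rangle\otimes|i\rangle$, so $\langle w|S^{\iota}|w\rangle=0$ whenever $w\perp v$, and the positivity inequality gives $\langle w|S^{Q_n}|w\rangle\le 0$. Passing to the limit, $\langle w|S^{Q_*}|w\rangle\le 0$ for all $w\perp v$.

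Combining these, $S^{Q_*}\ge 0$ together with $\langle w|S^{Q_*}|w\rangle\le 0$ on $v^{\perp}$ yields $\langle w|S^{Q_*}|w\rangle=0$ on $v^{\perp}$; positivity then gives $S^{Q_*}w=0$ for $w\perp v$, so $S^{Q_*}$ is supported on $\mathbb{C}|v\rangle$ and hence $S^{Q_*}=\alpha S^{\iota}$ for some $\alpha\ge 0$. Linearity of the super matrix in $\phi$ yields $Q_*=\alpha\iota$, and $\|Q_*\|=1$ with $\alpha\ge 0$ forces $\alpha=1$, so $Q_*=\iota$. The main obstacle I anticipate is the degenerate case $r_n\to 0$ (i.e.\ $\xi_2^{(n)}\to\iota$): a naive choice such as $\kappa_n=1-s_{\xi_2^{(n)}}$ can shrink faster than the non-scalar part of $\xi_2^{(n)}$, causing $\|\eta_n\|$ to blow up. Rescaling by $r_n$ and extracting super-matrix information in the direction $v^{\perp}$ is precisely what makes the two regimes close uniformly.
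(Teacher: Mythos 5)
Your argument is correct, and it takes a genuinely different route from the paper's. The paper gives a direct, quantitative proof: it fixes $\delta = \min(\epsilon/(4p^2),\,0.1/\sqrt{p})$ explicitly, normalizes $\iota-\xi_2$ by its \emph{Hilbert--Schmidt} norm, writes the result in the canonical form $sA+YA+AY^*-\sum\lambda_iX_iAX_i^*$ of equation (3.4), and then uses the complete positivity of $\iota-\xi-\nu$ together with the characterization (3.6) of completely positive maps of that form to bound $\Vert Y\Vert_{H.S.}$ and $\sum\lambda_i^2$ by multiples of $p\delta^2$, concluding that $s$ is close to $1$ and the non-identical part is small. You instead argue by contradiction and compactness: normalizing by the operator norm, extracting a limit $Q_*$ of the directions $Q_n=(\iota-\xi_2^{(n)})/r_n$, and pinning $Q_*$ down as $\iota$ by two Choi-matrix facts --- $Q_*$ is a limit of the CP maps $\xi_1^{(n)}/r_n$ (since $\Vert Q_nT_n\Vert\to 0$), so $S^{Q_*}\ge 0$, while complete positivity of $\xi_2^{(n)}=\iota-r_nQ_n$ forces $S^{Q_n}\le 0$ on the orthocomplement of the maximally entangled vector $v$, so $S^{Q_*}$ is rank one along $v$ and $Q_*$ is a nonnegative multiple of $\iota$ of norm one. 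Each step checks out: the negation of the lemma is correctly quantified, $r_n>0$ follows from $\Vert\xi_2^{(n)}(I)\Vert<1$, the CP cone is closed in finite dimensions, and $\langle w,S^{Q_*}w\rangle=0$ with $S^{Q_*}\ge0$ does give $S^{Q_*}w=0$. What the paper's approach buys is an explicit $\delta(\epsilon,p)$, which it in fact quotes verbatim when invoking the lemma in the proof of Theorem 4.4 (via $\delta_1=\min(\epsilon_1/(4p^2),0.1/\sqrt p)$); your proof is non-constructive, so that later passage would need the harmless rewording ``let $\delta_1$ be the constant furnished by Lemma 3.1 for $\epsilon_1$.'' What your approach buys is brevity and conceptual clarity: it replaces all of the Hilbert--Schmidt bookkeeping with the single structural fact that $\iota$ is pure (its super matrix is rank one), at the cost of using finite-dimensional compactness --- which the paper's argument also implicitly relies on, so nothing in generality is lost.
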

\begin{proof}  Assume $0 < \epsilon < 1$ and let
$$
\delta = \min(\frac {\epsilon} {4p^2},\medspace \frac {0.1} {\sqrt{p}})
$$
and suppose $\xi_1$ and $\xi_2$ satisfy the hypothesis of the lemma.  Let
$$
\iota - \xi  = b^{-1}(\iota - \xi_2)\qquad \text{with} \qquad b =
\Vert\iota - \xi_2\Vert_{H.S.}.
$$
Note that $b > 0$ since if $b = 0$ then $\xi_2 = \iota$ which would violate
the assumption $\Vert\xi_1(I) + \xi_2(I)\Vert < 1$.  Note $\Vert\iota
- \xi \Vert_{H.S.} = 1$.  Let
$$
\zeta = \iota - (\iota - \xi_2)^{-1}\xi_1
$$
and by assumption $\Vert\zeta\Vert < \delta$.  Note $\iota - \xi_2 =
b(\iota - \xi )$ and we have
$$
\xi_1 = (\iota - \xi_2)(\iota - \zeta ) = b(\iota - \xi )(\iota - \zeta ).
$$
Since $\xi_1 \geq 0$ and $b > 0$ we have
$$
\iota - \xi - \zeta + \xi\zeta \geq 0.
$$
Let $\nu = (\iota - \xi )\zeta$.  Since $\Vert\iota - \xi\Vert_{H.S.} = 1$
and from our previous estimate of the Hilbert Schmidt norm of the product
of two elements of $\mathcal{S} (\mathbb C^p)$ (see inequality (3.3)) we have
$$
\Vert\nu\Vert_{H.S.} = \Vert (\iota - \xi )\zeta\Vert_{H.S.} \leq
\sqrt{p}\Vert \iota - \xi\Vert_{H.S.}\Vert\zeta\Vert =
\sqrt{p}\Vert\zeta\Vert < \delta\sqrt {p}
$$
and we have $\iota - \xi - \nu \geq 0$.

Now $\iota - \xi = b^{-1}(\iota - \xi_2)$ and since $\xi_2 \geq 0$ and the
identity map $\iota$ is conditionally zero we have $\iota - \xi$ is
conditionally negative.  Hence, we can write this map in the form of
equation (3.4) as
$$
(\iota - \xi )(A) = sA + YA + AY^* - \sum_{i=1}^m \lambda_iX_iAX_i^*
$$
where $Y$ and the $X_i$ have trace zero and $s$ is real and $tr(X_i^*X_j) =
\delta_{ij}$ and $\lambda_i > 0$ for $i,j = 1,\cdots ,m$.  Since the
Hilbert Schmidt norm of $\iota - \xi$ is one we have
$$
\Vert\iota - \xi\Vert_{H.S.}^2 = s^2 + 2tr(Y^*Y) + \sum_{i=1}^m \lambda_i^2 = 1.
$$
Now we can express $\nu$ in the form
$$
\nu (A) = rA + ZA + AZ^* - \sum_{i=1}^q \mu_iS_iAS_i^*
$$
where $Z$ and the $S_i$ have trace zero and $tr(S_i^*S_j) = \delta_{ij}$.
Note we have put a minus sign in front of the sum in anticipation of the
fact that the $\mu_i$ will turn out to be positive.  Since
$\Vert\nu\Vert_{H.S.}< \delta p^{\tfrac{1}{2}}$
$$
\vert r\vert^2 + 2\Vert Z\Vert_{H.S.}^2+ \sum_{i=1} \mu_i^2 < p\delta^2.
$$
Now we have
$$
(\iota-\xi-\nu )(A) = (s-r)A+(Y-Z)A+A(Y-Z)^*-\sum_{i=1}^m \lambda_iX_iAX_i^
* + \sum_{j=1}^q \mu_jS_jAS_j^*
$$
for $A \in B(\mathbb C^p)$.  Note the above map is completely positive.  Now
for the above mapping to be completely positive the two sums term above
must add up to a positive quadratic form.  Note then if any of the $\mu_i$
are negative then the above expression is not completely positive.  Let $A$
and $B$ correspond to the positive quadratic forms
$$
A(Q)\quad \Leftrightarrow  \quad \sum_{j=1}^q \mu_iS_iQS_i^*\qquad \text{and}
\qquad B(Q) \quad \Leftrightarrow \quad \sum_{i=1}^m \lambda_iX_iQX_i^*.
$$
Since if $A \geq B \geq 0$ we have
\begin{align*}
tr(A^2) &= tr(A^{\tfrac{1}{2}} AA^{\tfrac{1}{2}} ) \geq tr(A^{\tfrac{1}{2}}
BA^{\tfrac{1}{2}}) = tr(AB)
\\
&= tr(B^{\tfrac{1}{2}} AB^{\tfrac{1}{2}} )\geq tr(B^{\tfrac{1}{2}}
BB^{\tfrac{1}{2}} ) = tr(B^2)
\end{align*}
it follows that
$$
\sum_{i=1}^m \lambda_i^2 \leq \sum_{j=1}^q \mu_j^2 < p\delta^2.
$$
Since the quadratic form $A - B$ is positive we can write
$$
L(A) =  -\sum_{i=1}^m \lambda_iX_iAX_i^* + \sum_{j=1}^q \mu_jS_jAS_j^*  =
\sum _{k=1}^r \sigma_kC_kAC_k
$$
for $A \in B(\mathbb C^p)$ where the $C_k$ are of trace zero and $tr(C_k^*C_l)
= \delta_{kl}$ and $\sigma_k > 0$ for $k,l = 1,\cdots ,r$.  We have
$$
\sum_{j=1}^r \sigma_j^2 \leq  \sum_{j=1}^q \mu_j^2  <  p\delta^2
$$
and we have
$$
(\iota - \xi - \nu )(A) = (s-r)A + (Y-Z)A + A(Y-Z)^* + \sum_{j=1}^r
\sigma_jC _jAC_j^*
$$
for $A \in B(\mathbb C^p)$.  Since the above map is completely positive we
have from our previous discussion that $Y - Z$ must satisfy the conditions
(3.6) so
$$
Y - Z = \sum_{i=1}^r c_iC_i\qquad \text{and} \qquad \sum_{i=1}^r \vert
c_i\vert ^2/\sigma_i \leq s-r.
$$
Note $s^2 \leq \Vert\iota - \xi\Vert_{H.S.}^2= 1$ and $r^2 \leq
\Vert\nu\Vert _{H.S.}^2< p\delta^2$
$$
s - r < 1 + \delta p^{\tfrac{1}{2}}
$$
and
$$
\sum_{i=1}^r \vert c_i\vert^2/\sigma_i < 1 + \delta p^{\tfrac{1}{2}}.
$$
Now we have
$$
\Vert Y - Z\Vert_{H.S.}^2 = \sum_{i=1}^r \vert c_i\vert^2.
$$
Maximizing the above sum subject the above inequality on the $\vert
c_i\vert ^2$ we see the maximum occurs when $c_i = 0$ except $c_q$ where
$\sigma_q$ is the maximum of the $\sigma_{i^{^{\prime}s.}}$  Hence, we have
$$
\Vert Y - Z\Vert_{H.S.}^2 < (1 + \delta p^{\tfrac{1}{2}} )\sigma_{\max}
$$
where $\sigma_{\max}$ is the largest of the $\sigma_i$.  We have
$$
\sigma_{\max}^2  \leq \sum_{i=1}^r \sigma_i^2  < p\delta^2
$$
we have
$$
\Vert Y - Z\Vert_{H.S.}< \delta p^{\tfrac{1}{2}}\sqrt{1+\delta p^{\tfrac{1}{2}}}
$$
and since the $2\Vert Z\Vert_{H.S.}^2< p\delta^2$ we have
$$
\Vert Y\Vert_{H.S.}\leq \Vert Y - Z\Vert_{H.S.}+ \Vert Z\Vert_{H.S.} <
\delta p^{\tfrac{1}{2}} (\sqrt{1+\delta p^{\tfrac{1}{2}}} + 2^{-\tfrac{1}{2}}).
$$
Using the fact that $\delta\leq p^{-\tfrac{1}{2}}/10$ so $\delta
p^{\tfrac{1}{2}} \leq 0.1$ this simplifies to
$$
\Vert Y\Vert_{H.S.}< \delta p^{\tfrac{1}{2}} (\sqrt{1.1} + \sqrt{\tfrac{1}{2}})
< 2\delta p^{\tfrac{1}{2}}.
$$
We recall that
$$
\Vert\iota - \xi\Vert_{H.S.}^2 = s^2 + 2tr(Y^*Y) + \sum_{i=1}^m \lambda_i^2
= 1
$$
and
$$
\sum_{i=1}^m \lambda_i^2 < p\delta^2\qquad \text{and} \qquad  2tr(Y^*Y) <
8p \delta^2.
$$
We find
$$
1 \geq s^2 > 1 - 9p\delta^2.
$$
We assure the reader that $s > 0$ since $s - r > 0$ and $\vert r\vert <
\delta p^{\tfrac{1}{2}} \leq 0.1$ and $s^2 > 0.9$.  Now we have
$$
\Vert (\iota - \xi ) - s\iota\Vert_{H.S.}^2= 2tr(Y^*Y) + \sum_{i=1}^m
\lambda _i^2 < 8p\delta^2 + p\delta^2 = 9p\delta^2
$$
and, hence, we have from (3.2) that
$$
\Vert (\iota - \xi ) - s\iota\Vert \leq p^{3/2}\Vert (\iota - \xi ) -
s\iota \Vert_{H.S.}< 3\delta p^2.
$$
Now let
$$
\kappa = b s\qquad \text{and} \qquad \eta = s^{-1}(\iota - s\iota - \xi ).
$$
Then we have
$$
\iota - \xi_2 = b(\iota - \xi ) = \kappa (\iota + \eta )
$$
and
$$
\Vert\eta\Vert < 3p^2\delta/s
$$
and since $s > 0$ and $s^2 > 0.9$ we have $3/s < 4$ so $\Vert\eta\Vert <
4\delta p^2 \leq \epsilon .$ \end{proof}

We prove an easy lemma so we can refer to it later.

\begin{lem} 
Suppose $\phi$ is a completely positive map of
$B(\mathbb C ^p)$ into itself and $T$ is an hermitian operator.  Then there is
an hermitian linear map $\psi$ of $B(\mathbb C^p)$ into itself so that $\psi
(I) = T$ and $\psi + \phi$ is conditionally zero.  Furthermore if $\psi
^{\prime}$ is an hermitian linear map of $B(\mathbb C^p)$ into itself so that
$\psi ^{\prime}+ \phi$ is conditionally negative and $\psi ^{\prime} \geq
\psi$ then $\psi ^{\prime} = \psi + s\iota$ where $s \geq 0$ and $\iota$ is
the identity map.
\end{lem}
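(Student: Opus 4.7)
From the discussion following equation (3.4), every conditionally zero map in $\mathcal{S}(\mathbb C^p)$ has the form $L(A) = tA + ZA + AZ^*$ with $t \in \mathbb R$ and $Z \in B(\mathbb C^p)$ of trace zero. I therefore look for $\psi$ of the form
\[
\psi(A) = tA + ZA + AZ^* - \phi(A),
\]
which is automatically hermitian (since $\phi$ is) and makes $\psi + \phi$ conditionally zero by construction. The remaining task is to choose $t$ and $Z$ so that $\psi(I) = T$, i.e.\ $tI + Z + Z^* = T + \phi(I)$. Since $T + \phi(I)$ is hermitian, the obvious choice is $t = \mathrm{tr}(T + \phi(I))/p$ and $Z = \tfrac{1}{2}(T + \phi(I) - tI)$; then $Z$ is hermitian, $\mathrm{tr}(Z) = 0$ by the choice of $t$, and $Z + Z^* = T + \phi(I) - tI$ as required. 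This settles existence.

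\textbf{Plan for the ``uniqueness'' clause.} Suppose $\psi^{\prime}$ is an hermitian linear map with $\psi^{\prime} \geq \psi$ (so $\psi^{\prime} - \psi$ is completely positive) and $\psi^{\prime} + \phi$ conditionally negative. Set $\chi = \psi^{\prime} - \psi$. Then $\chi$ is simultaneously completely positive and conditionally negative: completely positive by assumption, and conditionally negative because
\[
\chi = (\psi^{\prime} + \phi) - (\psi + \phi)
\]
is the difference of a conditionally negative map and a conditionally zero map. Now I apply the fact already established earlier in the section that a map $L \in \mathcal{S}(\mathbb C^p)$ that is both conditionally negative and completely positive must be of the form $L(A) = sA$ with $s \geq 0$. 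Applying this to $\chi$ yields $\psi^{\prime} - \psi = s\iota$ with $s \geq 0$, as desired.

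\textbf{Main obstacle.} There is no substantive obstacle; the proof is entirely a matter of combining two structural facts already proved in this section — that conditionally zero elements of $\mathcal{S}(\mathbb C^p)$ are parametrized by a real scalar and a traceless operator, and that conditionally negative completely positive maps are nonnegative scalar multiples of $\iota$. The only genuine computation is the explicit selection of $(t,Z)$ for existence, and the one point that deserves checking is that $T + \phi(I)$ is hermitian, which is immediate since $\phi$ is hermitian and so $\phi(I)$ is self-adjoint.
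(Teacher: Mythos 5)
Your proposal is correct and follows essentially the same route as the paper: the paper also takes $\psi(A) = YA + AY^* - \phi(A)$ with the hermitian part of $Y$ equal to $\tfrac12(T+\phi(I))$ (it simply does not bother to split off the trace part, and notes the skew part $C$ is free), and its uniqueness argument likewise observes that $\psi' - \psi$ is completely positive and conditionally negative, hence $s\iota$ with $s\ge 0$ by the fact established earlier in the section. The differences are purely cosmetic.
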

\begin{proof}  Assume the hypothesis and notation of the lemma.  Let $\psi$
be given by
$$
\psi (A) = YA + AY^*- \phi (A)
$$
for $A \in B(K)$ where $Y = B + iC$ and $B = {\tfrac{1}{2}} (T + \phi (I))$ and
$C = C^*$ can be freely chosen.  Notice we can add a real multiple of the
identity to $C$ and $\psi$ is unchanged.  Now suppose $\psi ^{\prime}$
satisfies the hypothesis of the theorem.  Then $\psi ^{\prime} = \psi +
\eta$ where $\eta$ is completely positive and $\psi ^{\prime} + \phi$ is
conditionally negative so we have the map
$$
A \rightarrow YA + AY^* + \eta (A)
$$
is conditionally negative.  Since the map $A \rightarrow YA + AY^*$  is
conditionally zero we have the map $\eta$ is conditionally negative.
Hence, $\eta$ is conditionally zero and completely positive so
$$
\eta (A) = sA
$$
for $A \in B(\mathbb C^p)$ with $s \geq 0.$ \end{proof}

An important theorem of Evans and Lewis \cite{evans-lewis} is that a mapping
$\phi$ is the generator of a semigroup of completely positive maps if and only
if $\phi$ is conditionally completely positive.  We use this result in the
follows lemma which we will need later.

\begin{lem} 
Suppose $\phi$ is a conditionally negative map of $B(
\mathbb C^p)$ into itself and $\phi (I) \geq sI$ with $s > 0$.  Then $\phi$ is
invertible and its inverse $\phi^{-1}$ is completely positive.  If $\phi
^{\prime}$ is also a conditionally negative map of $B(\mathbb C^p)$ into
itself and $\phi \leq \phi^{\prime}$ then $\phi^{\prime}$ is also
invertible and $\phi^{-1} \geq \phi^{\prime -1}$.
\end{lem}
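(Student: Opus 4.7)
The plan is to invoke the theorem of Evans and Lewis \cite{evans-lewis} to realize $-\phi$ as the generator of a completely positive semigroup on $B(\mathbb{C}^p)$, use the lower bound $\phi(I) \geq sI$ to obtain exponential decay of that semigroup, and then recover $\phi^{-1}$ as the Laplace integral of the semigroup; complete positivity will come for free.

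Concretely, set $\alpha_t = e^{-t\phi}$. Since $-\phi$ is conditionally completely positive, Evans--Lewis gives that each $\alpha_t$ is completely positive. Because $\phi$ commutes with its own exponential in finite dimensions, $\frac{d}{dt}\alpha_t(I) = -\alpha_t(\phi(I))$, and the hypothesis $\phi(I) \geq sI$ combined with positivity of $\alpha_t$ yields $\alpha_t(\phi(I)) \geq s\alpha_t(I)$. Differentiating the operator-valued function $e^{st}\alpha_t(I)$ and using this inequality shows it is non-increasing in $t$, so $\alpha_t(I) \leq e^{-st}I$; since $\alpha_t$ is completely positive, $\Vert\alpha_t\Vert = \Vert\alpha_t(I)\Vert \leq e^{-st}$.

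I would then define $\Phi(A) = \int_0^\infty \alpha_t(A)\,dt$. The integral converges in operator norm by the exponential decay, $\Phi$ is completely positive as a norm-limit of Riemann sums of completely positive maps, and the fundamental theorem of calculus combined with $\alpha_t(A) \to 0$ gives $\phi \Phi = \Phi \phi = \iota$. Hence $\phi^{-1} = \Phi$ exists and is completely positive.

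For the comparison statement, write $\psi = \phi' - \phi$, which is completely positive by the hypothesis $\phi \leq \phi'$. Then $\phi'(I) = \phi(I) + \psi(I) \geq sI$, so the first part applied to $\phi'$ supplies a completely positive inverse. The resolvent identity $\phi^{-1} - \phi'^{-1} = \phi^{-1}(\phi' - \phi)\phi'^{-1} = \phi^{-1}\psi\phi'^{-1}$ then exhibits the difference as a composition of completely positive maps, hence itself completely positive, yielding $\phi^{-1} \geq \phi'^{-1}$. The only real obstacle is establishing the exponential decay of $\alpha_t$; once that is in hand, the Laplace integral and the resolvent identity do all the remaining work.
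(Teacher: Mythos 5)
Your proof is correct, and it follows the same overall skeleton as the paper's: invoke Evans--Lewis to make $e^{-t\phi}$ a completely positive semigroup, establish exponential norm decay, and recover $\phi^{-1}$ as the Laplace integral. Where you differ is in the two technical steps, and in both cases your route is cleaner. For the decay, the paper perturbs to $\phi - s'\iota$ with $0<s'<s$, bounds $\Psi_t(I)$ by a power-series remainder estimate valid only for small $t$, and then propagates the contractivity by the semigroup law, ending with the bound $\Vert e^{-t\phi}\Vert \leq e^{-s't}$; your differential inequality $\frac{d}{dt}\bigl(e^{st}\alpha_t(I)\bigr) = e^{st}\bigl(s\alpha_t(I)-\alpha_t(\phi(I))\bigr) \leq 0$, which uses only positivity of $\alpha_t$ applied to $\phi(I)-sI\geq 0$, gives the sharper bound $\Vert e^{-t\phi}\Vert \leq e^{-st}$ in one stroke with no auxiliary $s'$. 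For the comparison of inverses, the paper works at the semigroup level via the Duhamel formula $e^{-t\phi}-e^{-t\phi'} = \int_0^t e^{-u\phi}(\phi'-\phi)e^{(u-t)\phi'}\,du$ and then integrates over $t$, whereas you apply the resolvent identity $\phi^{-1}-\phi'^{-1} = \phi^{-1}(\phi'-\phi)\phi'^{-1}$ directly once both inverses are known to exist and be completely positive; this is a strictly shorter argument that uses only closure of completely positive maps under composition. Both of your substitutions are valid (the hypothesis $\phi\leq\phi'$ gives $\phi'(I)\geq\phi(I)\geq sI$, so the first part does apply to $\phi'$), and nothing is lost relative to the paper's version.
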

\begin{proof}  Assume the hypothesis and notation of the lemma.  Choose
$s^{\prime}$ so that $0 < s^{\prime} < s$.  Since the identity map $\iota$
is conditionally zero $\phi - s^{\prime}\iota$ is conditionally negative
and the result of Evans and Lewis mentioned above states that the exponential
of a conditionally positive linear mapping is completely positive.  Hence,
$\Psi_t =$ $\exp(-t(\phi-s^{\prime}\iota ))$ is a completely positive map
for $t > 0.  $ Since $\Psi_t$ is completely positive we have
$\Vert\Psi_t\Vert = \Vert\Psi_t(I)\Vert$.  We have
\begin{align*}
\Psi_t(I) &= I - t(\phi-s^{\prime}\iota )(I) + t^2/2! (\phi-s^{\prime}\iota
)^2(I) \cdots
\\
&\leq I - t(s-s^{\prime})I + t^2/2! (\phi-s^{\prime}\iota )^2(I) \cdots
\\
&\leq I - t(s-s^{\prime})I + I((1/2!)t^2\Vert\phi-s^{\prime}\iota\Vert^2+(1
/3!)t^3\Vert\phi-s^{\prime}\iota\Vert^3+\cdots )
\\
&= (1 -t(s-s^{\prime}) + r(t))I
\end{align*}
where
$$
r(t) = (e^{t\Vert\phi-s^{\prime}\iota\Vert} - 1 - t\Vert\phi-s^{\prime}\iota
\Vert ).
$$
Since $r(t)/t^2\rightarrow \tfrac{1}{2}\Vert\phi-s^{\prime}\iota\Vert^2$ as $t
\rightarrow 0$ there is a $\delta > 0$ so that $r(t) < t(s-s^{\prime})$
for $0 < t < \delta$.  Hence, $\Psi (t) \leq I$ for $0 < t < \delta$ and,
hence, $\Vert\Psi (t)\Vert \leq 1$ for $0 < t < \delta$ and since $\Psi
(t)$ is a semigroup we have $\Vert\Psi (t)\Vert \leq 1$ for all $t$.
Since
$$
e^{-t\phi} = e^{-s^{\prime}t}\Psi (t)
$$
we have $\Vert e^{-t\phi}\Vert \leq e^{-s^{\prime}t}$.  Hence, we have
$$
\phi^{-1} = \int_0^\infty e^{-t\phi} \,dt
$$
where our estimate on the norm of $e^{-t\phi}$ insures that the integral
exists and is equal to $\phi^{-1}$.  Since $\phi^{-1}$ is the integral of
completely positive maps $\phi^{-1}$ is completely positive.

Now suppose $\phi^{\prime}$ has the properties stated in the lemma.  Since
$\phi^{\prime} \geq \phi$ we have $\phi^{\prime}(I) \geq \phi (I) \geq sI$
so $\phi^{\prime-1}$ exists and is completely positive.  Consider the
equation
$$
\frac {d} {dt} e^{-t\phi}e^{t\phi^{\prime}}= e^{-t\phi}(\phi ^{\prime}-\phi
)e^{t\phi^{\prime}}
$$
and integrating we find
$$
e^{-t\phi}e^{t\phi^{\prime}}- I = \int_0^t e^{-s\phi}(\phi^{\prime}-\phi
)e^{s\phi^{\prime}}ds
$$
and multiplying by $e^{-t\phi^{\prime}}$ on the right we have
$$
e^{-t\phi} - e^{-t\phi^{\prime}} = \int_0^t e^{-s\phi}(\phi^{\prime}-\phi
)e^{(s-t)\phi^{\prime}}ds.
$$
Since $e^{-s\phi},\medspace \phi^{\prime}-\phi$ and $e^{(s-t)\phi^{\prime} }$
are completely positive for $0 \leq s \leq t$ the expression on the right hand
side of the above equation is completely positive and, hence,
$$
\phi^{-1} - \phi^{\prime-1} = \int_0^\infty (e^{-t\phi} - e^{-t\phi
^{\prime} }) \,dt
$$
is completely positive so $\phi^{-1} \geq \phi^{\prime-1}.$ \end{proof}

\section{Index zero $q$-weight map of full range rank}

We begin this section by introducing what will call the skeleton of a
$q$-weight map where by the skeleton we mean the mapping $\phi_t = \omega
\vert_t \Lambda$.  This means we focus only on $\omega \vert_t \Lambda$ and
 ignore any further details contained in $\omega$.  The notion of the
skeleton is more psychological than mathematical.  The idea is that
$q$-weight map are fairly complicated objects and in focusing on the skeleton
we concentrate on a less complicated object.  We believe a complete
understanding to skeletons would go a very long way toward understanding
$q$-weight maps.  We have discovered a great deal of information is contained
in the skeleton.  For example, in an earlier paper we showed that every
range rank two $q$-weight map is never $q$-pure and some subordinates can be
constructed with no further knowledge than the skeleton.  We will say that
a $q$-weight map has a pure skeleton if the subordinates of the $q$-weight map
than are constructed from only a knowledge of the skeleton are totally ordered.
This is a somewhat vague statement but it will become precise in specific
situations.

We begin with the general properties of skeletons.  First we note that
$\omega \vert_t (I)$ can be computed from the skeleton.  This is because
$$
\frac {d} {dt} \omega \vert_t (I) = e^t \frac {d} {dt}\phi_t(I)
$$ 
and, therefore, $\omega \vert_t (I)$ can be calculated from $\phi_t(I)$ by
integration
\begin{equation}\label{skellie}
\omega \vert_t (I) = e^t\phi_t(I) + \int_t^\infty e^s\phi_s(I) \,ds.
\end{equation}  
The important point is when we refer to $\omega \vert_t (I)$ we are
referring to something that is computable from the skeleton so, for
example, the fact that $\omega$ is unital is computable from its skeleton
since $\omega$ is unital if and only if $\omega \vert_t (I - \Lambda ) =
\omega \vert_t (I) - \phi_t(I) \rightarrow I$ as $t \rightarrow 0+$.

\begin{thm}\label{skeleton}
Suppose $\omega$ is a $q$-weight map over $\mathbb C^p$
and $\phi _t = \omega \vert_t \Lambda$ for $t > 0$.  Then $\phi_t$ has the
following properties.
\begin{enumerate}[(i)]
\item $\phi_t$ is completely positive.

\item $\phi_t$ is non increasing so $\phi_t \geq \phi_s$ for $0 < t
\leq s$.

\item $(\iota + \phi_t)^{-1}$ exists and  $(\iota +
\phi_t)^{-1}\phi_t$ is a completely positive contractive map.

\item $(\iota + \phi_t)^{-1}(\phi_s - \phi_r)$ is completely positive
for $0 < t \leq s \leq r$.

\item  $(\iota + \phi_t)^{-1}\omega \vert_t (I) \leq I$.

\item $\iota - (\iota + \phi_t)^{-1}(\iota + \phi_s)$ is completely
positive for $0 < t < s$.

\item  $(\iota + \phi_t)^{-1}(\iota + \phi_s)$ is conditionally
negative for $0 < t < s$.
\end{enumerate}
\end{thm}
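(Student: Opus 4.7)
The plan is to reduce all seven items to a single telescoping identity, $\pi_t^\#\vert_s = (\iota + \phi_t)^{-1}\omega\vert_s$ valid for $s \geq t$, combined with the observation that $\Lambda(A)$ commutes with every spectral projection $E(u,\infty)$ on $H$ for every $A \in B(K)$.

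First I would dispatch the easy parts. For (i), $\phi_t = \omega\vert_t \Lambda$ is a composition of CP maps: $\omega\vert_t(\cdot) = \omega(E(t,\infty)\cdot E(t,\infty))$ is the compression of the CP map $\omega$ by $E(t,\infty)$, and $\Lambda$ is CP. For (ii), commutation of $\Lambda(A)$ with $E(t,\infty)$ and $E(s,\infty)$ kills the cross terms in $E(t,\infty)\Lambda(A)E(t,\infty) - E(s,\infty)\Lambda(A)E(s,\infty)$, giving
\[
\phi_t(A) - \phi_s(A) = \omega\bigl(E(t,s)\Lambda(A)E(t,s)\bigr),
\]
which is manifestly CP in $A$. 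Parts (iii) and (v) are essentially definitional: the $q$-weight hypothesis gives that $\iota + \phi_t$ is invertible and $\pi_t^\#$ is CP contractive, so $(\iota + \phi_t)^{-1}\phi_t = \pi_t^\# \Lambda$ is a CP contraction and $(\iota + \phi_t)^{-1}\omega\vert_t(I) = \pi_t^\#(I) \leq I$.

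The technical heart is (iv). For $s \geq t$ I would first establish
\[
\pi_t^\#\vert_s(B) = \pi_t^\#\bigl(E(s,\infty)BE(s,\infty)\bigr) = (\iota+\phi_t)^{-1}\omega\bigl(E(s,\infty)BE(s,\infty)\bigr) = (\iota+\phi_t)^{-1}\omega\vert_s(B),
\]
where the middle equality uses $E(t,\infty)E(s,\infty) = E(s,\infty)$ for $s \geq t$ to absorb the $E(t,\infty)$ compression present in the formula $\pi_t^\# = (\iota+\phi_t)^{-1}\omega\vert_t$. Setting $B = \Lambda(A)$ and exploiting the commutativity of $\Lambda(A)$ with the projections to combine $E(s,\infty) - E(r,\infty) = E(s,r)$ into a single symmetric compression yields
\[
(\iota+\phi_t)^{-1}(\phi_s - \phi_r)(A) = \pi_t^\#\bigl(E(s,r)\Lambda(A)E(s,r)\bigr),
\]
which is CP in $A$ as the composition of the CP maps $\Lambda$, $X \mapsto E(s,r)XE(s,r)$, and $\pi_t^\#$.

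Items (vi) and (vii) then cascade. For (vi), the algebraic identity $\iota - (\iota + \phi_t)^{-1}(\iota + \phi_s) = (\iota+\phi_t)^{-1}(\phi_t - \phi_s)$ exhibits it as the specialization of (iv) with the outer index $t$ and inner difference $\phi_t - \phi_s$, so it takes the explicit CP form $\pi_t^\#(E(t,s)\Lambda(A)E(t,s))$. For (vii), write
\[
(\iota + \phi_t)^{-1}(\iota + \phi_s) = \iota - \bigl[\iota - (\iota + \phi_t)^{-1}(\iota + \phi_s)\bigr];
\]
since $\iota$ is conditionally zero (both $\iota$ and $-\iota$ are conditionally positive) and the bracketed map is CP by (vi), the right-hand side is conditionally negative. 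The only real obstacle is recognizing the telescoping identity for (iv); once it is in hand, the remaining arguments reduce to commutativity of $\Lambda(A)$ with the spectral projections and the CP calculus.
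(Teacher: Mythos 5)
Your proposal is correct and follows essentially the same route as the paper: the same identity $(\phi_t-\phi_s)(A)=\omega(E(t,s)\Lambda(A)E(t,s))$ for (ii), the same key formula $(\iota+\phi_t)^{-1}(\phi_s-\phi_r)(A)=\pi_t^\#(E(s,r)\Lambda(A)E(s,r))$ for (iv), the definitional reading of (iii) and (v), and the reduction of (vi) to (iv) and of (vii) to (vi) via the conditional nullity of $\iota$. The only difference is that you spell out the telescoping/commutation steps that the paper leaves implicit.
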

\begin{proof}  Assume the hypothesis of the theorem.  Conditions (i) and
(iii) follow from the definition of a $q$-weight map.  Condition (ii) follows
from the fact that for $0 < t \leq s$ we have
$$
(\phi_t - \phi_s)(A) = \omega \vert_t \Lambda (A) - \omega \vert_s \Lambda
(A) = \omega (E(t,s)\Lambda (A)E(t,s))
$$
where $E(t,s)$ is the orthogonal projection in $B(\mathbb C^p\otimes
L^2(0,\infty ))$ onto functions with support in the interval $[t,s]$.

Condition (iv) follows from the fact that the generalized boundary
representation $\pi_t^\#$ is completely positive and
$$
(\iota + \phi_t)^{-1}(\phi_s - \phi_r)(A) = \pi_t^\# (E(s,r)\Lambda
(A)E(s,r))
$$
and condition (v) is just the statement that $\pi_t^\#$ is completely
contractive.

Condition (vi) follows from the fact that $(\iota + \phi_t)^{-1}\phi_s$ is
non increasing in $s$ for $0 < t < s$ and $(\iota + \phi_t)^{-1}(\iota +
\phi_t) = \iota$ and since the identity map is conditionally zero
condition (vii) follows from condition (vi).  \end{proof}

\begin{defn}
If $\phi=\{\phi_t\}_{t \geq 0}$ is a one parameter family of maps
satisfying the conditions of Theorem~\ref{skeleton}, where the formula for
$\omega_t(I)$ in condition (v) is replaced by the right-hand side
of Equation~\eqref{skellie}, we say $\phi$ is an admissible
skeleton over $\mathbb C^p$.
\end{defn}

We believe that if $\phi$ is an admissible skeleton then there is a
$q$-weight map $\omega$ over $\mathbb C^p$ so that $\phi_t = \omega \vert_t
\Lambda$ for $t > 0$.  We believe further that if $\phi$ is a pure
admissible skeleton by which we mean you can not see that the $q$-weight map
$\omega$ associated with $\phi$ is not $q$-pure then there is a $q$-pure
$q$-weight map $\omega$ so that $\phi_t = \omega \vert_t \Lambda$ for $t > 0$.
In the case of a $q$-weight map $\omega$ over $\mathbb C$, one sees that the
$q$-weight map
$$
\omega (A) = (f,Af)\qquad \text{where} \qquad f(x) =
(-e^{x} \frac {d} {dx}\phi_x(I))^{\tfrac{1}{2}}
$$
is a $q$-pure $q$-weight map so that $\phi_t = \omega \vert_t \Lambda$.

The next result except for the last sentence first appeared in \cite{jankowski1}
and then in \cite{jmp1} but because of its importance and the shortness of
the proof we include it here.

\begin{thm}
Suppose $\omega$ is a $q$-weight map over $\mathbb C^p$ of
index zero and $\pi_t^\#$ is the generalized boundary representation of
$\omega$.  Then $\pi_t^\#\Lambda (A) \rightarrow A$ as $t \rightarrow 0+$
for all $A$ in the range of $\omega$.  There is a $\delta > 0$ so that the
range of the skeleton $\phi_t = \omega \vert_t \Lambda$ is the range of
$\omega$ for all $t \in (0,\delta )$.  Furthermore, if $L$ is any limit point
of the $\pi_t^\#\Lambda$ as $t \rightarrow 0+$ then $L$ is a completely
positive idempotent map with range $\mathcal{L}$ the range of $\omega$ and
$\mathcal{L}$ is $*$-algebra which we call the Choi-Effros algebra with
multiplication given by $A\star B = L(AB)$.  Furthermore, if $\omega ^{\prime}$
is a $q$-subordinate of $\omega$ then $\omega ^{\prime}$ is of index zero.
\end{thm}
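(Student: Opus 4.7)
The plan is to organize the proof around the key identity
$$(\iota + \phi_t)^{-1}\phi_s(A) = \pi_t^\# \vert_s (\Lambda(A)), \qquad 0 < t < s, \; A \in B(K),$$
obtained by splitting $\Lambda(A) = E(t,s)\Lambda(A)E(t,s) + E(s,\infty)\Lambda(A)E(s,\infty)$ (cross terms vanish because $\Lambda(A)$ is a multiplication operator), applying $\pi_t^\# = (\iota+\phi_t)^{-1}\omega\vert_t$, and comparing with the identity $\pi_t^\# \Lambda(A) = (\iota+\phi_t)^{-1}\phi_t(A)$. Because $\omega$ has index zero, the normal spine of $\omega$ vanishes, so Theorem~\ref{skeleton} gives $\pi_t^\# \vert_s \to 0$ in the $\sigma$-strong topology as $t \to 0+$ for every $s > 0$; this forces $(\iota+\phi_t)^{-1}\phi_s(A) \to 0$ for each fixed $s > 0$ and $A \in B(K)$.

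Combined with the rewriting $\pi_t^\# \Lambda = \iota - (\iota+\phi_t)^{-1}$, this immediately yields $\pi_t^\# \Lambda(A) \to A$ for every $A \in \mathrm{range}(\phi_s)$, for any $s > 0$. The main technical step is showing that for $s < \delta$, $\mathrm{range}(\phi_s) = \mathrm{range}(\omega)$. I would establish this by unpacking a Kraus decomposition $\omega(A) = \sum_k V_k^* A V_k$: the Kraus operators of $\omega\vert_s$ are $E(s,\infty)V_k$, while those of $\phi_s = \omega\vert_s \Lambda$ are $\chi_{[s,\infty)}(x)e^{-x/2}V_k$. Because $\chi_{[s,\infty)}(x) e^{-x/2}$ is bounded below by a strictly positive constant on $[s, \infty)$, the two families span the same subspace $M_s \subseteq K$, so $\mathrm{range}(\phi_s) = \mathrm{range}(\omega\vert_s)$. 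Since $\mathrm{range}(\omega\vert_s) \subseteq \mathrm{range}(\omega)$ is monotone in $s$ and sits inside the finite-dimensional space $B(K)$, it stabilizes to $\mathrm{range}(\omega)$ for $s$ below some threshold $\delta$. This gives the first two assertions.

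For the third assertion, extract any subsequential limit $L = \lim_n \pi_{t_n}^\# \Lambda$ of the CP contractions. Pointwise convergence on $\mathrm{range}(\omega)$ gives $L\vert_{\mathrm{range}(\omega)} = \iota$, so $\mathrm{range}(L) \supseteq \mathrm{range}(\omega)$. On the other hand, $\pi_{t_n}^\# \Lambda = (\iota+\phi_{t_n})^{-1} \circ \phi_{t_n}$ with $(\iota+\phi_{t_n})^{-1}$ invertible on $B(K)$, so $\mathrm{rank}(\pi_{t_n}^\# \Lambda) = \dim \mathrm{range}(\phi_{t_n}) = \dim \mathrm{range}(\omega)$ for $t_n < \delta$. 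Lower semicontinuity of rank then gives $\dim \mathrm{range}(L) \leq \dim \mathrm{range}(\omega)$, so equality holds, and $L(A) \in \mathrm{range}(\omega)$ together with $L\vert_{\mathrm{range}(\omega)}= \iota$ yields $L^2 = L$. The Choi--Effros theorem for completely positive idempotents then endows $\mathcal{L} = \mathrm{range}(\omega)$ with the $\ast$-algebra structure $A \star B = L(AB)$.

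The last sentence is immediate from the definition of $q$-subordinate: if $\omega \geq_q \omega'$, then $\pi_t^\# - \pi_t^{\#\prime}$ is completely positive for every $t > 0$, so $0 \leq \pi_t^{\#\prime}\vert_s \leq \pi_t^\# \vert_s \to 0$; hence the normal spine of $\omega'$ also vanishes, meaning $\omega'$ has index zero. The main obstacle throughout is the range identification $\mathrm{range}(\phi_s) = \mathrm{range}(\omega)$ in the second paragraph, which requires the Stinespring/Kraus structure of $\omega$ as a $B(K)$-valued $b$-weight map and is not visible purely at the level of the formal skeleton identities from Theorem~\ref{skeleton}.
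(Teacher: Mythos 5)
The gap is in your second paragraph. The identity $\mathrm{range}(\phi_s) = \mathrm{range}(\omega\vert_s)$ does not follow from comparing Kraus operators, and as a purely structural statement about completely positive $b$-weight maps it is false. The two maps have different domains: $\omega\vert_s$ is evaluated on all of $E(s,\infty)\mathfrak{A}(\mathbb C^p)E(s,\infty)$, whereas $\phi_s$ only sees the multiplication operators $E(s,\infty)\Lambda(A)E(s,\infty)$ with $A \in B(\mathbb C^p)$, and restricting the domain can genuinely shrink the range even when the spans of the Kraus operators agree. Concretely, take $p=2$ and $\omega\vert_s(B)_{ij} = (G_i, BG_j)$ with $G_i(x) = e_1 u_i(x)$ for two linearly independent scalar functions $u_1,u_2$ supported in $[s,\infty)$: then $\mathrm{range}(\omega\vert_s) = B(\mathbb C^2)$, but $\phi_s(A) = a_{11}M$ for a single fixed matrix $M$, so $\mathrm{range}(\phi_s)$ is one-dimensional, even though the single Kraus operator of $\phi_s$ and that of $\omega\vert_s$ have the same range in $\mathbb C^2$. (Also, $e^{-x/2}$ is not bounded below on $[s,\infty)$, though that is not the essential problem.) The range identity you want is true for $q$-weight maps of index zero and small $s$, but it cannot be obtained without the index-zero hypothesis, which your Kraus argument never invokes.

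The paper's route avoids this by running your first-paragraph computation with a larger class of test elements. For $A \in \mathrm{range}(\omega)$, monotonicity of $s \mapsto \mathrm{range}(\omega\vert_s)$ and finite-dimensionality give an $s>0$ and a $B \in \mathfrak{A}(\mathbb C^p)$ with $B = E(s,\infty)BE(s,\infty)$ and $A = \omega\vert_s(B)$; here $B$ is a general element of $\mathfrak{A}(\mathbb C^p)$, not of the form $\Lambda(C)$. Since $\omega\vert_t(B) = \omega\vert_s(B)$ for $t<s$, one gets
$$
A - \pi_t^\#\Lambda(A) = (\iota+\omega\vert_t\Lambda)^{-1}(A) = (\iota+\omega\vert_t\Lambda)^{-1}\omega\vert_t(B) = \pi_t^\#(B) = \pi_t^\#\vert_s(B) \longrightarrow \pi_o^\#\vert_s(B) = 0,
$$
which is exactly your computation with $B$ allowed to range over $E(s,\infty)\mathfrak{A}(\mathbb C^p)E(s,\infty)$. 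The statement $\mathrm{range}(\phi_t)=\mathrm{range}(\omega)$ for small $t$ is then a consequence rather than an input: $\mathrm{range}(\pi_t^\#\Lambda) = \mathrm{range}(\phi_t) \subseteq \mathrm{range}(\omega)$ always holds, and pointwise convergence to the identity on the finite-dimensional space $\mathrm{range}(\omega)$ forces equality for small $t$. Your third and fourth paragraphs (idempotency of any limit point $L$ from $L=\iota$ on $\mathrm{range}(\omega)$ together with $L(B(\mathbb C^p)) \subseteq \mathrm{range}(\omega)$, and the index-zero claim for $q$-subordinates via $0 \leq \pi_t^{\prime\#}\vert_s \leq \pi_t^\#\vert_s \to 0$) are correct and match the paper.
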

\begin{proof}  Assume the hypothesis and notation of the theorem.  Note that
the range of $\omega \vert_t$ is non increasing in $t$ so that for $0 < t <
s$ we have $Range (\omega \vert_t ) \supset Range(\omega \vert_s )$.
Since $B( \mathbb C^p)$ is finite dimensional there is an $s > 0$ so that
$Range(\omega \vert_t ) = Range(\omega \vert_s )$ for $0 < t \leq s$.  Now
suppose $A \in Range(\omega )$ so there is an $B \in \mathfrak A (\mathbb C^p)$
so that $\omega \vert_s (B) = A$ and since $\omega \vert_s (B) = \omega
(E(s,\infty )BE(s,\infty ))$ (where $E(s, \infty )$ is the projection in
$\mathfrak A (\mathbb C^p)$ onto functions with support in $[s,\infty ))$ we may
assume $B$ has support in $E(s,\infty )(\mathbb C^p\otimes L^2(0,\infty ))$ so
$B = E(s,\infty )BE(s,\infty )$.  Then we have
\begin{align*}
A - \pi_t^\#\Lambda (A) &= (\iota + \omega \vert_t \Lambda )^{-1}((\iota +
\omega \vert_t \Lambda )(A) - \omega \vert_t \Lambda (A))
\\
&= (\iota + \omega \vert_t \Lambda )^{-1}(A) = (\iota + \omega \vert_t
\Lambda )^{-1}\omega \vert_s (B)
\\
&= (\iota + \omega \vert_t \Lambda )^{-1}\omega \vert_t (B) = \pi_t^\# (B)
\rightarrow \pi_o^\# (B) = 0
\end{align*}
as $t \rightarrow 0+$.  Hence, $\pi_t^\#\Lambda (A) \rightarrow A$ as $t
\rightarrow 0+$.

Clearly the range of $\pi_t^\#\Lambda$ is contained in the range of $\omega$
for all $t > 0$ and since $B(\mathbb C^p)$ is finite dimensional and
$\pi_t^\#\Lambda (A) \rightarrow A$ for all A in the range of $\omega$ is
follows there is a $\delta > 0$ so that the range of $\pi_t^\#\Lambda$ is
equal to the range of $\omega$ for $t \in (0,\delta ).  $ Recall in term of
the skeleton $\phi_t = \omega \vert_t \Lambda$ we have $\pi_t^\#\Lambda =
(\iota + \phi_t)^{-1}\phi_t = \phi_t(\iota + \phi_t)^{-1}$ so the range of
$\pi_t^\#\Lambda$ is the range of $\phi_t$.  Hence the range of $\phi _t$
is the range of $\omega$ for all $t \in (0,\delta )$.

Now if $L$ is any limit point of $\pi_t^\#\Lambda$ as $t \rightarrow 0+$
then $L(A)$ for each $A \in B(\mathbb C^p)$ is the limit of operators in the
range of $\omega$ and since $B(\mathbb C^p)$ is finite dimensional everything
that is the limit of elements in the range of $\omega$ is, in fact, in the
range of $\omega$.  Hence, $L(A)$ is in the range of $\omega$ and,
therefore, $\pi_t^\# (L(A)) \rightarrow L(A)$ as $t \rightarrow 0+$.
Hence, we have $L(L(A)) = L(A)$.

From the result of Choi and Effros \cite{CE} we have
$$
L(AL(B)) = L(L(A)L(B)) = L(L(A)B)
$$
for $A,B \in B(\mathbb C^p)$.  Note $L(A) = A$ for $A$ in the range of $\omega
$.  Choi and Effros define the multiplication as
$$
A\star B = L(AB).
$$
and equipped with this multiplication $\mathcal{L} (\omega )$ is a
$C^*$-algebra.  In the paper of Choi and Effros they assume that $I_o = L(I)$
is the unit $I$ of $B(\mathbb C^p)$ but in our finite dimensional case this
assumption is not needed (see Theorem 5.1 in the next section).

Next suppose $\omega ^{\prime}$ is a $q$-subordinate of $\omega$.  Suppose
$A \in \mathfrak A (\mathbb C^p)$, $A \geq 0$ and $A = E(r,\infty )AE(r,\infty)$
for $r > 0$ where $E(r,\infty )$ is the orthogonal projection in $B(\mathbb
C^p\otimes L^2(0,\infty ))$ onto function with support in $[r,\infty )$.
Suppose $\pi_t^\#$ and $\pi_t^{\prime\#}$ are the generalized boundary
representations of $\omega$ and $\omega ^{\prime}$, respectively.  Since
$\omega$ is of index zero we have $\pi_t^\# (A) \rightarrow 0$ as $t
\rightarrow 0+$ and since $\pi_t^\# \geq \pi_t^{\prime\#}$ we have $\pi
_t^{\prime\#} (A) \rightarrow 0$ as $t \rightarrow 0+$.  Since every $A =
E(r,\infty )AE(r,\infty )$ can be written as a linear combination of
positive elements we have $\pi_t^{\prime}(E(r,\infty )AE(r,\infty ))
\rightarrow 0$ as $t \rightarrow 0+$.  Hence, the normal spine of
$\pi_t^{\prime\#}$ is zero and $\omega ^{\prime}$ is of index zero.
\end{proof}

Now we further specialize our attention to $q$-weight maps of index zero and
full range rank by which we mean for every $A \in B(\mathbb C^p)$ there is an
$B \in \mathfrak A (\mathbb C^p)$ so that $\omega (B) = A$.  It then follows for
such $q$-weight maps that the skeleton $\phi_t$ has range $B(\mathbb C^p)$ for
small $t$ and if $\pi _t^\#$ is the generalized boundary representation of
$\omega$ then $\pi_t^\#\Lambda \rightarrow \iota$ as $t \rightarrow 0+$.
We prove an important theorem.

\begin{thm}
Suppose $\omega$ is a $q$-weight map over $\mathbb C^p$ of
index zero and the range of $\omega$ is $B(\mathbb C^p)$.  For $t > 0$ let
$\phi_t = \omega \vert_t \Lambda$ and let $\pi_t^\#$ be the generalized
boundary representation of $\omega$.  Let
$$
v_t = tr(I + \phi_t(I))\qquad \text{and} \qquad \Theta_t = v_t^{-1}(\iota +
\phi_t)
$$
where $tr$ is the trace on $B(\mathbb C^p)$ normalized so that $tr(I) = 1$.
Then $\Theta _t$ is completely positive and invertible and the inverse
$\Theta_t^{-1}$ is conditionally negative and $\Theta_t$ converges to a
limit $\Theta$ as $t \rightarrow 0+$ and $\Theta$ is completely positive.
The limit $\Theta$ is invertible and the inverse $\Theta^{-1}$ is
conditionally negative and $\Theta_t^{-1} \rightarrow \Theta ^{-1}$ as $t
\rightarrow 0+$.

Furthermore, $\vartheta = \Theta^{-1}\omega$ is a completely positive
$B(\mathbb C ^p)$ valued $b$-weight map on $\mathfrak A (\mathbb C^p)$ (i.e.
$\vartheta \in B(\mathbb C^p) \otimes \mathfrak A (\mathbb C^p)_*)$ with the
property that
$$
\vartheta \vert_t \Lambda  = v_t(\iota + \nu_t)
$$
and $1/v_t \rightarrow 0$ and $\Vert\nu_t\Vert \rightarrow 0$ as $t
\rightarrow 0+$.
\end{thm}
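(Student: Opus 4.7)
The plan is to first establish basic properties and uniform bounds for $\Theta_t$, then apply Lemma~3.1 to prove $\{\Theta_t\}$ is Cauchy, and finally derive invertibility of the limit and analyze $\vartheta$.

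By Theorem~4.1(iii), $\iota+\phi_t$ is completely positive and invertible with $(\iota+\phi_t)^{-1} = \iota - \pi_t^\#\Lambda$, so $\Theta_t$ is completely positive and invertible, and $\Theta_t^{-1} = v_t(\iota-\pi_t^\#\Lambda)$ is conditionally negative (a positive multiple of the conditionally zero $\iota$ minus the conditionally positive $\pi_t^\#\Lambda$).  The normalization $tr(\Theta_t(I)) = 1$ together with $\|A\|\le p\cdot tr(A)$ for positive $A$ gives $\|\Theta_t\|\le p$.  Because $\omega$ has full range rank and index zero, $\pi_t^\#\Lambda\to\iota$ as $t\to 0+$, so $(\iota+\phi_t)^{-1}\to 0$ in norm, $\|I+\phi_t(I)\|\to\infty$, and therefore $v_t\to\infty$.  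For the Cauchy property, apply Lemma~3.1 with
\[
\xi_1 = (\iota+\phi_s)^{-1}\phi_t = \pi_s^\#(E(t,\infty)\Lambda(\cdot)E(t,\infty)), \qquad \xi_2 = (\iota+\phi_s)^{-1}(\phi_s-\phi_t) = \pi_s^\#(E(s,t)\Lambda(\cdot)E(s,t)),
\]
for $0<s<t$.  Both are completely positive; using $E(s,\infty)=E(s,t)+E(t,\infty)$ one finds $\xi_1(I)+\xi_2(I) = \pi_s^\#\Lambda(I) \le e^{-s}\pi_s^\#(I)\le e^{-s}I$ (because $\pi_s^\#$ is supported on $E(s,\infty)$ where $\Lambda(I)\le e^{-s}I$), and a direct calculation from $\iota-\xi_2=(\iota+\phi_s)^{-1}(\iota+\phi_t)$ gives $(\iota-\xi_2)^{-1}\xi_1 = \pi_t^\#\Lambda$.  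Given $\epsilon>0$, choose $\delta$ from Lemma~3.1 and $t_0$ with $\|\iota-\pi_t^\#\Lambda\|<\delta$ for $t<t_0$; the lemma yields $(\iota+\phi_s)^{-1}(\iota+\phi_t)=\kappa(\iota+\eta)$ with $\kappa>0$ and $\|\eta\|<\epsilon$, i.e.\ $\Theta_t=c\,\Theta_s(\iota+\eta)$ with $c=\kappa v_s/v_t$.  Taking the normalized trace in this identity and using $tr(\Theta_t(I))=tr(\Theta_s(I))=1$ pins down $c = 1/(1+tr(\Theta_s(\eta(I)))) = 1+O(\epsilon)$, so $\|\Theta_t-\Theta_s\|=O(\epsilon)$, and $\{\Theta_t\}$ is Cauchy, converging to a completely positive limit $\Theta$.

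From $\Theta_t=c\,\Theta_s(\iota+\eta)$ one obtains $\Theta_s^{-1}=c(\iota+\eta)\Theta_t^{-1}$, which bounds $\|\Theta_s^{-1}\|$ uniformly in $s\in(0,t)$ for any fixed $t<t_0$; any limit point $\Psi$ of $\Theta_s^{-1}$ then satisfies $\Theta\Psi=\lim_s \Theta_s\Theta_s^{-1}=\iota$, so $\Theta$ is invertible with $\Theta^{-1}=\Psi$ and $\Theta_t^{-1}\to\Theta^{-1}$, which is conditionally negative as a limit of such.  Setting $\vartheta=\Theta^{-1}\omega$, the key identity
\[
\Theta_t^{-1}\omega(A) = v_t(\iota+\phi_t)^{-1}\omega|_t(A) = v_t\,\pi_t^\#(A)
\]
valid for $A\in\mathfrak{A}(\mathbb{C}^p)$ supported in $[r,\infty)$ and any $t\le r$ exhibits $\vartheta$ on compactly supported elements as a limit of completely positive maps (the same argument applied to $\iota_n\otimes \pi_t^\#$ gives full complete positivity), and the extension to all $A\in\mathfrak{A}(\mathbb{C}^p)$ is by $\sigma$-weak approximation using that $\omega$ is a $b$-weight.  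Finally, $\vartheta|_t\Lambda=\Theta^{-1}\phi_t=\Theta^{-1}(v_t\Theta_t-\iota)=v_t(\iota+\nu_t)$ with $\nu_t=\Theta^{-1}\Theta_t-\iota-v_t^{-1}\Theta^{-1}$, and $\|\nu_t\|\to 0$ since $\Theta_t\to\Theta$ and $v_t^{-1}\to 0$.  The main obstacle is the Cauchy argument: choosing $\xi_1,\xi_2$ so that Lemma~3.1 directly controls $(\iota+\phi_s)^{-1}(\iota+\phi_t)$, and verifying the strict inequality $\|\xi_1(I)+\xi_2(I)\|<1$ via the decay $\pi_s^\#\Lambda(I)\le e^{-s}I$.
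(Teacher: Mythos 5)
Your proposal is correct and follows essentially the same route as the paper: normalize by the trace, feed $\xi_1=(\iota+\phi_s)^{-1}\phi_t$ and $\xi_2=(\iota+\phi_s)^{-1}(\phi_s-\phi_t)$ into Lemma~3.1 to get $(\iota+\phi_s)^{-1}(\iota+\phi_t)=\kappa(\iota+\eta)$, use $tr(\Theta_t(I))=1$ to pin down the scalar and conclude the Cauchy property, then pass to the inverse and compute $\nu_t$. The only deviations are cosmetic — you justify $\Vert\xi_1(I)+\xi_2(I)\Vert<1$ via the explicit $e^{-s}$ decay and obtain invertibility of $\Theta$ from a uniform bound on $\Vert\Theta_s^{-1}\Vert$ plus a limit-point argument, where the paper uses the Neumann-series estimate $\Vert\iota-\Theta_s^{-1}\Theta\Vert\leq 1/9$; both rest on the same Lemma~3.1 output.
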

\begin{proof}  Assume the hypothesis and notation of the theorem.  Note
$\Theta _t$ is completely positive and $tr(\Theta_t(I)) = 1$ for all $t >
0$.  We will show that $\Vert\Theta_t - \Theta_s\Vert \rightarrow 0$ as
$t,s \rightarrow 0+$.  Now suppose $0 < \epsilon < 0.1$. and
$$
\epsilon_1 = \frac {\epsilon} {2p+\epsilon}\qquad \text{and} \qquad
\delta_1 = \min(\frac {\epsilon_1} {4p^2},\medspace \frac {0.1}
{\sqrt{p}}).
$$
Since
$$
\pi_t^\#\Lambda = \iota - (\iota+\phi_t)^{-1} \rightarrow \iota
$$
as $t \rightarrow 0+$, there is a $\delta > 0$ so that  
$$
\Vert (\iota + \phi_t)^{-1}\Vert < \delta_1
$$
for $t \in (0,\delta )$.  Now suppose that $0 < t < s < \delta$.  Recall
from the properties of the skeleton we have
$$
\xi_1 = (\iota + \phi_t)^{-1}\phi_s \geq 0\qquad \text{and} \qquad \xi_2 =
(\iota + \phi_t)^{-1}(\phi_t - \phi_s) \geq 0.
$$
Note that
$$
\iota - \xi_2 = (\iota + \phi_t)^{-1}(\iota + \phi_s)
$$
and
$$
\Vert\iota - (\iota - \xi_2)^{-1}\xi_1\Vert = \Vert (\iota +
\phi_s)^{-1}\Vert < \delta_1.
$$
Since $\Vert\xi_1 + \xi_2\Vert = \Vert (\iota + \phi_t)^{-1}\phi_t\Vert =
\Vert \pi_t^\#\Lambda\Vert < 1$ we have by lemma 3.1
$$
(\iota + \phi_t)^{-1}(\iota + \phi_s) = \kappa (\iota + \eta )\qquad
\text{so } \qquad \iota + \phi_s = \kappa (\iota + \phi_t)(\iota + \eta )
$$
where $\kappa > 0$ and $\Vert\eta\Vert < \epsilon_1$.  Now for $\Theta_t$
as defined above we have
$$
\Theta_s = \kappa v_t/v_s\Theta_t(\iota+\eta )
$$
and since $tr(\Theta_s(I)) = tr(\Theta_t(I)) = 1$ we have
$$
1 = \kappa v_t/v_s(1+tr(\Theta_t(\eta (I))))
$$
and since
$$
-\Vert\eta\Vert I \leq \eta (I) \leq \Vert\eta\Vert I
$$
we have
$$
-\epsilon_1 < - \Vert\eta\Vert \leq tr(\Theta_t(\eta (I))) \leq
\Vert\eta\Vert < \epsilon_1
$$
so
$$
\left\vert 1 - \frac {v_s} {\kappa v_t}\right\vert < \epsilon_1\qquad
\text{and} \qquad \left \vert 1 - \frac {\kappa v_t} {v_s}\right\vert <
\frac {\epsilon_1} {1-\epsilon _1}
$$
and
\begin{align*}
\Vert\Theta_s - \Theta_t\Vert  &= \Vert\Theta_t((\kappa v_t/v_s-1)\iota +
\kappa v_t/v_s\eta )\Vert
\\
&\leq \Vert\Theta_t\Vert (\vert\kappa v_t/v_s-1\vert + \kappa
v_t/v_s\Vert\eta \Vert )
\\
&\leq \Vert\Theta_t\Vert (\frac {\epsilon_1} {1-\epsilon_1} + \frac
{\Vert\eta \Vert} {1-\epsilon_1}) < \Vert\Theta_t\Vert (\frac {2\epsilon_1}
{1-\epsilon _1}) \leq \frac {2p\epsilon_1} {1-\epsilon_1} \leq \epsilon
\end{align*}
where we used the estimate $\Vert\Theta_t\Vert = \Vert\Theta_t(I)\Vert \leq
p \cdot tr(\Theta_t(I)) = p$.  Hence, $\Vert\Theta_s - \Theta_t\Vert <
\epsilon$ for $t,s \in (0,\delta )$ and, hence, $\Theta _t$ converges to a
limit $\Theta$ as $t \rightarrow 0+$.  Since $\Theta_t$ is the limit of
completely positive maps we have $\Theta$ is completely positive.  For $0 <
t < s < \delta$ we have
$$
(\iota + \phi_t)^{-1}(\iota + \phi_s) = \kappa (\iota + \eta )
$$
where $\Vert\eta\Vert < \epsilon_1$ which yields
$$
\Theta_t^{-1}\Theta_s = (\kappa v_t/v_s)(\iota + \eta ) = \iota - (\kappa
v_t/v_s-1)\iota + \kappa v_t/v_s\eta.
$$
So
$$
\Vert\iota - \Theta_t^{-1}\Theta_s\Vert \leq \vert\kappa v_t/v_s-1\vert +
\kappa v_t/v_s\Vert\eta\Vert < \frac {2\epsilon_1} {1-\epsilon_1}  \leq
\epsilon < 0.1.
$$
Now if $\Vert\iota - x\Vert < s$ with $0 < s < 1$ then $x$ is invertible
with $\Vert\iota - x^{-1}\Vert < s/(1-s)$ so we have
$$
\Vert\iota - \Theta_s^{-1}\Theta_t\Vert < \frac {0.1} {1-0.1} = 1/9
$$
and letting $t \rightarrow 0+$ we have
$$
\Vert\iota - \Theta_s^{-1}\Theta\Vert \leq 1/9
$$
and $\Theta_s^{-1}\Theta$ is invertible so $\Theta^{-1} =
(\Theta_s^{-1}\Theta )^{-1}\Theta_s^{-1}$.  Hence, $\Theta$ is invertible
and since $\Theta_t \rightarrow \Theta$ as $t \rightarrow 0+$ we have
$\Theta _t^{-1} \rightarrow \Theta^{-1}$ as $t \rightarrow 0+$.  Since
$\Theta$ is the limit of completely positive maps $\Theta$ is completely
positive.

Note that
$$
\Theta_t^{-1}\omega \vert_t  = v_t\pi_t^\#
$$
for $t > 0$ so $\Theta_t^{-1}\omega \vert_t$  is completely positive and
since $\Theta_t^{-1} \rightarrow \Theta^{-1}$ and $\omega \vert_t
\rightarrow \omega$ in $B(\mathbb C^p) \otimes \mathfrak A (\mathbb C^p)_*$ we
have $\vartheta = \Theta^{-1}\omega$ is a completely positive $B(\mathbb C^p)$
valued $b$-weight map on $\mathfrak A (\mathbb C^p)$.

Since $\vartheta = \Theta^{-1}\omega$  we have
$$
\vartheta \vert_t \Lambda = \Theta^{-1}\omega \vert_t \Lambda =
\Theta^{-1}\phi_t.
$$
Now we have
$$
\pi_t^\#\Lambda = (\iota + \phi_t)^{-1}\phi_t  =
v_t^{-1}\Theta_t^{-1}\phi_t
$$
so
$$
\vartheta \vert_t \Lambda = \Theta^{-1}v_t\Theta_t\pi_t^\#\Lambda =
v_t(\Theta^{-1}\Theta_t)\pi_t^\#\Lambda.
$$
Then we have
$$
\vartheta \vert_t \Lambda = v_t(\iota + \nu_t)\qquad \text{where} \qquad
\nu_t = \Theta^{-1}\Theta_t\pi_t^\#\Lambda - \iota
$$
and since $\pi_t^\#\Lambda \rightarrow \iota$ and $\Theta^{-1}\Theta_t
\rightarrow \iota$ as $t \rightarrow 0$ it follows that
$$
\Vert\nu_t\Vert \rightarrow 0
$$
as $t \rightarrow 0+$.  Note that $(\iota + \phi_t)^{-1} = v_t^{-1}\Theta_t
^{-1}$ and since $\Theta_t^{-1} \rightarrow \Theta^{-1}$ and $\Vert (\iota
+ \phi_t)^{-1}\Vert \rightarrow 0$ as $t \rightarrow 0+$ it follows that
$v_t^{-1} \rightarrow 0$ as $t \rightarrow 0+$.

To see that $\Theta^{-1}$ is conditionally negative note that
$$
\pi_t^\#\Lambda = \iota - (\iota + \phi_t)^{-1} = \iota - v_t^{-1}\Theta_t^
{-1}
$$
so if $A_i \in B(\mathbb C^p)$ and $f_i \in \mathbb C^p$ for $i = 1,\cdots ,n$
and
$$
\sum_{i=1}^n A_if_i  = 0
$$
then
$$
\sum_{i,j=1}^n v_t(f_i,\pi_t^\#\Lambda (A_i^*A_j)f_j) = - \sum_{i,j=1}^n (f
_i,\Theta_t^{-1}(A_i^*A_j)f_j) \geq 0
$$
so $\Theta_t^{-1}$ is conditionally negative and its limit $\Theta^{-1}$ is
conditionally negative.   \end{proof}

Now we make an important switch in notation.  The previous theorem proved
statements about the mapping $\Theta$.  Now we will focus are attention on
the mapping $\psi$ which is the inverse of $\Theta$.  The reason for this
change of focus is that it is much easier to characterize subordinates of a
$q$-weight map in terms of the mapping $\psi$.
Express in terms of $\psi$ (the inverse of $\Theta )$ the conclusion of
Theorem 4.4 is that every $q$-weight map $\omega$ over $\mathbb C^p$ of index
zero and range of $B(\mathbb C^p)$ can be written in the form
$\omega = \psi^{-1}\vartheta$ where $\vartheta \in B(\mathbb C^p) \otimes
\mathfrak A (\mathbb C^p)_*$ is completely positive and $\psi \in \mathcal{S}
(\mathbb C^p)$ is conditionally negative and $\psi$ is invertible with
$\psi^{-1}$ completely positive and
$$
\vartheta \vert_t \Lambda = v_t(\iota + \nu_t)
$$
where $v_t \rightarrow \infty$ and $\Vert\nu_t\Vert \rightarrow 0$ as $t
\rightarrow 0+$.  As we will see it is far easier to discover the
properties of $\vartheta$ and $\psi$ then to determine the properties of
$\omega$.  The problem with the above expression for $\vartheta  \vert_t
\Lambda$ is that the definition of $\nu_t$ is ambiguous since we can add or
subtract a multiple of $\iota$ to it.  The method we will use is to choose
$\nu_t$ so as to minimize its Hilbert Schmidt norm.  This then makes the
definition of $\nu_t$ unique.  In Lemma 4.5 we will describe how to express
$\vartheta$ so that $\nu_t$ is easily computable and afterwards we will
adopt the conventions of that lemma.

Note that in the proof of Theorem 4.4 the map $\psi$ has the property that
$tr(\psi^{-1}) = tr(\Theta ) = 1$.  Now in the future we will not require a
particular normalization of $\psi$ and $\vartheta$.  Note if we replace
$\psi$ by $s\psi$ and $\vartheta$ by $s\vartheta$ then $\omega =
\psi^{-1}\vartheta$ remains unchanged.  So from now on $\psi$ and
$\vartheta$ are only unique up to multiplying both by a positive number.
There are advantages and disadvantages to requiring a specific
normalization of $\psi$ and $\vartheta$ and we decided in favor of more
freedom particularly because given $\omega = \psi^{-1}\vartheta$ we will
want to consider other $q$-weight maps of the form $\omega ^{\prime} = \psi
^{\prime- 1}\vartheta$.

Notice in the expression $\vartheta \vert_t \Lambda$ if we change the
normalization of $\vartheta$ so $\vartheta ^{\prime} = s\vartheta$ then we
have
$$
\vartheta ^{\prime} \vert_t \Lambda = v_t^{\prime}(\iota + \nu_t)
$$
where $v_t^{\prime} = sv_t \rightarrow \infty$ and $\Vert\nu_t\Vert
\rightarrow 0$ as $t \rightarrow 0+$.

The reader may well wonder why we use $\psi$ in the pair $(\psi ,\vartheta
)$ when it was the inverse $\psi^{-1} = \Theta$ which we first defined.
This is because the conditions that the $q$-weight map $\omega =
\psi^{-1}\vartheta$ be a $q$-weight map are most easily expressed in terms of
$\psi$.  In the next lemma has very little mathematical content.  It simply
shows that a completely positive $b$-weight map can be expressed in a certain
form.  Yet putting $\vartheta$ in this form proved to be of enormous
importance.  It had taken us almost three months of work using Maple and
Matlab to construct a single example of a $q$-pure non Shur $q$-weight map of
index zero for the simplest case of $p = 2$ but once we struck upon the
idea of constructing examples of the form $\psi^{-1} \vartheta$ with
$\vartheta$ of the form given in the next lemma we found we could construct
barrel loads of examples for any $p$.  Calculation that had taken months
could now be done in hours.  To understand what is going on consider the
case where a $q$-weight map $\omega$ is bounded.  Then one can take the limit
of the generalized boundary representation $\pi_t^\#$ as $t \rightarrow
0+$.  Everything can be calculated from $\pi_o^\#$.  The index of $\omega$
is just the index of $\pi_o^\#$ and all $q$-subordinates of $\omega$ can be
found by taking subordinates of $\pi_o^\#$.  One can easily write down
examples of $\pi _o^\#$ and the associated $\omega$ computed.  In short
$\pi_o^\#$ a completely positive contractive map is easy to write down and
analyze but the $\omega$ is hard to find and hard to analyze.  What the
next lemma does is give us a form for $\vartheta$ much like the form for
$\pi_o^\#$ which is relatively easy to construct.  So even though the
result of the next lemma is not at all deep the form we arrived at made
problems that we had been struggling with for years suddenly tractable.

We should also mention that the next lemma contains a definition.  We will
use the decomposition in the lemma repeatedly in the rest of the paper.

\begin{lem} 
Suppose $\vartheta \in B(\mathbb C^p) \otimes \mathfrak A
(\mathbb C ^p)_*$ is completely positive.  Then $\vartheta$ can be expressed
in the form
$$
\vartheta_{ij}(A) = \sum_{k\in J} ((g_{ik}+h_{ik}),A(g_{jk}+h_{jk}))
$$
where the $g_{ik},h_{ik} \in \mathbb C^p\otimes L_+^2(0,\infty )$ and
$$
(g_{ik})_j(x) = \delta_{ij}g_k(x)\qquad \text{and} \qquad \sum_{i=1}^p
(h_{ik })_i(x) = 0
$$
for $A \in \mathfrak A (\mathbb C^p),\medspace x \geq 0,\medspace i,j \in \{
1,\cdots ,p\}$ and $k \in J$ a countable index set.  Since $\vartheta \in
B(\mathbb C^p) \otimes \mathfrak A (\mathbb C^p)_*$  we have $tr(\vartheta (I -
\Lambda )) < \infty$ where
\begin{align*}
tr(\vartheta (I-\Lambda )) &= \frac {1} {p} \sum_{i=1}^p \vartheta_{ii}
(I-\Lambda)
\\
&=\sum_{k\in J} \int_0^\infty (1-e^{-x})(\vert g_k(x)\vert^2 + \frac {1}
{p} \sum_{i=1}^p \Vert h_{ik}(x)\Vert^2) \,dx.
\end{align*}
We define the completely positive $B(\mathbb C^p)$ valued $b$-weight map
$\rho \in B(\mathbb C^p) \otimes \mathfrak A (\mathbb C ^p)_*$
given by
$$
\rho_{ij}(A) = \sum_{k\in J} (h_{ik},Ah_{jk})
$$
for $A \in \mathfrak A (\mathbb C^p)$ and for $t > 0$ we define
$$
w_t = \sum_{k\in J} (g_k,\Lambda \vert_t g_k), \qquad (R_t)_{ij}(B) =
\rho_{ij}\vert_t (\Lambda (B)),\qquad (Y_t)_{ij} = \sum_{k\in J}
((h_{ik})_j,\Lambda \vert_t g_k)
$$
and
$$
\zeta_t(A) = Y_tA
$$
for $A \in B(\mathbb C^p)$.  Then we have
$$
\vartheta \vert_t \Lambda (A) = w_tA + Y_tA + AY_t^* + R_t(A)
$$
for $A \in B(\mathbb C^p)or$
$$
\vartheta \vert_t \Lambda = w_t\iota + \zeta_t + \zeta_t^* + R_t
$$
and $R_t$ is of the form
$$
R_t(A) = \sum_{i=1}^m \lambda_i(t)X_i(t)AX_i(t)^*
$$
and $Y_t$ and $X_t(i)$ are of trace zero and $tr(X_i(t)^*X_j(t)) = \delta_{ij}$.

From the expression for $tr(\vartheta (I-\Lambda )) < \infty$ above it
follows that $tr(\rho (I-\Lambda)) \leq tr(\vartheta (I-\Lambda )) < \infty$.
\end{lem}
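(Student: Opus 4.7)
The plan is to start from the standard Kraus representation of $\vartheta$ and split each vector-valued Kraus datum into a scalar ``diagonal'' component aligned with a fixed basis vector and a trace-free residue.

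Since $\vartheta \in B(\mathbb C^p) \otimes \mathfrak A(\mathbb C^p)_*$ is completely positive, it admits a representation $\vartheta(A) = \sum_{k \in J} S_k A S_k^*$ with $S_k \colon \mathbb C^p \otimes L^2(0,\infty) \to \mathbb C^p$ bounded linear. Setting $F_{ik} := S_k^* e_i$ and viewing $F_{ik}$ as a $\mathbb C^p$-valued function of $x$, one gets $\vartheta_{ij}(A) = \sum_k (F_{ik}, A F_{jk})$ with $F_{ik} \in \mathbb C^p \otimes L_+^2(0,\infty)$ forced by the defining condition of $\mathfrak A(\mathbb C^p)_*$. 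Next I would set $g_k(x) := p^{-1} \sum_{i=1}^p (F_{ik}(x))_i$, $g_{ik}(x) := g_k(x)\, e_i$, and $h_{ik}(x) := F_{ik}(x) - g_{ik}(x)$. Then $(g_{ik})_j(x) = \delta_{ij} g_k(x)$ and $\sum_i (h_{ik}(x))_i = \sum_i (F_{ik}(x))_i - p\,g_k(x) = 0$, giving the asserted decomposition. The trace identity follows by a one-line orthogonality check: the cross term $\sum_i \overline{g_k(x)}(h_{ik}(x))_i$ vanishes by the trace-free condition on $h$, and $\sum_i |g_{ik}(x)|^2 = p|g_k(x)|^2$, so $p^{-1} \sum_i \vartheta_{ii}(I-\Lambda) = \sum_k \int_0^\infty (1-e^{-x})(|g_k(x)|^2 + p^{-1} \sum_i \Vert h_{ik}(x)\Vert^2)\, dx$.

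For the identities involving $\vartheta\vert_t \Lambda$, I would expand $\vartheta_{ij}(E(t,\infty)\Lambda(A)E(t,\infty)) = \sum_k \int_t^\infty e^{-x}\,(g_{ik}(x) + h_{ik}(x), A(g_{jk}(x) + h_{jk}(x)))\, dx$ into four bilinear pieces. The $gg$-piece reduces via $(g_k(x) e_i, A g_k(x) e_j) = |g_k(x)|^2 A_{ij}$ to $w_t A_{ij}$; the $hg$- and $gh$-pieces, using the same factorization, collapse respectively to $(Y_t A)_{ij}$ and $(AY_t^*)_{ij}$ with $(Y_t)_{ij}$ matching the formula in the statement; and the $hh$-piece is exactly $\rho_{ij}\vert_t(\Lambda(A))$. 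Trace-freeness of $Y_t$ is then immediate from $\sum_i (h_{ik})_i = 0$, since $\mathrm{tr}(Y_t) = \sum_k (\sum_i (h_{ik})_i, \Lambda\vert_t g_k) = 0$.

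Finally, to obtain $R_t(A) = \sum_i \lambda_i(t) X_i(t) A X_i(t)^*$ with orthonormal trace-zero $X_i(t)$, I would observe that $R_t$ has the continuous Kraus representation $R_t(A) = \sum_k \int_t^\infty e^{-x}\, T_k(x) A T_k(x)^*\, dx$ where $T_k(x) \in B(\mathbb C^p)$ is defined by $(T_k(x))_{ij} = \overline{(h_{ik}(x))_j}$, so that $\mathrm{tr}(T_k(x)) = \overline{\sum_i (h_{ik}(x))_i} = 0$. Applying the averaging/eigendecomposition argument behind equation~(3.1) to $R_t$ then shows that all eigenvectors of $\Theta_{R_t}$ lie in the linear span of the $T_k(x)$, hence inside the trace-zero subspace of $B(\mathbb C^p)$, producing the required trace-zero orthonormal $X_i(t)$ with $\lambda_i(t) \geq 0$. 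The one place that deserves care is the $L_+^2$-integrability of $g_k$ and $h_{ik}$ individually together with convergence of the various sums; this follows from $F_{ik} \in L_+^2$ and the boundedness, in the $\Vert\cdot\Vert_+$ norm, of the linear projection $F_{ik} \mapsto (g_k, h_{ik})$.
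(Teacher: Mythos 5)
Your proposal follows essentially the same route as the paper: start from the standard representation $\vartheta_{ij}(A)=\sum_{k}(F_{ik},AF_{jk})$, define $g_k(x)=\tfrac{1}{p}\sum_i (F_{ik})_i(x)$ and $h_{ik}=F_{ik}-g_{ik}$, and then verify the remaining identities by direct computation using the representation machinery of Section~3. The paper compresses everything after the definition of the $g$'s and $h$'s into ``the further results follow from the discussion in section 3 and straightforward computation,'' and your expansion of that computation (the four bilinear pieces, the trace-zero Kraus operators $T_k(x)$ for $R_t$, and the $\Vert\cdot\Vert_+$-boundedness of $F_{ik}\mapsto(g_k,h_{ik})$) is correct.
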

\begin{proof}  Assume $\vartheta \in B(\mathbb C^p) \otimes \mathfrak A
(\mathbb C^p)_ *$ is a completely positive.  From the general theory of
completely positive maps we know that $\vartheta$ can be written in the form
$$
\vartheta_{ij}(A) = \sum_{k\in J} (F_{ik},AF_{jk})
$$
with the $F_{ik} \in \mathbb C^p\otimes L_+^2(0,\infty )$ for $k \in J$ a
countable index set.  Furthermore, we know the $F_{ik}$ can be chosen so
they are linearly independent over $\ell^2(J)$.  Now we simply define
$$
g_k(x) = \frac {1} {p} \sum_{i=1}^p (F_{ik})_i(x)
$$
and then define
$$
(g_{ik})_j(x) = \delta_{ij}g_k(x)\qquad \text{and} \qquad
h_{ik}(x) = F_{ik}(x) - g_{ik}(x)
$$
for $x \geq 0,\medspace i,j \in \{ 1,\cdots ,p\}$ and $k \in J$.

The further results of the lemma follow from the discussion in section 3
and straight forward computation.  \end{proof}

In the above lemma we concluded that for $t > 0$
\begin{equation*}
\vartheta\vert_t\Lambda = w_t\iota + \zeta_t + \zeta_t^* + R_t = w_t(\iota
+ \gamma_t)
\end{equation*}
where
\begin{equation*}
\gamma_t = w_t^{-1}(\zeta_t + \zeta_t^* + R_t)
\end{equation*}
and in Theorem 4.4 we concluded that $\vartheta\vert_t\Lambda = v_t(\iota +
\nu_t)$ and $\Vert\nu_t\Vert \rightarrow 0$ as $t \rightarrow 0+$ where
$v_t = tr(I+\omega \vert_t\Lambda (I))$ and $\nu_t =
\Theta^{-1}\Theta_t\pi_t^\#\Lambda - \iota $.  We remark on the connection
between the two formulae.  From equation 3.4 of section 3 we see that
$R_t$ is the internal part of $\vartheta\vert _t\Lambda$ and $w_t$ is the
coefficient of the identical part of $\vartheta\vert_t\Lambda$.  Since both
are equal to $\vartheta\vert_t\Lambda$ we have
$w_t(\iota + \gamma_t) = v_t(\iota + \nu_t)$ for $t > 0$ and $v_t
\rightarrow \infty$ and $\Vert\nu_t\Vert \rightarrow 0$ as $t \rightarrow
0+$ and $w_t$ is the coefficient of the identical part of $\vartheta
\vert_t\Lambda$ it follows that $w_t/v_t \rightarrow 1$ and
$\Vert\gamma_t\Vert \rightarrow 0$ so the two forms are roughly equivalent in
the limit.

Once we wrote down $\vartheta$ in the form given in the lemma we considered
what would happen if $\rho$ is bounded and we found we could construct
$q$-weight maps fairly easily.  Next we will need a technical lemma for making
estimates.

\begin{lem}
Suppose $\vartheta$ is a completely positive normal
$B( \mathbb C^p)$ valued $b$-weight map of the form given in the previous lemma
and suppose $\rho$ is a bounded $b$-weight map and suppose $w_t \rightarrow
\infty$ as $t \rightarrow 0+$.  Then
$$
\vert (Y_t)_{ij}\vert^2/w_t \rightarrow 0
$$
as $t \rightarrow 0+$ for each $i,j \in \{ 1,\cdots ,p\}$.
\end{lem}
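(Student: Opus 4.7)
The plan is to use the explicit scalar integral form of $(Y_t)_{ij}$ together with a splitting of the range of integration at a fixed point $T>0$. Unpacking the definitions of $\Lambda\vert_t$ and $(h_{ik})_j$ gives
\[
(Y_t)_{ij} = \sum_{k\in J}\int_t^\infty \overline{(h_{ik})_j(x)}\,e^{-x}g_k(x)\,dx,
\qquad
w_t = \sum_{k\in J}\int_t^\infty e^{-x}|g_k(x)|^2\,dx.
\]
A single Cauchy--Schwarz on $\ell^2(J)\otimes L^2([t,\infty),e^{-x}dx)$ only yields $|(Y_t)_{ij}|^2/w_t \le \sum_k \int_t^\infty e^{-x}|(h_{ik})_j|^2\,dx$, whose limit as $t\rightarrow 0+$ need not be zero, so a direct application is insufficient.

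The fix is a splitting argument. Fix $\epsilon>0$ and decompose $(Y_t)_{ij} = A_t^T + B^T$, where $A_t^T$ integrates over $[t,T]$ and $B^T$ integrates over $[T,\infty)$; note that $B^T$ does not depend on $t$ once $t<T$. Applying Cauchy--Schwarz on $\ell^2(J)\otimes L^2([t,T],dx)$ with the weight $e^{-x}$ split as $e^{-x/2}\cdot e^{-x/2}$ gives
\[
|A_t^T|^2 \le \Big(\sum_{k\in J}\int_t^T e^{-x}|(h_{ik})_j|^2\,dx\Big)\Big(\sum_{k\in J}\int_t^T e^{-x}|g_k|^2\,dx\Big) \le c(T)\cdot w_t,
\]
where $c(T) := \sum_{k\in J}\int_0^T |(h_{ik})_j(x)|^2\,dx$. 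Because $\rho$ is bounded, $\sum_k\|h_{ik}\|^2<\infty$, so $\sum_k\int_0^\infty|(h_{ik})_j|^2\,dx<\infty$, and hence $c(T)\rightarrow 0$ as $T\rightarrow 0+$ by countable additivity. Choose $T$ so that $c(T)<\epsilon/4$.

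With this $T$ fixed, $B^T$ is a single finite complex number (again by Cauchy--Schwarz, using $\sum_k\|h_{ik}\|^2<\infty$ for the $h$-factor and the elementary bound $\sum_k\int_T^\infty e^{-2x}|g_k|^2\,dx \le (1-e^{-T})^{-1}\sum_k\|g_k\|_+^2<\infty$ for the $g$-factor on $[T,\infty)$). Since $w_t\rightarrow\infty$ by hypothesis, one may choose $t_0\in(0,T)$ with $|B^T|^2/w_t<\epsilon/4$ for all $t\in(0,t_0)$. Combining via $|(Y_t)_{ij}|^2\le 2|A_t^T|^2 + 2|B^T|^2$ then gives $|(Y_t)_{ij}|^2/w_t<\epsilon$ for $t\in(0,t_0)$, as required. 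The principal obstacle is exactly the one highlighted: a naive single Cauchy--Schwarz leaves a residual term that does not vanish, and one must separate the singular neighborhood of $x=0$ (where $\rho$ is small thanks to boundedness) from the tail on $[T,\infty)$ (where $w_t\rightarrow\infty$ swallows any fixed constant).
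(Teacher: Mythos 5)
Your proof is correct, and it reaches the same estimate as the paper but by a different mechanism. The paper never unpacks the integral representation at this stage: it invokes inequality (3.7) to get that the operator matrix $M(t)$ built from $w_t I$, $Y_t$, $Y_t^*$ and $R_t(I)$ is positive and non-increasing in $t$, passes to the scalar $2\times 2$ matrices $N(t)$ with entries $a(t)=w_t$, $b(t)=(Y_t)_{ji}$, $c(t)=(R_t)_{jj}$, and applies the determinant inequality $\vert b_k\vert^2\leq a_kc_k$ to the \emph{increments} of $N$ over a partition $0<t_2<t_1<t_o$; smallness of the increment of $c$ near zero plays the role of your $c(T)$, and $a(t)\rightarrow\infty$ absorbs the fixed remainder exactly as your $w_t$ absorbs $\vert B^T\vert^2$. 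Your splitting of the $x$-integral at $T$ corresponds precisely to the paper's splitting of the $t$-interval at $t_1$, and your Cauchy--Schwarz on $\ell^2(J)\otimes L^2([t,T])$ is the concrete incarnation of the paper's abstract positivity of $N(t_2)-N(t_1)$. What the paper's route buys is that it only uses the completely positive structure (so it would survive even without an explicit $g,h$ decomposition in hand); what your route buys is elementarity and transparency --- the role of boundedness of $\rho$ (finiteness of $\sum_k\Vert h_{ik}\Vert^2$, hence $c(T)\rightarrow 0$) and of $w_t\rightarrow\infty$ is laid bare, and you correctly flagged why a single global Cauchy--Schwarz fails. One small point worth making explicit in a final write-up: the finiteness of $\sum_k\Vert g_k\Vert_+^2$ that you use to control $B^T$ comes from the displayed formula for $tr(\vartheta(I-\Lambda))<\infty$ in Lemma 4.5, so it is indeed available under the stated hypotheses.
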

\begin{proof}  Assume the hypothesis and notation of the lemma.  Then from
inequality (3.7) of the last section we have the $(2 \times 2)$-matrix of
matrices in $B(\mathbb C^p)$
$$M(t) = \left[\begin{matrix} w_tI&Y_t^*
\\
Y_t&R_t(I)
\end{matrix} \right]
$$
is positive and non increasing so $M(t) \geq M(s)$ for $0 < t < s$.  Now we
have $w_t \rightarrow \infty$  and $R_t(I) \rightarrow R_o(I)$ as $t
\rightarrow 0+$.  Choose a pair of indices $i,j \in \{ 1,\cdots ,p\}$.  and
let $N(t)$ be the $(2 \times 2)$-matrix
$$
N(t) = \left[\begin{matrix} w_t&(Y_t^*)_{ij}
\\
(Y_t)_{ji}&(R_t)_{jj}
\end{matrix} \right]
= \left[\begin{matrix} a(t)&\overline{b(t)}
\\
b(t)&c(t)
\end{matrix} \right].
$$
Where the second matrix on the right is the definition of $a(t),\medspace b(t)$
and $c(t)$.  Note $N(t)$ is positive and non increasing in $t$ so $N(t) \geq
N(s)$ for $0 < t < s$.  Now suppose $\epsilon > 0$.  Consider the sequence
$0 < t_2 < t_1 < t_o$ and let
$$
a_k = a(t_k) - a(t_{k-1}),\qquad b_k = b(t_k) - b(t_{k-1}),\qquad
c_k = c(t_k)- c(t_{k-1})
$$
for $k = 1,2$.  Since $N(t_k) - N(t_{k-1}) \geq 0$ we have
$$
\vert b_k\vert^2 \leq a_kc_k
$$
and, hence
\begin{align*}
\vert b(t_2)\vert &= \vert b_2 + b_1 + b(t_o)\vert \leq \vert b_2\vert +
\vert b_1\vert + \vert b(t_o)\vert
\\
&\leq \sqrt{a_2c_2 } + \sqrt{a_1c_1 }  + \vert b(t_o)\vert.
\end{align*}
Since $c(t) \rightarrow c(0)$ as $t \rightarrow 0+$ we can choose $t_1$ so
that $c(0) - c(t) < \epsilon^2/4$ for $0 < t < t_1$.  Then
$$
\vert b(t_2)\vert \leq \tfrac{1}{2}\epsilon\sqrt{a(t_2)-a(t_1)}  + \sqrt{a_1c_1
} + \vert b(t_o)\vert
$$
so we have
$$
\frac {\vert b(t_2)\vert} {\sqrt{a(t_2)}}  \leq
\tfrac{1}{2}\epsilon\sqrt{1-a(t_1) /a(t_2)}+a(t_2)^{-\tfrac{1}{2}}(\sqrt{a_1c_1}
+ \vert b(t_o)\vert )
$$
and since $a(t_2) \rightarrow \infty$ as $t_2 \rightarrow 0+$ there is a
$\delta > 0$ so that the right hand side of the above inequality is less
than $\epsilon$ for $0 < t_2 < \delta $.  Hence, we have shown that
$a(t)^{-\tfrac{1}{2}}\vert b(t)\vert \rightarrow 0$ as $t \rightarrow 0+$ and,
hence,
$$
\vert Y_{ij}(t)\vert^2/w_t \rightarrow 0
$$
as $t \rightarrow 0+.$ \end{proof}

\begin{thm}
Suppose $\omega$ is a $q$-weight map over $\mathbb C^p$ of
index zero and the range of $\omega$ is $B(\mathbb C^p)$.  Then $\omega$ is of
the form $\omega = \psi^{-1}\vartheta$ where $\psi$ is an invertible
conditionally negative map of $B(\mathbb C^p)$ into itself with a completely
positive inverse and $\vartheta$ is of the form
$$
\vartheta_{ij}(A) = \sum_{k\in J} ((g_{ik}+h_{ik}),A(g_{jk}+h_{jk}))
$$
where the $g_{ik},h_{ik} \in \mathbb C^p\otimes L_+^2(0,\infty )$ and
$$
(g_{ik})_j(x) = \delta_{ij}g_k(x)\qquad \text{and} \qquad \sum_{i=1}^p
(h_{ik })_i(x) = 0
$$
for $A \in \mathfrak A (\mathbb C^p),\medspace x \geq 0,\medspace i,j \in \{
1,\cdots ,p\}$ and $k \in J$ a countable index set and the $h_{ik} \in \mathbb
C^p \otimes L^2(0,\infty )$ and if
$$
w_t = \sum_{k\in J} (g_k,\Lambda \vert_t g_k)\qquad \rho_{ij}(A) = \sum
_{k\in J} (h_{ik},Ah_{jk})
$$
then $\rho$ is bounded so
$$
\sum_{k\in J} \Vert h_{ik}\Vert^2 < \infty\qquad \text{and} \qquad
\sum_{k\in J} (g_k,(I-\Lambda )g_k) < \infty
$$
and $1/w_t \rightarrow 0$ as $t \rightarrow 0+$ and $\psi$ satisfies the
conditions
$$
\psi (I) \geq \vartheta (I - \Lambda )\qquad \text{and} \qquad \psi + \rho
\Lambda
$$
is conditionally negative.  Furthermore, $\omega$ is unital if and only if
$\psi (I) = \vartheta (I - \Lambda )$.

Conversely, if $\vartheta ,\medspace \rho$ and $\psi$ are as given above
then $\omega$ is a $q$-weight map over $\mathbb C^p$ of index zero and the range
of $\omega$ is all of $B( \mathbb C^p)$.  Furthermore, if $\psi ^{\prime}$ is
a second map satisfying the conditions above and $\omega ^{\prime} = \psi
^{\prime -1}\vartheta$ then $\omega ^{\prime}$ is a $q$-subordinate of
$\omega$ (i.e. $\omega \geq_q \omega ^{\prime})$ if and only if $\psi \leq
\psi ^{\prime}$.
\end{thm}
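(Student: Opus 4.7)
The plan is to prove the theorem in three parts: the forward decomposition of $\omega$, the converse construction, and the subordinate characterization. For the forward direction, Theorem~4.4 produces $\psi = \Theta^{-1}$ conditionally negative and invertible with completely positive inverse, together with $\vartheta = \psi\omega$ completely positive in $B(\mathbb C^p)\otimes\mathfrak A(\mathbb C^p)_*$, so that $\omega = \psi^{-1}\vartheta$ and $\vartheta\vert_t\Lambda = v_t(\iota + \nu_t)$ with $v_t \to \infty$, $\Vert\nu_t\Vert \to 0$ as $t\to 0+$. Lemma~4.5 then provides the stated Kraus-like form of $\vartheta$ and the auxiliary decomposition $\vartheta\vert_t\Lambda = w_t\iota + \zeta_t + \zeta_t^{*} + \rho\vert_t\Lambda$. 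The structural identity $\psi + \vartheta\vert_t\Lambda = \psi(\iota + \phi_t)$ is what couples these objects throughout the proof.

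To prove $\psi(I) \geq \vartheta(I-\Lambda)$, I apply the $q$-weight contractivity $\pi_t^{\#}(I) \leq I$: since $\pi_t^{\#} = (\psi + \vartheta\vert_t\Lambda)^{-1}\vartheta\vert_t$ and $(\psi + \vartheta\vert_t\Lambda)^{-1}$ is completely positive by the $q$-weight hypothesis, applying the positive map $\psi + \vartheta\vert_t\Lambda$ yields $\vartheta\vert_t(I-\Lambda) \leq \psi(I)$; the limit $t\to 0+$ gives the claim, and the unital characterization follows since $\omega(I-\Lambda) = \psi^{-1}\vartheta(I-\Lambda) = I$ iff $\vartheta(I-\Lambda) = \psi(I)$. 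For boundedness of $\rho$ and the conditional negativity of $\psi + \rho\Lambda$, I argue that $\psi + \vartheta\vert_t\Lambda$ is itself conditionally negative with strictly positive value on $I$: combined with the conditional zeroness of $w_t\iota + \zeta_t + \zeta_t^{*}$, this forces $\psi + \rho\vert_t\Lambda$ conditionally negative for every $t > 0$. The internal-part decomposition of Equation~(3.4) then bounds the internal part of $\rho\vert_t\Lambda$ uniformly above by the fixed completely positive map $-K(\psi)$, giving $L^2$-integrability of the $h_{ik}$ and, in the limit $t\to 0+$, conditional negativity of $\psi + \rho\Lambda$ (the cone of conditionally negative maps is closed).

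For the converse, given $(\vartheta,\rho,\psi)$ meeting the conditions, I set $\omega = \psi^{-1}\vartheta$ and use Lemma~4.5 to write $\psi + \vartheta\vert_t\Lambda = (\psi + \rho\vert_t\Lambda) + (w_t\iota + \zeta_t + \zeta_t^{*})$. The first summand is conditionally negative (being dominated by the hypothesized conditionally negative $\psi + \rho\Lambda$), the remaining terms are conditionally zero, and $(\psi + \vartheta\vert_t\Lambda)(I)\geq \psi(I)\geq\vartheta(I-\Lambda)>0$, so Lemma~3.3 yields a completely positive inverse. Then $\iota + \omega\vert_t\Lambda = \psi^{-1}(\psi + \vartheta\vert_t\Lambda)$ is invertible, $\pi_t^{\#} = (\psi + \vartheta\vert_t\Lambda)^{-1}\vartheta\vert_t$ is completely positive, and $\pi_t^{\#}(I) \leq I$ follows from the same inequality $\psi(I)\geq\vartheta\vert_t(I-\Lambda)$. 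For the subordinate characterization, the direction $\psi \leq \psi'\Rightarrow\omega\geq_q\omega'$ follows from Lemma~3.3's monotonicity applied to $\psi + \vartheta\vert_t\Lambda\leq\psi' + \vartheta\vert_t\Lambda$, giving ordered completely positive inverses, hence $\pi_t^{\#}\geq\pi_t'^{\#}$. Conversely, $\omega\geq_q\omega'$ gives $\pi_t^{\#}\Lambda\geq\pi_t'^{\#}\Lambda$, so $(\iota + \phi_t')^{-1}(\phi_t - \phi_t')\geq 0$ is completely positive; multiplication by the completely positive $(\iota + \phi_t')$ yields $\phi_t\geq\phi_t'$. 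This translates to $(\psi^{-1} - \psi'^{-1})\vartheta\vert_t\Lambda\geq 0$, and dividing by $v_t$ and invoking $\vartheta\vert_t\Lambda/v_t\to\iota$ from Theorem~4.4 gives $\psi^{-1}\geq\psi'^{-1}$ in the limit, whence $\psi\leq\psi'$ by the monotonicity in Lemma~3.3.

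The main obstacle I anticipate is the bootstrap from $(\psi + \vartheta\vert_t\Lambda)^{-1}$ being completely positive to $\psi + \vartheta\vert_t\Lambda$ being conditionally negative used in the forward direction --- Lemma~3.3 supplies only the opposite implication, so this will require either an Evans--Lewis semigroup argument applied to $e^{-s(\psi + \vartheta\vert_t\Lambda)}$ or careful tracking of the internal-part decomposition of $\vartheta\vert_t\Lambda$ using the $\Vert\nu_t\Vert\to 0$ estimates of Theorem~4.4. A secondary subtlety is the converse of Lemma~3.3's monotonicity needed for the subordinate direction, i.e., that $\psi^{-1}\geq\psi'^{-1}$ implies $\psi\leq\psi'$; this will likely be handled by a spectral argument or by producing the inequality directly from $(\psi^{-1}-\psi'^{-1})\vartheta\vert_t\Lambda\geq 0$ without passing through the limit.
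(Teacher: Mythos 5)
Your architecture matches the paper's (Theorem~4.4 plus Lemma~4.5 for the decomposition, Lemma~3.3 for the converse, resolvent identities for the subordinate order), but the two obstacles you flag at the end are real gaps, and neither of your proposed escapes closes the first one. Knowing that $(\iota+\phi_t)^{-1}$ is completely positive for a single $t$ does not yield conditional negativity of $\psi+\vartheta\vert_t\Lambda$: the Evans--Lewis route would require $e^{-s(\psi+\vartheta\vert_t\Lambda)}$ to be completely positive for every $s>0$, whereas complete positivity of the inverse only says the Laplace transform $\int_0^\infty e^{-s(\cdot)}\,ds$ is completely positive, which is strictly weaker; and no amount of internal-part bookkeeping at fixed $t$ manufactures the sign condition. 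The information the paper actually uses is two-parameter: by Theorem~4.1(vi)--(vii) the map $(\iota+\phi_s)^{-1}(\iota+\phi_t)$ is conditionally negative for all $0<s<t$, and multiplying by $tr(I+\phi_s(I))$ and letting $s\rightarrow 0+$ (where $tr(I+\phi_s(I))(\iota+\phi_s)^{-1}\rightarrow\psi$ by Theorem~4.4) gives $\psi+\vartheta\vert_t\Lambda$ conditionally negative, hence $\psi+R_t$ conditionally negative; the uniform Hilbert--Schmidt bound on $R_t$ by $\sigma_{\max}$, and thus boundedness of $\rho$, then follows as you sketch. Without invoking the monotone family $\{\phi_s\}_{s>0}$ this step cannot be recovered.

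Two further steps are incorrectly justified. First, $\psi+\vartheta\vert_t\Lambda$ is not a positive map (it is conditionally negative with completely positive inverse), so you cannot apply it to $I-\pi_t^\#(I)\geq 0$ to get $\psi(I)\geq\vartheta\vert_t(I-\Lambda)$; the paper instead writes $w_t(I-\pi_t^\#(I))=Z_t^{-1}(\psi(I)-\vartheta\vert_t(I-\Lambda))$ with $Z_t^{-1}\rightarrow\iota$ and passes to the limit. Second, in the converse of the subordinate characterization, $(\iota+\phi_t')$ is not completely positive, so you cannot peel it off to get $\phi_t\geq\phi_t'$ that way (that inequality is true, but via the series $\omega\vert_t=\sum_n(\pi_t^\#\Lambda)^n\pi_t^\#$); more seriously, your endpoint $\psi^{-1}\geq\psi'^{-1}$ does not imply $\psi\leq\psi'$ for maps, since $\psi'-\psi=\psi'(\psi^{-1}-\psi'^{-1})\psi$ and neither $\psi$ nor $\psi'$ is completely positive. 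The paper never inverts: it uses $Z_t^{-1}-Z_t'^{-1}=w_t^{-1}Z_t^{-1}(\psi'-\psi)Z_t'^{-1}$ to show $w_t(\pi_t^\#\Lambda-\pi_t'^{\#}\Lambda)\rightarrow\psi'-\psi$, exhibiting $\psi'-\psi$ directly as a limit of completely positive maps. You should restructure both the forward conditional-negativity step and the subordinate converse around these limit identities rather than around inverting order relations.
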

\begin{proof}  We begin with the proof in the paragraph beginning with the
word, conversely.  Suppose then that $\vartheta ,\medspace \rho$ and $\psi$
have the properties given in the statement of the theorem.  From the form
of $\vartheta$ we see $\vartheta$ is the sum of completely positive maps
and is, therefore, completely positive.  Now we define $w_t,\medspace \rho
,\medspace Y_t, R_t$ and $\zeta_t$ be defined as in Lemma
4.5 and we find
$$
\vartheta \vert_t \Lambda = w_t\iota + \zeta_t + \zeta_t^* + R_t
$$
and we compute
$$
\iota + \omega \vert_t \Lambda = \psi^{-1}(w_t\iota + \zeta_t + \zeta _t^*
+ R_t + \psi ).
$$
Then
$$
\pi_t^\# = Z(t)^{-1}w_t^{-1}\vartheta \vert_t
$$
where
$$
Z_t = (\iota + w_t^{-1}(\zeta_t + \zeta_t^* + R_t + \psi )).
$$

Recall in the discussion after Definition 1.1 to check
that $\pi_t^\#$ is a completely positive contraction for all $t > 0$ is only
necessary to check this condition for $0 < t < \delta$ for some $\delta >0$.
Since $\vartheta \vert_t$  is completely positive and $w_t > 0$ it follows
that $\pi_t^\#$ is completely positive if $Z_t^{-1}$ is completely positive.
Since $R_t$ is uniformly bounded and from Lemma 4.6 we have $\Vert \zeta_t
\Vert^2 / w_t \rightarrow 0$ and $1/w_t \rightarrow 0$ as $t \rightarrow 0+$
we have $Z_t \rightarrow \iota$ as $t \rightarrow 0+$ it follows that there are
real numbers $s,\delta > 0$ so that $Z_t(I) \geq sI$ for $0 < t < \delta$. If
$Z_t(I) \geq sI$ it follows from Lemma 3.3 that $Z_t^{-1}$ is completely
positive provided $Z(t)$ is conditionally negative.  We have $Z(t)$ is
conditionally negative if and only if $R_t + \psi$ is conditionally negative.
Since $R_t$ is non increasing as a completely positive map and
$R_t \rightarrow R_o$ as $t \rightarrow 0+$ we have
$$
R_t + \psi \leq R_o + \psi
$$
and by assumption $R_o + \psi$ is conditionally negative.  Hence,
$Z_t^{-1}$ is completely positive and $\pi_t^\#$ is completely positive for
$0 < t < \delta$.

So all that remains to check is that $\pi_t^\# (I) \leq I$ for $0 < t < \delta$.
Now we have
$$
\pi_t^\# (\Lambda ) = (\iota + w_t^{-1}(\zeta_t + \zeta_t^* + R_t + \psi ))
^{-1}(\iota + w_t^{-1}(\zeta_t + \zeta_t^* + R_t))(I)
$$
so it follows that
$$
I - \pi_t^\# (\Lambda ) = (\iota + w_t^{-1}(\zeta_t + \zeta_t^* + R_t +
\psi ))^{-1}w_t^{-1}\psi (I) = Z_t^{-1}w_t^{-1}\psi(I)
$$
and since
$$
I - \pi_t^\# (I) = I - \pi_t^\# (\Lambda ) - \pi_t^\# (I - \Lambda )
$$
we have
$$
I - \pi_t^\# (I) = Z_t^{-1}w_t^{-1}(\psi (I)-\vartheta \vert_t (I-\Lambda
))
$$
and since $Z_t^{-1}$ is completely positive and $\vartheta \vert_t (I -
\Lambda )$ is non increasing in $t$ and $\psi (I) \geq \vartheta (I -
\Lambda )$ it follows that $I \geq \pi_t^\# (I)$ so $\pi_t^\#$ is
completely positive and completely contractive for $t > 0$.  Hence,
$\omega$ is a $q$-weight map over $\mathbb C^p$.  The fact that the range of
$\omega$ is $B( \mathbb C^p)$ is apparent since $\pi_t^\#\Lambda \rightarrow
\iota$ as $t \rightarrow 0+$.  Since $w_t \rightarrow \infty$ as $t
\rightarrow 0+$ it is apparent that $\pi_t^\# (E(s,\infty )) \rightarrow 0$
as $t \rightarrow 0+$ where $E(s,\infty )$ is the projection in $B(\mathbb
C^p\otimes L^2(0,\infty ))$ onto functions with support to the right of
$s$.  Hence, the normal spine of $\pi_t^\#$ is zero so $\omega$ is of index
zero.

Finally since $\psi\omega = \vartheta$ we have $\psi\omega (I - \Lambda ) =
\vartheta (I - \Lambda )$ so if $\omega$ is unital we have $\psi (I) =
\vartheta (I - \Lambda )$ and, conversely, if $\psi (I) = \vartheta (I -
\Lambda )$ then $\omega (I - \Lambda ) = \psi^{-1}\vartheta (I - \Lambda )
= I$ so $\omega$ is unital.

Now suppose $\psi ^{\prime}$ is second invertible hermitian linear
mapping of $B(\mathbb C^p)$ into itself, $\psi ^{\prime}(I) \geq
\vartheta (I - \Lambda )$ and $\psi ^{\prime} + \rho \Lambda $
is conditionally negative.  Let $\omega ^{\prime} = \psi ^{\prime
-1}\vartheta$.  We show $\omega \geq_q \omega ^{\prime}$ if and
only if $\psi \leq \psi ^{\prime}$.

First suppose $\psi \leq \psi ^{\prime}$.  Then if $\pi_t^\#$ and
$\pi_t^{\prime\#}$ are the generalized boundary representations of $\omega$
and $\omega ^{\prime}$, respectively we have
$$
\pi_t^\# = (\iota + w_t^{-1}(\zeta_t + \zeta_t^* + R_t + \psi
))^{-1}w_t^{-1 }\vartheta \vert_t  = Z_t^{-1}w_t^{-1}\vartheta \vert_t
$$
and
$$
\pi_t^{\prime \#} = (\iota + w_t^{-1}(\zeta_t + \zeta_t^* + R_t + \psi
^{\prime})) ^{-1}w_t^{-1}\vartheta \vert_t  = Z_t^{\prime
-1}w_t^{-1}\vartheta  \vert_t
$$
and
$$
\pi_t^\# - \pi_t^{\prime \#} = (Z_t^{-1} - Z_t^{\prime
-1})w_t^{-1}\vartheta  \vert_t.
$$
So $\omega \geq_q \omega ^{\prime}$ if $Z_t^{-1} - Z_t^{\prime -1}$ is
completely positive.  We have
\begin{align*}
Z_t^{-1} - Z_t^{\prime -1} &= Z_t^{-1}(Z_t^{\prime} - Z_t)Z_t^{\prime-1}
\\
&= w_t^{-1}Z_t^{-1}(\psi ^{\prime} - \psi )Z_t^{\prime -1}
\end{align*}
and since $Z_t^{-1},Z_t^{\prime -1}$ and $\psi ^{\prime} - \psi$ are
completely positive it follows that $Z_t^{-1} - Z_t^{\prime -1}$ is
completely positive and $\omega \geq_q \omega ^{\prime}$.

Now suppose $\omega \geq_q \omega ^{\prime}$ and $\pi_t^\#$ and
$\pi_t^{\prime \# }$ are the generalized boundary representations of
$\omega$ and $\omega ^{\prime}$, respectively.  Then $w_t\pi _t^\#\Lambda
\geq w_t\pi_t^{\prime \#}\Lambda$ for all $t > 0$.  Then we have
\begin{align*}
w_t(\pi_t^\#\Lambda - \pi_t^{\prime \#}\Lambda) &= w_t(Z_t^{-1} -
Z_t^{\prime -1}) (\iota + w_t^{-1}(\zeta_t + \zeta_t^* + R_t))
\\
w_t(\pi_t^\#\Lambda - \pi_t^{\prime \#}\Lambda) &= Z_t^{-1}(\psi ^{\prime}
- \psi )Z_t^{\prime -1}(\iota + w_t^{-1}(\zeta_t + \zeta_t^* + R_t))
\end{align*}
and taking the limit as $t \rightarrow 0+$ we have
$$
Z_t \rightarrow \iota\qquad Z_t^{\prime} \rightarrow \iota\qquad \text{and
} \qquad (\iota + w_t^{-1}(\zeta_t + \zeta_t^* + R_t)) \rightarrow \iota
$$
and, hence,
$$
w_t(\pi_t^\#\Lambda - \pi_t^{\prime \#}\Lambda) \rightarrow \psi ^{\prime}
- \psi
$$
so $\psi ^{\prime} - \psi$ is the limit of completely positive maps so
$\psi ^{\prime} \geq \psi$.

Now we prove the first part of the theorem.  Assume then that $\omega$ is a
$q$-weight map over $\mathbb C^p$ of index zero and the range of $\omega$ is
$B(\mathbb C^p)$.  Then by Theorem 4.4 we can write $\omega = \psi^{-1}
\vartheta$ where $\psi$ is conditionally negative and invertible and its
inverse is completely positive and $\vartheta \in B(\mathbb C^p) \otimes
\mathfrak A (\mathbb C^p)_*$ is completely positive and
$$
\vartheta \vert_t \Lambda = v_t(\iota + \nu_t)
$$
where $1/v_t \rightarrow 0+$ and $\Vert\nu_t\Vert \rightarrow 0$ as $t
\rightarrow 0+$.  (Note $v_t$ and $w_t$ can be slightly different.)  From
Lemma 4.5 we know $\vartheta$ can be expressed in the form given in the
statement of the theorem except that we can not conclude at this point that
$\rho$ is bounded.  We do know that
$$
\vartheta \vert_t \Lambda = w_t\iota + \zeta_t + \zeta_t^* + R_t
$$
and, therefore,
$$
\omega \vert_t \Lambda = \phi_t = \psi^{-1}(w_t\iota + \zeta_t + \zeta _t^*
+ R_t)
$$
and, hence,
$$
(\iota + \phi_s)^{-1}(\iota + \phi_t) = (\iota + \phi_s)^{-1}\psi^{-1}(\psi
+ w_t\iota + \zeta_t + \zeta_t^* + R_t)
$$
and for $0 < s < t$ by Theorem 4.1 the above expression is conditionally
negative for all $0 < s < t$ so we have
$$
Q(s,t) = tr(I + \phi_s(I))(\iota + \phi_s)^{-1}\psi^{-1}(\psi + w_t \iota +
\zeta_t + \zeta_t^* + R_t)
$$
is conditionally negative for $0 < s < t$.  Now as $s \rightarrow 0+$ by
Theorem 4.4 we have
$$
tr(I + \phi_s(I))(\iota + \phi_s)^{-1} \rightarrow \psi
$$
as $s \rightarrow 0+$.  Since $Q(s,t)$ is conditionally negative we have
taking the limit as $s \rightarrow 0+$ that
$$
Q(0,t) = (\psi + w_t\iota + \zeta_t + \zeta_t^* + R_t)
$$ is conditionally negative for all $t > 0$ and since $w_s\iota + \zeta_t +
\zeta _t^*$ is conditionally zero we have $\psi + R_t$ is conditionally
negative for all $t > 0$.  Now $R_t$ is non increasing in $t$ and we know
that
$$
R_t(A) = \sum_{i=1}^m \lambda_i(t)X_i(t)AX_i^*(t)
$$
where $\lambda_i > 0$ and $tr(X_i(t)) = 0$ and $tr(X_i^*(t)X_j(t)) =\delta_{ij}$
and since $\psi$ is conditionally negative we know from the previous
section that
$$
\psi (A) = rA + WA + AW^* - \sum_{i=1}^q \sigma_iS_iAS_i^*
$$
for $A \in B(\mathbb C^p)$ where $W$ and the $S_i$ are of trace zero and
$tr(S_i^*S_j) = \delta_{ij}$ and $\sigma _i > 0$ for $i,j = 1,\cdots ,q$.
Since $\psi + R_t$ is conditionally negative we have $\lambda_i(t) \leq
\sigma_{\max}$ where $\sigma_{\max}$ is the maximum of the $\sigma
_i^{\prime}s$.  Hence,
$$
\Vert R_t\Vert_{H.S.}^2 = \sum_{i=1}^m
\lambda_i^2(t)  \leq  m\sigma_{\max} ^2
$$
and
$$ \Vert R_t\Vert \leq p\Vert R_t\Vert_{H.S.}\leq p\sqrt{m} \sigma_{\max}
\leq p^2\sigma_{\max}
$$
so $R_t = \rho \vert_t (\Lambda )$ is uniformly bounded and since $\rho
\vert_t$  is non increasing we have $\rho \vert_t \Lambda $ converges to a
bounded $b$-weight map $\rho \Lambda $ as $t \rightarrow 0+$ and $\psi +
\rho \Lambda $ is conditionally negative.  Finally, we show $\rho$ is
bounded.  Note from Lemma 4.5 we have $tr(\rho (I-\Lambda )) \leq
tr(\vartheta (I-\Lambda ))$ so $\Vert\rho (I-\Lambda )\Vert \leq
p\Vert\vartheta (I-\Lambda )\Vert$ so
\begin{align*}
\Vert\rho\Vert = \Vert\rho (I)\Vert  &= \Vert\rho (I - \Lambda ) + \rho
(\Lambda )\Vert \leq \Vert\rho (I - \Lambda )\Vert + \Vert\rho (\Lambda )\Vert
\\
&\leq p\Vert\vartheta (I-\Lambda )\Vert + \Vert\rho (\Lambda )\Vert .
\end{align*}

Then all that remains is to show that $\psi (I) \geq \vartheta (I - \Lambda
)$.  From our previous computations we have
$$
I - \pi_t^\# (I) = Z_t^{-1}w_t^{-1}(\psi (I)-\vartheta \vert_t (I-\Lambda
))
$$
and $Z_t^{-1} \rightarrow \iota$ and $\vartheta \vert_t (I - \Lambda )
\rightarrow \vartheta (I - \Lambda )$ as $t \rightarrow 0+$ we have
$$
w_t(I - \pi_t^\# (I)) \rightarrow \psi (I) - \vartheta (I - \Lambda )
$$
and since the expression on the left is positive we have
$\psi (I) \geq \vartheta (I - \Lambda ). $ \end{proof}

Given a $q$-weight map of the form $\omega = \psi^{-1}\vartheta$ as describe
above we will call $\psi$ a coefficient map of $\omega$ and $\vartheta$ the
corresponding limiting $b$-weight map.  As mentioned before $\psi$ and
$\vartheta$ are unique up to multiplying each by a positive number $s$ so if
$\omega = \psi^{-1}\vartheta = \psi ^{\prime -1}\vartheta ^{\prime}$ then $\psi
^{\prime} = s\psi$ and $\vartheta ^{\prime} = s\vartheta$ with $s > 0$.

We are particularly interested is the case when $\omega$ is $q$-pure.  Note
from the previous theorem we can easily construct $q$-subordinates of a
$q$-weight map $\omega = \psi^{-1}\vartheta$ by simply finding $\psi ^{\prime}$
so that $\psi ^{\prime} \geq \psi$ and $\psi ^{\prime} + \rho
\Lambda $ is conditionally negative.  Note if the $\psi ^{\prime}$
satisfying these conditions are not totally ordered then $\omega$ will not be
$q$-pure.  From the discussion in the last section we see that the $\psi
^{\prime}$ satisfying these conditions are totally ordered if and only if
$\psi + \rho \Lambda$ is conditionally zero in which case the
$\psi ^{\prime}$ such that $\psi ^{\prime} + \rho \Lambda$ is
conditionally negative and $\psi ^{\prime} \geq \psi$ consists of those
$\psi ^{\prime}$ of the form $\psi ^{\prime} = \psi + s\iota$ with $s \geq
0$.  If $\omega = \psi^{-1}\vartheta$ is of this form so $\psi + \rho
 \Lambda$ is conditionally zero we say $\omega$ has a pure
skeleton.  Then $\omega = \psi^{-1}\vartheta$ has a pure skeleton if and
only if its trivial subordinates are totally ordered.

We summarize these observations in a definition.

\begin{defn}
If $\omega$ is a $q$-weight map over $\mathbb C^p$ of
index zero with range $B(\mathbb C^p)$ and $\omega = \psi^{-1}\vartheta$ where
$\psi$ and $\vartheta$ were defined in Theorem 4.7.  We call the pair
$(\psi ,\vartheta )$ a coefficient map of $\omega$ and $\vartheta$ the
corresponding limiting $b$-weight map of $\omega$.  The bounded map $\rho$ of
Theorem 4.7 is constructed from the $b$-weight map $\vartheta$ and we denote
this by writing $\rho _\vartheta$.  If $\omega = \psi^{-1}\vartheta$ as just
described and $\psi ^{\prime} \geq \rho _\vartheta \Lambda $ and $\psi
^{\prime} \geq \psi$ we call $\omega ^{\prime} = \psi ^{\prime
-1}\vartheta$ a trivial subordinate of $\omega $.  The only trivial
subordinates are of the form $\omega ^{\prime} = \psi ^{\prime -1}\omega$
with $\psi ^{\prime} = \psi + s\iota$ with $s \geq 0$ if and only if $\psi
+ \rho_\vartheta \Lambda $ is conditionally zero.  In this case we say
$\omega$ has a pure skeleton.
\end{defn}

Note that if $\omega$ is a $q$-weight map over $\mathbb C^p$ with index zero and
range $B(\mathbb C^p)$ and $\omega = \psi^{-1}\vartheta$ with $\psi$ a
coefficient map of $\omega$ and $\vartheta$ the corresponding limiting
$b$-weight map then from Lemma 3.2 there is a map $\psi ^{\prime}$ so that
$\psi ^{\prime} + \rho_\vartheta \Lambda $ is conditionally zero and $\psi
^{\prime}(I) = T$ where $T$ is any hermitian element of $B(\mathbb C^p)$ we
choose.  Then if we choose $T$ to be any positive operator with $T \geq
\vartheta (I - \Lambda )$ then $\omega ^{\prime} = \psi ^{\prime
-1}\vartheta$ has a pure skeleton.  If we choose $T = \vartheta (I -
\Lambda )$ then $\omega ^{\prime}$ is unital.  Simply put given an index
zero $q$-weight map $\omega$ over $\mathbb C^p$ with range $B(\mathbb C ^p)$
then one can easily modify it to produce a $q$-weight map with a pure skeleton.

The next theorem shows how to find all subordinates in the case at hand.

\begin{thm}
Suppose $\omega$ is a $q$-weight map over $\mathbb C^p$
of index zero and range $B(\mathbb C^p)$ and $\omega = \psi^{-1}\vartheta$
with $\psi$ a coefficient map of $\omega$ and $\vartheta$ the corresponding
limiting $b$-weight map.  Suppose there is a bounded completely positive
$B(\mathbb C^p)$  valued weight map $\eta \in B(\mathbb C^p) \otimes
\mathfrak A (\mathbb C^p)_*$ so that $\vartheta \geq \eta$.  Then if
$$
\psi ^{\prime} \geq \psi + \eta \Lambda \qquad \text{and} \qquad \psi
^{\prime} + \rho_\vartheta \Lambda  - \eta \Lambda
$$
is conditionally negative then $\omega ^{\prime} = \psi ^{\prime
-1}(\vartheta - \eta )$ is a $q$-subordinate of $\omega$.

Conversely, if $\omega ^{\prime}$ is $q$-subordinate of $\omega$ there is a
bounded completely positive $B(\mathbb C^p)$ valued $b$-weight map $\eta \in
B(\mathbb C^p) \otimes \mathfrak A (\mathbb C^p)_*$  with $\vartheta \geq \eta$
and $\psi ^{\prime}$ satisfying the conditions above so that $\omega ^{\prime}
= \psi ^{\prime -1}(\vartheta - \eta )$.
\end{thm}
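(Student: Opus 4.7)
The plan is to split the forward implication into two stages: first, verify that $\omega'=\psi'^{-1}(\vartheta-\eta)$ satisfies the hypotheses of Theorem~4.7 for the pair $(\psi',\vartheta-\eta)$, so that it is a $q$-weight map of index zero over $\mathbb{C}^p$ with full range; second, compute $\pi_t^\#-\pi_t^{\prime\#}$ directly and show it is completely positive for all sufficiently small $t>0$, which suffices since it is only necessary to check this at $t_k\to 0+$. For the converse, I will take an arbitrary $q$-subordinate $\omega'$, invoke Theorem~4.3 to see that it has index zero, apply Theorem~4.7 (after reducing to full range) to obtain $\omega'=\psi'^{-1}\vartheta'$, and set $\eta=\vartheta-\vartheta'$.

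For the first stage, the key identification is $\rho_{\vartheta-\eta}\Lambda=\rho_\vartheta\Lambda-\rho_\eta\Lambda$, where $\rho_\eta$ is the internal part extracted from $\eta$ via Lemma~4.5; since $\eta\Lambda-\rho_\eta\Lambda$ is the non-internal (hence conditionally zero) remainder of $\eta\Lambda$ in the decomposition~(3.4), we may write
\begin{equation*}
\psi'+\rho_{\vartheta-\eta}\Lambda=\bigl(\psi'+\rho_\vartheta\Lambda-\eta\Lambda\bigr)+\bigl(\eta\Lambda-\rho_\eta\Lambda\bigr),
\end{equation*}
which exhibits the left side as the sum of a conditionally negative map (by hypothesis) and a conditionally zero map, hence as conditionally negative. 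Evaluating $\psi'\geq\psi+\eta\Lambda$ at $I$ and combining with $\psi(I)\geq\vartheta(I-\Lambda)$ from Theorem~4.7 applied to $\omega$ yields $\psi'(I)\geq(\vartheta-\eta)(I-\Lambda)$; invertibility of $\psi'$ with a completely positive inverse then follows from Lemma~3.3.

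For the second stage, setting $N_t=\psi+\vartheta\vert_t\Lambda$ and $M_t=\psi'+(\vartheta-\eta)\vert_t\Lambda$, a brief algebraic manipulation gives
\begin{equation*}
\pi_t^\#-\pi_t^{\prime\#}=N_t^{-1}\Bigl\{(\psi'-\psi-\eta\Lambda)\pi_t^{\prime\#}+(\eta\Lambda-\eta\vert_t\Lambda)\pi_t^{\prime\#}+\eta\vert_t\Bigr\}.
\end{equation*}
The decisive observation is that $\Lambda(A)=e^{-x}A$ is a multiplication operator in $x$, and so commutes with both $E(0,t)$ and $E(t,\infty)$; consequently $\Lambda(A)-E(t,\infty)\Lambda(A)E(t,\infty)=E(0,t)\Lambda(A)E(0,t)$, and the map $\eta\Lambda-\eta\vert_t\Lambda\colon A\mapsto\eta(E(0,t)\Lambda(A)E(0,t))$ is completely positive. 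Combined with complete positivity of $\pi_t^{\prime\#}$, of $\eta\vert_t$, and of $\psi'-\psi-\eta\Lambda$ (the last by hypothesis), and with complete positivity of $N_t^{-1}=w_t^{-1}Z_t^{-1}$ for small $t$ (from the construction argument in the proof of Theorem~4.7 applied to $\omega$), we conclude $\pi_t^\#\geq\pi_t^{\prime\#}$.

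The converse is the subtler half and the main anticipated obstacle. After obtaining $\omega'=\psi'^{-1}\vartheta'$ and setting $\eta=\vartheta-\vartheta'$, the two conditions on $\psi'$ should be recovered by taking $t\to0+$ limits of the completely positive quantities $w_t(\pi_t^\#-\pi_t^{\prime\#})$ and $w_t\Lambda(\pi_t^\#-\pi_t^{\prime\#})$, in the spirit of the extraction of $\psi(I)\geq\vartheta(I-\Lambda)$ at the end of the proof of Theorem~4.7. The difficulty lies in showing that $\eta$ itself is both bounded and completely positive rather than merely a difference of such maps: one must use $\omega\geq_q\omega'$ to force the unbounded identical-part leading coefficients of $\vartheta\vert_t\Lambda$ and $\vartheta'\vert_t\Lambda$ to cancel in the limit, and coordinate the positive scalings under which $(\psi,\vartheta)$ and $(\psi',\vartheta')$ are each unique, so that $\vartheta-\vartheta'\geq 0$ as a completely positive $B(\mathbb{C}^p)$-valued $b$-weight map.
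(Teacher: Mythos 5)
Your forward direction is correct and is essentially the paper's own argument. The identification of $\rho_{\vartheta-\eta}\Lambda$ with $\rho_\vartheta\Lambda-\rho_\eta\Lambda$ via linearity of the internal part, the inequality $\psi'(I)\geq(\vartheta-\eta)(I-\Lambda)$ obtained from $\psi'(I)\geq\psi(I)+\eta(\Lambda)\geq\vartheta(I-\Lambda)+\eta(\Lambda)=(\vartheta-\eta)(I-\Lambda)+\eta(I)$, and the formula
\begin{equation*}
\pi_t^\#-\pi_t^{\prime\#}=N_t^{-1}\bigl((\psi'-\psi-\eta\vert_t\Lambda)\pi_t^{\prime\#}+\eta\vert_t\bigr)
\end{equation*}
all check out; the paper writes the same expression in the form $w_t^{-1}\bigl(Z_t^{-1}(\psi'-T_t-\psi)Z_t^{\prime-1}\vartheta\vert_t+Z_t^{\prime-1}\eta\vert_t\bigr)$ and, instead of splitting off the cut-off term as you do, simply notes $\psi'-\psi-\eta\vert_t\Lambda\geq\psi'-\psi-\eta\Lambda\geq 0$ because $\eta\Lambda-\eta\vert_t\Lambda$ is completely positive. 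Same computation, same use of Lemma 3.3 for $Z_t^{-1}$ and $Z_t^{\prime-1}$.

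The converse is where the theorem's real content lies, and your proposal stops precisely at that point. Writing $\eta=\vartheta-\vartheta'$ is meaningless until the pairs $(\psi,\vartheta)$ and $(\psi',\vartheta')$ are placed on a common scale, and producing that scale \emph{is} the proof that $\eta$ is completely positive and bounded; you name this as the obstacle but do not overcome it. The paper's resolution is concrete: normalize both coefficient maps by $tr(\psi^{-1}(I))=tr(\psi^{\prime\prime-1}(I))=1$, so that $tr(I+\phi_t(I))(\iota+\phi_t)^{-1}\rightarrow\psi$ and likewise for $\psi''$; observe that
\begin{equation*}
tr(I+\phi_t(I))(\pi_t^\#-\pi_t^{\prime\#})=\psi_t\omega\vert_t-\frac{tr(I+\phi_t(I))}{tr(I+\phi_t'(I))}\,\psi_t^{\prime\prime}\omega'\vert_t\ \geq\ 0 ;
\end{equation*}
deduce from this that $\kappa=\limsup_{t\rightarrow 0+}tr(I+\phi_t(I))/tr(I+\phi_t'(I))$ is finite; pass to a sequence $t_k$ realizing $\kappa$ so that $\eta=\vartheta-\kappa\vartheta''$ is exhibited as a limit of completely positive maps, hence completely positive; obtain $\kappa\psi''-\psi\geq\eta\vert_s\Lambda$ by comparing the $s$-cut-offs, which together with $\eta\vert_t(I-\Lambda)\leq\vartheta(I-\Lambda)$ gives boundedness of $\eta$; and finally set $\psi'=\kappa\psi''$ and remove the normalization by the scaling argument at the end of the proof. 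Without the finiteness of $\kappa$ and the realization of $\eta$ as a limit of positive maps rather than a formal difference, the converse is not established, so as written your proposal proves only half the theorem.
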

\begin{proof}  Assume the hypothesis of first paragraph of the theorem.  We
assume the notation of Theorem 4.7 so that
$$
\vartheta \vert_t \Lambda = w_t\iota + \zeta_t + \zeta_t^* + R_t
$$
for $t > 0$.  Let $\vartheta ^{\prime} = \vartheta - \eta$ and let $T_t =
\eta \vert_t \Lambda$ so
$$
\vartheta ^{\prime} \vert_t \Lambda = w_t\iota + \zeta_t + \zeta_t^* + R_t
- T_t
$$
and let $\psi ^{\prime}$ be such that $\psi ^{\prime} \geq \psi + \eta
\Lambda  = \psi + T_o$ and $\psi ^{\prime} + \rho_\vartheta \Lambda  -
\eta \Lambda $ is conditionally negative.  We show $\psi ^{\prime} +
\rho_{\vartheta} \vert_t \Lambda  - \eta \vert_t \Lambda $ is
conditionally negative for $t > 0$.  To see this note that in constructing
$\rho_\vartheta$ from $\vartheta$ from the definition of the $g's$ and $h's$
we find that $\rho_\vartheta \vert_t \Lambda$ is the internal part of
$\vartheta \vert_t \Lambda $ where the internal part of a mapping of
$\mathbb C^p$ into itself was defined in equation 3.4 of the last section.
The reason we use the cut off at $t > 0$ is because $\vartheta$ is unbounded.
To be more specific for the case at hand suppose $t > 0$ and let
$L_t = \vartheta\vert_t\Lambda ,\medspace L _t^{\prime} =
\vartheta^{\prime}\vert_t\Lambda = (\vartheta - \eta )\vert_t \Lambda$ and
$L_t^{\prime\prime} = \eta\vert_t\Lambda$ and let $K_t,\medspace K_t^{\prime }$
and $K_t^{\prime\prime}$ be the internal parts of $L_t,\medspace L_t^{\prime }$
and $L_t^{\prime\prime}$, respectively, as defined in equation 3.4 of the last
section.  Note that $K_t = \rho_\vartheta\vert_t \Lambda,\medspace K_t^{\prime}
= \rho _{\vartheta^{\prime}}\vert_t \Lambda $ and $K_t- K_t^{\prime\prime} =
K_t^{\prime}.  $ Note that since $\rho_\vartheta ,\medspace
\rho_{\vartheta^{\prime}}$ and $\eta$ are bounded $K_t,K _t^{\prime}$ and
$K_t^{\prime\prime}$ converge in norm as $t \rightarrow 0$.  Note that
$K_t^{\prime}$ is non increasing in $t$ we have so $K_o^{\prime} \geq
K_t^{\prime}$ for $t > 0$.  Since the difference between a map and its internal
part is conditionally zero we have $\psi^{\prime} +
\rho_\vartheta\vert_t\Lambda - \eta\vert_t\Lambda$ is conditionally negative if
and only if $\psi^{\prime} + K_t^{\prime}$ is conditionally negative and since
we are given that $\psi^{\prime}+K_t^{\prime}$ is conditionally negative at
$t = 0$ we have $\psi^{\prime} + \rho_\vartheta \vert_t\Lambda -
\eta\vert_t\Lambda$ is conditionally negative for $t > 0$.

We have
$$
\psi ^{\prime}(I) \geq \psi (I) + \eta (\Lambda ) \geq \vartheta (I-\Lambda
) + \eta (\Lambda ) = \vartheta ^{\prime}(I-\Lambda ) + \eta (I)
\geq \vartheta ^{\prime}(I-\Lambda )
$$
so by Theorem 4.7 we have $\omega ^{\prime} = \psi ^{\prime -1}
\vartheta ^{\prime}$ is a $q$-weight map over $\mathbb C^p$.  Let $\pi_t^\#$ and
$\pi_t^{\prime\#}$ be the generalized boundary representations of $\omega$
and $\omega ^{\prime}$, respectively.  Then we have
$$
\pi_t^\# = (\iota + w_t^{-1}(\zeta_t + \zeta_t^* + R_t + \psi))
^{-1}w_t^{-1 }\vartheta \vert_t  = w_t^{-1}Z_t^{-1}\vartheta \vert_t
$$
and
$$
\pi_t^{\prime\#} = (\iota + w_t^{-1}(\zeta_t+\zeta_t^*+R_t-T_t+\psi
^{\prime})) ^{-1}w_t^{-1}(\vartheta-\eta ) \vert_t  = w_t^{-1}Z_t^{\prime
-1}( \vartheta-\eta ) \vert_t
$$
so
\begin{align*}
\pi_t^\# - \pi_t^{\prime\#} &= w_t^{-1}(Z_t^{-1}\vartheta \vert_t -
Z_t^{\prime -1}(\vartheta-\eta ) \vert_t )
\\
&= w_t^{-1}Z_t^{-1}(Z_t^{\prime}Z_t^{\prime -1}\vartheta \vert_t -
Z_tZ_t^{\prime -1}(\vartheta-\eta ) \vert_t )
\\
&= w_t^{-1}(Z_t^{-1}(Z_t^{\prime}-Z_t)Z_t^{\prime-1}\vartheta \vert_t +
Z_t^{\prime-1}\eta \vert_t )
\\
&= w_t^{-1}(Z_t^{-1}(\psi ^{\prime}-T_t-\psi )Z_t^{\prime-1}\vartheta
\vert_t  + Z_t^{\prime -1}\eta \vert_t ).
\end{align*}
Since $\psi ^{\prime} \geq \psi + \eta \Lambda  \geq \psi + \eta \vert_t
\Lambda  = \psi + T_t$ we have $\psi ^{\prime} - T_t - \psi \geq 0$.  Note
$Z_t^{-1}$ and $Z_t^{\prime -1}$ are completely positive from Lemma 3.3 and
since the other terms $\vartheta\vert_t$ and $\eta \vert_t$  are completely
positive we have $\pi_t^\# \geq \pi_t^{\prime\#}$ for $t > 0$ so
$\omega ^{\prime}$ is a $q$-subordinate of $\omega$.

Now suppose $\omega$ is a $q$-weight map over $\mathbb C^p$ of index zero and
the range of $\omega$ is $B(\mathbb C^p)$ and $\omega ^{\prime}$ is a non zero
$q$-subordinate of $\omega$.  Then from Theorem 4.3 we know that $\omega
^{\prime}$ is of index zero.  Suppose $\pi _t^\#$ and $\pi_t^{\prime\#}$
are the generalized boundary representations of $\omega$ and $\omega
^{\prime}$, respectively.  Since $\omega \geq_q \omega ^{\prime}$ we have
$\pi_t^\# \geq \pi_t^{\prime \#}$ for $t > 0$ and we know $\pi_t^\#\Lambda
\rightarrow \iota$ and $\pi_t^{\prime \#}\Lambda (A) \rightarrow A$ for all $A
\in Range(\omega ^{\prime})$.   Let $L^{\prime}$ be a limit point of
the $\pi_t^{\prime\#}\Lambda$ as $t \rightarrow 0+$.  Then we have $\iota
\geq L^{\prime} \geq 0$ and since $\iota$ is pure as a completely positive
map we have $L^{\prime} = s\iota$ for $s \in [0,1]$.  Since $L^{\prime}(A)
=$ A for $A$ in the range of $\omega ^{\prime}$ and since $\omega
^{\prime}$ is not zero there are non zero $A$ in the range of $\omega
^{\prime}$.  Hence $L^{\prime} = \iota$ and since this is true for any
limit point we have $\pi_t^{\prime\#}\Lambda \rightarrow \iota$ as $t
\rightarrow 0+$.  Hence, $\omega ^{\prime}$ has range $B(\mathbb C^p)$ so we
know that $\omega ^{\prime} = \psi ^{\prime -1} \vartheta ^{\prime}$ where
$\psi ^{\prime}$ is a coefficient map for $\omega ^{\prime}$ and $\vartheta
^{\prime}$ is the corresponding limiting $b$-weight map of $\omega ^{\prime}$.

In writing $\omega = \psi^{-1}\vartheta$ the maps $\psi$ and $\vartheta$
are not unique in that we can multiply both of them by the same positive
number and $\omega$ is unchanged.  Although the maps are not unique we can
make them unique by requiring $tr(\psi^{-1}(I)) = 1$.  We will assume that
$\psi$ has been normalized so that $tr(\psi^{-1}(I)) = 1$ and prove the
theorem under this additional hypothesis.  Then at the end of the proof we
will show that the truth of theorem under this hypothesis implies the
theorem is true in general.

Then with this hypothesis on $\psi$ we see from the argument in Theorem 4.3
that
$$
\psi_t = tr(I + \phi_t(I))(\iota + \phi_t)^{-1} \rightarrow \psi.
$$
Now for the $q$-weight map $\omega ^{\prime}$ we express this in the from
$\omega ^{\prime} = \psi ^{\prime\prime -1}\vartheta ^{\prime\prime}$ where
we require $tr(\psi ^{\prime\prime -1}(I)) = 1$.  The reason we use the
double prime is because $\psi ^{\prime\prime}$ will turn out to be a
multiple of the $\psi ^{\prime}$ in the statement of the theorem.  Then in
terms of $\psi ^{\prime\prime}$ we have
$$
\psi_t^{\prime\prime} = tr(I + \phi_t^{\prime}(I))(\iota + \phi_t^{\prime})
^{-1} \rightarrow \psi ^{\prime\prime}.
$$
Since $\omega \geq_q \omega ^{\prime}$ we have $\pi_t^\# \geq
\pi_t^{\prime\#}$ so
$$
tr(I + \phi_t(I))(\pi_t^\# - \pi_t^{\prime\#} ) = \psi_t\omega \vert_t -
\frac {tr(I+\phi_t (I))} {tr(I+\phi_t ^{\prime}(I))}
\psi_t^{\prime\prime}\omega^{\prime} \vert_t  \geq 0
$$
for $t > 0$.  Since for some number $r$ it is not true that $\psi\omega
\geq r\psi ^{\prime\prime}\omega ^{\prime}$ it follows that
$$
\kappa = \lim \sup_{t\rightarrow 0+} \frac {tr(I+\phi_t(I))} {tr(I+\phi_t
^{\prime}(I)) }  \leq r.
$$
Now let $t_k > 0$ be a decreasing sequence converging to zero and
$$
\lim_{k\rightarrow\infty} \frac {tr(I+\phi_{t_k}(I))} {tr(I+\phi_{t_k}
^{\prime} (I))}  = \kappa .
$$
Then we find
$$
\lim_{k\rightarrow\infty}  \psi_{t_k}\omega \vert_{t_k}  - \frac {tr(I +
\phi_{t_k}(I))} {tr(I+\phi_{t_k} ^{\prime}(I))} \psi_{t_k}^{\prime\prime}
\omega^{\prime}\vert_{t_k} = \vartheta - \kappa\vartheta ^{\prime\prime}
= \eta \geq 0.
$$
Where the last equality is the definition of $\eta$.

Now we compute
$$
tr(I+\phi_{t_k}(I))(\pi_{t_k}^\#\Lambda - \pi_{t_k}^{\prime\#}\Lambda ) =
\frac {tr(I+\phi_{t_k}(I))} {tr(I+\phi_{t_k} ^{\prime}(I))}
\psi_{t_k}^{\prime\prime} - \psi_{t_k} \rightarrow \kappa\psi
^{\prime\prime} - \psi.
$$
Now for any $s > 0$ we have
$$
tr(I+\phi_{t_k}(I))(\pi_{t_k}^\#- \pi_{t_k}^{\prime\#} )\Lambda \geq tr(I+
\phi_{t_k}(I))(\pi_{t_k}^\#- \pi_{t_k}^{\prime\#} )\vert_s \Lambda
$$
and as $k \rightarrow \infty$ the right hand side of the above inequality
converges to $(\vartheta-\kappa\vartheta ^{\prime\prime}) \vert_s \Lambda$
and the left hand side converges to $\kappa\psi ^{\prime\prime} - \psi$.
Hence, we have
$$
\kappa\psi ^{\prime\prime} - \psi \geq (\vartheta - \kappa\vartheta
^{\prime\prime})  \vert_s \Lambda = \eta \vert_s \Lambda
$$
for $s > 0$.  Hence, $\eta \vert_s \Lambda$ is uniformly bounded and since
$$
\eta \vert_t (I) = \eta \vert_t (\Lambda ) + \eta \vert_t (I - \Lambda )
\leq \kappa\psi ^{\prime\prime}(I) - \psi (I) + \vartheta (I - \Lambda )
$$
we have $\eta \vert_t (I)$ is uniformly bounded and, hence, $\eta$ is
bounded and $\eta \Lambda  \leq \kappa\psi ^{\prime\prime} - \psi$ and
$\vartheta ^{\prime\prime} = (1/\kappa )(\vartheta - \eta )$.  Now we
define $\psi ^{\prime} = \kappa\psi ^{\prime\prime}$ and $\vartheta
^{\prime} = \kappa\vartheta ^{\prime\prime}$ and in terms of $\psi
^{\prime}$ we have
$$
\omega ^{\prime} = \psi ^{\prime -1}(\vartheta - \eta )\qquad \text{and }
\qquad \psi ^{\prime} \geq \psi + \eta \Lambda.
$$

So all that remains is to show that $\psi ^{\prime} + \rho_\vartheta
\Lambda  - \eta \Lambda $ is conditionally negative.
Since $\omega ^{\prime}$ is a $q$-weight map we have from Theorem 4.7 that
$\psi ^{\prime} + \rho_{\vartheta^{\prime}}\Lambda $ is conditionally
negative.  But as we saw earlier in the first paragraphs of this proof
$\rho_{\vartheta}\Lambda - \rho_{\vartheta^{\prime}}\Lambda - \eta \Lambda$
is conditionally zero which yields the desired result.

Now we have proved the theorem with this particular normalization of $\psi$
and $\vartheta$.  What about other normalizations?  If you examine the
statement of the theorem and make the following replacement, $\psi
\rightarrow s\psi ,\medspace \vartheta  \rightarrow s\vartheta ,\medspace
\psi ^{\prime}\rightarrow s\psi ^{\prime},\medspace \eta \rightarrow s\eta$
and note with these replacements one has $\rho_\vartheta \Lambda
\rightarrow s\rho_\vartheta \Lambda $ you see all the statements in the
theorem are replaced by equivalent statements.  Therefore, it follows that
since the theorem is true for a specific normalization for $\psi$ and
$\vartheta$ it follows that the theorem is true in general.   \end{proof}

\begin{thm}
Suppose $\omega$ is a $q$-weight map over $\mathbb C^p$
of index zero and range $B(\mathbb C^p)$ and $\omega = \psi^{-1}\vartheta$
with $\psi$ a coefficient map of $\omega$ and $\vartheta$ the corresponding
limiting $b$-weight map.  Then $\omega$ is $q$-pure if and only if $\psi +
\rho_\vartheta \Lambda $ is conditionally zero and $\vartheta$ is
strictly infinite meaning that if $\eta$ is a completely positive bounded
$B(\mathbb C^p)$ valued normal weight map on $\mathfrak A (\mathbb C^p)$ and
$\eta$ is a subordinate of $\vartheta$ so $\vartheta \geq \eta$ then $\eta = 0$.
\end{thm}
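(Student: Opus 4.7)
The plan is to combine the description of all $q$-subordinates of $\omega$ from Theorem 4.9 with the order-parameter uniqueness of Theorem 3.2. For the reverse direction, suppose both conditions hold and let $\omega'$ be any $q$-subordinate of $\omega$. By Theorem 4.9, $\omega' = \psi'^{-1}(\vartheta - \eta)$ for some bounded completely positive $\eta \leq \vartheta$ and some $\psi'$ with $\psi' \geq \psi + \eta\Lambda$ and $\psi' + \rho_\vartheta\Lambda - \eta\Lambda$ conditionally negative. Strict infiniteness forces $\eta = 0$, so $\omega' = \psi'^{-1}\vartheta$ with $\psi' \geq \psi$ and $\psi' + \rho_\vartheta\Lambda$ conditionally negative. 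Setting $\alpha = \psi' - \psi \geq 0$, the identity $(\psi' + \rho_\vartheta\Lambda) - (\psi + \rho_\vartheta\Lambda) = \alpha$ presents $\alpha$ as the difference of a conditionally negative map and a conditionally zero map, hence conditionally negative. Being both completely positive and conditionally negative, $\alpha = s\iota$ for some $s \geq 0$ by the classification in Section 3. Every $q$-subordinate therefore has the form $(\psi + s\iota)^{-1}\vartheta$, and Theorem 4.7 makes the family totally ordered, so $\omega$ is $q$-pure.

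For the forward direction, assume $\omega$ is $q$-pure. The trivial subordinates correspond to the $\eta = 0$ case of Theorem 4.9, so they are $q$-subordinates and hence totally ordered; the discussion preceding Definition 4.8 translates this into $\psi + \rho_\vartheta\Lambda$ being conditionally zero. To prove $\vartheta$ strictly infinite, suppose for contradiction that $\eta \neq 0$ is a bounded completely positive subordinate of $\vartheta$. Set $\psi'_\eta = \psi + \eta\Lambda$; then $\psi'_\eta + \rho_\vartheta\Lambda - \eta\Lambda = \psi + \rho_\vartheta\Lambda$ is conditionally negative by Theorem 4.7, so Theorem 4.9 produces a $q$-subordinate $\omega'_\eta = (\psi + \eta\Lambda)^{-1}(\vartheta - \eta)$ of $\omega$. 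Fix a faithful normal state $\rho$ on $B(\mathbb C^p)$. The continuous map $s \mapsto \omega_s(\rho)(I - \Lambda)$ on the trivial family $\omega_s = (\psi + s\iota)^{-1}\vartheta$ equals $\omega(\rho)(I - \Lambda)$ at $s = 0$ and tends to $0$ as $s \to \infty$, so the intermediate value theorem furnishes $s^* \geq 0$ with $\omega_{s^*}(\rho)(I - \Lambda) = \omega'_\eta(\rho)(I - \Lambda)$; Theorem 3.2 then yields $\omega'_\eta = \omega_{s^*}$.

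To convert this identity into $\eta = 0$, evaluate the equality $(\psi + \eta\Lambda)^{-1}(\vartheta - \eta) = (\psi + s^*\iota)^{-1}\vartheta$ at $A = E(t, \infty)\Lambda(B)E(t, \infty)$ for $B \in B(\mathbb C^p)$. Multiplying through by $\psi + \eta\Lambda$ on the left and rearranging gives
\begin{equation*}
(s^*\iota - \eta\Lambda)(\psi + s^*\iota)^{-1}\vartheta\vert_t\Lambda(B) = \eta\vert_t\Lambda(B).
\end{equation*}
Using the decomposition $\vartheta\vert_t\Lambda = w_t\iota + \zeta_t + \zeta_t^* + R_t$ from Lemma 4.5 together with $\Vert\zeta_t\Vert^2/w_t \to 0$ from Lemma 4.6 and uniform boundedness of $R_t$, divide by $w_t$ and let $t \to 0+$ to extract $(s^*\iota - \eta\Lambda)(\psi + s^*\iota)^{-1}(B) = 0$ for every $B$. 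Invertibility of $(\psi + s^*\iota)^{-1}$ forces $\eta\Lambda = s^*\iota$, so $\psi + \eta\Lambda = \psi + s^*\iota$, and the identity $\omega'_\eta = \omega_{s^*}$ reduces to $\vartheta - \eta = \vartheta$, whence $\eta = 0$, contradicting the choice. The crucial technical step, and main obstacle, is this asymptotic analysis: it hinges on the precise decomposition from Lemma 4.5 and the growth control of Lemma 4.6.
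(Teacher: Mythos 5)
Your proposal is correct, and its reverse direction and the ``conditionally zero'' half of the forward direction coincide with the paper's argument (Theorem 4.9 plus strict infiniteness to kill $\eta$, then the observation that a map which is simultaneously completely positive and conditionally negative is $s\iota$; and the discussion preceding Definition 4.8 for the trivial subordinates). Where you genuinely depart from the paper is in proving that $q$-purity forces $\vartheta$ to be strictly infinite. The paper's route is direct: given a nonzero bounded subordinate $\eta$ of $\vartheta$, it forms the two subordinates $\omega' = (\psi+\eta\Lambda)^{-1}(\vartheta-\eta)$ and $\omega'' = (\psi+s\iota)^{-1}\vartheta$, computes $w_t(\pi_t^{\prime\prime\#}-\pi_t^{\prime\#}) \rightarrow \eta - s\vartheta$, and chooses $s>0$ small enough that $(s\vartheta-\eta)(E(r,\infty))$ is not positive while $\eta - s\vartheta$ can never be positive (bounded versus unbounded); the two subordinates are then incomparable, contradicting $q$-purity. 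You instead invoke the bijection of Theorem 3.2 to force $\omega'_\eta$ to coincide with some trivial subordinate $\omega_{s^*}$, and then extract $\eta = 0$ from the identity $\eta\vert_t\Lambda = (s^*\iota - \eta\Lambda)(\psi+s^*\iota)^{-1}\vartheta\vert_t\Lambda$ by dividing by $w_t$ and letting $t\rightarrow 0+$ (indeed, once the map $M = (s^*\iota-\eta\Lambda)(\psi+s^*\iota)^{-1}$ is shown to vanish, you get $\eta = M\vartheta = 0$ immediately, without the intermediate step $\eta\Lambda = s^*\iota$). Both arguments are sound and rest on the same technical inputs (Lemma 4.5, Lemma 4.6, Theorem 4.9). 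The trade-off: your version avoids the somewhat delicate two-sided non-comparability argument and the choice of $s$, but it imports the full strength of Theorem 3.2 (whose proof uses an ultrafilter/compactness construction), whereas the paper's version needs only the elementary definition of $q$-purity at this point. One small point worth making explicit if you write this up: the injectivity in Theorem 3.2 is exactly where $q$-purity enters (two subordinates with equal value of $\rho(\,\cdot\,(I-\Lambda))$ must first be comparable before equality can be concluded), so your application is legitimate under the standing hypothesis of the forward direction but would be circular anywhere else.
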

\begin{proof}  Assume the hypothesis of the theorem.  From Theorem 4.7 we
know that if $\psi ^{\prime}$ is a hermitian map of $B(\mathbb C^p)$ into
itself and
$$
\psi ^{\prime} \geq \psi\qquad \text{and} \qquad \psi ^{\prime} +
\rho_\vartheta \Lambda
$$
is conditionally negative then $\omega ^{\prime} = \psi ^{\prime
-1}\vartheta$ is a $q$-subordinate of $\omega$ and if $\psi
^{\prime\prime}$ is a second such map satisfying these conditions and
$\omega ^{\prime\prime} = \psi ^{\prime\prime -1}\vartheta$ then $\omega
^{\prime} \geq_q \omega ^{\prime\prime}$ if and only if $\psi ^{\prime}
\leq \psi ^{\prime\prime}$.  Let $S$ be the set of $\psi ^{\prime}$ satisfying
the conditions above.  It follows that if $\omega$ is $q$-pure then $S$ must be
totally ordered.  Note if $\psi ^{\prime} = \psi + s\iota$ with $s \geq 0$ then
$\psi ^{\prime} \in S$ so if $S$ is totally ordered then $S$ simply consists of
$\psi ^{\prime} = \psi + s\iota$ with $s \geq 0$ and this is the case if
and only if $\psi + \rho_\vartheta \Lambda $ is conditionally zero.
Hence, if $\omega$ is $q$-pure we have $\psi + \rho _\vartheta \Lambda $
is conditionally zero.

Next suppose there is a bounded subordinate $\eta$ of $\vartheta$ and $\eta
\neq 0$.  Let $\psi ^{\prime} = \psi + \eta \Lambda$.  Then
$$
\psi ^{\prime} + \rho_\vartheta \Lambda  - \eta \Lambda = \psi +
\rho_\vartheta \Lambda
$$
is conditionally negative and by Theorem 4.9 it follows that $\omega
^{\prime} = \psi ^{\prime -1}(\vartheta - \eta )$ is a $q$-subordinate of
$\omega$.  To show that $\omega$ is not $q$-pure let $\psi ^{\prime\prime}
= \psi + s\iota$ where $s$ is a positive number of our choosing and let
$\pi_t^{\prime\#}$ and $\pi_t^{\prime\prime\#}$ be the generalized boundary
representation of $\omega ^{\prime}$ and $\omega ^{\prime\prime}$,
respectively.  Then recalling the computations of the previous theorem we
have
$$
\pi_t^{\prime\prime\#} = (\iota + w_t^{-1}(\zeta_t + \zeta_t^* + R_t + \psi
+ s\iota ))^{-1}w_t^{-1}\vartheta \vert_t  = w_t^{-1}Z_t^{\prime\prime
-1}\vartheta \vert_t
$$
and we have
$$
w_t(\pi_t^{\prime\prime\#} - \pi_t^{\prime\#} ) = Z_t^{-1}(T_o-T_t-s\iota
)Z_t^{\prime -1}\vartheta \vert_t  + Z_t^{\prime -1}\eta \vert_t
\rightarrow \eta - s\vartheta .
$$
We recall that $E(r,\infty )$ is the projection in $B(\mathbb C^p \otimes
L^2(0, \infty ))$ on functions with support in $[r,\infty )$.  Since $\eta
\neq 0$ we there is a $r > 0$ so that $\eta (E(r,\infty )) \neq 0$.  Now
choose $s > 0$ so small that $(s\vartheta - \eta )(E(r,\infty ))$ is not
positive.  For this choice of $s$ we have it is not true that $\omega
^{\prime\prime} \geq_q \omega ^{\prime}$ since $\eta - s\vartheta$ can not
be positive since $\eta$ is bounded and $\vartheta$ is unbounded.  It is
also not true that $\omega ^{\prime} \geq_q \omega ^{\prime\prime}$ since
$(s\vartheta - \eta )E(r,\infty )$ is not positive.  Hence, there are two
subordinates of $\omega$ that are not ordered so $\omega$ is not $q$-pure.

Hence, we have shown that if $\omega$ is $q$-pure $\psi + \rho_\vartheta
\Lambda $ is conditionally zero and $\vartheta$ is strictly infinite.
Now suppose $\omega$ satisfies the conditions of the theorem and $\omega
^{\prime}$ is a $q$-subordinate of $\omega$.  By the previous theorem we
have $\omega ^{\prime} = \psi ^{\prime -1}(\vartheta - \eta )$ where $\eta$
is a bounded subordinate of $\vartheta$ and
$$
\psi ^{\prime} \geq \psi + \eta \Lambda \qquad \text{and} \qquad \psi
^{\prime} + \rho_\vartheta \Lambda  - \eta \Lambda
$$
is conditionally negative.  Since $\vartheta$ is given to be strictly
infinite we have $\eta = 0$ so $\psi ^{\prime}$ satisfies the conditions
above with $\eta = 0$.  Since $\psi + \rho_\vartheta \Lambda $ is
conditionally zero we have $\psi ^{\prime} - \psi \geq 0$ and conditionally
negative so $\psi ^{\prime} = \psi + s\iota$ with $s \geq 0$.   Hence, the
$q$-subordinates of $\omega$ are totally ordered.  \end{proof}

In the next lemma we give conditions that $\vartheta$ be strictly infinite.

\begin{thm}
Suppose $\vartheta$ is a $B(\mathbb C^p)$ valued $b$-weight map
on $\mathfrak A (\mathbb C^p)$ of the form
$$
\vartheta_{ij}(A) = \sum_{k\in J} ((g_{ik}+h_{ik}),A(g_{jk}+h_{jk}))
$$
where the $g_{ik},h_{ik} \in \mathbb C^p\otimes L_+^2(0,\infty )$ and
$$
(g_{ik})_j(x) = \delta_{ij}g_k(x)\qquad \text{and} \qquad \sum_{k\in J}
\sum _{i=1}^p h_{ik}(x) = 0
$$
and
$$
\sum_{k\in J} \Vert h_{ik}\Vert^2 < \infty
$$
for $A \in \mathfrak A (\mathbb C^p),\medspace x \geq 0,\medspace i,j \in \{
1,\cdots ,p\}$ and $k \in J$ a countable index.  Let
$$
\mu (A) = \sum_{k\in J} (g_k,Ag_k)
$$
for $A \in \mathfrak A (\mathbb C )$.  Then $\vartheta$ is strictly infinite if
and only if $\mu$ is strictly infinite and the $h^{\prime}s$ are linearly
independent over the $g^{\prime}s$ by which we mean that if $c \in \ell^2(J)$
and
$$
\sum_{k\in J} c_kg_k = 0
$$
then
$$
\sum_{k\in J} c_kh_{ik}  = 0
$$
for each $i = 1,\cdots ,p$.
\end{thm}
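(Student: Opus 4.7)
The plan is to parametrize bounded subordinates $\eta$ of $\vartheta$ in terms of coefficient sequences in $\ell^2(J)$, and then read off both directions of the equivalence from this parametrization. Setting $F_{ik}=g_{ik}+h_{ik}$, we have $\vartheta_{ij}(A)=\sum_{k}(F_{ik},AF_{jk})$, and after passing to a minimal decomposition we may assume the family $\{F_{ik}\}$ is linearly independent in the sense recalled in the general theory of normal completely positive maps. That theory says that any normal completely positive subordinate of the bounded truncation $\vartheta\vert_t$ is a finite or countable sum of rank-one pieces $A\mapsto (G_i,AG_j)$ with $G_i=\sum_k c_k F_{ik}$ for some $c\in\ell^2(J)$ satisfying $\sum_k |c_k|^2\le 1$. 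Applying this at each cutoff $\eta\vert_t\le\vartheta\vert_t$ and passing to the limit as $t\to 0+$, any $\eta\in B(\mathbb C^p)\otimes\mathfrak A(\mathbb C^p)_*$ with $\vartheta\ge\eta\ge 0$ admits a representation
$$\eta_{ij}(A)=\sum_{\ell\in J'}(G_{i\ell},AG_{j\ell}),\qquad G_{i\ell}=\sum_{k\in J}c^{(\ell)}_k F_{ik},$$
for coefficients $c^{(\ell)}\in\ell^2(J)$ (indexed by a countable set $J'$) obeying a joint $\ell^2$-contractivity bound.

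Next I would analyze when such an $\eta$ is bounded in the ordinary sense, i.e.\ lies in $B(\mathbb C^p)\otimes B(H)_*$. Since $(g_{ik})_j(x)=\delta_{ij}g_k(x)$, the $g$-part of $G_{i\ell}$ is the vector whose $i$-th coordinate is $\sum_k c^{(\ell)}_k g_k$ and whose other coordinates vanish; by $\sum_k \Vert h_{ik}\Vert^2<\infty$ and Cauchy-Schwarz, the $h$-part $\sum_k c^{(\ell)}_k h_{ik}$ always lies in $\mathbb C^p\otimes L^2(0,\infty)$. Therefore $\eta$ is bounded if and only if $\sum_k c^{(\ell)}_k g_k$ belongs to $L^2(0,\infty)$ for every $\ell$.

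Both directions of the theorem now follow. Suppose $\vartheta$ has a nonzero bounded subordinate, and pick $\ell$ and $i$ with $G_{i\ell}\ne 0$. If $\sum_k c^{(\ell)}_k g_k\ne 0$ then the scalar functional $A\mapsto \bigl(\sum_k c^{(\ell)}_k g_k,\ A\sum_k c^{(\ell)}_k g_k\bigr)$ is a nonzero bounded subordinate of $\mu$, so $\mu$ fails to be strictly infinite. Otherwise $\sum_k c^{(\ell)}_k g_k=0$, and the nonzero $G_{i\ell}$ forces $\sum_k c^{(\ell)}_k h_{ik}\ne 0$, violating linear independence of the $h$'s over the $g$'s. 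Conversely, if $\mu$ has a nonzero bounded subordinate with witness $c\in\ell^2(J)$ (so $\sum_k c_k g_k$ is nonzero and in $L^2$), or if the $h$'s are linearly dependent over the $g$'s with witness $c$ (so $\sum_k c_k g_k=0$ but $\sum_k c_k h_{ik}\ne 0$ for some $i$), then the rank-one map $\eta_{ij}(A)=(G_i,AG_j)$ with $G_i=\sum_k c_k F_{ik}$ is a nonzero bounded subordinate of $\vartheta$.

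The main obstacle is the first step: rigorously justifying the coefficient parametrization for subordinates of the unbounded $b$-weight map $\vartheta$, which is not directly covered by the classical Radon-Nikodym statement for bounded normal completely positive maps. I would address this by applying the classical statement to each bounded truncation $\vartheta\vert_t\ge\eta\vert_t$, producing coefficient families $c^{(\ell)}(t)$ with uniform $\ell^2$-contractivity bounds, and then extracting a limiting family as $t\to 0+$ via weak compactness; the uniform bound $\sum_k \Vert h_{ik}\Vert^2<\infty$ ensures the limiting sums converge and genuinely represent $\eta$. A secondary technical point is that the minimal decomposition may need to be chosen carefully to preserve the structural separation between the $g_{ik}$'s and $h_{ik}$'s, but this is routine.
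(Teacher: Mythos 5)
Your proof is correct and follows essentially the same route as the paper's: the paper likewise reduces the statement to the equivalence ``$\vartheta$ is strictly infinite if and only if every $c\in\ell^2(J)$ with $F_i=\sum_k c_k(g_{ik}+h_{ik})\in\mathbb C^p\otimes L^2(0,\infty)$ forces $F_i=0$,'' using the Radon--Nikodym-type parametrization of subordinates from Section~1, and then runs the same case analysis on whether $\sum_k c_kg_k$ vanishes. The only difference is presentational: the paper treats the coefficient parametrization as immediate from the $\Vert\cdot\Vert_+$ framework rather than via truncation and weak limits, and argues the forward implication directly rather than contrapositively.
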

\begin{proof}  Assume hypothesis and notation of the theorem.  Now
$\vartheta$ is strictly infinite if and only if for $c \in \ell^2(J)$ so
that
$$
F_i = \sum_{k\in J} c_k(g_{ik}+h_{ik}) \in \mathbb C^p\otimes L^2(0,\infty )
$$
for each $i = 1,\cdots ,p$ then $F_i = 0$ for each $i = 1,\cdots ,p$.
Suppose that $\mu$ is strictly infinite and the $h_{ik}$ are linearly
independent over the $g_k$.  Suppose $c \in \ell^2(J)$ and the $F_i$ above
are in $\mathbb C ^p\otimes L^2(0,\infty )$ for each $i = 1,\cdots ,p$. since
the sum of the $c_kh_{ik}$ is in $\mathbb C^p \otimes L^2(0,\infty )$ it
follows that the sum of the $c_kg_{ik}$ is in $\mathbb C^p\otimes L^2(0,\infty
)$ and, hence,
$$
\sum_{k\in J} c_kg_k \in L^2(0,\infty )
$$
but since $\mu$ is strictly infinite the above sum is zero and since the
$h_{ik}$ are linearly independent over the $g_k$ we have the sum of the
$c_kh _{ik}$ is zero so $F_i = 0$ for $i = 1,\cdots ,p$ and $\vartheta$ is
strictly infinite.

Now suppose $\mu$ is not strictly infinite.  Then there is a $c \in
\ell^2(J)$ so that
$$
g = \sum_{k\in J} c_kg_k  \in L^2(0,\infty )\qquad \text{and} \qquad
\sum_{k \in J} \vert c_k\vert^2 = 1
$$
and $g \neq 0$.  Let
$$
F_i = \sum_{k\in J} c_k(g_{ik}+h_{ik}).
$$
Since the sum of the $c_kh_{ik}$ is in $\mathbb C^p\otimes L^2(0,\infty )$
then $F _i \in \mathbb C^p\otimes L^2(0,\infty )$ for each $i = 1,\cdots ,p$.
Now from the condition on the $h_{ik}$ we have
$$
g(x) = \frac {1} {p} \sum_{i=1}^p (F_i)_i(x)
$$
so the $F_i$ can not all be zero so $\vartheta$ is not strictly infinite.

Now suppose the $h_{ik}$ are not linearly independent over the $g_k$.  Then
there is a $c \in \ell^2(J)$ so that
$$
\sum_{k\in J} c_kg_k = 0
$$
and
$$
\sum_{k\in J} c_kh_{ik}  \neq 0
$$
for some $i \in \{ 1,\cdots ,p\}$.  Then we have
$$
F_i = \sum_{k\in J} c_k(g_{ik} + h_{ik}) = \sum_{k\in J} c_kh_{ik} \in \mathbb
C ^p\otimes L^2(0,\infty )
$$
for each $i \in \{ 1,\cdots ,p\}$ and not all the $F_i$ are zero.  Hence,
$\vartheta$ is not strictly infinite.  \end{proof}

\section{Index zero with general range}

As mentioned in the last section Choi and Effros showed that if $L$ is a
completely positive contractive linear mapping of $B(\mathbb C^p)$ into itself
that is idempotent, (i.e. $L^2 = L)$ then $\mathcal{L}$ the range of $L$ is a
$*$-subalgebra of $B(\mathbb C^p)$ where the product in $\mathcal{L}$ is given
by $A \star B = L(AB)$.  Because of the importance of this results and to
establish notation we present a proof in the special simple case where
$B(H)=B(\mathbb C^p)$ is finite dimensional.  Note the element $I_o = L(I)$ is
the unit of $\mathcal{L}$ since $I_o \star A = A \star I_o = A$ for all
$A \in \mathcal{L}$.

\begin{thm}
Suppose $L$ is a completely positive contractive
linear mapping of $B(\mathbb C^p)$ into itself that is idempotent (i.e. $L^2 =
L)$.  Then there is a unique projection $F$ so that $L(A)=L(FAF)$ and $F$
is the smallest projection so that $L(F) = L(I)= I_o$.  If $\phi$ is the mapping
of $FB(\mathbb C^p)F$ into itself given by $\phi (A) = FL(A)F$ then the range
of $\phi$ is a $*$-algebra so
$$
\phi (\phi (A)\phi (B)) = \phi (A)\phi (B)\qquad \text{and} \qquad \phi (A^
*) = \phi (A)^*
$$
for $A,B \in B(\mathbb C^p)$.  The mapping $\phi$ is faithful in that if $A
\in FB(\mathbb C^p)F$ and $A \geq 0$ then if $\phi (A) = 0$ then $A = 0$.

Every operator in the range of $L$ commutes with $F$ so
$$
L(A) = \phi (A) + (I - F)L(\phi (A))(I - F)
$$
for all $A \in B(\mathbb C^p)$.  Both $L$ and $\phi$ have the Choi-Effros
property that
$$
L(L(A)B) = L(L(A)L(B)) = L(AL(B))
$$
and
$$
\phi (\phi (A)B) = \phi (\phi (A)\phi (B)) = \phi (A\phi (B))
$$
for $A,B \in B(\mathbb C^p)$.

Given the mapping $\phi$ with the properties listed above one can specify
any completely positive contractive map $\psi$ from the range of $\phi$
into $(I-F)B(\mathbb C^p)(I-F)$ and the mapping $L^{\prime}(A) = \phi (A) +
\psi (\phi (A))$ is a completely positive contractive linear mapping of
$B(\mathbb C^p)$ into itself that is idempotent.
\end{thm}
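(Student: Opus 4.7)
The plan is to exploit a Kraus decomposition $L(A) = \sum_i V_i A V_i^*$ and read off all the claims from the block form of the $V_i$'s relative to $F$. First, I would define $F$ as the projection onto $\sum_i \mathrm{range}(V_i^*)$; tautologically this is the smallest projection with $V_i F = V_i$ for every $i$, equivalently the smallest with $L(I-F) = 0$, i.e.\ with $L(F) = L(I) = I_o$, and uniqueness is immediate from minimality. The relation $V_i F = V_i$ gives $L(A) = L(FAF)$. Applying $L^2(I) = L(I)$ produces $\sum V_i(I - I_o) V_i^* = 0$, hence $V_i(I - I_o) = 0$ and $V_i I_o = V_i$; taking adjoints, $I_o V_i^* = V_i^*$, placing each $\mathrm{range}(V_i^*)$ in the $1$-eigenspace of $I_o$. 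Combined with the definition of $F$, this forces $I_o F = F$, so $I_o$ commutes with $F$, $I_o = F + (I-F)I_o(I-F)$, and $\phi(F) = F I_o F = F$.

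Writing $V_i = \begin{pmatrix} a_i & 0 \\ b_i & 0 \end{pmatrix}$ in the splitting $\mathbb C^p = F\mathbb C^p \oplus (I-F)\mathbb C^p$, the identity $\sum V_i V_i^* = I_o$ yields $\sum a_i a_i^* = I_{F\mathbb C^p}$ and $\sum a_i b_i^* = 0$. Then $\phi(X) = \sum a_i X a_i^*$ is unital completely positive on $FB(\mathbb C^p)F$, and the $(F,F)$-block of $L^2 = L$ gives $\phi^2 = \phi$. Faithfulness on positives drops out: if $\phi(A) = 0$ with $A \geq 0$, each $a_i A a_i^* = 0$ gives $a_i A^{1/2} = 0$, whence $A^{1/2} = A^{1/2}\sum a_i a_i^* = 0$. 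For the $*$-subalgebra property of $\mathrm{range}(\phi)$, the Schwarz inequality for $\phi$ at $A \in \mathrm{range}(\phi)$ gives $\phi(A^*A) \geq A^*A$, and the positive excess $\phi(A^*A) - A^*A$ is annihilated by $\phi$, so by faithfulness $\phi(A^*A) = A^*A \in \mathrm{range}(\phi)$. Polarization upgrades this to closure under ordinary multiplication, yielding $\phi(\phi(A)\phi(B)) = \phi(A)\phi(B)$ and $\phi(A^*) = \phi(A)^*$, and via the multiplicative domain the Choi-Effros identities $\phi(\phi(A)B) = \phi(A)\phi(B) = \phi(A\phi(B))$.

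The hard part will be showing that every operator in the range of $L$ commutes with $F$. In blocks, $L(X)$ has off-diagonal $T(X) := \sum a_i X b_i^*$, and the $(F,I-F)$-block of $L^2 = L$ gives only the weaker relation $T(X) = T(\phi(X))$, so $T$ factors through $\phi$ but is not visibly zero. The trick is to apply the Schwarz inequality to $L$ itself at a self-adjoint $Y \in \mathrm{range}(\phi)$: the subalgebra property supplies $Y^2 \in \mathrm{range}(\phi)$ so $\phi(Y^2) = Y^2$, and the $(F,F)$-block of $L(Y)^* L(Y) \leq L(Y^2)$ collapses to $Y^2 + T(Y)T(Y)^* \leq Y^2$, forcing $T(Y) = 0$. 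Linearity together with the factorization $T(X) = T(\phi(X))$ then gives $T \equiv 0$, so $L(X) = L(FXF)$ is block-diagonal relative to $F$. Combined with $L(\phi(A)) = L(FL(A)F) = L^2(A) = L(A)$, this yields the asserted formula $L(A) = \phi(A) + (I-F)L(\phi(A))(I-F)$. The Choi-Effros identities for $L$ then follow from this formula together with the corresponding identities for $\phi$, using additionally the $(I-F,I-F)$-block constraint $\sum b_i Z b_i^* = 0$ for $Z \in \ker \phi$, a further consequence of $L^2 = L$, to match the lower blocks.

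For the converse, given $\phi$ and $\psi$ as specified I would set $L'(A) := \phi(FAF) + \psi(\phi(FAF))$. This is completely positive as a sum of compositions of completely positive maps. Since the two summands live in the orthogonal corners $FB(\mathbb C^p)F$ and $(I-F)B(\mathbb C^p)(I-F)$, one has $\|L'(I)\| = \|F + \psi(F)\| = \max(1, \|\psi(F)\|) \leq 1$. Idempotency is immediate: $L'(A)$ is block-diagonal, so $FL'(A)F = \phi(FAF)$, whence $L'^2(A) = \phi(\phi(FAF)) + \psi(\phi(\phi(FAF))) = L'(A)$ by $\phi^2 = \phi$.
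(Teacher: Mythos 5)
Your overall architecture is close to the paper's: a Kraus decomposition, $F$ defined as the projection onto the span of the ranges of the $V_i^*$, the Schwarz inequality plus faithfulness for the subalgebra property, and a block analysis relative to $F$. Your route to $T\equiv 0$ (applying the Schwarz inequality to $L$ at a self-adjoint $Y$ in the range of $\phi$ and reading off the $(F,F)$-block of $L(Y)^2\leq L(Y^2)$) is a clean alternative to the paper's argument, which instead shows that every projection in the range of $\phi$ commutes with the operators $FS_i$ and builds the commutation of $\mathrm{range}(L)$ with $F$ from there. The converse direction matches the paper.

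However, your faithfulness argument has a genuine gap, and faithfulness is load-bearing: the subalgebra property, the multiplicative domain, and hence the Choi--Effros identities all rest on it. From $a_iAa_i^*=0$ you correctly get $a_iA^{1/2}=0$, equivalently $A^{1/2}a_i^*=0$; but the step ``whence $A^{1/2}=A^{1/2}\sum_i a_ia_i^*=0$'' is a non sequitur, since $A^{1/2}\sum_i a_ia_i^*=A^{1/2}I_F=A^{1/2}$ identically, and to make the sum vanish term by term you would need $A^{1/2}a_i=0$, which does not follow from $a_iA^{1/2}=0$. Indeed faithfulness cannot follow from unitality of the Kraus family alone: the map $\phi(X)=(f_1,Xf_1)I$ on $B(\mathbb C^p)$, with Kraus operators $a_j=e_{j1}$ satisfying $\sum_j a_ja_j^*=I$, is a unital completely positive idempotent that annihilates the positive element $e_{22}$. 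The correct argument must invoke the minimality of $F$, which your computation never uses: from $\phi(A)=0$ first deduce $L(A)=L(L(A))=L(FL(A)F)=L(\phi(A))=0$, hence $V_iAV_i^*=0$ for every $i$, hence $A^{1/2}V_i^*=0$, hence $A^{1/2}$ annihilates the span of the ranges of the $V_i^*$, i.e.\ $A^{1/2}F=0$, and since $A=FAF$ this forces $A=0$. This is essentially the paper's argument; with that repair the remainder of your proof goes through.
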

\begin{proof}  Suppose $L$ is a linear completely positive contractive
idempotent of $B(\mathbb C^p).  $ Let $L(I) = I_o$.  Since $L$ is completely
positive and contractive we have $0 \leq I_o \leq I$.  Since $L$ has norm one
we have $\Vert I_o\Vert = 1$ so  $I_o = E_o + B$ where $E_o$ is an hermitian
projection and $E_oB = BE_o = 0$ and $0 \leq B \leq sI$ where $s \in [0,1)$
(i.e. $s$ is strictly less than one).

Since $L$ is completely positive we have
$$
L(A) = \sum_{i=1}^m S_iAS_i^*
$$
for $A \in B(\mathbb C^p)$ where the $S_i$ are linearly independent elements of
$B(\mathbb C^p)$.  Since $L(I_o) = L(L(I)) = L(I) = I_o$ we have $L(I-I_o)=0$ so
$$
L(I-I_o) = L((I-E_o)(I-B)(I-E_o) = \sum_{i=1}^m S_i(I-E_o)(I-B)(I-E_o)S_i^* = 0
$$
and since $(I-E_o)(I-B)(I-E_o) \geq (1-s)(I-E_o)$ it follows that
$S_i(I-E_o) = 0$ which yields
$$
S_i = S_iE_o\qquad \text{and} \qquad S_i^* = E_oS_i^*
$$
for $i = 1,\cdots ,m$.  Recall that $I_o = E_o + B$ so
$0 \leq E_o \leq I_o \leq I$ and we have
$$
I_o = L(I) = \sum_{i=1}^m S_iS_i^*  = \sum_{i=1}^m S_iE_oS_i^* = L(E_o).
$$
Now let $\mathfrak M$ be the linear span of the ranges of the $S_i^*$ for $i =
1,\cdots ,m$ and let $F$ be the hermitian projection of $\mathbb C^p$ onto
$\mathfrak M$.  Note $F$ is the smallest projection so that $FS_i^* = S_i^*$
for $i = 1,\cdots ,m$.  Since for $i = 1,\cdots ,m$ we have $E_oS_i^* =
S_i^*$ it follows that $E_o \geq F$.  Note we have
$$
L(A) = \sum_{i=1}^m S_iAS_i^*  = \sum_{i=1}^m S_iFAFS_i^*  = L(FAF)
$$
for all $A \in B(\mathbb C^p)$.  Note that if $P$ is an hermitian projection
and $F \geq P \geq 0$ and $L(P) = 0$ then
$$
\sum_{i=1}^m S_iPPS_i^*  = 0
$$
so for $i = 1,\cdots ,m$ we have $PS_i^* = 0$ so $(F-P)S_i^* = FS_i^* = S_i^*$
so $F - P \geq F$ which yields $P = 0$.  Since any positive element of
$FB(\mathbb C^p)F$ is the sum projections in $FB(\mathbb C^p)F$ with positive
coefficients it follows that if $A \in FB(\mathbb C^p)F$ and $A \geq 0$ and
$L(A) = 0$ then $A = 0$.  It follows that $F$ is the smallest projection so
that $L(F) = L(I) = I_o$.

Now let $\phi (A) = FL(A)F$ for $A \in B(\mathbb C^p)$.  Note $\phi$ is
completely positive and idempotent.  And note that
$$
L(A) = L(L(A)) = L(FL(A)F) = L(\phi (A))
$$
for all $A \in B(\mathbb C^p)$.  This is an important equality that we will
often use in the remainder of this proof.

We show the range of $\phi$ is an algebra.  Since $\phi$ is completely
positive $\phi_2 = \iota_2 \otimes \phi$ is positive so we have for
$A \in B(\mathbb C^p)$ that
$$
\phi_2 ( \left[\begin{matrix} F&\phi (A)
\\
\phi (A)^*&\phi (A)^*\phi (A)
\end{matrix} \right])=
\left[\begin{matrix} F&\phi (A)
\\
\phi (A)^*&\phi (\phi (A)^*\phi (A))
\end{matrix} \right]  \geq 0.
$$
So we have
$$
B = \phi (\phi (A)^*\phi (A)) - \phi (A)^*\phi (A) \geq 0.
$$
But $\phi (B) = 0$ and since $FBF = B \geq 0$ we have $B = 0$.  Thus, $\phi
(\phi (A)^*\phi (A)) = \phi (A)^*\phi (A)$ for all $A \in B(\mathbb C^p).  $
It follows from complex linearity that
$$
\phi (\phi (A)\phi (B)) = \phi (A)\phi (B)
$$
for all $A \in B(\mathbb C^p)$.  Hence, the range of $\phi$ is a
$*$-subalgebra in $FB(\mathbb C^p)F$ with unit $F$.  Suppose $P$ is a
hermitian projection in the range of $\phi$.  Then we have
$$
P = \phi (P) = \sum_{i=1}^m FS_iPS_i^*F
$$
and
$$
(I - P)P(I - P) = \sum_{i=1}^m ((I-P)FS_iP)((I-P)FS_iP)^* = 0
$$
and so we conclude $(I-P)FS_iP = 0$ for $i = 1,\cdots ,m$.  Applying the
same argument to $F - P$ we find $(I-F+P)FS_i(F-P) = PFS_i(I-P) = 0$ for $i
= 1,\cdots ,m$.  Then from these two relations we find $FS_i$ maps the
range of $P$ into itself and the range of $I - P$ into itself so $FS_i$
commutes with $P$ for $i = 1,\cdots ,m$.  Since the range of $\phi$ is a
finite dimensional $C^*$-algebra each $A$ in the range of $\phi$ can be
written as a complex linear combinations of projections in the range of
$\phi$ and since each of those projections commute with the $FS_i$ for $i =
1,\cdots ,m$ we see that $\phi (A)$ commutes with the $FS_i$ and since the
range of $\phi$ is a $*$-algebra we have $\phi (A)$ commutes with both
$FS_i$ and $S_i^*F$ for each $i = 1,\cdots ,m$.  Hence, we see that
\begin{align*}
\phi (\phi (A)B) &= \phi (F\phi (A)BF) = \sum_{i=1}^m FS_i\phi (A)BS_i^*F
\\
&= \sum_{i=1}^m \phi (A)FS_iBS_i^*F = \phi (A)\phi (B) = \phi (\phi (A)\phi
(B))
\end{align*}
and
\begin{align*}
\phi (A\phi (B)) &= \phi (FA\phi (B)F) = \sum_{i=1}^m FS_iA\phi (B)S_i^*F
\\
&= \sum_{i=1}^m FS_iAS_i^*F\phi (B) = \phi (A)\phi (B) = \phi (\phi (A)\phi
(B))
\end{align*}
and, hence, we have
$$
\phi (\phi (A)B) = \phi (\phi (A)\phi (B)) = \phi (A\phi (B))
$$
for all $A,B
\in B(\mathbb C^p)$.

We prove that $L(A)$ commutes with $F$ for all $A \in B(\mathbb C^p)$.  We
have
\begin{align*}
FL(A) &= FL(\phi (A)) = \sum_{i=1}^m FS_i\phi (A)S_i^* = \sum_{i=1}^m \phi
(A)FS_iS_i^*
\\
&= \phi (A)FL(I) = \phi (A)FI_o = \phi (A)F = \phi (A)
\end{align*}
and
\begin{align*}
L(A)F &= L(\phi (A))F = \sum_{i=1}^m S_i\phi (A)S_i^*F = \sum_{i=1}^m
S_iS_i ^*F\phi (A) = L(I)F\phi (A)
\\
&= I_oF\phi (A) = F\phi (A) = \phi (A).
\end{align*}
and, hence, $FL(A) = \phi (A) = L(A)F$ for all $A \in B(\mathbb C^p)$.  Hence,
we have
\begin{align*}
L(A) &= FL(A)F + (I-F)L(A)F + FL(A)(I-F) + (I-F)L(A)(I-F)
\\
&= \phi (A) + (I-F)L(A)(I-F) = \phi (A) + (I-F)L(\phi (A))(I-F).
\end{align*}

Now we can prove the Choi-Effros identity that for $A,B \in B(\mathbb C^p)$ we
have
\begin{align*}
L(L(A)B) &= L(FL(A)B) = L(\phi (A)B) = L(\phi (\phi (A)B))
\\
&= L(\phi (\phi (A)\phi (B))) = L(\phi (A)\phi (B))
\end{align*}
and
\begin{align*}
L(AL(B)) &= L(AL(B)F) = L(A\phi (B)) = L(\phi (A\phi (B)))
\\
&= L(\phi (\phi (A)\phi (B))) = L(\phi (A)\phi (B))
\end{align*}
and
$$
L(L(A)L(B)) = L(FL(A)L(B)F) = L(\phi (A)\phi (B))
$$
so we have
$$
L(L(A)B) = L(L(A)L(B)) = L(AL(B))
$$
for all $A,B \in B(\mathbb C^p)$.

Now suppose $\psi$ is a completely positive contractive linear mapping of
the range of $\phi$ into $(I-F)B(\mathbb C^p)(I-F)$.   Let
$$
L^{\prime}(A) = \phi (A) + \psi (\phi (A))
$$
for $A \in B(\mathbb C^p)$.  We note $L$ is completely positive and
contractive and
\begin{align*}
L^{\prime}(L^{\prime}(A)) &= L^{\prime}(\phi (A)+\psi (\phi (A))) = \phi
(\phi (A))+\psi (\phi (\phi (A)))
\\
&= \phi (A) + \psi (\phi (A)) = L^{\prime}(A)
\end{align*}
for $A \in B(\mathbb C^p).$ \end{proof}

Our situation is further complicated by the fact that the limit $L$ is not
necessarily unique.  In fact, by experimenting you can find examples where
there are two limits $L$ and $L^{\prime}$ with disjoint support projections
$F$ and $F^{\prime}$.  What is unique is the range of $L$ since this is equal
to the range of $\omega$.  From the work of Jankowski we know that in the case
of $\mathbb C^3$ the Choi-Effros algebra can be isomorphic to the $(2 \times
2) -$ matrices with elements $\{ f_{ij}:i,j = 1,2\}$ given in terms of the
$(3 \times 3)$ matrix units $\{ e_{ij}:i,j = 1,2,3\}$ as follows
$$
f_{11} = e_{11}\qquad f_{12} = e_{12}\qquad f_{21} = e_{21}\qquad f_{22} =
e_{22} + \lambda e_{33}
$$
with $\lambda \in [0,1]$.  In this case we can deduce that the support
projection $F$ is $F = e_{11} + e_{22}$.

Since the limit $L$ is not unique we are faced with the following
problem. Suppose there are two completely positive contractive
idempotent maps $L_1$ and $L_2$ of $B(\mathbb C^p)$ into itself
with the same range $\mathcal{L}$.  Then as we have seen $L_1$ and
$L_2$ give us a multiplication on $\mathcal{L} $.  The question is
are they the same.  In particular is
$$
L_1(AB) = L_2(AB)
$$
for $A,B \in \mathcal{L}$.  Choi and Effros in \cite{CE} develop
the theory of operator systems and we believe their results show
the answer is yes.  Unfortunately they do not specifically state
the result we are after. Therefore, we will give a brief outline a
proof in our finite dimensional case and apologize in advance for
giving longer and more complicated argument when they would most
probably give.  In our discussion we will often be showing that
$L_1$ and $L_2$ satisfy certain conditions such as $L_1(EFE) =
\lambda E$ and $L_2(EFE) =\lambda E$ for certain elements $E$ and
$F$.  Rather than write out two derivation we will simply write
out a derivation for $L$.  So we will use the following notation.
When we write $L$ we mean any completely positive contractive
idempotent of $B(\mathbb C^p)$ into $\mathcal{L}$ whose range is
$\mathcal{L}$.  Then any formula involving $L$ applies to both
$L_1$ or $L_2$.  There is one obvious word of caution, namely, if
we have a collection of formulae involving $L$ the formulae are
valid for both $L_1$ and $L_2$ but we can not mix the subscripts 1
and 2 (i.e. in all the formulae in a collection we must either use
all 1's or all 2's).

Since $\mathcal{L}$ is a finite dimensional $C^*$-algebra every
hermitian $A$ is the sum of minimal projections
$$
A = \sum_{i=1}^m \lambda_iE_i
$$
where the $\lambda_i$ are real and the $E_i$ are mutually
orthogonal minimal projections.  Note that such a decomposition
does not require the idempotent $L_1$ or $L_2$ because a minimal
projection $E$ can be characterized as follows.  The element $E
\in \mathcal{L}$ has the property that $E \geq 0,\medspace \Vert
E\Vert = 1$ and if $A \in \mathcal{L}$ and $0 \leq A \leq E$ then
$A = \lambda E$.  One checks that if an element $E \in
\mathcal{L}$ has these properties then $E$ is a minimal
projection.  Note the fact that $L(E^2) = E$ (where $L$ is any
completely positive contractive idempotent with range $\mathcal{L}
)$ is automatically satisfied since $0 \leq E^2 \leq E$ and,
therefore, $0 \leq L(E^2) \leq L(E)$ and, therefore, $L(E) =
\lambda E$.  And since $L$ is completely positive and the matrix
of elements in $B(\mathbb C^p)$
$$
\left[\begin{matrix} I&E
\\
E&E^2
\end{matrix} \right]
\geq 0
$$
the matrix obtained by replacing the above matrix elements of the above matrix
with $L$ applied to that matrix element yields a positive matrix so
$$
\left[\begin{matrix} L(I)&L(E)
\\
L(E)&L(E^2)
\end{matrix} \right] =
\left[\begin{matrix} L(I)&E
\\
E&\lambda E
\end{matrix} \right] \geq 0
$$
and since $I \geq L(I)$ this matrix can only be positive if $\lambda \geq 1$
and since $0 \leq \lambda \leq 1$ we have $\lambda = 1$ and $L(E^2) = E$.

Next if $E$ and $F$ are hermitian projections in $\mathcal{L}$
then $E$ and $F$ are orthogonal if and only if $\Vert E+F\Vert =
1$.  This means that in the expression for $A$ above the fact that
the $E_i$ are mutually orthogonal minimal projections can be
determined without the use of $L_1$ or $L_2$.  Now if $E$ and $F$
are orthogonal minimal hermitian projections in $\mathcal{L}$ then
$L(EFE) = \lambda E$ with $\lambda \geq 0$ so $L(E(E+F)E) =
(1+\lambda)E$ and since $\Vert E+F\Vert = 1$ it follows that
$\lambda = 0$ and since $L(EF(EF)^*) = 0$ it follows that $L(EF) =
L(FE) = 0$. Then for the $E_i$ orthogonal minimal projections the
product $L(E_iE_j) = \delta_{ij}E_i$ for $i,j = 1,\cdots ,m$
(where $\delta_{ij} = 1$ for $i = j$ and $\delta_{ij} = 0$ for $i
\neq j)$.  Then we have
$$
L(A^2)= L(\sum_{i,j=1}^m\lambda_i\lambda_jE_iE_j)=\sum_{i=1}^m\lambda_i^2L(E_i).
$$
Hence we see that $L(A^2)$ is the same element of $\mathcal{L}$
for any completely positive contractive idempotent map $L$ with
range $\mathcal{L}$.  Then, in particular, we have $L_1(A^2) =
L_2(A^2)$. Applying this to the sum of two hermitian $A,B \in
\mathcal{L}$ we see that the Jordan product is the same for $L_1$
and $L_2$, namely, $L_1(AB+BA) = L_2(AB+BA)$ for all hermitian
$A,B \in \mathcal{L} $.

Next we use the fact that $L_1$ and $L_2$ are completely positive.  Let
$H_2 = H \oplus H$ and $\mathcal{L}_2$ be the set of $(2 \times 2)$-matrices
with entries in $\mathcal{L}$ and let $L_{12} = \iota_2 \otimes L_1$ and
$L_{22} = \iota_2 \otimes L_2$ (i.e. we apply $L_1$ and $L_2$ to
$(2 \times 2)$-matrices with entries in $B(\mathbb C^p)$).
Repeating the argument that
showed $L_1(AB+BA) = L_2(AB+BA)$ for all hermitian $A,B \in \mathcal{L}$ for
$\mathcal{L}_2$ we see that $L_{12}(AB+BA) = L_{22}(AB+BA)$ for hermitian
$A,B \in \mathcal{L}_2$.  Now consider product of matrices in $B(H_2)$
$$
X = \left[\begin{matrix} A&B
\\
B&-A
\end{matrix} \right]
\left[\begin{matrix} A&B
\\
B&-A
\end{matrix} \right] =
\left[\begin{matrix} A^2+B^2&AB-BA
\\
BA-AB&A^2+B^2
\end{matrix} \right]
$$
for hermitian $A,B \in \mathcal{L}$.  Since $L_{12}(X) = L_{22}(X)$ we have
that $L_1(AB-BA) = L_2(AB-BA)$ and since $L_1(AB+BA) = L_2(AB+BA)$ we have
$L_1(AB) = L_2(AB)$ for all hermitian $A,B \in \mathcal{L}$ and by complex
linearity we have $L_1(AB) = L_2(AB)$ for all $A,B \in \mathcal{L}$.

In summary we see that if $\omega$ is a non zero $q$-weight map of
index zero and $\pi_t^\#$ is the generalized boundary representation of
$\omega$ and $L$ is a limit point of $L_t = \pi_t^\#\Lambda$ as $t
\rightarrow 0+$ then the range of $\omega$ which we denote by
$\mathcal{L} (\omega )$ is a $C^*$-algebra with multiplication
defined by $A \star B = L(AB)$ for $A,B \in \mathcal{L} (\omega
)$.  Note we have shown the multiplication defined by $L$ does not
depend on which limit point of $L_t$ we take.  Note the unit $I_o$
does not depend on which limit point of $L_t$ we take since the
unit $I_o$ can be characterized without reference to $L$ in that
$I_o$ is the unique positive norm one element of $\mathcal{L}
(\omega )$ with the property that $I_o \geq A$ for all positive
norm one element of $\mathcal{L} (\omega )$.

Armed with these results we find a projection $P$ that dominates all support
projections.

\begin{thm}
Suppose $L$ is a completely positive contractive
linear idempotent mapping of $B(\mathbb C^p)$ into itself and $F$ is the
support projection which is the smallest hermitian projection so that $L(F) =
L(I) = I_o$ and $\phi (A) = FL(A)F$ for $A \in B(\mathbb C^p)$.  Let
$\mathcal{L}$ denote the range of $L$.  Let $P$ be the central support of $F$
in the center of the commutant $\mathcal{L} ^{\prime}$ of $\mathcal{L}$ (i.e.
$P$ is the smallest projection in $\mathcal{L} ^{\prime} \cap \mathcal{L}
^{\prime\prime}$ with $P \geq F$).  Then $\psi (A) = PL(A)P$ is completely
positive contractive linear mapping of $B(\mathbb C^p)$ into itself that is
idempotent and the range of $\psi$ is a $*$-algebra so
$$
\psi (\psi (A)\psi (B)) = \psi (A)\psi (B)\qquad \text{and} \qquad \psi (A^
*) = \psi (A)^*
$$
for $A,B \in B(\mathbb C^p)$.  Furthermore, if $L^{\prime}$ is another
completely positive contractive idempotent map of $B(\mathbb C^p)$ into itself
with range $\mathcal{L}$ then
$$
L^{\prime}(A) = L^{\prime} (PAP)
$$
for $A \in B(\mathbb C^p)$ and if $F^{\prime}$ is the support
projection for $L^{\prime}$ then $F^{\prime} \leq P$ and $P$ is the smallest
projection in $\mathcal{L} ^{\prime} \cap \mathcal{L} ^{\prime\prime}$ with
$P \geq F^{\prime}$.  If $I_o = L(I)$ is the unit of $\mathcal{L}$ then
$PI_o = I_oP = PI_oP = P$.
\end{thm}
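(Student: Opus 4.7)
My plan is to derive all the properties of $\psi(A) = PL(A)P$ via a combination of direct computation, an application of Theorem 5.1 to $\psi$ itself, and a Stinespring argument handling the multiplicative structure. I first note that $F \in \mathcal{L}'$, which follows from Theorem 5.1 since $L(B)F = FL(B) = \phi(B)$ for every $B \in B(\mathbb C^p)$; $P$ is therefore well-defined, and as $P \in \mathcal{L}'' = (\mathcal{L}')'$ commutes with $F \in \mathcal{L}'$, the projections $P$ and $F$ commute and satisfy $PF = FP = F$. Complete positivity and contractivity of $\psi$ are inherited from $L$. For idempotency, since $P$ commutes with each $L(A)$ we have $\psi(A) = PL(A)P = PL(A)$, and using $L(X) = L(FXF)$ twice together with $PF = F$ and $L \circ L = L$ we compute
\[
L(PL(A)P) = L(FPL(A)PF) = L(FL(A)F) = L(L(A)) = L(A),
\]
so $\psi(\psi(A)) = PL(A) = \psi(A)$.

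Next I would apply Theorem 5.1 to the CP contractive idempotent $\psi$. From $\psi(F) = PL(F) = PI_o = \psi(I)$ the support projection $F_\psi$ satisfies $F_\psi \leq F$; if $F_\psi < F$, then $PL(F-F_\psi) = 0$, and decomposing $L(F-F_\psi) = \phi(F-F_\psi) + (I-F)L(\phi(F-F_\psi))(I-F)$ and multiplying by $F$ on the left forces $\phi(F-F_\psi) = 0$, which by faithfulness of $\phi$ (Theorem 5.1) gives $F - F_\psi = 0$. Hence $F_\psi = F$ and $\phi_\psi := F\psi(\cdot)F = \phi$, so Theorem 5.1 applied to $\psi$ yields the decomposition
\[
\psi(A) = \phi(A) + (P-F)L(\phi(A))(P-F),
\]
exhibiting the range of $\psi$ as the graph of the CP contractive map $\tau : \mathcal{M} \to (P-F)B(\mathbb C^p)(P-F)$ given by $\tau(m) = (P-F)\sigma(m)(P-F)$, where $\sigma(m) := (I-F)L(m)(I-F)$. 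To show this graph is a $*$-subalgebra of $B(\mathbb C^p)$ under the usual product, I would prove $\tau$ is a $*$-homomorphism by taking a minimal Stinespring dilation $\sigma(m) = W^*\pi(m)W$ with $W^*W = \sigma(F)$ and identifying the range of $P-F$ with a $\pi(\mathcal{M})$-invariant subspace contained in the eigenvalue-$1$ subspace of $\sigma(F)$, where $W$ acts isometrically.

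For $PI_o = I_oP = PI_oP = P$, the spectral projection $E_o$ of $I_o$ at eigenvalue $1$ lies in $\mathcal{L}''$ (since $I_o \in \mathcal{L}$), and a parallel Stinespring argument shows $E_o - F$ commutes with $\sigma(\mathcal{M})$, placing $E_o \in \mathcal{L}'$ as well. Minimality of $P$ then gives $P \leq E_o$, and $E_o \leq I_o$ yields $PI_o = P$. For the universal property with respect to another CP contractive idempotent $L'$ with range $\mathcal{L}$, the discussion preceding the theorem shows $L'(AB) = L(AB)$ for all $A, B \in \mathcal{L}$, so the Choi-Effros product on $\mathcal{L}$ and its unit $I_o$ are intrinsic; running the preceding construction with $L'$ and its support $F'$ produces a central projection $P'$, and the intrinsic nature of the data forces $P' = P$. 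Hence $F' \leq P$, and Theorem 5.1 gives $L'(A) = L'(F'AF') = L'(PAP)$ (using $F' P = F' = P F'$).

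The main obstacle is the $*$-homomorphism property of $\tau$, which at its heart is the claim that the range of $P-F$ is a $\pi(\mathcal{M})$-invariant subspace inside the isometric part of the Stinespring dilation of $\sigma$; centrality of $P$ in $\mathcal{L}''$ is exactly the structural input needed to force this invariance, and once it is in place the remaining assertions of the theorem follow by straightforward manipulations.
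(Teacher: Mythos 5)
The proposal is incomplete at the two places where the theorem is actually hard, and the more serious of these is a genuine gap. Your preliminaries (that $F\in\mathcal{L}'$, that $PF=FP=F$, and the idempotency computation $L(PL(A)P)=L(FL(A)F)=L(A)$) match the paper and are fine. But the claim that for a second idempotent $L'$ with support projection $F'$ one gets $P'=P$ ``by the intrinsic nature of the data'' does not hold up: $P$ is defined as the central support of $F$ in $\mathcal{L}'$, and $F$ is emphatically not intrinsic to $\mathcal{L}$ --- the paper points out just before the theorem that one can have two idempotents $L$, $L'$ with the same range whose support projections $F$, $F'$ are disjoint. The commutant $\mathcal{L}'$ and the Choi--Effros product and unit $I_o$ are intrinsic, but nothing you have written explains why the central supports of two possibly disjoint projections $F$, $F'$ inside $\mathcal{L}'$ must coincide. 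This equality $P=P'$ is where the paper spends most of its effort: it sets $Q=PP'$, produces an element $Q_o$ in the center of the Choi--Effros algebra with $PQP=PQ_oP$ and $P'QP'=P'Q_oP'$, and then rules out $Q_o\neq I_o$ by taking a minimal central projection $B_o\leq I_o-Q_o$ and using quasi-equivalence of the two representations $A\mapsto PB_oA$ and $A\mapsto P'B_oA$ of a type~I$_q$ block to build an intertwining operator $C\in\mathcal{L}'$ that fails to commute with the central projection $PB_o$, a contradiction. Some argument of this kind (or an honest intrinsic characterization of $P$, which the paper only obtains much later, in Theorem 5.19, and only in the factor case) is required; without it the statements $F'\leq P$ and $L'(A)=L'(PAP)$ are unsupported.

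The second soft spot is the multiplicativity of the range of $\psi$. You reduce it to showing that $\tau(m)=(P-F)\sigma(m)(P-F)$ is a $*$-homomorphism and acknowledge that the required invariance of the range of $P-F$ inside a Stinespring dilation of $\sigma$ is ``the main obstacle'' --- that is, you have restated the problem rather than solved it. The paper avoids dilations entirely: it realizes $P$ concretely as the projection onto $\mathfrak{M}=\mathrm{span}\{Af: A\in\mathcal{L}',\ Ff=f\}$, checks that $\mathfrak{M}$ is invariant under $\mathcal{L}$, $\mathcal{L}'$ and their adjoints (so $P\in\mathcal{L}'\cap\mathcal{L}''$ and is the central support of $F$), and then gets closure of $P\mathcal{L}P$ under multiplication for free from the Theorem 5.1 identity $ABF=CF$ with $C=L(AB)$: for $f=\sum_i A_iFg_i\in\mathfrak{M}$ one has $ABf=\sum_i A_iABFg_i=\sum_i A_iCFg_i=Cf$. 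I would recommend adopting that explicit description of $P$; it simultaneously settles the $*$-algebra property and makes $PI_o=P$ a one-line consequence of $F(I-I_o)=0$ (rather than your detour through the spectral projection of $I_o$ and a second dilation argument).
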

\begin{proof}  Assume the hypothesis and notation of the lemma.  Let $\mathfrak
M$ be the linear span of vectors of the form $Af$ with $A \in \mathcal{L}
^{\prime}$ and $f$ in the range of $F$.  Note a vector $f$ is in $\mathfrak M$
if and only if $f$ can be written in the form
$$
f = \sum_{i=1}^m A_iFg_i
$$
with $A_i \in \mathcal{L} ^{\prime}$ and $g_i \in \mathbb C^p$ for $i = 1,\cdots
,m.  $ Note from the previous theorem we have $F \in \mathcal{L}^{\prime}$.
Then if $A \in \mathcal{L}$ we have
$$
Af = \sum_{i=1}^m AA_iFg_i = \sum_{i=1}^m A_iAFg_i = \sum _{i=1}^m A_iFAg_i
$$
so $Af \in \mathfrak M$ and if $B \in \mathcal{L} ^{\prime}$ then since
$BA_i \in \mathcal{L} ^{\prime}$ we have $Bf \in \mathfrak M$.  Hence,
$A\mathfrak M \subset \mathfrak M ,\medspace A^*\mathfrak M \subset
\mathfrak M ,\medspace B \mathfrak M \subset \mathfrak M$ and $B^*\mathfrak
M \subset \mathfrak M$ for $A \in \mathcal{L}$ and $B \in \mathcal{L}
^{\prime}$.  It follows that if $P$ is the orthogonal projection of
$\mathbb C^p$ onto $\mathfrak M$ then $P \in \mathcal{L} ^{\prime}$ and $P
\in \mathcal{L} ^{\prime\prime}$ so $P$ is in the center of $\mathcal{L}
^{\prime}$ which equals the center of $\mathcal{L} ^{\prime\prime}$.  Note
$P$ is the central support of $F$ since $P$ is the smallest projection $P
\geq F$ which is contained in both $\mathcal{L} ^{\prime}$ and $\mathcal{L}
^{\prime\prime}$.

To prove the last statement of the theorem we use another characterization of
$P$ that $P$ is the smallest projection so that $P \geq U^*FU$ for all unitary
$U \in \mathcal{L}^{\prime}$.  Note from this definition of $P$ we have $P$
commutes with all unitary $U \in \mathcal{L}^{\prime}$ so
$P \in \mathcal{L}^{\prime\prime}$ and $P \in \mathcal{L}^{\prime}$ since $P$
is in the algebra generated by the $U^*FU \in \mathcal{L} ^{\prime}$ and clearly
$P$ is the smallest projection in
$\mathcal{L}^{\prime} \cap \mathcal{L} ^{\prime\prime}$ with $P \geq F$.
Note the $L(I) = L(I_o)$ so  $L(I - I_o) = 0$.  Since $F$ is the smallest
projection so that $L(F) = I_o$ it follows that $F(I-I_o) = (I-I_o)F = 0$.
Hence, $U^*FU(I-I_o) = U^*F(I-I_o)U = 0$ for all $U \in \mathcal{L} ^{\prime}$
so $P(I-I_o) = (I-I_o)P = 0$ and we have $P = PI = PI_o = I_oP = PI_oP$.

Next we note that $P\mathcal{L} P$ is a $*$-algebra.  It is clear that
$P\mathcal{L} P$ is invariant under the $*$-operation.  Note that if
$A,\medspace B \in \mathcal{L}$ then from Theorem 5.1 there is a
$C \in \mathcal{L}$ so that $ABF = CF$.  Then for $f \in \mathfrak M$ of the
above form we have
$$
ABf = \sum_{i=1}^m ABA_iFg_i = \sum_{i=1}
^m A_iABFg_i = \sum_{i=1}^m A_iCFg_i = \sum_{i=1}^m CA_iFg_i = Cf
$$
so $P\mathcal{L} P$ is a $*$-algebra.  Since the mapping $A \rightarrow FAF$ is
a completely positive one to one mapping from $\mathcal{L}$ to $F\mathcal{L} F$
it follows that the mapping A $\rightarrow$ PAP is a completely positive one
to one mapping of $\mathcal{L}$ onto $P\mathcal{L} P$.  From the previous lemma
we have for $A \in B(\mathbb C^p)$ that $L(A) = L(FAF)$ and $FP = F$ so if
$\psi(A) = PL(A)P$ we have
$$
\psi^2 (A) = PL(PL(A)P)P = PL(FPL(A)PF)P = PL(FL(A)F)P = PL(L(A))P = PL(A)P
= \psi (A)
$$
so we see that $\psi$ is a completely positive contractive idempotent map
of $B(\mathbb C^p)$ into itself and the range of $\psi$ is a $*$-algebra and
$\psi (A) = 0$ if and only if $L(A) = 0$.

Now suppose $L^{\prime}$ is another completely positive contractive
idempotent map of $B(\mathbb C^p)$ into itself with range $\mathcal{L}$ and
$F^{\prime}$ is the support projection for $L^{\prime}$ and it is not true that
$F^{\prime} \leq P$.  Then repeating the previous argument we find there is a
central projection $P^{\prime}$ so that $P^{\prime} \mathcal{L} P^{\prime}$ is
a $*$-algebra and the mapping $\psi ^{\prime}(A) =
P^{\prime}L^{\prime}(A)P^{\prime}$ is completely positive contractive linear
idempotent map of $B(\mathbb C^p)$ so that the range of $\psi ^{\prime}$ is a
$*$-algebra and $\psi ^{\prime}(A) = 0$ if and only if $L^{\prime}(A) = 0$.

Since it is not true that $F^{\prime} \leq P$ we have $Q = PP^{\prime}$
is central projection and $Q \neq P^{\prime}$.  We will prove this in not the
case because $Q = P = P'$.  Since every element in
$\mathcal{L} ^{\prime\prime}$ can be written as a polynomial of elements of
$\mathcal{L}$ and the unit $I$ of $B(\mathbb C^p)$ it follows that
$$
Q=\sum_{k=1}^m A_k\qquad \text{with} \qquad A_k = B_{k1}B_{k2}\cdots B_{kn_k}
$$
with the $B_{ki} \in \mathcal{L}$ or $B_{ki} = I$.  Now we claim that for each
$A_k$ above there is an element $C_k \in \mathcal{L}$ so that $PA_kP = PC_kP$
and $P^{\prime}A_kP^{\prime} = P^{\prime}C_kP^{\prime}$ for each $k = 1,\cdots
,m$.  This is seen as follows.  Since $P\mathcal{L} P$ is an algebra we have
$$
PA_kP = PB_{k1}PB_{k2}P \cdots PB_{kn_k}P = PC_kP
$$
where $C_k \in \mathcal{L}$ is unique.  (Note $PIP = PI_oP$ so any terms
involving $I$ can be replaced by $I_o$.) Likewise we have
$P^{\prime}A_kP^{\prime} = P^{\prime}C_k^{\prime}P^{\prime}$ where
$C_k^{\prime} \in \mathcal{L}$ is unique.   Since the multiplication defined by
$L$ and $L^{\prime}$ are the same it follows that $C_k = C_k^{\prime}$ and,
hence, if we define
$$
Q_o = \sum_{k=1}^m C_k
$$
we have $PQP = PQ_oP = PQ_o$ and $P^{\prime}QP^{\prime}=P^{\prime}Q_oP^{\prime}
=P^{\prime}Q_o$.  Next suppose $A \in \mathcal{L}$.  Then $QA = PPP^{\prime}A =
PQ_oA = PL(Q_oA)$ and $AQ = APPP^{\prime}=APQ_o = PAQ_o = PL(AQ_o)$ and since
$Q \in \mathcal{L}^{\prime}$ we have $L(Q_oA)=L(AQ_o)$ so $Q_o$ is in the
center of $\mathcal{L}$ where we view $\mathcal{L}$ as a $*-$algebra with the
Choi-Effros multiplication.

Next we show $Q_o = I_o$.  If this is not the case then $I_o-Q_o$ is a
central projection in $\mathcal{L}$.  Since each projection in the center
of $\mathcal{L}$ is the sum of minimal central projections there is a non
zero minimal central projection $B_o \in \mathcal{L}$ with $B_o \leq
I_o-Q_o$.  Note $B_oP$ and $B_oP^{\prime}$ are central projections in
$\mathcal{L}^{\prime}$.  Since the mappings $A \leftrightarrow$ PA and $A
\leftrightarrow P^{\prime} A$ are $*$-isomorphisms of $\mathcal{L}$ with
$P\mathcal{L}$ and of $\mathcal{L}$ with $P^{\prime}\mathcal{L}$ it follows
that $PB_o$ and $P^{\prime} B_o$ are both non zero.  Note the product of
these projections is zero since
$$
PB_oP^{\prime} B_o = PP^{\prime} B_oB_o = PQ_oB_oB_o = PL(Q_oB_oB_o) =
PL(L(Q_oB _o)B_o)
$$
and since $B_o \leq I_o-Q_o$ we have $L(Q_oB_o) = 0$ so $PB_oP^{\prime} B_o =0$.
Now $B_o\star\mathcal{L}$ (where $B_o\star A = L(B_oA)$ for $A \in
\mathcal{L} )$ is a Choi-Effros factor of type~I$_q.  $ Let $E_{ij} \in
B_o\star\mathcal{L}$ be a complete set of matrix units so $E_{ij}\star E _{nm}
= \delta_{jn}E_{im}$ which span $B_o\star\mathcal{L}$.  Note there is a
natural $*$-isomorphism $\psi$ of $B( \mathbb C^q)$ into $B_o\star\mathcal{L}$
defined as follows.  If $A \in B(\mathbb C^q)$ corresponds to the matrix
$\{ a_{ij}\}$ then
$$
\psi (A) = \sum_{i,j=1}^q a_{ij}E_{ij}.
$$
Since the mappings $A \leftrightarrow$ PA and $A \leftrightarrow
P^{\prime} A$ are $*$-isomorphisms of $\mathcal{L}$ it follows
that $\pi_1(A) = P\psi (A)$ and $\pi_2(A) = P^{\prime}\psi (A)$
are $*$-representation of $B(\mathbb C^q)$ on $PB_o$ and
$P^{\prime} B_o$, respectively.  As is well know any two
$*$-representations of $B(\mathbb C^q)$ are quasi-equivalent so
there are intertwining operators.  Specifically let $f$ be a unit
vector so that $PE_{11}f = f$ and $g$ be a unit vector so that
$P^{\prime} E_{11}g = g$.  Let
$$
f_i = PE_{i1}f\qquad \text{and} \qquad g_i = P^{\prime} E_{i1}g
$$
for $i = 1,\cdots ,q$.  We define the operator $C$ as follows.  We define
$Cf_i = g_i$ for $i = 1,\cdots ,q$  and $Cf = 0$ if $f$ is orthogonal to
the $f _i$.  This defines an operator $C \in B(\mathbb C^p)$ of rank $q$.
A little computation show that $C\pi_1(A) = \pi_2(A)C$ for $A \in B(\mathbb
C^q)$.  Now if $A \in B_o\star\mathcal{L}$ then $A$ is a linear combination
of the $E_{ij}$ so $A = \psi (A_1)$ for a unique $A_1\in B(\mathbb C^q)$ so
we can write $A_1 = \psi^{-1}(A)$.  Then $\pi_1(A_1) = \pi_1(\psi^{-1}(A)) =
PB_oA$ and $\pi_2(A_1) = \pi_2(\psi^{-1}(A)) = P^{\prime} B_oA$ and the fact
that $C\pi_1(A) = \pi_2(A)C$ means that $CPB_oA = P^{\prime} B_oAC$ for $A
\in \mathcal{L}$.  Notice that the range of $C$ is contained the range of
$P^{\prime} B_o$ and the range of $C^*$ is contained in the range of $PB_o$
so we have
$$
C = CPB_o = P^{\prime} B_oC = P^{\prime} B_oCPB_o.
$$
It follows that $C \in \mathcal{L}^{\prime}$ since for $A \in \mathcal{L}$ we
have
$$
 CA = CPB_oA = P^{\prime} B_oAC = AP^{\prime} B_oC = AC.
$$

We have reached a contradiction.  Recall $PB_o$ is a central projection in
$\mathcal{L}^{\prime}$ and we see that $C \in \mathcal{L}^{\prime}$ does not
commute with it.  Hence, the assumption that $Q_o \neq I_o$ leads to a
contradiction.  Hence, $PP^{\prime} = PI_o = P^{\prime}I_o$.  But as we showed
earlier in this proof we have $PI_o = P$ and by the same argument
$P^{\prime}I_o = P^{\prime}$ so we have $PP^{\prime} = P = P^{\prime}$.  Since
the support projection $F^\prime$ for $L^{\prime}$ satisfies
$F^{\prime} \leq P^{\prime}$ we have $F^{\prime} \leq P$.

We complete the proof by showing
$$
L^{\prime}(A) = L^{\prime} (PAP)
$$
for $A \in B(\mathbb C^p)$.  Now we have
$$
L^{\prime}(PAP) = L^{\prime} (F^{\prime}PAPF^{\prime})
  =  L^{\prime} (F^{\prime}AF^{\prime}) = L^{\prime} (A).
$$
for $A \in B(\mathbb C^p).$

\end{proof}

\begin{defn}
Suppose $L$ is a linear completely positive contractive idempotent map of
$\mathbb C^p$ into itself and $\mathcal{L}$ is the range of $L$.  Suppose $F$
is the support projection for $L$ which is the smallest hermitian projection
$F \in B(\mathbb C^p)$ so that $L(F) = L(I) = I_o$.  The projection $P$ which
is the smallest projection in $\mathcal{L} ^{\prime}\cap\mathcal{L}
^{\prime\prime}$ with $P \geq F$ is called the maximal support projection for
$\mathcal{L}$.
\end{defn}

In the above definition it appears as if the maximal support projection
depends on $L$ but from the previous theorem we see that any linear
completely positive contractive idempotent map with the same range as $L$
yields the same projection $P$.

Next we consider the case where we have two completely positive contractive
linear idempotent mappings $L$ and $L_1$ of $B(\mathbb C^p)$ into itself and
$L - L_1$ is a completely positive map.

\begin{thm}
Suppose $L$ and $L_1$ are completely positive
contractive linear idempotent mappings of $B(\mathbb C^p)$ into itself and $L
- L_1$ is a completely positive map.  Suppose further that $L_1(I) = E$ and
$E$ is a projection.  Let $\mathcal{L}$ be the range of $L$.  Then $E$ commutes
with every element of $\mathcal{L}$ (i.e., $E \in \mathcal{L}^{\prime}$) and
$E \leq L(I)$ and $L _1(A) = EL(A) = EL(A)E$ for $A \in B(\mathbb C^p))$.
\end{thm}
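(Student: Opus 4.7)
My plan is to prove the three assertions $E \leq L(I)$, $L_1(A) = EL(A) = EL(A)E$, and $E \in \mathcal{L}^{\prime}$ in sequence, ultimately reducing the main identity to a classification of CP idempotents on a finite-dimensional $C^*$-algebra. The bound $E \leq L(I)$ is immediate: $(L - L_1)(I) \geq 0$ forces $L(I) - E \geq 0$. For the support structure of $L_1$, I would fix a Kraus decomposition $L_1(A) = \sum_k W_k A W_k^*$. From $\sum W_k W_k^* = L_1(I) = E$ one sees $E W_k = W_k$, while idempotency yields $L_1(E) = L_1(L_1(I)) = E$, so $\sum W_k(I - E) W_k^* = 0$ and hence $W_k E = W_k$. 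Thus each $W_k \in E B(\mathbb{C}^p) E$ and $L_1(A) = L_1(E A E) = E L_1(A) E$ for every $A$.

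The heart of the argument is to establish $\mathrm{Range}(L_1) \subseteq \mathcal{L}$, which reduces to proving $L(E) = E$. I first show $E \leq F$, where $F$ is the support projection of $L$ from Theorem 5.1. Setting $I_o = L(I)$, Theorem 5.2 gives $I_o \in \mathcal{L} \subseteq F B(\mathbb{C}^p) F$, so $(I-F) I_o (I-F) = 0$; consequently $(I-F)(I_o - E)(I-F) = -(I-F) E (I-F) \leq 0$, but the left-hand side is $\geq 0$ since $I_o - E \geq 0$, forcing $(I-F) E (I-F) = 0$ and $E \leq F$. Next consider $X = L(E) - E$. One checks $X \geq 0$ (from $(L - L_1)(E) \geq 0$ and $L_1(E) = E$), $X \leq I_o - E$, and $L(X) = L^2(E) - L(E) = 0$. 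For any positive $Y$ with $L(Y) = 0$, the faithfulness of $\phi(A) = F L(A) F$ on $F B(\mathbb{C}^p) F$ together with $L(Y) = L(F Y F)$ forces $F Y F = 0$; the standard $2 \times 2$ positivity trick then yields $Y = (I-F) Y (I-F)$. Applied to $X$: $(I-F) L(E) (I-F) = 0$ because $L(E) \in \mathcal{L} \subseteq F B F$, and $(I-F) E (I-F) = 0$ because $E \leq F$, so $X = 0$ and $L(E) = E$. The CP map $(L - L_1) L_1$ is then non-negative with $(L - L_1) L_1 (I) = L(E) - E = 0$, so it vanishes identically, giving $L L_1 = L_1$ and $\mathrm{Range}(L_1) \subseteq \mathcal{L}$.

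To conclude, I would note that $L_1(I_o) = E$ (by sandwiching: $L_1(I_o) \leq L_1(I) = E$, while $(L-L_1)(I - I_o) \geq 0$ expands to $L_1(I_o) \geq E$), so in particular $E \in \mathcal{L}$. Then $L_1|_\mathcal{L}$ is a CP idempotent on $\mathcal{L}$ dominated by $L|_\mathcal{L} = \mathrm{id}_\mathcal{L}$, with unit $E$. By Theorem 5.1 and Theorem 5.2, $\mathcal{L}$ is a genuine sub-$*$-algebra of $F B(\mathbb{C}^p) F$ under ordinary operator product, because $\phi(AB) = \phi(A) \phi(B) = AB$ for $A, B \in \mathcal{L}$; consequently the Choi-Effros product on $\mathcal{L}$ coincides with the operator product. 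Using the block decomposition $\mathcal{L} \cong \bigoplus_k M_{n_k}$ and the fact (by Arveson's Radon-Nikodym theorem: the commutant of a single matrix algebra in its standard representation is scalar) that the only CP sub-idempotent of the identity on a single matrix algebra is $0$ or the identity, it follows that $L_1|_\mathcal{L}$ must be multiplication by a central projection $E \in \mathcal{L}$, so $L_1(A) = E A$ for $A \in \mathcal{L}$. Centrality of $E$ in $\mathcal{L}$ under the operator product gives $E \in \mathcal{L}^{\prime}$, and for general $A \in B(\mathbb{C}^p)$ we conclude $L_1(A) = L_1(L(A)) = E L(A) = L(A) E = E L(A) E$.

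The main technical obstacle I anticipate is proving $L(E) = E$: the individual ingredients $E \leq F$, the kernel description for positive elements in $\ker L$, and the $(I-F)$-support trick are each elementary, but assembling them correctly requires the precise structural content of Theorem 5.1 and Theorem 5.2.
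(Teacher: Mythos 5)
Your proposal has a genuine gap, and in fact several of its intermediate claims are false. The pivotal error is the assertion that $\mathcal{L}\subseteq FB(\mathbb{C}^p)F$. Theorem 5.1 gives $L(A)=\phi(A)+(I-F)L(\phi(A))(I-F)$, and the second term need not vanish: elements of $\mathcal{L}$ merely commute with $F$ and in general have a nonzero $(I-F)$-corner. Concretely, on $B(\mathbb{C}^2)$ let $L(A)=(f_1,Af_1)\bigl(e_{11}+\tfrac{1}{2}e_{22}\bigr)$ and $L_1(A)=(f_1,Af_1)e_{11}$. Both are completely positive contractive idempotents, $L-L_1$ is completely positive, and $E=L_1(I)=e_{11}$ is a projection, so all hypotheses of the theorem hold; here $F=e_{11}$ while $I_o=L(I)=e_{11}+\tfrac{1}{2}e_{22}\notin FB(\mathbb{C}^2)F$. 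Worse, the claim you call the heart of the argument fails outright: $L(E)=L(e_{11})=e_{11}+\tfrac{1}{2}e_{22}\neq E$. Consequently $LL_1\neq L_1$, the range of $L_1$ is not contained in $\mathcal{L}=\mathbb{C}I_o$, and $E\notin\mathcal{L}$, so the entire endgame---treating $L_1$ restricted to $\mathcal{L}$ as a CP sub-idempotent of the identity of $\mathcal{L}$ and decomposing $\mathcal{L}$ into matrix blocks under the ordinary product---has no foundation. (The identification of the Choi--Effros product with the operator product also fails here: $I_o\star I_o=I_o$ but $I_o^2\neq I_o$.) Your preliminary steps, namely $E\leq L(I)$ and $L_1(A)=EL_1(A)E$ via the Kraus decomposition, are correct, but they are only the easy part.

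The paper's proof avoids all of this and is essentially three lines. Set $\phi(A)=E\bigl(L(A)-L_1(A)\bigr)E=EL(A)E-L_1(A)$, using your observation that $L_1(A)=EL_1(A)E$. This map is completely positive since $L-L_1$ is, yet $\phi(I)=EL(I)E-E\leq 0$ by contractivity of $L$; hence $\phi(I)=0$, so $\phi=0$ and $L_1(A)=EL(A)E$ for all $A$. Then for positive $A\in\mathcal{L}$ one has $A-EAE=(L-L_1)(A)\geq 0$; a positive operator whose $E$-corner vanishes has vanishing off-diagonal corners, so $EA(I-E)=0$ and $E$ commutes with $A$, whence $E\in\mathcal{L}^{\prime}$ by linearity. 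Note that this argument never mentions $F$, never needs $L(E)=E$ or $E\in\mathcal{L}$, and does not even use idempotency of $L_1$.
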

\begin{proof}  Assume the hypothesis and notation of the theorem.  Suppose
$A \in B(\mathbb C^p)$ and $0 \leq A \leq I$ then since $L_1$ is completely
positive we have $0 \leq L_1(A) \leq E$ from which it follows that $L_1(A)
= EL_1(A)E$ and since $B(\mathbb C^p)$ is the complex linear span of its
positive elements we have $L_1(A) = EL_1(A)E$ for all $A \in B(\mathbb C^p)$.
Now consider the mapping
$$
\phi (A) = E(L(A) - L_1(A))E = EL(A)E - L_1(A)
$$
for $A \in B(\mathbb C^p)$.  Since $L \geq L_1$ this mapping is completely
positive.  Note $\phi (I) = EL(I)E - E$ and since $L$ is contractive we
have $\phi (I) \leq 0$ but since $\phi$ is completely positive we have
$\phi (I) = 0$ so $\phi = 0$ and
$$
L_1(A) = EL(A)E
$$
for all $A \in B(\mathbb C^p)$.  Now suppose $A \in \mathcal{L}$ and $A \geq 0$.
Then
$$
L(A) - L_1(A) = A - EAE \geq 0
$$
from which we conclude that $E$ commutes with $A$.  Since $\mathcal{L}$ is the
complex linear span of its positive elements we have $E \in \mathcal{L}
^{\prime}.$ \end{proof}

Notice that if $\omega$ is a $q$-weight map over $\mathbb C^p$ with range
$\mathcal{L} (\omega )$ when we compute the generalized boundary representation
of $\omega$ given by
$$
\pi_t^\# = (\iota + \omega \vert_t \Lambda )^{-1}\omega \vert_t
$$
in computing the inverse of $\iota + \omega \vert_t \Lambda$ we only need
compute $(\iota + \omega \vert_t \Lambda )$ on $\mathcal{L} (\omega )$.
We know from the general theory that the inverse exists but for
calculational purposes we only care about the inverse on $\mathcal{L}
(\omega )$.  In the case where $\omega$ is of index zero then we know that
the map $A \leftrightarrow$ PAP where $P$ is the maximal support projection
for $\mathcal{L} (\omega )$ is completely positive and one to one in both
directions.  So to parameterize $\mathcal{L} (\omega )$ we can take a
complete set of matrix units $e _{ij}^r$ for $\mathcal{L} (\omega )$ chosen
so that $Pe_{ij}^rP$ are a complete set of matrix units for $P\mathcal{L}
(\omega )P$.  Now when we analyze a $q$-weight map over $\mathbb C^p$ of
index zero when we speak of the map $\omega \vert_t \Lambda$ we will often
consider this to be a map of $\mathcal{L} (\omega) $ into itself rather
than a map of $B(\mathbb C^p)$ into itself.  This may seem an obvious
observation but it took us some time to realize this.  To give an example.
Note that limit points $L$ of $\pi_t^\#\Lambda$ as $t \rightarrow 0+$ are
not unique but if we restrict our attention to $\mathcal{L} (\omega )$ the
limit is unique and the limit is the identity map.

Next we show that if $\omega$ is $q$-pure $q$-weight map over $\mathbb C^p$ of
index zero then range of $\omega$ is a factor with the Choi-Effros product.
First we prove a routine lemma.

\begin{lem}
Suppose $B(X)$ is the Banach space of all linear maps of a finite dimensional
Banach space $X$ into itself and $A_n \in B(X)$ for $n = 1,2,\cdots $ is a
sequence of invertible elements and $A_n \rightarrow A$ as $n\rightarrow\infty$
then $A$ is invertible and $A _n^{-1}\rightarrow A^{-1}$ as $n \rightarrow
\infty$ if and only if there is a constant $K$ so that $\Vert A_n^{-1}\Vert
\leq K$ for all $n$.
\end{lem}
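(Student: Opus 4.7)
The plan is to handle the two implications separately, relying only on the basic algebraic identity $A_n^{-1}-A^{-1}=A_n^{-1}(A-A_n)A^{-1}$ and a Neumann-series argument. The finite dimensionality of $X$ will enter only through the statement that $B(X)$ is a Banach algebra; no determinant or compactness magic is needed.

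For the easy direction, assume $A_n^{-1}\to A^{-1}$. Then $\Vert A_n^{-1}\Vert$ is a convergent sequence of real numbers, hence bounded, and we may take $K=\sup_n \Vert A_n^{-1}\Vert$.

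For the substantive direction, assume $\Vert A_n^{-1}\Vert\le K$ for all $n$. The first step is to establish that $A$ is invertible. I would write $A_n^{-1}A=I-A_n^{-1}(A_n-A)$ and observe that $\Vert A_n^{-1}(A_n-A)\Vert\le K\Vert A_n-A\Vert$, which tends to $0$ by hypothesis. Choose $N$ so large that this quantity is below $\tfrac12$ for $n\ge N$. Then by the Neumann series, $I-A_N^{-1}(A_N-A)$ is invertible in $B(X)$, so $A_N^{-1}A$ is invertible, and therefore $A$ itself is invertible, with
$$A^{-1}=\bigl(I-A_N^{-1}(A_N-A)\bigr)^{-1}A_N^{-1}.$$

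Once $A^{-1}$ exists, the convergence $A_n^{-1}\to A^{-1}$ follows at once from the identity
$$A_n^{-1}-A^{-1}=A_n^{-1}(A-A_n)A^{-1},$$
which gives $\Vert A_n^{-1}-A^{-1}\Vert\le K\Vert A^{-1}\Vert\cdot\Vert A-A_n\Vert\to 0$. The only real content is choosing $N$ large enough to activate the Neumann series; there is no conceptual obstacle, since the uniform bound $K$ is exactly what prevents the inverses from blowing up as $A_n$ approaches a possibly singular limit. (One could note in passing that without the uniform bound the conclusion can fail, e.g.\ $A_n=\tfrac1n I\to 0$, showing that the hypothesis is essential, not merely convenient.)
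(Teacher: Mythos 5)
Your proof is correct, and it takes a genuinely different route from the paper's. The paper proves that $\{A_n^{-1}\}$ is a Cauchy sequence via the identity $A_n^{-1}-A_m^{-1}=A_n^{-1}(A_n-A_m)A_m^{-1}$ together with the uniform bound (giving the factor $K^2$), extracts a limit $B$ by completeness of $B(X)$, and then verifies $\Vert AB-I\Vert\to 0$, concluding $B=A^{-1}$ from $AB=I$ --- a step that implicitly uses finite dimensionality, since a one-sided inverse need not be two-sided in general. You instead establish invertibility of $A$ first, by a Neumann-series perturbation at a single fixed index $N$, and only then deduce convergence from the resolvent identity $A_n^{-1}-A^{-1}=A_n^{-1}(A-A_n)A^{-1}$. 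Your argument buys slightly more generality (it works verbatim in any Banach algebra, with no appeal to a one-sided inverse being two-sided, and no Cauchy/completeness argument), while the paper's is equally short but structured around first producing the candidate limit and then identifying it. Both are complete; your counterexample $A_n=\tfrac1n I$ is a nice touch showing the bound is not removable.
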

\begin{proof}  Assume $A_n$ is a sequence as stated above.  If $A$ is
invertible and $A_n^{-1} \rightarrow A^{-1}$ as $n \rightarrow \infty$ then
$\Vert A_n ^{-1}\Vert$ is uniformly bounded.  Now suppose there is a
constant $K$ so that $\Vert A_n^{-1}\Vert \leq K$ for all $n.  $ Then we
have
\begin{align*}
\Vert A_n^{-1} - A_m^{-1}\Vert  &= \Vert A_n^{-1}(A_n - A_m)A_m^{-1}\Vert
\leq \Vert A_n^{-1}\Vert\medspace\Vert (A_n - A_m)\Vert\medspace\Vert
A_m^{-1}\Vert
\\
&\leq K^2\Vert A_n - A_m\Vert \rightarrow 0
\end{align*}
as $n,m \rightarrow \infty$.  Since $B(X)$ is complete there is a $B
\in B(X)$ so that $A_n^{-1}\rightarrow B$ as $n \rightarrow
\infty$.  We show $A$ is invertible and $B = A^{-1}$.  Since $A_n
\rightarrow A$ there is a constant $K^{\prime}$ so that $\Vert A_n\Vert
\leq K^{\prime}$ for all $n$.  Now we have
\begin{align*}
\Vert AB - I\Vert &= \Vert (A - A_n)B + A_n(B - A_n^{-1})\Vert
\\
&\leq \Vert A - A_n\Vert\medspace\Vert B\Vert
+ K^{\prime}\Vert B - A_n^{-1}\Vert \rightarrow 0
\end{align*}
as $n \rightarrow \infty$.  Hence $AB = I$ and $B = A^{-1}.$ \end{proof}

\begin{thm}
Suppose $\omega$ is a $q$-weight map over $\mathbb C^p$ and
$\pi _t^\#$ is the generalized boundary representation of $\omega$.
Suppose that $\psi_t^\#$ is a completely positive map of $\mathfrak A (\mathbb
C^p)$ into $B(\mathbb C^p)$ which is subordinate to $\pi_t^\#$ so $\pi_t^\#
\geq \psi_t^\#$ for each $t > 0$.  Let
$$
\eta_t = (\iota - \psi_t^\#\Lambda )^{-1}\psi_t^\#
$$
for $t > 0$.  Then $\omega \vert_t  \geq_q \eta_t$ for $t > 0$ and if $\eta$
is a weak limit point in $B(\mathbb C^p) \otimes \mathfrak A (\mathbb C^p)_*$ of
$\eta_t$ as $t \rightarrow 0$ then $\eta$ is a $q$-subordinate of
$\omega$ so $\omega \geq_q \eta$.
\end{thm}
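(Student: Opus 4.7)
The identity $(\iota + \omega|_t \Lambda)^{-1} = \iota - \pi_t^\# \Lambda$ rearranges to $\omega|_t = (\iota - \pi_t^\# \Lambda)^{-1}\pi_t^\#$, so the formula defining $\eta_t$ is obtained from the formula for $\omega|_t$ by substituting the smaller CP map $\psi_t^\#$ in place of $\pi_t^\#$. I expect $\eta_t$ to be the unique $q$-weight map whose generalized boundary representation at time $t$ coincides with $\psi_t^\#$, with the subordinate relation $\omega|_t \geq_q \eta_t$ inherited from $\psi_t^\# \leq \pi_t^\#$ via monotonicity of the inversion.

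The first step is to verify that $\iota - \psi_t^\# \Lambda$ is invertible with completely positive inverse, so that $\eta_t$ is itself CP. Since $\psi_t^\# \Lambda$ is CP, $\iota - \psi_t^\# \Lambda$ is conditionally negative, and the identity
$\iota - \psi_t^\# \Lambda = (\iota + \omega|_t \Lambda)^{-1} + (\pi_t^\# - \psi_t^\#)\Lambda$
gives $(\iota - \psi_t^\# \Lambda)(I) \geq (\iota + \omega|_t \Lambda)^{-1}(I) \geq 0$. After handling any degeneracy in the positive lower bound by first treating $\lambda\psi_t^\#$ for $\lambda \in [0,1)$ (where $\|\lambda\psi_t^\# \Lambda\| \leq \lambda < 1$ makes the geometric series converge immediately) and passing to $\lambda \to 1^-$, Lemma 3.3 delivers invertibility and CP of the inverse. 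A direct algebraic computation then yields $(\iota + \eta_t \Lambda)(\iota - \psi_t^\# \Lambda) = \iota$, which identifies $\psi_t^\#$ as the generalized boundary representation of $\eta_t$ at time $t$.

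The second step is to establish $\omega|_t \geq_q \eta_t$. For each $s > 0$, the identities $\eta_t|_s = (\iota - \psi_t^\# \Lambda)^{-1}\psi_t^\#|_s$ and the analogous $\omega|_s = (\iota - \pi_t^\# \Lambda)^{-1}\pi_t^\#|_s$ lead, by the same algebra as in Step 1, to
\begin{equation*}
\pi_s^\#(\eta_t) = (\iota - (\psi_t^\# - \psi_t^\#|_s)\Lambda)^{-1}\psi_t^\#|_s, \qquad \pi_s^\#(\omega|_t) = (\iota - (\pi_t^\# - \pi_t^\#|_s)\Lambda)^{-1}\pi_t^\#|_s.
\end{equation*}
For $s \leq t$, using that both $\pi_t^\#\Lambda$ and $\psi_t^\#\Lambda$ factor through the cutoff at $t$, the inversions collapse and the difference reduces to $\pi_t^\# - \psi_t^\# \geq 0$. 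For $s \geq t$, the CP dominations $\pi_t^\#|_s \geq \psi_t^\#|_s$ and $\pi_t^\# - \pi_t^\#|_s \geq \psi_t^\# - \psi_t^\#|_s$, combined with Lemma 3.3's order-reversing property for inverses of conditionally negative maps, give the desired CP inequality $\pi_s^\#(\omega|_t) \geq \pi_s^\#(\eta_t)$.

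For the final step, the bound $\eta_t(I - \Lambda) \leq \omega|_t(I - \Lambda) \leq \omega(I - \Lambda)$ coming from $\omega|_t \geq_q \eta_t$ confines $\{\eta_t\}_{t > 0}$ to a weakly compact subset of $B(\mathbb{C}^p) \otimes \mathfrak A(\mathbb{C}^p)_*$, so a weak limit point $\eta$ exists along any ultrafilter as $t \to 0^+$. Fixing $s > 0$, for every $t < s$ we have $\pi_s^\#(\omega|_t) = \pi_s^\#$ and $\pi_s^\# \geq \pi_s^\#(\eta_t)$; since $B(\mathbb{C}^p)$ is finite dimensional, weak convergence of $\eta_t$ passes through the inversion formulas defining $\pi_s^\#(\eta_t)$ exactly as in the proof of the earlier $q$-subordinate construction, yielding $\pi_s^\# \geq \pi_s^\#(\eta)$, and hence $\omega \geq_q \eta$. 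The main obstacle will be the $s \geq t$ case of the second step: propagating $\pi_t^\# \geq \psi_t^\#$ through the inversion of conditionally negative maps under varying cutoffs demands careful bookkeeping and a tight application of Lemma 3.3's monotonicity.
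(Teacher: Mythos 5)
Your proposal is correct and takes essentially the same route as the paper: the algebraic identity $(\iota+\eta_t\vert_s\Lambda)^{-1}\eta_t\vert_s=(\iota-(\psi_t^\#-\psi_t^\#\vert_s)\Lambda)^{-1}\psi_t^\#\vert_s$, term-by-term domination of the resulting Neumann series by the corresponding series for $\pi_s^\#$, and passage to the weak limit in finite dimensions. The one place you are looser than the paper is the final step: the paper makes explicit (via Lemma 5.5) that convergence of $(\iota+\eta_k\vert_s\Lambda)^{-1}$ to $(\iota+\eta\vert_s\Lambda)^{-1}$ requires a uniform bound on these inverses, which it obtains from the completely positive estimate $0\leq \iota-(\iota+\eta_k\vert_s\Lambda)^{-1}\leq \omega\vert_s\Lambda$ --- an estimate your step-two formulas already contain, so you should cite it there rather than deferring to ``exactly as in the earlier construction.''
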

\begin{proof}  Assume the hypothesis and notation of the theorem.  Since the
generalized boundary representation $\phi_t^\#$ of $\eta_s$ is $\psi_s^\#$
for $t \leq s$ and the boundary representation $\pi_t^\#$ of $\omega
\vert_s$  is $\pi_s^\#$ for $t \leq s$ it follows that $\omega \vert_s
\geq_q \eta_s$ for all $s > 0$.  Now suppose that $\eta$ is a weak limit
point of $\eta_t$ as $t \rightarrow 0+$.  Since $B(\mathbb C^p)$ is finite
dimensional there is a decreasing sequence of $t_k > 0$ so that
$\eta_{t_k}(A) \rightarrow \eta(A)$ as $k \rightarrow \infty$ for each $A
\in \mathfrak A (\mathbb C^p)$.  To simplify notation we let $\eta_k = \eta
_{t_k}$ and $\psi_k^\# = \psi_{t_k}^\#$, $\pi_k^\# = \pi_{t_k}^\#$ and
$\omega_k = \omega \vert_{t_k}$   for $k = 1,2,\cdots$.  Suppose $s > 0$.
Since $\pi_k^\# \geq \psi_k^\#$ we have for $k$ large enough so that $t_k <
s$ that
$$
(\iota + \eta_k \vert_s \Lambda )^{-1}\eta_k \vert_s  \leq (\iota + \omega
\vert_s \Lambda )^{-1}\omega \vert_s.
$$
Now we have $\eta_k \vert_s \Lambda \rightarrow \eta \vert_s
\Lambda$ as $k \rightarrow \infty$ and from lemma 5.5 we have $(\iota +
\eta \vert_s \Lambda )$ is invertible and
$$
(\iota + \eta_k \vert_s \Lambda )^{-1}\rightarrow (\iota + \eta\vert_s
\Lambda )^{-1}
$$
as $k \rightarrow \infty$ if and only if there is a constant $K$ so that
$$
\Vert (\iota + \eta_k \vert_s \Lambda )^{-1}\Vert \leq K
$$
for all $k = 1,2,\cdots$.  Now we have
$$
\eta_k \vert_s  = (\iota -  \psi_k^\#\Lambda )^{-1}\psi_k^\# \vert_s
$$
so
$$
\iota + \eta_k \vert_s \Lambda = (\iota - \psi_k^\#\Lambda )^{-1}(\iota -
(\psi_k^\# - \psi_k^\# \vert_s)\Lambda )
$$
which yields
\begin{align*}
(\iota + \eta_k \vert_s \Lambda )^{-1} &= (\iota - (\psi_k^\# - \psi_k^\#
\vert_s)\Lambda )^{-1}(\iota - \psi_k^\#\Lambda )
\\
&= \iota - (\iota - (\psi_k^\# - \psi_k^\# \vert_s)\Lambda )^{-1}\psi_k^\#
\vert_s \Lambda
\end{align*}
so if we let $T_k = \iota - (\iota + \eta_k \vert_s \Lambda )^{-1}$ we have
$$
T_k = (\iota - (\psi_k^\# - \psi_k^\# \vert_s)\Lambda )^{-1}\psi_k^\#
\vert_s \Lambda
$$
and using the geometric series expansion $(1-x)^{-1}= 1 + x + x^2 + \cdots
$ which converges since $\Vert (\psi_k^\#-\psi_k^\# \vert_s) \Lambda\Vert
\leq \Vert\pi_k^\#\Lambda\Vert < 1$ we have
$$
T_k = (\iota+(\psi_k^\#-\psi_k^\# \vert_s )\Lambda+((\psi_k^\#-\psi_{k
\vert s}^\#\vert )\Lambda )^2+\cdots )\psi_k^\# \vert_s \Lambda
$$
so $T_k$ is the sum of completely positive terms so $T_k$ is completely
positive and, hence, $\Vert T_k\Vert = \Vert T_k(I)\Vert$.  Note that if
$\phi _i$ and $\phi_i^{\prime}$ are completely positive maps and $\phi_i
\geq \phi_i^{\prime}$ for $i = 1,\cdots ,n$ then
$$
\phi_1\phi_2\cdots \phi_n \geq \phi_1^{\prime}\phi_2^{\prime}\cdots
\phi_n^{\prime}
$$
so we have
\begin{align*}
T_k &\leq (\iota + \psi_k^\#\Lambda + (\psi_k^\#\Lambda )^2 \cdots )\psi_k
^\# \vert_s \Lambda
\\
&= (\iota - \psi_k^\#\Lambda )^{-1}\psi_k^\# \vert_s \Lambda = \eta_k
\vert_s \Lambda  \leq \omega \vert_s \Lambda
\end{align*}
and, hence,
$$
\Vert T_k\Vert \leq \Vert\omega \vert_s (\Lambda )\Vert
$$
for all $k$ with $t_k \leq s$.  Hence, we have
$$
\Vert (\iota + \eta_k \vert_s \Lambda )^{-1}\Vert = \Vert\iota - T_k\Vert
\leq \Vert\iota\Vert + \Vert T_k\Vert \leq 1 + \Vert\omega \vert_s (
\Lambda )\Vert
$$
for $t_k \leq s$.  Hence, from Lemma 5.5 we have $\iota + \eta \vert_s
\Lambda$ is invertible and
$$
(\iota + \eta_k \vert_s \Lambda )^{-1} \rightarrow  (\iota + \eta \vert_s
\Lambda )^{-1}
$$
as $k \rightarrow \infty$ and, hence,
$$
(\iota + \eta_k \vert_s \Lambda )^{-1}\eta_k \vert_s \rightarrow (\iota +
\eta \vert_s \Lambda )^{-1}\eta\vert_s
$$
as $k \rightarrow \infty$.  Since
$$
(\iota + \eta_k \vert_s \Lambda )^{-1}\eta_k\vert_s  \leq (\iota + \omega
\vert_s \Lambda )^{-1}\omega \vert_s
$$
for all $k$ we have
$$
(\iota + \eta \vert_s \Lambda )^{-1}\eta \vert_s  \leq (\iota + \omega
\vert_s \Lambda )^{-1}\omega \vert_s
$$
and, hence, $\eta$ is a $q$-subordinate of $\omega .$ \end{proof}

\begin{thm}
Suppose $\omega$ is a $q$-weight map over $\mathbb C^p$
of index zero and $\mathcal{L}$ is the range of $\omega$ which is a Choi-Effros
algebra with the multiplication $A\star B = L(AB)$ where $L$ is a limit
point of $\pi_t^\#\Lambda$ as $t \rightarrow 0+$.  Let $Q$ be a non zero
central projection in $\mathcal{L}$.  Then there is a $q$-subordinate
$\eta$ of $\omega$ so that the range of $\eta$ is $Q\star\mathcal{L} $.  In
particular, if $\omega$ is a $q$-pure $q$-weight map over $\mathbb C^p$ of
index zero then the range $\mathcal{L}$ of $\omega$ is a Choi-Effros factor.
\end{thm}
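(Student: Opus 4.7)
The plan is to truncate the generalized boundary representation $\pi_t^\#$ of $\omega$ by the central projection $Q$---suitably lifted to an honest orthogonal projection in $B(\mathbb C^p)$ using the maximal support machinery of Theorem 5.2---and then apply Theorem 5.6 to pass to a limiting $q$-subordinate.

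First I would invoke Theorem 5.2 to obtain the maximal support projection $P \in \mathcal{L}' \cap \mathcal{L}''$ and set $\tilde Q := PQP$. Under the $*$-isomorphism $A \mapsto PAP$ from $\mathcal{L}$ onto $P\mathcal{L} P$ (where the Choi--Effros product becomes the ordinary operator product), $\tilde Q$ is an honest orthogonal projection in $B(\mathbb C^p)$ and is central in the $C^*$-subalgebra $P\mathcal{L} P$ of $PB(\mathbb C^p)P$. I would then define $\psi_t^\#(A) := \tilde Q\, \pi_t^\#(A)\, \tilde Q$, which is manifestly CP. To verify $\pi_t^\# \geq \psi_t^\#$ as CP maps, for $A \geq 0$ set $B := \pi_t^\#(A) \in \mathcal{L}$; since $P \in \mathcal{L}'$, $P$ commutes with $B$, giving $B = PBP + (I-P)B(I-P)$, and the centrality of $\tilde Q$ in $P\mathcal{L} P$ further splits $PBP = \tilde Q B \tilde Q + (P-\tilde Q)B(P-\tilde Q)$. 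Hence $B - \tilde Q B \tilde Q$ is a sum of two positive operators, and the same computation at the matrix level $\iota_n \otimes (\pi_t^\# - \psi_t^\#)$ yields full CP domination. Theorem 5.6 then produces a $q$-subordinate $\eta$ of $\omega$ as a weak limit point of $\eta_t := (\iota - \psi_t^\#\Lambda)^{-1}\psi_t^\#$, whose existence follows by compactness in finite dimensions.

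The containment $\text{range}(\eta) \subset Q\star\mathcal{L}$ should be routine: the range of each $\eta_t$ lies in the closed subspace $\tilde Q\,\mathcal{L}\,\tilde Q$, which under the Theorem 5.2 isomorphism is exactly $Q\star\mathcal{L}$, and this is preserved on passage to weak limits. I expect the main obstacle to be the reverse containment, showing that $\text{range}(\eta)$ is not a strict subspace of $Q\star\mathcal{L}$. Here I would argue that $\eta$ is itself of index zero (as a $q$-subordinate of an index-zero $\omega$, by the second half of Theorem 4.3), so its range coincides with the range of its own limiting Choi--Effros idempotent $L_\eta$. Since $\pi_{\eta,t}^\# \leq \pi_t^\#$, any limit point $L_\eta$ satisfies $L_\eta \leq L$, and Theorem 5.4 applied to this pair writes $L_\eta(\cdot) = E L(\cdot) E$ with $E = L_\eta(I) \in \mathcal{L}'$ a projection dominated by $I_o$, so $\text{range}(L_\eta) = E\mathcal{L} E$. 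The key computation will then be to identify $E$ with $\tilde Q$, using that $\psi_t^\#\Lambda(I) = \tilde Q\,\pi_t^\#\Lambda(I)\,\tilde Q \to \tilde Q$ as $t \to 0+$ (since $\pi_t^\#\Lambda(I) \to I_o$ and $\tilde Q I_o \tilde Q = \tilde Q$); tracking how this limit is preserved through the inversion $(\iota - \psi_t^\#\Lambda)^{-1}$ appearing in the passage from $\psi_t^\#$ to $\eta_t$ to $\pi_{\eta,t}^\#$ is the delicate step.

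For the ``in particular'' statement, suppose $\omega$ is $q$-pure but $\mathcal{L}$ is not a factor. Pick a nontrivial central projection $Q \in \mathcal{L}$ and apply the first part to both $Q$ and $I_o - Q$, producing nonzero $q$-subordinates $\eta_Q$ and $\eta_{I_o - Q}$ of $\omega$ with ranges $Q\star\mathcal{L}$ and $(I_o - Q)\star\mathcal{L}$ respectively. Since $q$-subordinates of a $q$-pure weight map are themselves $q$-pure (by transitivity of $q$-subordination), and by total ordering we may assume $\eta_Q \geq_q \eta_{I_o - Q}$; Theorem 3.2 applied to the $q$-pure $\eta_Q$ then forces $\text{range}(\eta_{I_o - Q}) \subset \text{range}(\eta_Q)$. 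But $Q\star\mathcal{L} \cap (I_o - Q)\star\mathcal{L} = \{0\}$ since $Q\star(I_o - Q) = 0$, contradicting $\eta_{I_o - Q} \neq 0$; hence $\mathcal{L}$ must be a Choi--Effros factor.
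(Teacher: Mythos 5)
Your skeleton---compress $\pi_t^\#$ by the central projection and pass to a weak limit of $\eta_t=(\iota-\psi_t^\#\Lambda)^{-1}\psi_t^\#$ via Theorem 5.6---is the same as the paper's, but there are two genuine gaps. First, you compress with the honest operator projection $\tilde Q=PQP$ rather than with the Choi--Effros product. Since $A\mapsto PA$ is a $*$-isomorphism of $(\mathcal{L},\star)$ onto $P\mathcal{L}P$, one has $\tilde Q\,B\,\tilde Q=P(Q\star B)$ for $B\in\mathcal{L}$, so the range of your $\psi_t^\#$ (hence of $\eta$) lies in $P(Q\star\mathcal{L})=\tilde Q\mathcal{L}\tilde Q$. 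This is order-isomorphic to $Q\star\mathcal{L}$ but is in general a \emph{different} subspace of $B(\mathbb C^p)$: by Theorem 5.1 an element $C\in\mathcal{L}$ has the form $\phi(C)+(I-F)L(\phi(C))(I-F)$, so $PC\in\mathcal{L}$ only when $PC=C$, which fails unless $I_o=P$. The theorem asserts the range is literally $Q\star\mathcal{L}\subset\mathcal{L}$ (and this is what gets used later, e.g.\ against Theorem 2.2), so you must take $\psi_t^\#=Q\star\pi_t^\#$; the domination $\pi_t^\#\geq Q\star\pi_t^\#\geq 0$ then follows because $B\mapsto(I_o-Q)\star B=(I_o-Q)\star B\star(I_o-Q)$ is a compression inside the $C^*$-algebra $\mathcal{L}$ and hence completely positive.

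Second, and more seriously, the surjectivity of the range---that the limit $\eta$ is nonzero and its range is \emph{all} of $Q\star\mathcal{L}$---is only a plan, not a proof. Your appeal to Theorem 5.4 requires the unverified hypothesis that $L_\eta(I)$ be a projection (Theorem 5.1 shows such a unit is in general only a projection plus a strict contraction), and you defer the identification of $E$ with the compressing projection as ``the delicate step.'' The paper needs none of this: expanding the resolvent gives the identity
$$
\eta_t \;=\; Q\star\bigl(\iota-(\iota-Q\star\pi_t^\#\Lambda)^{-1}(I_o-Q)\star\bigr)\,\omega\vert_t ,
$$
and for $A\in\mathcal{L}$ with $Q\star A=A$ one picks $B$ with $\omega(B)=A$ and $B=E(t_0,\infty)BE(t_0,\infty)$; since $(I_o-Q)\star A=0$ this yields $\eta_t(B)=A$ exactly for $0<t<t_0$, an equality that survives the weak limit. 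That identity is the missing idea. Your ``in particular'' argument is essentially sound, though the range-containment result you want is Theorem 2.2 (not 3.2) and you need the transitivity remark to view $\eta_{I_o-Q}$ as a $q$-subordinate of the $q$-pure $\eta_Q$; the paper argues more directly that two nonzero $q$-subordinates whose ranges meet only in $\{0\}$ cannot be comparable.
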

\begin{proof}  Assume the hypothesis and notation of the theorem.  If
$\mathcal{L}$ is a factor with the Choi-Effros product then $\eta = \omega$ and
the proof is complete.  Suppose then that $\mathcal{L}$ is not a factor with
the Choi-Effros product and $Q \in \mathcal{L}$ is a central projection in
$\mathcal{L}$.  Let $I_o$ be the unit of $\mathcal{L}$ and note $I_o-Q$ is not
zero.  Let $\pi_t^\#$ be the generalized boundary representation of $\omega$
and let $\psi_t^\# = Q\star \pi_t^\#$ and note that $\pi_t^\# - \psi_t^\# =
(I_o - Q)\star \pi_t^\# \geq 0$ for $t > 0$ and let
$$
\eta_t = (\iota - \psi_t^\#\Lambda )^{-1}\psi_t^\# = \psi_t^\# +
\psi_t^\#\Lambda \psi_t^\# + \cdots .
$$
Note the range of $\eta_t$ is contained in $Q\star\mathcal{L}$.  Then from
Theorem 5.6 we know that any weak limit $\eta$ of $\eta_t$ is a $q$-subordinate
of $\omega $.

Next we show that any limit point $\eta$ is not zero and the range of
$\eta$ is $Q\star\mathcal{L}$.  Now we have
$$
\eta_t = (\iota - \psi_t^\#\Lambda )^{-1}\psi_t^\#\qquad \text{and} \qquad
\psi_t^\# = Q\star\pi_t^\# = Q\star (\iota + \omega \vert_t \Lambda
)^{-1}\omega \vert_t
$$
so
$$
\eta_t = (\iota - Q\star\pi_t^\#\Lambda )^{-1}Q\star(\iota + \omega \vert_t
\Lambda )^{-1}\omega \vert_t
$$
and since
$$
(\iota + \omega \vert_t \Lambda )^{-1} = \iota - \pi_t^\#\Lambda
$$
we have
$$
\eta_t = (\iota - Q\star\pi_t^\#\Lambda )^{-1}Q\star(\iota -
\pi_t^\#\Lambda ) \omega \vert_t = R_t\omega \vert_t
$$
and
\begin{align*}
R_t &= (\iota + Q\star\pi_t^\#\Lambda + (Q\star\pi_t^\#\Lambda )^2 + \cdots
)Q\star (\iota - \pi_t^\#\Lambda )
\\
&= (\iota + Q\star\pi_t^\#\Lambda + (Q\star\pi_t^\#\Lambda )^2 + \cdots
)Q\star -(Q\star\pi_t^\#\Lambda + (Q\star\pi_t^\#\Lambda )^2 + \cdots )
\\
&= Q\star - Q\star\pi_t^\#\Lambda (I_o-Q)\star - (Q\star\pi_t^\#\Lambda)^2
(I_o-Q)\star - (Q\star\pi_t^\#\Lambda)^3(I_o-Q)\star - \cdots
\\
&= Q\star - Q\star (\iota - Q\star\pi_t^\#\Lambda )^{-1}(I_o - Q)\star
\end{align*}
so we have
\begin{align*}
\eta_t &= Q\star\omega \vert_t  - Q\star (\iota - Q\star\pi_t^\#\Lambda
)^{-1}(\omega \vert_t  - Q\star\omega \vert_t )
\\
&= Q\star (\iota - (\iota - Q\star\pi_t^\#\Lambda )^{-1}(I_o - Q)\star
)\omega  \vert_t
\end{align*}
for $t > 0$.  Let $\eta$ be a limit point of $\eta_t$ as $t \rightarrow 0
+$.  Since the range of $\omega$ is $\mathcal{L}$ for any $A \in \mathcal{L}$
there is a $t_o > 0$ and a $B \in \mathfrak A (\mathbb C^p)$ so that $E(t_o,
\infty ) BE(t_o,\infty ) = B$ (where $E(s,\infty )$ is the projection in
$\mathfrak A (\mathbb C^p)$ onto function with support in $[s,\infty ))$ and
$\omega (B) = A$.  Now let $A$ be any element of $\mathcal{L}$ so that $Q\star
A = A$.  Then $(I_o - Q)\star A = 0$ and we see that for $t \in (0,t_o)$ we have
$\eta_t(B) = A$.  This then shows that every limit point $\eta$ is not
zero and the range of $\eta$ is $Q\star\mathcal{L}$.  Then $\eta$ is a
$q$-subordinate of $\omega$ with range $Q\star\mathcal{L}$.

Now we prove the last sentences of the theorem.  Suppose then that $\omega$
is a $q$-pure $q$-weight map over $\mathbb C^p$ with range $\mathcal{L}$ which
is an algebra with the Choi-Effros multiplication.  Suppose $\mathcal{L}$ is not
a Choi-Effros factor.  Then there are at least two minimal central projections
$Q_1$ and $Q_2$ so that $Q_1\star Q_2 = 0$.  By what we have just proved there
are $q$-subordinates $\eta_1$ and $\eta_2$ of $\omega$ with ranges $Q_1
\mathcal{L}$ and $Q_2\mathcal{L}$, respectively.  Since $Q_1$ and $Q_2$ are
disjoint central projections it is immediately clear that it is not true that
$\eta_1 \geq_q \eta_2$ or $\eta_2 \geq_q \eta _1$.  Hence, $\omega$ is not
$q$-pure.
\end{proof}

Our ultimate goal is to understand $q$-pure $q$-weight maps
$\omega$ of index zero and from the last theorem we know that for
such maps the range of $\omega$ is a Choi-Effros factor (i.e. $\mathcal{L}$
the range of $\omega$ is a factor with the Choi-Effros product).
The remainder of this section we will focus on this
situation.  The main results of the next few theorems and lemmas
is to carry over the results of the last section to this new
setting.  What is surprising is that the arguments of the last
section carry over to our new setting with virtually no change
except for a change in notation.  What we will do is to replace
$B(\mathbb C^p)$ of the last section by $\mathcal{L}$ the range of
$\omega$ so instead of considering a mapping for $\mathfrak{A}
(\mathbb C ^p)$ into $B(\mathbb C^p)$ we will consider the same
mapping from $\tilde{\mathfrak{A}} (\mathbb C^p)$ to $\mathcal{L}$
where $\tilde{\mathfrak{A}} (\mathbb C^p)$ is $\mathcal{L} \otimes
\mathfrak{A} (\mathbb C )$.  Note $\mathfrak{A} (\mathbb C ^p) =
B(\mathbb C^p)\otimes\mathfrak{A} (\mathbb C )$ so $\tilde
{\mathfrak{A}} (\mathbb C^p)$ is obtained by restriction.  We will
spell this out in detail but for the moment we what to emphasize
the main point that if you see a tilde on a mapping it means the
same mapping only restricted $\mathcal{L}$.  The mapping $\phi_t =
\omega\vert_t\Lambda$ is a mapping of $B(\mathbb C^p)$ into
itself.  The mapping $\tilde \phi_t$ is the same mapping
restricted to $\mathcal{L}$.  The mapping $L_t = \pi_t^\#\Lambda$
is a mapping of $B(\mathbb C^p)$ into itself and the mapping
$\tilde L_t$ is the same mapping restricted to $\mathcal{L}$.

The basic rule is that the tilde means restrict to $\mathcal{L}$
so $\tilde B(\mathbb C^p) = \mathcal{L}$.  The mapping $\Lambda$
is a mapping of $B(\mathbb C^p)$ into it $B(\mathbb C^p\otimes
L^2(0,\infty ))$.  The mapping $\tilde{\Lambda}$ is the same
mapping restricted to $\mathcal{L}$. The range of $\tilde{\Lambda}$
is in $B(\mathbb C^p\otimes L^2(0,\infty )) = B(\mathbb
C^p)\otimes B(L^2(0,\infty ))$ but it is also in $\tilde B(\mathbb
C^p\otimes L^2(0,\infty )) = \tilde B(\mathbb C^p)\otimes
B(L^2(0,\infty )) = \mathcal{L}\otimes B(L^2(0,\infty ))$.  Here
is the tricky part.  In order to specify an element of $A \in
B(\mathbb C^p)$ we specify a $(p\times p)$-matrix $a_{ij} \in
\mathbb C$ for $i,j = 1,\cdots ,p$.  Now $\mathcal{L}$ is a
Choi-Effros factor of type~I$_q$ which means there are matrix
units $E_{ij} \in \mathcal{L}$ which span $\mathcal{L}$ so to
specify an element $A \in \mathcal{L}$ we specify a $(q\times
q)$-matrix $a_{ij} \in \mathbb C$ for  $i,j = 1,\cdots ,q$.
Similarly each element of $B( \mathbb C^p\otimes L^2(0,\infty ))$
can be specified by a $(p\times p)$-matrix of elements of
$B(L^2(0,\infty ))$ and each $A \in \tilde B(\mathbb C^p\otimes
L^2(0,\infty )))$ can be specified by a $(q\times q)$-matrix of
elements of $A_{ij} \in B(L^2(0,\infty ))$ by the formula
\begin{equation*}
A = \sum_{i,j=1}^q E_{ij}A_{ij}.
\end{equation*}
Notice there is a bijection from $\tilde B(\mathbb C^p\otimes
L^2(0,\infty ))$ to $B(\mathbb C^q\otimes L^2(0,\infty )))$ and
this bijection is a completely positive contraction in both
directions.  In the language of Choi and Effros it is a complete
order isomorphism.  Now here is the tricky point.  In calculations
involving $\Lambda$ we used $(p\times p)$-matrices but in
calculations involving $\tilde{\Lambda}$ we will use $(q\times
q)$-matrices.  The tricky point is if you ask what a particular
matrix element $a_{ij} \in \mathbb C$ corresponds to the answer is
not so simple. For example it involves the choice of matrix unit
$E_{ij} \in \mathcal{L} $.  For example in a calculation involving
$\tilde{\Lambda}$ we might have a function $(g_{ik})_j(x) \in
\mathbb C$ for $k \in J$ some index set and $i,j = 1,\cdots ,q$.
But translating back to the mapping $\Lambda$ these functions with
values in $\mathbb C$ become functions with values in $\mathbb
C^p$ (actually is certain subspaces of $\mathbb C^p$ depending in
the values of $i$ and $j)$ and there is even the possibility that
the term corresponding to $(g_{ik})_j(x)$ would involve multiple
terms $(g_{ik})_j$ with different $k's$.  This is both the beauty
and draw back of the tilde notation.  The draw back is that
understanding formulas in terms of $\tilde{\Lambda}$ in terms of
$\Lambda$ can be quite complicated.  The beauty is that
calculations with $\tilde{\Lambda}$ can be extremely simple where
as the corresponding calculation with $\Lambda$ would be so
complicated that one would not have the courage to undertake them.

We formalize this with the following extended definition.

\begin{defn}\label{5.8}
A subset $\mathcal{L}$ of $B(\mathbb C^p)$ is called a Choi-Effros factor
if there is a completely positive contractive idempotent mapping $L$ of
$B(\mathbb C^p)$ into itself with range $\mathcal{L}$ and $\mathcal{L}$
equipped with the Choi-Effros multiplication $A\star B = L(AB)$ is a
$(q\times q)$-matrix algebra with unit $I _o$.  A complete set of matrix
units for $\mathcal{L}$ consist of elements $E_{ij}$ for $i,j = 1,\cdots ,q$
which span $\mathcal{L}$ and satisfying the relations
\begin{equation*}
E_{ij}^* = E_{ji},\qquad E_{ij}\star E_{nm} = \delta_{jn}E_{im}
\end{equation*}
and
\begin{equation*}
E_{11}+E_{22}+ \cdots +E_{qq} = I_o.
\end{equation*}
If $A \in \mathcal{L}$ the matrix entries of $A$ are complex numbers $a_{ij
}$ so that
\begin{equation*}
A = \sum_{i,j=1}^q a_{ij}E_{ij}.
\end{equation*}
If $\psi$ is a mapping of $B(\mathbb C^p)$ into another space we denote by
$\tilde \psi$ the same mapping restricted to $\mathcal{L}$.  We denote,
\begin{equation*}
\tilde B(\mathbb C^p\otimes L^2(0,\infty )) = \mathcal{L} \otimes
B(L^2(0,\infty ))
\end{equation*}
\begin{equation*}
\tilde{\mathfrak{A}} (\mathbb C^p) = \mathcal{L} \otimes \mathfrak{A}
(\mathbb C ) \subset \mathcal{L} \otimes B(L^2(0,\infty )) \subset \tilde
B(\mathbb C^p \otimes L^2(0,\infty )).
\end{equation*}
Each element $A \in \tilde B (\mathbb C^p\otimes L^2(0,\infty ))$ or $A \in
\tilde{\mathfrak{A}} (\mathbb C^p)$ can be written as a $(q\times
q)$-matrix of elements in $B(L^2(0,\infty ))$
\begin{equation*}
A = \sum_{i,j=1}^q E_{ij}\otimes A_{ij}
\end{equation*}
and the operators $A_{ij} \in B(L^2(0,\infty ))$ are called the
coefficients of $A$ in $\mathcal{L} \otimes B(L^2(0,\infty ))$.  The
mapping from $A$ to the matrix of coefficients of $A$ gives us a complete
order isomorphism of $\tilde B(\mathbb C^p\otimes L^2(0,\infty ))$ and
$\tilde{\mathfrak{A}} (\mathbb C^p)$ with $B(\mathbb C^q\otimes L^2(0,\infty
))$ and $\mathfrak{A} (\mathbb C^q)$, respectively.

Given an element $A \in \mathcal{L}$ we denote by $\tilde tr(A)$ the trace
of $A$ in $\mathcal{L}$ defined as follows.  If
\begin{equation*}
A = \sum_{i,j=1}^q a_{ij}E_{ij}\qquad \text{then} \qquad \tilde tr(A) =
\frac {1} {q} \sum_{i=1}^q a_{ii}.
\end{equation*}
\end{defn}

Technically when we refer to the matrix elements $a_{ij}$ of $A$
we should say with respect to the matrix units $E_{ij}$ but we
will forgo repeating this.  Note the trace $\tilde tr(A)$ does not
depend on the particular choice of matrix unit $E_{ij}$ for
$\mathcal{L}$ and $\tilde tr(I _o) = 1$.

In summary all the calculations involving $\tilde \phi_t,\medspace \tilde
\Lambda ,\medspace \tilde{\mathfrak{A}} (\mathbb C^p)$ and $\mathcal{L}$ are
the same as calculations involving $\phi_t,\medspace \Lambda ,\medspace
\mathfrak{A} (\mathbb C^p)$ and $B(\mathbb C^p)$ except $\mathbb C^p$ is
replaced by $\mathbb C^q$ and we think of $\mathcal{L}$ as $B(\mathbb C^q).
$ For us the fact that this works is a triumph of the work of Choi and
Effros without which we would never have come this far.

The first example which shows the advantage of the tilde notation
comes from the very basic property of the $q$-weight map
$\omega$.  Recall this is a completely positive mapping of
$\mathfrak{A} (\mathbb C^p)$ into $B(\mathbb C^p)$ which satisfies
inequality $\omega (I - \Lambda ) \leq I$ where $\Lambda$ means
$\Lambda (I)$ by which we mean
\begin{equation*}
\omega\vert_t(I - \Lambda (I)) \leq I
\end{equation*}
for all $t > 0$.  The corresponding inequality the tilde notation
is
\begin{equation*}
\omega\vert_t(I - \Lambda (I_o)) \leq I_o
\end{equation*}
for all $t > 0$ where $I_o$ is the identity $\mathcal{L}$ the range of
$\omega $.  The next theorem shows this is true and not only in the factor
case but for the general case as well.

\begin{thm}\label{5.9}
Suppose $\omega$ is a $q$-weight map over $\mathbb C^p$ of index zero and
$\mathcal{L}$ is the range of $\omega$ and $I_o \in \mathcal{L}$ is the
unit of $\mathcal{L}$ (i.e. $I_o\star A = A\star I_o = A$ for all $A \in
\mathcal{L}$).  Then $\omega (I-\Lambda (I_o)) \leq I_o$ by which we mean
$\omega\vert_t(I-\Lambda (I_o)) \leq I_o$ for all $t > 0$.
\end{thm}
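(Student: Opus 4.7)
My plan proceeds in two stages: first, sharpen the routine contractivity bound $\pi_t^\#(I)\le I$ to $\pi_t^\#(I)\le I_o$ using the Choi--Effros structure on $\mathcal{L}$; second, transport this strengthened bound back to $\omega\vert_t(I-\Lambda(I_o))$ by means of the identity $\pi_t^\#\Lambda=\iota-(\iota+\omega\vert_t\Lambda)^{-1}$.

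For the first stage I would begin by observing that $\omega\vert_t\Lambda$ has range contained in $\mathcal{L}$, so $\iota+\omega\vert_t\Lambda$ maps $\mathcal{L}$ into itself; since this map is invertible on all of $B(\mathbb C^p)$ by the $q$-weight axioms, its inverse must also preserve $\mathcal{L}$, and hence $\pi_t^\#(I)=(\iota+\omega\vert_t\Lambda)^{-1}\omega\vert_t(I)$ lies in $\mathcal{L}$. Next I would invoke Theorem~4.3 to produce a limit point $L$ of $\pi_s^\#\Lambda$ as $s\to 0+$; this $L$ is a completely positive contractive idempotent with range $\mathcal{L}$ and $L(I)=I_o$. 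Applying $L$ to both sides of $\pi_t^\#(I)\le I$ and using that $L$ fixes the elements of $\mathcal{L}$ gives $\pi_t^\#(I)=L(\pi_t^\#(I))\le L(I)=I_o$, which is the critical sharpening.

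The second stage is formal. Applying the identity $\pi_t^\#\Lambda=\iota-(\iota+\omega\vert_t\Lambda)^{-1}$ at $I_o$ and combining it with the decomposition $\pi_t^\#(I)=\pi_t^\#\Lambda(I_o)+\pi_t^\#(I-\Lambda(I_o))$ rearranges to
\begin{equation*}
\pi_t^\#(I-\Lambda(I_o))\;=\;\pi_t^\#(I)-\pi_t^\#\Lambda(I_o)\;\le\;I_o-\pi_t^\#\Lambda(I_o)\;=\;(\iota+\omega\vert_t\Lambda)^{-1}(I_o).
\end{equation*}
Rewriting the leftmost side as $(\iota+\omega\vert_t\Lambda)^{-1}\omega\vert_t(I-\Lambda(I_o))$ shows that the operator $W:=(\iota+\omega\vert_t\Lambda)^{-1}\bigl(\omega\vert_t(I-\Lambda(I_o))-I_o\bigr)$ satisfies $W\le 0$. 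Applying $\iota+\omega\vert_t\Lambda$ to this $W$ and exploiting that $\omega\vert_t\Lambda$ is completely positive (so $W\le 0$ forces $\omega\vert_t\Lambda(W)\le 0$) yields $\omega\vert_t(I-\Lambda(I_o))-I_o=W+\omega\vert_t\Lambda(W)\le 0$, which is the desired bound.

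The main obstacle is the first stage: the standard complete contractivity of $\pi_t^\#$ only delivers $\pi_t^\#(I)\le I$, and nothing in the elementary $q$-weight formalism forces the sharper estimate $\pi_t^\#(I)\le I_o$. The upgrade uses the index-zero hypothesis in an essential way, since it is Theorem~4.3 that furnishes the Choi--Effros idempotent $L$ needed to collapse $I$ down to $I_o$. Once that sharpening is secured, the remainder of the argument is a short algebraic manipulation resting only on the defining identity for $\pi_t^\#$ and the complete positivity of $\omega\vert_t\Lambda$.
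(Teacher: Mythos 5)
Your proof is correct, and its overall architecture is the same as the paper's: first sharpen $\pi_t^\#(I)\le I$ to $\pi_t^\#(I)\le I_o$, then push that bound through the resolvent identity $(\iota+\omega\vert_t\Lambda)^{-1}=\iota-\pi_t^\#\Lambda$ using positivity of $\omega\vert_t\Lambda$. Indeed your second stage is literally the paper's computation in disguise: your $W$ equals $\pi_t^\#(I)-I_o$, and the identity $\omega\vert_t(I-\Lambda(I_o))-I_o=W+\omega\vert_t\Lambda(W)$ is exactly the paper's rearrangement $\omega\vert_t(I-\Lambda(I_o))=\pi_t^\#(I)-(\iota-\pi_t^\#\Lambda)^{-1}\pi_t^\#\Lambda\bigl(I_o-\pi_t^\#(I)\bigr)$. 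Where you genuinely diverge is the first stage. The paper gets $\pi_t^\#(I)\le I_o$ by compressing with the maximal support projection $P$ of Theorem 5.2: from $P\pi_t^\#(I)P\le P=PI_oP$ and the fact that $A\mapsto PAP$ is an order isomorphism of $\mathcal{L}$ onto $P\mathcal{L}P$. You instead take any Choi--Effros limit idempotent $L$ from Theorem 4.3, note $\pi_t^\#(I)\in\mathcal{L}$ (which you justify correctly: $\iota+\omega\vert_t\Lambda$ maps $\mathcal{L}$ into itself and is invertible, so by finite-dimensionality its inverse preserves $\mathcal{L}$), and apply $L$ to $\pi_t^\#(I)\le I$ to get $\pi_t^\#(I)=L(\pi_t^\#(I))\le L(I)=I_o$. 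Your route is the more economical one: it needs only Theorem 4.3 and the fact (established in Section 5) that $I_o=L(I)$ is independent of the limit point, whereas the paper's route imports the maximal-support-projection machinery of Theorem 5.2. Both are sound; the paper's version has the minor advantage of not requiring you to check that $(\iota+\omega\vert_t\Lambda)^{-1}$ preserves $\mathcal{L}$, since the compression $P(\cdot)P$ can be applied to $\pi_t^\#(I)\le I$ without first locating $\pi_t^\#(I)$ inside $\mathcal{L}$ --- though the paper does implicitly use that the range of $\pi_t^\#$ lies in $\mathcal{L}$ as well.
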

\begin{proof}  Assume the hypothesis and notation of the theorem and let
$\pi_t^\#$ be the generalized boundary representation of $\omega$.  First we
claim that $\pi_t(I) \leq I_o$ for all $t$.  Let $P$ be the maximal support
projection for $\mathcal{L}$.  Since $\pi_t^\# (I) \leq I$ we clearly have
$P\pi_t(I) = P\pi _t(I)P \leq  P$ for $t > 0$ and since the mapping $A
\rightarrow$ PA $= PAP$ is an order isomorphism for $A \in \mathcal{L}$ and
$PI_o = P$ and the range of $\pi_t^\#$ is $\mathcal{L}$ it follows that
$\pi _t^\# (I) \leq I_o$ for all $t > 0$.  Now we have
\begin{align*}
\omega\vert_t(I - \Lambda (I_o)) &= (\iota - \pi_t^\#\Lambda )^{-1}\pi_t^\#
(I - \Lambda (I_o))
\\
&= (\iota - \pi_t^\#\Lambda )^{-1}(\iota - \pi_t^\#\Lambda )\pi_t^\# (I)
\\
&\qquad - (\iota - \pi_t^\#\Lambda )^{-1}(\pi_t^\#\Lambda (I_o) -
\pi_t^\#\Lambda \pi_t^\# (I))
\\
&= \pi_t^\# (I)-(\iota-\pi_t^\#\Lambda )^{-1}\pi_t^\#\Lambda (I_o-\pi_t^\# (I))
\end{align*}
and since $\pi_t^\# (I) \leq I_o$ and the mapping
$(\iota-\pi_t^\#\Lambda )^{-1}\pi_t^\#\Lambda$ is completely positive we have
\begin{equation*}
\omega\vert_t(I - \Lambda (I_o)) \leq I_o
\end{equation*}
for all $t > 0.$ \end{proof}

The next theorem is translates the results of the last section where
$\mathcal{L}$ was $B(\mathbb C^p)$ to our new setting where $\mathcal{L}$
is a Choi-Effros factor.  In an earlier version of this paper we
laboriously translated each of the theorems and lemmas.  What the next theorem
shows is we get it all for free.  The simple trick is defining $\tilde \omega$
as explained in the proof of the theorem.

\begin{thm}\label{5.10}
Suppose $\omega$ is a $q$-weight map over $\mathbb C^p$ of index zero over
$B(\mathbb C^p)$ and the range of $\omega$ denoted by $\mathcal{L}$ is a
Choi-Effros factor of type~I$_q$ (where $q < p)$.  For $t > 0$ let
$\phi_t = \omega\vert_t\Lambda$ and let $\pi_t^\#$ be the generalized boundary
representation of $\omega$.  We denote by $\tilde \Lambda$ and $\tilde \phi_t$
these mappings restricted to $\mathcal{L}$ as described above.  Let
\begin{equation*}
v_t = \tilde tr(I_o + \tilde \phi_t(I_o))\qquad \text{and} \qquad \Theta_t
= v_t^{-1}(\iota + \tilde \phi_t).
\end{equation*}
Then $\Theta_t$ is completely positive invertible mapping of $\mathcal{L}$
onto $\mathcal{L}$ and the inverse $\Theta_t^{-1}$ is conditionally
negative and $\Theta_t$ converges to a limit $\Theta$ as $t \rightarrow 0+$
and $\Theta$ is completely positive.  The limit $\Theta$ is invertible and
the inverse $\Theta^{-1} = \psi$ is conditionally negative and $\Theta_t
^{-1} \rightarrow \Theta^{-1} = \psi$ as $t \rightarrow 0+$.

We define $\vartheta = \Theta^{-1}\omega = \psi\omega$ where $\vartheta \in
B(\mathbb C^p) \otimes \mathfrak{A} (\mathbb C^p)_*$ is completely positive
and
\begin{equation*}
\vartheta (A) = \lim_{t\rightarrow 0+} v_t\pi_t^\# (A)
\end{equation*}
for $A \in \mathfrak{A} (\mathbb C^p)$.  We define $\vartheta_{ij}$ by the
formula
\begin{equation*}
\vartheta (A) = \sum_{i,j=1}^q \vartheta_{ij}(A)E_{ij}
\end{equation*}
and we have $\omega = \Theta\vartheta = \psi^{-1}\vartheta$.  We have
$\tilde{\vartheta}$ the restriction of $\vartheta$ to $\tilde{\mathfrak{A}}
(\mathbb C ^p)$ can be expressed in the form
\begin{equation*}
\tilde \vartheta_{ij}(A) = \sum_{k\in J} ((g_{ik}+h_{ik}),A(g_{jk}+h_{jk}))
\end{equation*}
where the $g_{ik},h_{ik} \in \mathbb C^q\otimes L_+^2(0,\infty )$ and
\begin{equation*}
(g_{ik})_j(x) = \delta_{ij}g_k(x)\qquad \text{and} \qquad \sum_{i=1}^q
(h_{ik })_i(x) = 0
\end{equation*}
for $A \in \tilde{\mathfrak{A}} (\mathbb C^p),\medspace x \geq 0,\medspace
i,j \in \{ 1,\cdots ,q\}$ and $k \in J$ a countable index set and the $h_{ik}
\in \mathbb C^q \otimes L^2(0,\infty )$ and if
\begin{equation*}
w_t = \sum_{k\in J} (g_k,\Lambda\vert_tg_k)\qquad \tilde \rho_{ij}(A) =
\sum _{k\in J} (h_{ik},Ah_{jk})
\end{equation*}
then $\tilde \rho$ is bounded so
\begin{equation*}
\sum_{i\in J} \Vert h_{ik}\Vert^2 < \infty\qquad \text{and} \qquad
\sum_{k\in J} (g_k,(I-\Lambda )g_k) < \infty
\end{equation*}
and $1/w_t \rightarrow 0$ as $t \rightarrow 0+$ and $\psi$ satisfies
the conditions
\begin{equation*}
\psi (I_o) \geq \vartheta (I - \Lambda (I_o))\qquad \text{and} \qquad \psi
+ \rho \tilde \Lambda
\end{equation*}
is conditionally negative.
\end{thm}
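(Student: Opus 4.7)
The plan is to reduce Theorem~5.10 to Theorems~4.4 and~4.7 via restriction. Since $\mathcal{L}$ is a Choi-Effros factor of type~I$_q$, Definition~5.8 identifies $\mathcal{L}$ with $B(\mathbb{C}^q)$ as a $*$-algebra and provides complete order isomorphisms $\tilde{\mathfrak{A}}(\mathbb{C}^p) \cong \mathfrak{A}(\mathbb{C}^q)$ and $\tilde B(\mathbb{C}^p\otimes L^2(0,\infty)) \cong B(\mathbb{C}^q\otimes L^2(0,\infty))$. The central observation is that Sections~3 and~4 only used that the target algebra was finite-dimensional with a normalized trace, so once $\omega$ is restricted to act on $\tilde{\mathfrak{A}}(\mathbb{C}^p)$ with values in $\mathcal{L}$, the previous theorems apply with $p$ replaced by $q$.

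First I would define $\tilde\omega$ as the restriction of $\omega$ to $\tilde{\mathfrak{A}}(\mathbb{C}^p)$ and check that this is a $q$-weight map over $\mathbb{C}^q$ of index zero whose range is all of $\mathcal{L}$. The key preliminary step is that $\iota + \omega\vert_t\Lambda$ maps $\mathcal{L}$ into itself and is invertible there: if $B + \omega\vert_t\Lambda(B) = A \in \mathcal{L}$, then $B = A - \omega\vert_t\Lambda(B) \in \mathcal{L}$ because $\omega\vert_t$ takes values in the range of $\omega$, which is $\mathcal{L}$. Consequently $(\iota + \tilde\omega\vert_t\tilde\Lambda)^{-1}$ exists on $\mathcal{L}$ and agrees with the restriction of the ambient inverse, so the restriction $\tilde\pi_t^\#$ of $\pi_t^\#$ inherits complete positivity and contractivity. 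That $\tilde\omega$ has index zero follows from Theorem~4.3, which yields $\pi_t^\#\Lambda(A) \to A$ for $A \in \mathcal{L}$; the argument at the end of Theorem~4.3 then forces the normal spine of $\tilde\pi_t^\#$ to vanish. The range of $\tilde\omega$ coincides with the range of $\omega$, which is all of $B(\mathbb{C}^q)$ under the identification.

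Next I would apply Theorem~4.4 (with $p$ replaced by $q$) to $\tilde\omega$ to obtain $v_t = \tilde tr(I_o + \tilde\phi_t(I_o))$, $\Theta_t = v_t^{-1}(\iota + \tilde\phi_t)$, their limits $\Theta$ and $\psi = \Theta^{-1}$, and the completely positive $\tilde\vartheta = \psi\tilde\omega$ with $\tilde\vartheta\vert_t\tilde\Lambda = v_t(\iota + \nu_t)$ where $\Vert\nu_t\Vert \to 0$ and $1/v_t \to 0$. To promote $\tilde\vartheta$ to $\vartheta \in B(\mathbb{C}^p) \otimes \mathfrak{A}(\mathbb{C}^p)_*$, I would use the identity $\omega\vert_t = (\iota + \omega\vert_t\Lambda)\pi_t^\#$, which yields $\Theta_t^{-1}\omega\vert_t(A) = v_t\pi_t^\#(A)$ for every $A \in \mathfrak{A}(\mathbb{C}^p)$ (both sides lie in $\mathcal{L}$, where $\Theta_t^{-1}$ acts). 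Passing to $t \to 0+$ produces the limit formula $\vartheta(A) = \lim_{t\to 0+} v_t\pi_t^\#(A)$, and complete positivity is preserved under the limit. The decomposition of $\tilde\vartheta$ in terms of $g_{ik}, h_{ik}$, the boundedness of $\tilde\rho$, and the conditional negativity of $\psi + \tilde\rho\tilde\Lambda$ are then direct applications of Lemma~4.5 and Theorem~4.7 to $\tilde\omega$. The inequality $\psi(I_o) \geq \vartheta(I - \Lambda(I_o))$ follows by combining Theorem~5.9 with the limit argument at the end of Theorem~4.7.

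The hardest part will be the first step: one must verify carefully that all of the finite-dimensional $C^*$-algebra machinery of Section~3 (the internal part of a map, the Hilbert-Schmidt norm, Choi's super-matrix criterion, the norm inequalities (3.2)--(3.3)) transfers cleanly to $\mathcal{L}$ equipped with the Choi-Effros product. The facts established in Theorem~5.1 together with the discussion around Theorems~5.2--5.3---namely that the Choi-Effros multiplication is intrinsic to $\mathcal{L}$ and independent of the limit $L$ chosen---are what legitimize the transfer. Once this bookkeeping is in place, the remainder of the proof is essentially a mechanical replay of Theorems~4.4 and~4.7 with $B(\mathbb{C}^p)$ replaced throughout by $\mathcal{L}$.
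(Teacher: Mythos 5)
Your proposal is correct and follows essentially the same route as the paper: define $\tilde\omega$ by restricting $\omega$ to $\tilde{\mathfrak{A}}(\mathbb C^p)\cong\mathfrak{A}(\mathbb C^q)$, observe that the inverse in the generalized boundary representation need only be computed on the range $\mathcal{L}$, apply Theorems 4.4--4.7 and Lemma 4.5 with $p$ replaced by $q$ under the tilde translation (using Theorem 5.9 for $\omega(I-\Lambda(I_o))\leq I_o$), and recover the un-tilded $\vartheta$ via $\vartheta(A)=\lim_{t\to 0+}v_t\pi_t^{\#}(A)$. The paper's proof is exactly this reduction, so no further comparison is needed.
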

\begin{proof}  Assume the hypothesis and notation of the theorem.  Now we
define a $\tilde \omega$ which will turn out to be a $q$-weight map over
$\mathbb C ^q$.  Given $\omega$ as stated we define $\tilde \omega$ by
simply restricting $\omega$ to $\tilde{\mathfrak{A}} (\mathbb C^p)$ which we
identify with $\mathfrak{A} (\mathbb C^q)$.  Note each element in
$\tilde{\mathfrak{A}} (\mathbb C^p)$ can be uniquely expressed as a $(q\times
q)$-matrix with entries in $\mathfrak{A} (\mathbb C )$.  Specifically if $A
\in \tilde{\mathfrak{A}} (\mathbb C^p)$ it can be written in the form
\begin{equation*}
A = \sum_{i,j=1}^q E_{ij}A_{ij}
\end{equation*}
with the $A_{ij} \in \mathfrak{A} (\mathbb C )$.  In this was way we can
think of $\tilde \omega$ as an element of $B(\mathbb C^q) \otimes
\mathfrak{A} (\mathbb C )$.  Then one simply checks that $\tilde \omega$ is
a $q$-weight map of index zero over $\mathbb C^q$.  The reason this works
is because in the definition of the generalized boundary representation
\begin{equation*}
\pi_t^\#  = (\iota + \omega\vert_t\Lambda )^{-1}\omega\vert_t  = (\iota +
\phi _t)^{-1}\omega\vert_t = (\iota + \tilde \phi_t)^{-1}\omega\vert_t =
v_t^{-1 }\Theta_t^{-1}\omega\vert_t
\end{equation*}
one only has to compute the inverse above on the range of $\omega$
which allows us to replace $\phi_t$ by $\tilde \phi_t$ in the
above equation. Notice that in the last section $\mathcal{L}$ was
$B(\mathbb C^p)$ so the unit $I_o$ of $\mathcal{L}$ was the unit $I$ of
$B(\mathbb C^p)$.  Now in applying the arguments of the lemmas and theorems 4.5
to 4.7 to our present situation we make the following changes.  We replace the
unit $I$ of $B(\mathbb C^p)$ by the unit $I_o$ of $\mathcal{L}$.  Note the unit
$I$ of $B(\mathbb C^p \otimes L^2(0,\infty ))$ is not replaced so, for example,
the unit $I$ in $\pi^\#_t(I)$ is not replaced.  However, the expression
$\pi^\#_t(\Lambda)$ which is a shorten form of $\pi^\#_t(\Lambda(I))$ is
replaced by $\pi^\#_t(\Lambda(I_o))$.  Notice the inequality $\omega(I-\Lambda)
\leq I$ translates to the inequality $\omega(I-\Lambda(I_o)) \leq I_o$ in our
new setting and this inequality was proved in Theorem 5.9.  Notice that in an
expression like $\psi(I)$ the $I$ should be replaced by $I_o$ since $\psi$ is
a map of $\mathcal{L}$ into itself.  We see that the statement involving $\psi$
in the last sentence of this theorem is simply the translation of the
corresponding statement in the statement of Theorem 4.7.  Note that in the
statement
$$
\psi (I_o) \geq \vartheta (I - \Lambda (I_o))
$$
since $I_o \in \mathcal{L}$ we can replace $\vartheta$ by $\tilde{\vartheta}$
since $I_o \in \mathcal{L}$ so the inequality follows from the tilde
calculations discussed above.

Now we address the results involving $\vartheta$ (without the tilde).  First
$\vartheta$ is defined as $\vartheta = \Theta^{-1}\omega$ so $\vartheta \in
B(\mathbb C^p) \otimes \mathfrak{A} (\mathbb C^p)_*$.  We show $\vartheta$ is
completely positive.  Since $\pi_t^\#  = v_t^{-1}\Theta_t^{-1}\omega\vert_t$ we
have $\vartheta\vert _t = v_t\Theta^{-1}\Theta_t\pi_t^\#$ so for $0 < t \leq s$
we have
\begin{equation*}
\vartheta\vert_s = v_t\Theta^{-1}\Theta_t\pi_t^\#\vert_s.
\end{equation*}
Now taking the limit as $t \rightarrow 0+$ and using the fact that
$\Theta^{-1}\Theta_t \rightarrow \iota$ as $t \rightarrow 0+$ we have
\begin{equation*}
\vartheta\vert_s = \lim_{t\rightarrow 0+} v_t\pi_t^\#\vert_s
\end{equation*}
and since $\vartheta \in B(\mathbb C^p) \otimes \mathfrak{A} (\mathbb C^p)_
*$ so $\vartheta\vert_s(A) \rightarrow \vartheta (A)$ as $s \rightarrow 0+$
for $A \in \mathfrak{A} (\mathbb C^p)$ we have
\begin{equation*}
\vartheta (A) = \lim_{t\rightarrow 0+} v_t\pi_t^\# (A)
\end{equation*}
and since $\pi_t^\#$ is completely positive it follows that
$\vartheta$ is completely positive.  \end{proof}

Our goal is to show that if $\omega$ is a $q$-weight map of index zero then
$\omega$ has a $q$-subordinate where $\mathcal{L}$ the range of $\omega$ is
not only a Choi-Effros factor but a factor which means that if $P$ is the
maximal support projection then $P\mathcal{L} = \mathcal{L}$.  In order to
prove this we will need to show that $\omega (\Lambda (I-P))$ is finite.
Now if $E$ is a minimal $\mathcal{L}$-projection (i.e.  $E\star E = E$ and
$E$ is minimal) then $P(E-E^2) = 0$  so $E-E^2 \leq I-P$ so if $\omega
(\Lambda (I-P))$ is finite then $\omega (\Lambda (E-E^2))$ is finite for
all minimal $\mathcal{L}$-projections $E$.  We will prove this but first we
need a routine lemma for making norm estimates.

\begin{lem}\label{5.11}
Suppose $H$ is a finite dimensional Hilbert space and $\mathcal{L}$ is a
Choi-Effros algebra which is the range of a completely positive contractive
idempotent map $L$ of $B(H)$ into itself and for $A,B \in \mathcal{L}$ the
Choi-Effros product $A\star B = L(AB)$ and $I_o$ is the unit of $\mathcal{L} $.
Suppose $E \in \mathcal{L}$ is an hermitian projection by which we mean $E =
L(E^*E)$.  Suppose $T \in \mathcal{L}$ is positive.  Then
\begin{equation*}
\Vert T\Vert \leq \Vert E\star T\star E\Vert + \Vert (I_o-E)\star T\star (I
_o-E)\Vert .
\end{equation*}
\end{lem}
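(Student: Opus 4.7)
The plan is to reduce this to a standard block-matrix inequality inside a genuine C$^*$-subalgebra of $B(H)$, by exploiting the $*$-isomorphism furnished by Theorem 5.1. Write $F$ for the support projection of $L$ and set $\phi(A) = FAF$; from Theorem 5.1, $\phi$ is a $*$-isomorphism from $(\mathcal{L}, \star)$ onto the ordinary $*$-subalgebra $F\mathcal{L} F \subset B(H)$, sending $I_o$ to $F$ and taking Choi--Effros projections to genuine operator projections on $FH$.

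My first step will be to establish that $\phi$ is an isometry in the $B(H)$-norm, i.e., $\|A\|_{B(H)} = \|FAF\|_{B(H)}$ for every $A \in \mathcal{L}$. Since every element of $\mathcal{L}$ commutes with $F$, one has the block-diagonal decomposition $A = FAF + (I-F)A(I-F)$, so $\|A\| = \max(\|FAF\|,\ \|(I-F)A(I-F)\|)$. On the other hand $L(FAF) = L(A) = A$ and $L$ is contractive, giving $\|A\| = \|L(FAF)\| \leq \|FAF\|$; combined with $\|FAF\| \leq \|A\|$ this yields equality. In particular $\|T\| = \|\phi(T)\|$, $\|E\star T\star E\| = \|\phi(E)\phi(T)\phi(E)\|$, and $\|(I_o-E)\star T\star(I_o-E)\| = \|(F-\phi(E))\phi(T)(F-\phi(E))\|$, since $\phi$ is a homomorphism.

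The second step is to prove the block-matrix inequality inside $B(FH)$. Set $e = \phi(E)$ and $t = \phi(T) \geq 0$; then $e$ and $F - e$ are complementary orthogonal projections on $FH$. For any unit vector $f \in FH$, write $f_1 = ef$ and $f_2 = (F-e)f$, so $\|f_1\|^2 + \|f_2\|^2 = 1$. Expanding $(f, tf)$ and applying the Cauchy--Schwarz inequality for the positive sesquilinear form $(x,y) \mapsto (x, ty)$ to the cross term $(f_1, tf_2)$ gives
\begin{equation*}
(f, tf) \;\leq\; \left(\sqrt{(f_1, ete\, f_1)} + \sqrt{(f_2, (F-e) t (F-e) f_2)}\right)^2.
\end{equation*}
Bounding each square root by $\sqrt{\|ete\|}\|f_1\|$ and $\sqrt{\|(F-e)t(F-e)\|}\|f_2\|$, respectively, and then applying the scalar Cauchy--Schwarz inequality to the two-component vectors $(\sqrt{\|ete\|},\sqrt{\|(F-e)t(F-e)\|})$ and $(\|f_1\|, \|f_2\|)$, one obtains
\begin{equation*}
(f,tf) \;\leq\; \bigl(\|ete\| + \|(F-e)t(F-e)\|\bigr)\bigl(\|f_1\|^2 + \|f_2\|^2\bigr) \;=\; \|ete\| + \|(F-e)t(F-e)\|.
\end{equation*}
Taking the supremum over $f$ yields $\|t\| \leq \|ete\| + \|(F-e)t(F-e)\|$.

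Combining the two steps: translating the inequality through $\phi$ and using the norm identification from step one gives exactly $\|T\| \leq \|E\star T\star E\| + \|(I_o-E)\star T\star(I_o-E)\|$. The whole argument is essentially bookkeeping once the identification with $F\mathcal{L} F$ is in place; the one mildly subtle point is the isometry $\|A\| = \|FAF\|$ on $\mathcal{L}$, which is the only place where the Choi--Effros structure (rather than a generic $*$-homomorphism into $B(H)$) enters, and which follows cleanly from the contractivity of $L$ together with the fact that every element of $\mathcal{L}$ commutes with its support projection.
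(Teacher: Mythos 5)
Your proof is correct and follows essentially the same strategy as the paper's: transport the problem through a norm-preserving $*$-representation of $(\mathcal{L},\star)$ so that the Choi--Effros products become ordinary operator products, and then prove the standard $2\times 2$ block inequality $\Vert t\Vert \le \Vert ete\Vert + \Vert (F-e)t(F-e)\Vert$ via Cauchy--Schwarz on the cross term. The only difference is cosmetic: the paper invokes an abstract faithful representation of the finite-dimensional $C^*$-algebra $(\mathcal{L},\star)$ and asserts it preserves norms, whereas you realize the representation concretely as $A\mapsto FAF$ and verify the isometry $\Vert A\Vert = \Vert FAF\Vert$ explicitly from $A = L(FAF)$ together with the commutation of $\mathcal{L}$ with its support projection --- a slightly more self-contained justification of the same step.
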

\begin{proof}  Assume the hypothesis and notation of the lemma.  From the
Choi-Effros theory we know that $\mathcal{L}$ equipped with the
$\star$-multiplication is a finite dimensional $C^*$-algebra and as such it
has a faithful $*$-representation $\pi$ on a finite dimensional Hilbert
space $K$.  Since $\pi$ is faithful $\pi$ preserves norms so $\Vert T\Vert =
\Vert\pi (T)\Vert$.  Then to prove the lemma we need only estimate the norm
of $\pi (T)$.  To simplify notation rather that writing $\pi (T)$ or $\pi (E)$
in our calculations we will simply write $T$ or $E$ (without the $\pi )$.
This means that in estimating the norm of $T$ we will simply consider $T$ to
be a positive operator and $E$ a hermitian projection acting on a finite
dimensional Hilbert space.  Then to prove the lemma we need to prove the
estimate
\begin{equation*}
\Vert T\Vert \leq \Vert ETE\Vert + \Vert (I-E)T(I-T)\Vert .
\end{equation*}
Let
\begin{equation*}
A = ETE,\qquad B = ET(I-E),\qquad B^* = (I-E)TE,\qquad C  = (I-E)T(I-E).
\end{equation*}
Note that $T = A + B + B^* + C$.  Now let $H_1 = Range(E)$ and $H_2 =
Range(I-E)$.  We can now write $T$ as a $(2\times 2)$-matrix
\begin{equation*}
T = \left[\begin{matrix} A&B
\\
B^*&C
\end{matrix} \right]
\end{equation*}
where the entries $T_{ij}$ of $T$ map $H_i$ into $H_j$.  To prove the lemma
we show $\Vert T\Vert \leq \Vert A\Vert + \Vert C\Vert$.

Since $T \geq 0$ we have $(F,TF) \geq 0$ where $F = \{ f,zg\}$ with
$f \in H_1, g \in H_2,$ and $z \in \mathbb C$ which yields
\begin{equation*}
(f,Af) + \vert z\vert^2(g,Cg) + 2Re(z(f,Bg)) \geq 0.
\end{equation*}
Note that if either $(g,Cg) = 0$ or $(f,Af) = 0$ we have $(f,Bg) = 0$
otherwise you can find a value of $z$ so that the expression is less than zero.
Now assuming $(g,Cg) \neq 0$ we find by setting $z = -(f,Bg)/(g,Cg)$ that
\begin{equation*}
(f,Af)(g,Cg) \geq \vert (f,Bg)\vert^2.
\end{equation*}
There is a sequence of unit vectors $f_n,g_n \in H_1 \oplus H_2$ so
that $(f_n,Bg_n) \rightarrow \Vert B\Vert$.  Then we have
\begin{equation*}
\Vert A\Vert\cdot\Vert C\Vert \geq (f_n,Af_n)(g_n,Cg_n) \geq \vert
(f_n,Bg_n) \vert^2 \rightarrow \Vert B\Vert^2
\end{equation*}
as $n \rightarrow \infty$ so $\Vert A\Vert\cdot\Vert C\Vert \geq \Vert
B\Vert ^2$.  Now if $F = \{ f,g\}$ we have
\begin{align*}
(F,TF) &= (f,Af) + (g,Cg) + 2Re((f,Bg))
\\
&\leq \Vert A\Vert\cdot\Vert f\Vert^2 + \Vert C\Vert\cdot\Vert g\Vert^2 + 2
\Vert B\Vert\cdot\Vert f\Vert\cdot\Vert g\Vert
\\
&\leq \Vert A\Vert\cdot\Vert f\Vert^2 + \Vert C\Vert\cdot\Vert g\Vert^2 + 2
\Vert A\Vert^{\tfrac{1}{2}}\Vert C\Vert^{\tfrac{1}{2}}\Vert f\Vert\cdot\Vert
g\Vert
\\
&= (\Vert A\Vert^{\tfrac{1}{2}}\Vert f\Vert + \Vert C\Vert^{\tfrac{1}{2}}\Vert
g\Vert )^2.
\end{align*}
Maximizing this expression subject to the constraint $\Vert f\Vert^2 +
\Vert g\Vert^2 = 1$ we find the maximum occurs when
\begin{equation*}
\Vert f\Vert = \Vert A\Vert^{\tfrac{1}{2}} (\Vert A\Vert+\Vert C\Vert
)^{-\tfrac{1}{2} }\qquad \text{and} \qquad \Vert g\Vert = \Vert
C\Vert^{\tfrac{1}{2}} (\Vert A\Vert +\Vert C\Vert )^{-\tfrac{1}{2}}
\end{equation*}
which gives $(F,TF) \leq \Vert A\Vert + \Vert C\Vert$ and since $\Vert
T\Vert$ is the supremum of $(F,TF)$ for all unit vectors $F \in H_1 \oplus H_2$
we have $\Vert T\Vert \leq \Vert A\Vert + \Vert C\Vert .$ \end{proof}

\begin{thm}\label{5.12}
Suppose $\omega$ is a $q$-weight map over $\mathbb C^p$ of index zero and
$\mathcal{L}$ the range $\omega$ is a Choi-Effros factor of type~I$_q$.
Suppose $E$ is a minimal $\mathcal{L}$-projection so $E \in \mathcal{L}$
and $E\star E = E$.  Let $F$ be the support projection for $E$ so $F$
is smallest projection in $B(\mathbb C^p)$ with $F \geq E$.  Then $\omega
(\Lambda (F-E))$ is finite meaning there is a constant $K$ so that
$\Vert\omega\vert_t (\Lambda (F-E))\Vert \leq K$ for all $t > 0$. Note that
$F-E \geq E - E^2$ so we have shown that $\omega(\Lambda(E-E^2))$ is finite.
\end{thm}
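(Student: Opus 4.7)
The approach I would take is to reduce to the scalar case $q = 1$, where the bound follows directly from Theorem~5.9. In that case $\mathcal{L} = \mathbb C\, I_o$ and $E = I_o$, so $F$ is the range projection of $I_o$. Since $\Lambda(F) \leq F \leq I$ we have $\omega\vert_t(\Lambda(F)) \leq \omega\vert_t(I)$, and Theorem~5.9 gives $\omega\vert_t(\Lambda(I_o)) \geq \omega\vert_t(I) - I_o$. Combining these two estimates yields
\begin{equation*}
\omega\vert_t(\Lambda(F - I_o)) = \omega\vert_t(\Lambda(F)) - \omega\vert_t(\Lambda(I_o)) \leq \omega\vert_t(I) - (\omega\vert_t(I) - I_o) = I_o,
\end{equation*}
a uniform bound in $t$.

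For general $q$, I would reduce to this scalar estimate via iterated application of Lemma~5.11. Choose a complete system of matrix units $\{E_{kk}\}_{k=1}^q$ for the factor $\mathcal{L}$ with $E_{11} = E$, and let $\mu_k$ denote the pure state on $\mathcal{L}$ characterized by $E_{kk} \star A \star E_{kk} = \mu_k(A) E_{kk}$. Since $T_t := \omega\vert_t(\Lambda(F-E))$ is a positive element of $\mathcal{L}$, iterating Lemma~5.11 along the orthogonal decomposition $I_o = \sum_k E_{kk}$ gives
\begin{equation*}
\|T_t\| \leq \sum_{k=1}^q \|E_{kk} \star T_t \star E_{kk}\| = \sum_{k=1}^q \mu_k(T_t),
\end{equation*}
so it suffices to bound each scalar $\mu_k(T_t)$ uniformly in $t$.

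For each $k$, I would verify that $\alpha_k := \mu_k \circ \omega$ is a scalar $q$-weight over $\mathbb C^p$ (checking the invertibility and contractivity of Definition~1.1 by composing $\mu_k$ with the corresponding properties of $\pi_t^\#$; in particular $\mu_k(\pi_t^\#(I)) \leq \mu_k(I_o) = 1$ by the argument in the proof of Theorem~5.9). I would then apply the scalar estimate above to $\alpha_k$ to obtain the bound on $\mu_k(\omega\vert_t(\Lambda(F-E)))$. The link between the scalar support projection of $\alpha_k$ and the original $F$ is provided by the Choi-Effros identity $L(E^2) = E$, which gives $L(E - E^2) = 0$ and hence constrains how $F - E$ couples to the kernel of $\mu_k$ when $k \neq 1$.

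The main obstacle I foresee is the clean identification of the support projections for the scalar $q$-weights $\alpha_k$ and confirming that the scalar case of Theorem~5.9 transfers cleanly when $\mu_k(E) = 0$ (i.e.\ for $k \neq 1$); here the identity $E \star (I_o - E) = 0$ together with the structure of $\vartheta$ from Theorem~5.10 should provide the requisite control. An alternative that might sidestep the verification of the $q$-weight conditions for $\alpha_k$ is to induct on $q$: Lemma~5.11 splits $\mathcal{L}$ into the one-dimensional piece $\mathbb C E$ (handled by the scalar argument) and the subfactor $(I_o - E) \star \mathcal{L} \star (I_o - E)$ of type~I$_{q-1}$ (handled by the inductive hypothesis applied to an appropriate compression of $\omega$).
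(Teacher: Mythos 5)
Your scalar case is correct, and your use of Lemma~5.11 to reduce $\Vert T_t\Vert$, where $T_t = \omega\vert_t(\Lambda(F-E))$, to its diagonal corners is the same device the paper uses. The gap is in \emph{where} the compression is performed. The paper compresses $\vartheta\vert_t\Lambda$, where $\vartheta=\psi\omega$ is the limiting $b$-weight map of Theorem~5.10: there the divergence as $t\to 0+$ is carried entirely by the term $w_t\iota$, a multiple of the identity map of $\mathcal{L}$, so that $E\star\vartheta\vert_t\Lambda(I_o-E)\star E$ and $(I_o-E)\star\vartheta\vert_t\Lambda(E)\star(I_o-E)$ stay bounded simply because $E\star(I_o-E)\star E=0$. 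You compress $\omega\vert_t\Lambda=\psi^{-1}\vartheta\vert_t\Lambda$ instead, and there the cancellation is lost: $\mu_k\circ\psi^{-1}$ is a positive functional on $\mathcal{L}$ that is not supported on a single diagonal matrix unit, so $\mu_k(\omega\vert_t(\Lambda(I_o-E)))$ contains the term $w_t\,\mu_k(\psi^{-1}(I_o-E))$, which diverges unless $\mu_k(\psi^{-1}(I_o-E))=0$ --- and nothing forces that. Your scalar estimate only controls $\mu_k(\omega\vert_t(I-\Lambda(I_o)))$ via Theorem~5.9, and after writing $\Lambda(F-E)\leq I-\Lambda(I_o)+\Lambda(I_o-E)$ you are left with exactly this uncontrolled term. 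The same circularity undermines the claim that $\alpha_k=\mu_k\circ\omega$ is a $q$-weight map: contractivity of its boundary representation is the inequality $\mu_k(\omega\vert_t(I-\Lambda(E_{kk})))\leq 1$, whose left side dominates $\mu_k(\omega\vert_t(\Lambda(I_o-E_{kk})))$ and hence diverges in general, so $\alpha_k$ is typically not a $q$-weight map at all.

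A second missing ingredient: the paper needs \emph{two} upper bounds on $\Lambda(F-E)$, namely (i) $\Lambda(F-E)\leq I-\Lambda(I_o)+\Lambda(I_o-E)$, whose divergent part $w_t(I_o-E)$ is annihilated by the $E$-corner, and (ii) $F-E\leq\kappa E$ with $\kappa=\lambda_1^{-1}-1$, $\lambda_1$ the smallest positive eigenvalue of $E$ as an operator in $B(\mathbb C^p)$, whose divergent part $\kappa w_t E$ is annihilated by the $(I_o-E)$-corner. Your proposal contains only the analogue of (i); without the spectral estimate (ii) there is no way to bound the corners for $k\neq 1$, since $F-E\leq I-E$ just leads back to $\Lambda(I_o-E)$. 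The inductive alternative you sketch inherits both defects: the compressions of $\omega$ are not $q$-weight maps, and the divergent identical part of $\omega\vert_t\Lambda$ does not cancel under compression of $\omega$. To repair the argument, introduce the factorization $\omega=\psi^{-1}\vartheta$ first, prove estimates (i) and (ii) for $\vartheta\vert_t\Lambda(F-E)$, compress each by the complementary corner, apply Lemma~5.11 to $\vartheta\vert_t\Lambda(F-E)$, and only then pass back to $\omega$ via $\Vert\omega\vert_t(\Lambda(F-E))\Vert\leq\Vert\psi^{-1}\Vert\,\Vert\vartheta\vert_t(\Lambda(F-E))\Vert$.
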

\begin{proof}  Assume the hypothesis and notation of the theorem.  Suppose $E$
is a minimal $\mathcal{L}$-projection.  Assume $\psi ,\medspace \vartheta$ and
the $g_{ik}$ and $h_{ik}$ are defined as in Theorem 5.10.  We will show
$\Vert\vartheta\vert_t(\Lambda (F-E))\Vert$ is bounded for $t > 0$ and since
$\omega =  \psi^{-1}\vartheta$ we have $\Vert\omega\vert_t(\Lambda (F-E))\Vert
\leq \Vert \psi^{-1}\Vert\medspace \Vert\vartheta\vert_t(\Lambda (F-E))\Vert$
so we will have proved the theorem.  In the remainder of this proof we will
assume $t > 0$.

Recall $F$ is the support projection for $E$ the smallest projection in
$B(\mathbb C^p)$ with $F \geq E$ so $F$ is not necessarily in $\mathcal{L}$.
We begin by obtaining estimates of $\Lambda (F - E)$ in terms of elements of
$\mathcal{L}$. Since $I \geq F$ we have
\begin{equation*}
\Lambda (F-E) \leq \Lambda (I-E) = \Lambda (I-I_o) + \Lambda (I_o-E) \leq I
- \Lambda (I_o) + \Lambda (I_o-E)
\end{equation*}
so we have
\begin{equation*}
\vartheta\vert_t\Lambda (F-E) \leq \vartheta\vert_t(I-\Lambda (I_o)) +
\vartheta \vert_t\Lambda (I_o-E)
\end{equation*}
and since $\vartheta (I - \Lambda (I_o)) \leq \psi (I_o)$ we have
\begin{equation*}
\vartheta\vert_t\Lambda (F-E) \leq \vartheta\vert_t \Lambda (I_o-E)+\psi (I_o).
\end{equation*}

Next we get an estimate in terms of $\Lambda (E)$.  Let $\lambda_i \in [0,1]$
for $i = 0,1,\cdots ,m$ be the spectrum of $E$ in increasing order so
$\lambda_o = 0$ and $\lambda_m = 1$ so $\lambda_1$ is the smallest positive
eigenvalue of $E$.  Note the spectrum of $F-E$ consists of the numbers
$\lambda_o = 0$ and $1 - \lambda_i$ for $i = 1,2,\cdots ,m-1$.  Note we do not
include $\lambda_m$ since $\lambda_m = 1$ so $1 - \lambda_m = 0$ and this has
already been listed in the spectrum of $F-E$.  Let $\kappa$ be a positive real
number which we will define shortly.  Then the spectrum of $\kappa E - (F-E) =
(\kappa+1)E -F$ consists of the numbers 0 and $(\kappa+1)\lambda_i-1$ for $i =
1,\cdots ,m$.  Now let
\begin{equation*}
\kappa = \lambda_1^{-1}-1
\end{equation*}
and we see the spectrum of $\kappa E - (F-E)$ consist of the numbers 0 and
$(\lambda_i/\lambda_1-1)$ for $i = 2,\cdots ,m$.  Note we do not include $i
= 1$ since $(\lambda_1/\lambda_1-1) = 0$ and this has already been listed.
Hence, the spectrum of $\kappa E - (F-E)$ is non negative so we have
\begin{equation*}
\kappa E \geq F-E
\end{equation*}
and, hence, we have
\begin{equation*}
\vartheta\vert_t\Lambda (F-E) \leq \kappa\vartheta\vert_t\Lambda (E).
\end{equation*}

Now that we have above two estimates for $\vartheta\vert_t\Lambda (F-E)$ in
terms of elements of $\mathcal{L}$ we can use the formula for
$\tilde{\vartheta}$ in terms of the $g's$ and $h's$ in Theorem 5.10.  First
note that the tilde mappings are mapping of $B(\mathbb C^q)$ into itself
and so in our calculation we will think of $B(\mathbb C^q)$ as
$(q\times q)$-matrices.  Recalling the formula for $\tilde{\vartheta}$ in
Theorem 5.10 we find that for $A \in B(\mathbb C^q)$ representing an element
of $\mathcal{L}$
\begin{equation*}
\tilde{\vartheta}\vert_t\Lambda (A) = w_tA + Y_tA + AY_t^* +
\tilde{\rho}\vert _t\Lambda (A)
\end{equation*}
where $\tilde{\rho}$ is completely positive and uniformly bounded and $1/w_t
\rightarrow 0$ as $t \rightarrow 0+$ and $Y_t \in B(\mathbb C^q)$ is given by
\begin{equation*}
(Y_t)_{ij} = \int_t^\infty e^{-x}  \overline {(h_{ik})_j(x)} g_k(x) \,dx
\end{equation*}
and from the condition on the $h's$ it follows that $Y_t$ is of trace zero.
There are further properties of $\tilde{\rho}\vert_t\Lambda$ but we only need
that it is bounded meaning there is a constant $K$ so that $\tilde{\rho}\vert_t
\Lambda (I_o) \leq KI_o$ for all $t > 0$.  Now since $\vartheta\vert_t\Lambda
(F-E) \leq \kappa\vartheta \vert_t\Lambda (E)$ we have
\begin{equation*}
\vartheta\vert_t\Lambda (F-E) \leq \kappa w_tE + \kappa Y_tE + \kappa EY_t^
* + \kappa\tilde{\rho}\vert_t\Lambda (E)
\end{equation*}
and since $\vartheta\vert_t\Lambda (F-E) \leq \vartheta\vert_t\Lambda
(I_o-E) + \psi (I_o)$ we have
\begin{equation*}
\vartheta\vert_t\Lambda (F-E) \leq w_t(I_o-E) + Y_t(I_o-E) +(I_o-E)Y_t^* +
\tilde{\rho}\vert_t\Lambda (I_o-E) + \psi (I_o).
\end{equation*}
Note $\vartheta\vert_t\Lambda (F-E) \in \mathcal{L}$ where $\mathcal{L}$ is
the Choi-Effros algebra.  We will estimate its norm using the previous lemma.
Then sandwiching the bottom inequality between $E$ on the right and left using
the Choi-Effros product $A\star B = L(AB)$ we have
\begin{equation*}
E\star\vartheta\vert_t\Lambda (F-E)\star E \leq E\star\tilde{\rho}\vert_t
\Lambda (I_o-E)\star E +
E\star \psi (I_o) \star E \leq (K+\Vert\psi (I_o)\Vert )E
\end{equation*}
and sandwiching the top inequality between $(I_o-E)$ on the right and left
we have
\begin{equation*}
(I_o-E)\star \vartheta\vert_t\Lambda (F-E) \star (I_o-E) \leq (I_o-E)\star
\kappa \tilde{\rho}\vert_t \Lambda (E) \star (I_o-E) \leq \kappa K(I_o-E).
\end{equation*}
Now if $0 \leq A \leq B$ we have $0 \leq \Vert A \Vert \leq \Vert B \Vert$
which gives us the estimates
\begin{equation*}
\Vert E\star\vartheta\vert_t\Lambda (F-E)\star E \Vert
\leq (K+\Vert\psi (I_o)\Vert ) \qquad \text{and} \qquad \Vert (I_o-E)\star
\vartheta\vert_t\Lambda (F-E) \star (I_o-E) \Vert \leq \kappa K
\end{equation*}
so by the previous lemma we have
\begin{equation*}
\Vert\vartheta\vert_t\Lambda (F-E)\Vert \leq (1+\kappa)K + \Vert\psi (I_o)\Vert
\end{equation*}
and this bound in independent of $t.$ \end{proof}

In the previous theorem we showed that $\omega (\Lambda (E-E^2))$ is finite
for $E$ a minimal $\mathcal{L}$-projection.  Next we will show that if
$\omega (\Lambda (E-E^2))$ is finite for all minimal
$\mathcal{L}$-projections then $\omega (\Lambda (I-P))$ is finite where $P$
is the maximal support projection for $\mathcal{L}$.  Our strategy is as
follows.  Note that $P(I-I_o) = 0$ and $P(E-E^2) = 0$ for all minimal
projections $E \in \mathcal{L}$.  We define a projection $Q$ as the largest
projection in $B(\mathbb C^p)$ with these properties and then show that $Q
= P$.  We will restrict our attention to the case where $\mathcal{L}$ is a
Choi-Effros factor but we believe the results are valid in the more general
case.

Here we introduce some notation that we will be using for calculations.

\begin{defn}\label{5.13}
If $A \in B(\mathbb C^p)$ is positive we denote by
\begin{equation*}
A^+ = \lim_{n\rightarrow\infty} A^{1/n}
\end{equation*}
the support projection of $A$.  We denote by $A^-$ the projection
on the subspace spanned by the eigenvectors of $A$ of eigenvalue
$\lambda \geq 1$.
\end{defn}

Here we present various properties of $A^+$ and $A^-$ that we will use in
our calculations.  We assume $A \in B(\mathbb C^p)$ and $A \geq 0$.  First
note that since $\mathbb C^p$ is finite dimensional $A^+$ and $A^-$ can be
expressed as polynomials in $A$.  Note that if $C$ commutes with $A$ then
$C$ commutes with $A^+$ and $A^-$.  Note that if $Q$ is a projection then
$QA = 0$ if and only if $QA^+ = 0$.  Note that if $A \geq 0$ and $\Vert
A\Vert \leq 1$ then
\begin{equation*}
A^- = \lim_{n\rightarrow\infty} A^n.
\end{equation*}

Note if $A_1,A_2,\cdots ,A_m$ are positive operators in $B(\mathbb C^p)$
then the projection onto the subspace spanned by the ranges of the $A_i$ is
$(A_1+A_2+\cdots +A_m)^+$.  If $A_i \in B(\mathbb C^p)$ and $0 \leq A_i
\leq I$ for $i = 1,\cdots ,m$ then the projection onto the intersection of
the ranges of the $A_i^-$ is ((1$/m)(A_1+A_2+\cdots +A_m))^-$.

\begin{lem}\label{5.14}
Suppose $Q \in B(\mathbb C^p)$ is an hermitian projection and $A
\in B(\mathbb C^p)$ is positive then $QA = 0$ if and only if $QAQ
= 0$.  Suppose further that $A \in B(\mathbb C^p)$ is positive and
of norm one, ($A \geq 0$ and $\Vert A\Vert = 1)$.  Then the
following statements are equivalent.
\begin{enumerate}[(i)]
\item $Q(A-A^2) = 0$ \item $Q(A-A^2)^+ = 0$ \item $Q(A^+-A^-) = 0$ \item
$Q(A-A^-) = 0$  \item $Q(A^+-A) = 0$
\end{enumerate}
\end{lem}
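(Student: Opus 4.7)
The plan is first to prove the opening assertion that $QA=0$ iff $QAQ=0$ by factoring $A = A^{1/2}A^{1/2}$. The forward direction is immediate; for the converse, $QAQ = (A^{1/2}Q)^*(A^{1/2}Q) = 0$ forces $A^{1/2}Q = 0$, so $QA^{1/2} = 0$ on taking adjoints, and hence $QA = 0$.

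Next I would establish an auxiliary principle: for any positive $X \in B(\mathbb{C}^p)$, $QX = 0$ if and only if $QX^+ = 0$. The reverse direction is immediate since the range of $X$ equals the range of $X^+$. For the forward direction, iterate the first assertion: $QX = 0 \Rightarrow QXQ = 0 \Rightarrow X^{1/2}Q = 0 \Rightarrow QX^{1/2} = 0$, and repeating gives $QX^{1/2^n} = 0$ for all $n$, so letting $n \to \infty$ produces $QX^+ = 0$.

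The remaining equivalences come from a short spectral calculation. Writing $A = \sum_i \lambda_i P_i$ with $0 \leq \lambda_i \leq 1$ (using $A \geq 0$ and $\Vert A \Vert = 1$), each of the three operators $A - A^2$, $A - A^-$, and $A^+ - A$ is positive, and a direct computation shows that their support projections all equal $\sum_{0 < \lambda_i < 1} P_i$, which by Definition~5.13 is exactly $A^+ - A^-$. In particular $(A-A^2)^+ = A^+ - A^-$, so (ii) and (iii) are literally the same statement. The equivalence (i) $\iff$ (ii) then follows from the auxiliary principle applied to $X = A - A^2$, the equivalence (iii) $\iff$ (iv) from the principle applied to $X = A - A^-$ combined with $(A-A^-)^+ = A^+ - A^-$, and (iii) $\iff$ (v) from the principle applied to $X = A^+ - A$ combined with $(A^+-A)^+ = A^+ - A^-$.

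There is no genuine obstacle to this argument; the only point requiring care is verifying the three spectral identities for $(A-A^2)^+$, $(A-A^-)^+$, and $(A^+-A)^+$, which is routine once the spectral decomposition is fixed and one recalls that $A^- = \lim_{n \to \infty} A^n$ is the projection onto the eigenspace of $A$ for the eigenvalue $1$ (since $\Vert A \Vert = 1$ forces all eigenvalues to lie in $[0,1]$).
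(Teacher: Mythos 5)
Your proposal is correct and follows essentially the same route as the paper: both prove the preliminary claim by factoring $A=A^{1/2}A^{1/2}$, and both reduce the five equivalences to the observation, via the spectral decomposition $A=\sum_i\lambda_iF_i$, that $A-A^2$, $A-A^-$, and $A^+-A$ all have support projection $\sum_{0<\lambda_i<1}F_i=A^+-A^-$. Your auxiliary principle ``$QX=0$ iff $QX^+=0$'' is just a tidier packaging of the paper's remark that $Q$ annihilates a positive combination $\sum c_iF_i$ with $c_i>0$ exactly when it annihilates each $F_i$.
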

\begin{proof}  Assume the hypothesis and notation of the theorem. We prove
$QA=0$ if and only if $QAQ = 0$.  Now if $QA=0$ then multiplying by $Q$ on
the right we have $QAQ = 0$.  Now if $QAQ=0$ we have $XX^* = 0$ where
$X = QA^{\tfrac{1}{2}}$ from which we conclude that $X = 0$ from which we
conclude $XA^{\tfrac{1}{2}} = QA = 0$.

Now to prove the equivalence of the five conditions of the lemma we further
assume that $A$ is of norm one.  From the spectral decomposition of $A$ we know
\begin{equation*}
A = \sum_{i=1}^m \lambda_iF_i
\end{equation*}
with $0 < \lambda_1 < \lambda_2 < \cdots  < \lambda_{m-1} < \lambda_m = 1$
and the $F_i$ are mutually orthogonal projections.  Then
\begin{equation*}
A - A^2 = \sum_{i=1}^{m-1} (\lambda_i-\lambda_i^2)F_i\qquad (A^+-A^-) =
(A-A ^2)^+= \sum_{i=1}^{m-1} F_i
\end{equation*}
and
\begin{equation*}
A - A^- = \sum_{i=1}^{m-1} \lambda_iF_i \qquad
A^+ -A = \sum_{i=1}^{m-1} (1-\lambda_i)F_i.
\end{equation*}
Since in all cases the coefficients of the $F_i$ are positive we have $Q$
times any of these expressions is zero if and only if $QF_i = 0$ for $i =
1,\cdots ,m-1$.  Hence, the five conditions above are equivalent.
\end{proof}

\begin{lem}\label{5.15}
Suppose $E \in B(\mathbb C^p)$ is positive and of norm one, $(E \geq 0$ and
$\Vert E\Vert = 1)$ and $T \in B(\mathbb C^p)$ satisfies $0 \leq T \leq
E$.  Suppose $Q \in B(\mathbb C^p)$ is an hermitian projection.  If
$Q(E-E^-) = 0$ and $Q(E-T) = 0$ then $Q(E^+-T^-) = Q(E-T^-) =0 $.  Note if
$\Vert T\Vert < 1$ then $T^- = 0$ so $QE = QE^+ = 0$.
\end{lem}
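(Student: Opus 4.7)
The plan is to prove the stronger identity $T^- Q = PQ$ with $P := E^-$, from which both claimed equalities follow immediately. First I would consolidate the hypotheses. By Lemma 5.14 applied to $E$, the condition $Q(E - E^-) = 0$ is equivalent to $Q(E^+ - E^-) = 0$ and to $Q(E^+ - E) = 0$, so with $P := E^-$ we have $QE = QE^- = QE^+ = QP$. Taking adjoints (every operator here is Hermitian) yields $EQ = PQ$, and likewise $Q(E - T) = 0$ gives $TQ = EQ = PQ$. In particular, $Tf = Pf$ for every $f \in \mathrm{range}(Q)$.

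The key step is to upgrade $Tf = Pf$ to $T^- f = Pf$ for every $f \in \mathrm{range}(Q)$. Using $Tf = Pf$ and $P^2 = P$ we compute
\begin{equation*}
(f, T^2 f) = (Tf, Tf) = (Pf, Pf) = (f, Pf) = (f, Tf),
\end{equation*}
so $(f, (T - T^2) f) = 0$. Since $0 \leq T \leq E \leq I$, the operator $T - T^2 = T^{1/2}(I - T) T^{1/2}$ is positive, hence $(T - T^2) f = 0$, giving $T(Pf) = T^2 f = Tf = Pf$. Thus $Pf$ is a $1$-eigenvector of $T$, so $Pf \in \mathrm{range}(T^-)$. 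On the other hand $T((I - P) f) = Tf - T(Pf) = Pf - Pf = 0$, so $(I - P)f \in \ker T$. Since $T$ is Hermitian, $\ker T$ is orthogonal to $\mathrm{range}(T^-)$, and therefore
\begin{equation*}
T^- f = T^-(Pf) + T^-((I - P)f) = Pf + 0 = Pf.
\end{equation*}

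Consequently $T^- Q = PQ$, whence $QT^- = QP = QE = QE^+$, and both $Q(E - T^-) = 0$ and $Q(E^+ - T^-) = 0$ follow. The final sentence is then immediate: if $\Vert T \Vert < 1$ then $T$ has no eigenvalue $\geq 1$, so $T^- = 0$, and the already-established $QE = QT^- = QE^+$ forces $QE = QE^+ = 0$. The step I expect to be the main obstacle is recognizing that the apparently weak identity $Tf = Pf$ secretly encodes the sharper relation $T^2 f = Tf$; once that observation is extracted, the positivity of $T - T^2$ splits any $f \in \mathrm{range}(Q)$ into a $T$-fixed part in $\mathrm{range}(P)$ and a part in $\ker T$, and orthogonality of the eigenspaces of the Hermitian $T$ delivers $T^- f = Pf$ automatically.
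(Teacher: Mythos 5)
Your proof is correct, but it takes a genuinely different route from the paper's. The paper works at the operator level: after using Lemma 5.14 to convert $Q(E-E^-)=0$ into $Q(E^+-E)=0$ and adding the hypotheses to get $Q(E^+-T)=0$, it establishes the operator inequality $E^+-T \geq (1-c)(E^+-T^-)$ with $c=\Vert T-T^-\Vert<1$ (using that $T^-$ commutes with $E^+,E,E^-,T$ and that $T-T^-$ is supported under $E^+-T^-$), and then invokes the sandwiching principle $QAQ=0 \Leftrightarrow QA=0$ for positive $A$. You instead argue vector by vector on $\mathrm{range}(Q)$: the hypotheses give $Tf=E^-f$, and your observation that $(f,(T-T^2)f)=0$ forces $(T-T^2)f=0$ splits $f$ into a $T$-fixed part lying in $\mathrm{range}(T^-)$ and a part in $\ker T$, yielding the sharper identity $T^-Q=E^-Q$ outright. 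Your route is more elementary — it avoids the constant $b=(1-c)^{-1}$, the commutation bookkeeping, and the $QAQ=0$ lemma entirely — while the paper's route produces a quantitative operator inequality between $E^+-T$ and $E^+-T^-$ that stands on its own. Both arguments lean on Lemma 5.14 for the equivalences among $Q(E-E^-)=0$, $Q(E^+-E)=0$, and $Q(E^+-E^-)=0$, and both deliver the full conclusion including the final remark about $\Vert T\Vert<1$.
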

\begin{proof}  Assume the hypothesis an notation of the lemma.
We show there is a constant $b \geq 1$ so that $b(E^+- T) \geq E^-- T^-$.
Note that the range of $E^-$ is the set of vectors $f \in \mathbb C^p$ so
that $(f,Ef) = (f,f)$ and the range of $T^-$ is the set of vectors $f \in
\mathbb C ^p$ so that $(f,Tf) = (f,f)$.  Since $E \geq T$ the range of
$T^-$ is contained in the range of $E^-$ so $T^- \leq E^-$ and $E^-T^- =
T^-$.  Since
\begin{equation*}
I \geq E^+ \geq E \geq E^- \geq T^-\qquad \text{and} \qquad
I \geq E^+ \geq E \geq T \geq T^-
\end{equation*}
and $T^-$ is a projection so its eigenvalues are zero and one it follows
that $T^-$ commutes with $E^+,\medspace E,\medspace E^-, \medspace T$ and
$T^-$ and the product of any of these operators with $T^-$ is $T^-$.  Note
that $E^+$ is a unit for all of these operators and $E^+ -T^-$ is a unit
for the difference of any two of these operators.

Now let $c = \Vert T-T^-\Vert$ and note $c$ is the largest eigenvalue of
$T$ that is less than one so $0 \leq c < 1$.  Now we have
\begin{equation*}
E^+-T = (E^+-T^-) - (T-T^-) \geq (E^+-T^-) - \Vert T-T^-\Vert (E^+-T^-) =
(1 -c)(E^+-T^-).
\end{equation*}
Now let $b = (1-c)^{-1}$ and we have $b(E^+-T) \geq E^+-T^-$.

In the statement of the lemma we are given $Q(E - E^-) = 0$ and from the
previous lemma this implies $Q(E^+-E) = 0$.  Also we are given in the
statement of the lemma that $Q(E-T) = 0$.  Adding these two equations we
have $Q(E^+-T) = 0$ and since $b(E^+-T) \geq E^+-T^-$ we have $Q(E^+-T^-) = 0$.
Since $E^+-T^-\geq E-T^-\geq 0$ we have $Q(E-T^-) = 0.$
\end{proof}

\begin{lem}\label{5.16}
Suppose $\mathcal{L} \subset B(\mathbb C^p)$ is a Choi-Effros factor of
type~I$_q$ and $E_{ij}$ are a complete set of matrix units for $\mathcal{L}$.
Then if $i \neq j$
\begin{equation*}
E_{ii}^-E_{jk} = 0\qquad \text{and} \qquad E_{kj}E_{ii}^- = 0.
\end{equation*}
\end{lem}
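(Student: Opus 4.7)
The plan is to prove the two identities by exploiting the fact that $E_{ii}$ is a minimal $\mathcal{L}$-projection whose Choi-Effros square $E_{ii}\star E_{ii}=E_{ii}$ yields, via the Schwarz inequality for completely positive maps, strong constraints on the operator-theoretic behavior of $E_{ii}$ in $B(\mathbb C^p)$. The first identity will be reduced to the special case $E_{ii}^- E_{jj}=0$, and the second will follow by taking adjoints (using $E_{kj}=E_{jk}^*$). I expect the only real step is finding the right positivity inequality; everything else is manipulation.

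First I would show the seed fact: $E_{ii}^- E_{jj}=0$ for $i\ne j$. Since $E_{ii}+E_{jj}$ is a projection in $\mathcal{L}$ (because $(E_{ii}+E_{jj})\star(E_{ii}+E_{jj})=E_{ii}+E_{jj}$), and since any $\mathcal{L}$-projection $P_0$ satisfies $P_0\le I_o\le I$ as operators in $B(\mathbb C^p)$, we get $E_{ii}+E_{jj}\le I$. Hence for any unit vector $f$ with $E_{ii}f=f$ (i.e.\ $f$ in the range of $E_{ii}^-$, which is the $1$-eigenspace of $E_{ii}$ since $0\le E_{ii}\le I$), one has $(f,E_{jj}f)\le 0$, forcing $E_{jj}^{1/2}f=0$ and thus $E_{ii}^- E_{jj}=0$.

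Next I would promote this to the full statement $E_{ii}^- E_{jk}=0$. Fix $i\ne j$ and any index $k$. Applying the Schwarz inequality $L(A)^*L(A)\le \Vert L(I)\Vert L(A^*A)$ (from the Section~1 preliminaries) with $A=E_{kj}$, and noting $L(E_{kj})=E_{kj}$ and $L(E_{jk}E_{kj})=E_{jk}\star E_{kj}=E_{jj}$, yields the key estimate
$$
E_{jk}E_{kj}\;\le\;\Vert I_o\Vert\,E_{jj}\;\le\;E_{jj}.
$$
Sandwiching by $E_{ii}^-$ on both sides gives $E_{ii}^-E_{jk}E_{kj}E_{ii}^-\le E_{ii}^-E_{jj}E_{ii}^-=0$ by the seed fact. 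Setting $B=E_{ii}^-E_{jk}$, this says $BB^*=0$, hence $B=0$, proving the first identity.

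Finally, the second identity $E_{kj}E_{ii}^-=0$ is just the adjoint of the first: replace $k$ by any index and note $(E_{ii}^-E_{jk})^*=E_{kj}E_{ii}^-$ since $E_{ii}^-$ is a hermitian projection and $E_{jk}^*=E_{kj}$. The main (and only) obstacle is identifying the correct Schwarz-type inequality to link the non-idempotent operator product $E_{jk}E_{kj}$ in $B(\mathbb C^p)$ with the Choi-Effros product $E_{jk}\star E_{kj}=E_{jj}$, and this is handled cleanly by the Schwarz bound for the CP idempotent $L$ coupled with $L(I)=I_o\le I$.
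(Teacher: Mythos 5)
Your proof is correct and follows essentially the same route as the paper's: establish the seed fact $E_{ii}^-E_{jj}=0$ from the fact that $E_{ii}+E_{jj}$ is dominated by $I$ (the paper uses $\Vert E_{ii}+E_{jj}\Vert=1$, which is the same point), then invoke the Schwarz inequality for the idempotent $L$ to get $E_{jk}E_{kj}\le E_{jj}$. Your only deviation is the last step, where you conclude via $BB^*=0\Rightarrow B=0$ with $B=E_{ii}^-E_{jk}$ instead of the paper's polar-decomposition range argument; this is a mild streamlining, not a different method.
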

\begin{proof}  Assume the hypothesis and notation of the lemma.  First we
prove $E_{11}^-E_{22} = 0$.  Since $\Vert E_{11}+E_{22}\Vert = 1$
if $f \in \mathbb C^p$ is a unit vector so that $E_{11}^-f = f$ we
have $(f,(E_{11}+E_{22})f) = 1 + (f,E_{22}f) \leq 1$ so
$(f,E_{22}f) = 0$ and $f$ is orthogonal to the range of $E_{22}$.
Hence $E_{22}E_{11}^- = 0$ so $E_{11}^-E_{22}=0$.  Let $L$ be a
completely positive contractive idempotent with range
$\mathcal{L}$ then $E_{22} = L(E_{22}) = L(E_{2i}E_{i2})$ and from
the Schwarz inequality for completely positive maps we have (note
$\Vert L \Vert = 1$)
\begin{equation*} E_{2k}E_{k2} = L(E_{k2}^*)L(E_{k2})
\leq L(E_{2k}E_{k2}) = E_{22}
\end{equation*}
so $E_{11}^-E_{2k}E_{k2} = 0$.  By the polar decomposition in a finite
dimensional Hilbert space we have $E_{2k} = (E_{2k}E_{k2})^{\frac {1} {2}} U$
where $U$ is a unitary so the range of $E_{2k}$ is the range of
$(E_{2k}E_{k2})^{\frac {1} {2}}$ and since $\mathbb C^p$ is finite dimensional
the range of $(E_{2k}E_{k2})^{\frac {1} {2}}$ is the range of $E_{2k}E_{k2}$.
Since $E_{11}^-f=0$ for $f$ in the range of $E_{2k}E_{k2} = 0$ it follows that
$E_{11}^-f = 0$ for $f$ in the range of $E_{2k}$.  Hence, $E_{11}^-E_{2k} =
0$. The proof for general $i,j,k$ with $i \neq j$ follows from replacing 1
and 2 by $i$ and $j$.  Taking adjoints we obtain the second equalities.
\end{proof}

\begin{lem}\label{5.17}
Suppose $\mathcal{L} \subset B(\mathbb C^p)$ is a Choi-Effros factor of
type~I$_q$.  Let $Q$ be the largest projection so that $Q(I-I_o) = 0$ and
$Q(E-E^2) = 0$ for all minimal $\mathcal{L}$-projections $E \in \mathcal{L}$.
Then if $E_{ij}$ are a complete set of matrix units for $\mathcal{L}$ then
$Q(E_{ii}-E_{ij}E_{ji}) = 0$ and $Q(E_{ii}^+-(E_{ij}E _{ji})^-) = 0$ for
$i,j = 1,\cdots ,q$
\end{lem}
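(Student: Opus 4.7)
The plan is to handle the case $i=j$ first and then, for distinct $i$ and $j$, apply the defining property of $Q$ to a well-chosen one-parameter family of minimal $\mathcal{L}$-projections built out of the pair $E_{ij},E_{ji}$. When $i=j$, the first assertion is exactly the hypothesis $Q(E_{ii}-E_{ii}^{2})=0$; the second reduces to $Q(E_{ii}^{+}-E_{ii}^{-})=0$, because the eigenvalues of $E_{ii}$ lie in $[0,1]$ forces $(E_{ii}^{2})^{-}=E_{ii}^{-}$, and this holds by Lemma 5.14. So from now on I will assume $i\neq j$.

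For each $\lambda\in\mathbb{C}$ consider
\[
P_{\lambda}=E_{ii}+\lambda E_{ij}+\bar\lambda E_{ji}+|\lambda|^{2}E_{jj}\in\mathcal{L}.
\]
This is self-adjoint, and the factor relations $E_{ab}\star E_{cd}=\delta_{bc}E_{ad}$ give $P_{\lambda}\star P_{\lambda}=(1+|\lambda|^{2})P_{\lambda}$, so $\hat P_{\lambda}:=(1+|\lambda|^{2})^{-1}P_{\lambda}$ is a self-adjoint idempotent in $(\mathcal{L},\star)$; its normalized trace $\tilde tr(\hat P_{\lambda})=1/q$, so it is minimal, and the Schwarz inequality for $L$ gives $\hat P_{\lambda}=L(\hat P_{\lambda}^{2})\geq\hat P_{\lambda}^{2}\geq 0$, hence $\hat P_{\lambda}\geq 0$ with operator norm one. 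Applying the defining property of $Q$ to the minimal $\mathcal{L}$-projection $\hat P_{\lambda}$ yields $Q[(1+|\lambda|^{2})P_{\lambda}-P_{\lambda}^{2}]=0$ for every $\lambda\in\mathbb{C}$. The identity is polynomial in $\lambda,\bar\lambda$, so all coefficients vanish; extracting the coefficient of $\lambda\bar\lambda$ produces
\[
Q\bigl[E_{ii}+E_{jj}-E_{ii}E_{jj}-E_{jj}E_{ii}-E_{ij}E_{ji}-E_{ji}E_{ij}\bigr]=0.
\]
The hypotheses $Q(E_{ii}-E_{ii}^{2})=Q(E_{jj}-E_{jj}^{2})=0$ together with Lemma 5.14 give $QE_{ii}=QE_{ii}^{-}$ and $QE_{jj}=QE_{jj}^{-}$, while Lemma 5.16 gives $E_{ii}^{-}E_{jj}=E_{jj}^{-}E_{ii}=0$; hence $QE_{ii}E_{jj}=QE_{jj}E_{ii}=0$ and the display collapses to
\[
Q(E_{ii}-E_{ij}E_{ji})+Q(E_{jj}-E_{ji}E_{ij})=0.
\]

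Each summand on the left is a nonnegative operator by the Schwarz argument used in the proof of Lemma 5.16 (namely $E_{ij}E_{ji}=L(E_{ji})^{*}L(E_{ji})\leq L(E_{ij}E_{ji})=E_{ii}$). Multiplying the previous display on the right by $Q$ exhibits the zero operator as a sum of two positive operators $Q(E_{ii}-E_{ij}E_{ji})Q$ and $Q(E_{jj}-E_{ji}E_{ij})Q$, so both vanish, and the first part of Lemma 5.14 (if $A\geq 0$ and $QAQ=0$ then $QA=0$) promotes these to $Q(E_{ii}-E_{ij}E_{ji})=0$ and $Q(E_{jj}-E_{ji}E_{ij})=0$, which is the first assertion. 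For the second assertion I will invoke Lemma 5.15 with $E=E_{ii}$ and $T=E_{ij}E_{ji}$: the hypotheses $0\leq T\leq E$ (Schwarz), $Q(E-E^{-})=0$ (from Lemma 5.14), and $Q(E-T)=0$ (just proved) are all in hand, so the lemma yields $Q(E_{ii}^{+}-(E_{ij}E_{ji})^{-})=0$, completing the proof.

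The main obstacle is conceptual rather than technical: one must guess the family $\hat P_{\lambda}$ as the test projection. Once it is in hand, polynomial expansion in $\lambda,\bar\lambda$ is mechanical, and the only substantive algebraic input beyond the hypothesis on $Q$ is the clean cancellation of the cross terms $E_{ii}E_{jj}$ and $E_{jj}E_{ii}$ via Lemmas 5.14 and 5.16.
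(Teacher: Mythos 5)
Your proof is correct and follows essentially the same route as the paper's: your family $\hat P_{\lambda}$ is, after the substitution $\lambda=(x/s_x)e^{i\theta}$, exactly the paper's test family $E_x$, and extracting the coefficient of $\lambda\bar\lambda$ is the paper's combination of taking the $x^2$ coefficient of the power series and averaging over $\theta$. From there the cancellation of the cross terms via Lemmas 5.14 and 5.16, the splitting of the resulting identity into two positive summands, and the final appeal to Lemma 5.15 with $E=E_{ii}$, $T=E_{ij}E_{ji}$ match the paper's argument step for step.
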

\begin{proof}  Assume the hypothesis and notation of the theorem.  We prove
the lemma for $i,j = 1,2$.  Suppose $x$ and $\theta$ are real numbers with
$x^2 \leq 1$ (where we think of $x$ as a real variable and $\theta$ as a
constant) and
\begin{equation*}
E_x = s_x^2E_{11}+xe^{i\theta}s_xE_{12}+xe^{-i\theta}s_xE_{21}+x^2E_{22}\qquad
\text{with} \qquad s_x=\sqrt{1-x^2}.
\end{equation*}
Since $E_x$ is a minimal $\mathcal{L}$-projection we have $Q(E_x-E_x^2)=0.
$ Making a power series expansion in $x$ we have
\begin{equation*}
Q(E_x-E_x^2) = Q(A_o+xA_1+x^2A_2+\cdots) = 0.
\end{equation*}
Since this expression is real analytic in $x$ (meaning it has a convergent
power series expansion for $-1<x<1$) it follows that each of the
coefficients $QA_n$ must vanish so $QA_2 = 0$ and calculating $A_2$ we find
\begin{equation*}
QA_2 = Q(2E_{11}^2-E_{11}-E_{12}E_{21}+E_{22}-E_{21}E_{12}-E_{11}E_{22}-
E_{22}E_{11}-e^{2i\theta}E_{12}E_{12}-e^{-2i\theta}E_{21}E_{21}) = 0.
\end{equation*}
Averaging over $\theta$ from $\theta=-\pi$ to $\theta=+\pi$ the last two terms
average to zero.  From the pervious lemma we have $Q(E_{11}E_{22}) =
Q(E_{22}E_{11})= 0$ so we have
\begin{equation*}
Q(2E_{11}^2-E_{11}-E_{12}E_{21}+E_{22}-E_{21}E_{12}) = 0.
\end{equation*}
Since $E_{11}$ is a minimal projection in $\mathcal{L}$ we also have $Q(E_{11}-
E_{11}^2) = 0$ and adding twice this equation to the above equation and
multiplying by $Q$ on the right we find
\begin{equation*}
Q(E_{11}-E_{12}E_{21})Q + Q(E_{22}-E_{21}E_{12})Q = 0.
\end{equation*}
As we saw in the previous lemma we have from the Schwarz inequality
$E_{22}\geq E_{21}E_{12}$ and $E_{11}\geq E_{12}E_{21}$ so the expression above
is the sum of two positive terms so both terms must be zero.  From Lemma 5.14
we know that if $A \in B(\mathbb C^p)$ is positive and $QAQ = 0$ then $QA=0$
from which we conclude
\begin{equation*}
Q(E_{11}-E_{12}E_{21}) = 0\qquad \text{and} \qquad Q(E_{22}-E_{21}E_{12}) = 0.
\end{equation*}
Since $Q(E_{11}-E_{11}^2) = Q(E_{22}-E_{22}^2) = 0$ we have proved the lemma
for $i,j = 1,2$.  The argument we have just given will apply to any two values
for $i$ and $j$ from 1 to $q$. Now if $E = E_{ii}$ and $T = E_{ij}E_{ji}$ then
we have shown $Q(E-E^2)=0$ and $Q(E-T)=0$.  Lemma 5.14 shows that $Q(E-E^2)=0$
implies $Q(E-E^-)=0$ so we now have shown that $Q(E-E^-)=0$ and $Q(E-T)=0$ and
Lemma 5.15 shows these two conditions imply $Q(E^+-T^-)=0$.  Hence, we have
shown $Q(E_{ii}^+-(E_{ij}E_{ji})^-) = 0.$
\end{proof}

In the next lemma we define a projection $Q_1$ which will turn out to be
the maximal support projection.  The lemma is also a definition.

\begin{lem}\label{5.18}
Suppose $\mathcal{L}$ is a Choi-Effros factor of type~I$_q$ and $E_{ij}$
are a complete set of matrix units for $\mathcal{L}$ and let $I_o$ be the unit
of $\mathcal{L} $.  Let $Q$ be the largest projection so that $Q(I-I_o) = 0$
and $Q(E-E^2) = 0$ for every minimal projection $E \in \mathcal{L}$.  For
each $i = 1,\cdots ,q$ let $T_i$ be the projection onto the intersection of
the ranges of $(E_{ij}E_{ji})^-$ for $j = 1,\cdots ,q$ and let $Q_1 =
T_1+T_2+\cdots +T_q$.  Then $Q_1 \geq Q$.
\end{lem}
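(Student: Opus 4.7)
The plan is to show that every unit vector $f$ in the range of $Q$ also lies in the range of $Q_1$, so that $Q \leq Q_1$. Before doing this, I would verify that $Q_1$ is in fact a projection by checking the $T_i$ are pairwise orthogonal. Taking $j = i$ in the definition of $T_i$, we get $T_i \leq (E_{ii}E_{ii})^{-} = E_{ii}^{-}$, since the norm-one positive operator $E_{ii}$ and its square share the same eigenspace for eigenvalue $1$. From Lemma~\ref{5.16} we have $E_{ii}^{-}E_{jj} = 0$ for $i \neq j$, so $E_{ii}^{-}E_{jj}^{-} = \lim_{n\to\infty} E_{ii}^{-}E_{jj}^n = 0$. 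Hence the $E_{ii}^{-}$ are mutually orthogonal, and so are the $T_i$.

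Next I would record the main mechanical fact I need: if $A$ is hermitian and $QA = 0$, then taking adjoints gives $AQ = 0$, hence $Af = 0$ for $f \in \operatorname{Range}(Q)$. I will apply this below to several explicitly hermitian differences of projections.

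For the main step, fix $f$ in the range of $Q$. Since $Q(I-I_o) = 0$ and $I - I_o$ is hermitian, $I_o f = f$, so $\sum_{j=1}^q E_{jj} f = f$. Applying Lemma~\ref{5.14} to each minimal $\mathcal L$-projection $E_{jj}$ (which satisfies $Q(E_{jj}-E_{jj}^2)=0$ by assumption), the condition $Q(E_{jj}^{+} - E_{jj}) = 0$ also holds, and hermiticity yields $E_{jj}^{+} f = E_{jj} f$. Summing, $\sum_{j=1}^q E_{jj}^{+} f = f$. Now for each fixed $i$, Lemma~\ref{5.17} gives $Q(E_{ii}^{+} - (E_{ij}E_{ji})^{-}) = 0$ for every $j$, so by hermiticity $E_{ii}^{+} f = (E_{ij}E_{ji})^{-} f$. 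Thus $E_{ii}^{+} f$ lies in the range of $(E_{ij}E_{ji})^{-}$ for every $j$, hence in the intersection of these ranges, which is the range of $T_i$. Combined with $T_i \leq E_{ii}^{+}$ (which gives $T_i E_{ii}^{+} = T_i$), we get $E_{ii}^{+} f = T_i E_{ii}^{+} f = T_i f$. Summing over $i$ gives $f = \sum_{i=1}^q E_{ii}^{+} f = \sum_{i=1}^q T_i f = Q_1 f$, so $f \in \operatorname{Range}(Q_1)$.

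This shows $\operatorname{Range}(Q) \subseteq \operatorname{Range}(Q_1)$, i.e.\ $Q \leq Q_1$. The step I expect to require the most care is verifying that $T_i \leq E_{ii}^{+}$ (so that $T_i E_{ii}^{+} = T_i$) and that the $T_i$ are truly orthogonal as projections; both rely on the slightly subtle interplay between the operators $A^{+}$ and $A^{-}$ and the relations from Lemmas~\ref{5.14} and~\ref{5.16}. Everything else is a clean application of hermiticity and Lemma~\ref{5.17}.
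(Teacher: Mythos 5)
Your proof is correct. It shares the paper's overall skeleton --- the key inputs are Lemma~\ref{5.17}, the equivalences of Lemma~\ref{5.14}, the identity $Q(I-I_o)=0$, and a summation over the diagonal matrix units --- but it handles the one genuinely delicate step differently. To pass from the family of identities $Q(E_{ii}^+-(E_{ij}E_{ji})^-)=0$, $j=1,\dots,q$, to the single statement involving the intersection projection $T_i$, the paper forms the average $R_1=\tfrac1q\sum_j(E_{1j}E_{j1})^-$, notes $R_1^-=T_1$, and invokes Lemma~\ref{5.15} to conclude $Q(E_{11}-T_1)=0$ at the operator level. You instead argue at the vector level: for $f$ in the range of $Q$, hermiticity gives $E_{ii}^+f=(E_{ij}E_{ji})^-f$ for every $j$, so the single vector $E_{ii}^+f$ lies in each range and hence in their intersection, whence $E_{ii}^+f=T_if$ via $T_i\leq E_{ii}^+$. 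This bypasses Lemma~\ref{5.15} and the averaging device entirely and is arguably more elementary. You also supply a verification the paper leaves implicit, namely that the $T_i$ are mutually orthogonal (via $T_i\leq E_{ii}^-$ and Lemma~\ref{5.16}) so that $Q_1$ is in fact a projection; this is needed to pass from $Q(I-Q_1)=0$ (or from $\operatorname{Range}(Q)\subseteq\operatorname{Range}(Q_1)$) to $Q_1\geq Q$, and it is good that you made it explicit.
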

\begin{proof}  Assume the hypothesis and notation of the lemma.  From the
previous lemma we have $Q(E_{11}-(E_{1i}E_{i1})^-) = 0$ for $i = 1,\cdots
,q$ and, hence,
\begin{equation*}
Q(E_{11}-R_1) = 0\qquad \text{where} \qquad R_1 = \frac {1} {q} \sum_{i=1}^q
(E_{1i}E_{i1})^-
\end{equation*}
and then we have from Lemma 5.15 that $Q(E_{11}-R_1^-) = 0$ and $R_1^-$ is
$T_1$ so $Q(E_{11}-T_1) = 0$.  Repeating this argument with 1 replaced by $i$
yields the result that $Q(E_{ii}-T_i) = 0$ for $i = 1,\cdots ,q$.  Since
the sum of the $E_{ii}$ is $I_o$ we have $Q(I_o-Q_1) = 0$ and since
$Q(I-I_o) = 0$ it follows that
$Q(I-Q_1) = 0$.  Hence $Q_1 \geq Q.$
\end{proof}

\begin{thm}\label{5.19}
Suppose $\mathcal{L} \subset B(\mathbb C^p)$ is a Choi-Effros factor of
type~I$_q$ with unit $I_o$ and $Q$ is the largest projection so that
$Q(I-I_o) = 0$ and $Q(E-E^2) = 0$ for all minimal projections $E \in
\mathcal{L}$.  Then $Q = P$ where $P$ is the maximal support projection for
$\mathcal{L}$.
\end{thm}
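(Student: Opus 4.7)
The plan is to show $Q = P$ by establishing the two inclusions $P \le Q$ and $Q \le P$ separately.

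For the first inclusion $P \le Q$, I will verify that $P$ itself satisfies the two defining conditions of $Q$. The condition $P(I - I_o) = 0$ is immediate from Theorem 5.2, which gives $PI_o = P$. For the condition $P(E - E^2) = 0$ on minimal projections $E \in \mathcal{L}$, I will use the $*$-isomorphism $A \mapsto PAP$ from $(\mathcal{L}, \star)$ onto the ordinary $*$-algebra $P\mathcal{L}P$ supplied by Theorem 5.2. Since $P \in \mathcal{L}'$ commutes with $E$, we have $PEP = PE$, and since $E \star E = E$ is mapped to $(PEP)^2 = PEP$, this yields $(PE)^2 = PE$. Unpacking $(PE)^2 = PEPE = PE^2$ using $[P,E] = 0$, we conclude $PE^2 = PE$, so $P(E - E^2) = 0$. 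Maximality of $Q$ then gives $P \le Q$.

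For the reverse inclusion $Q \le P$, Lemma 5.18 has already reduced the task to showing $Q_1 = T_1 + \cdots + T_q \le P$, and since each $T_i \le E_{ii}^-$ by its definition, it is enough to prove $E_{ii}^- \le P$ for each $i$. My intermediate step will be to show $E_{ii}^- \le E_o$ where $E_o$ is the eigenvalue-1 spectral projection of $I_o$. This follows from Lemma 5.16: for $f$ with $E_{ii}f = f$, and for $j \ne i$, $E_{jj} f = (E_{jj} E_{ii}^-)f = 0$, so $I_o f = \sum_j E_{jj} f = E_{ii} f = f$, placing $f$ in $E_o$.

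The crux of the argument is then to show $E_o \le P$ (equivalently, that $P$ absorbs every eigenvalue-1 eigenvector of $I_o$). For this I plan to produce, for each such $f$, a completely positive contractive idempotent $L'$ on $B(\mathbb C^p)$ whose range is $\mathcal{L}$ and whose support projection $F'$ contains $f$; Theorem 5.2 then forces $F' \le P$, hence $f \in P\mathbb C^p$. The construction will use the vectors $f_j = E_{ji}f$ (where the matrix units are chosen so that $E_{ii}f = f$, i.e., aligned with $f$). A Schwarz-inequality argument applied to the operators $(E_{ji} + cE_{ki})^*(E_{ji} + cE_{ki})$ for $c \in \{1,-1,i,-i\}$, exploiting that $L$ of these equals $(1+|c|^2)E_{ii}$, will establish that $\{f_j/\Vert f \Vert\}$ is orthonormal and, with a bit more work, that $E_{kl}f_j = \delta_{lj}f_k$. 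The resulting isometry $V:\mathbb C^q \to \mathbb C^p$ intertwines $M_q(\mathbb C)$ with the action of the $E_{ij}$, and a compression through $V$ (of the form $L'(A) = \sum_{j,k} E_{jk}\langle f_k, Af_j\rangle/\Vert f \Vert^2$, adjusted as needed so that $L'$ is idempotent, contractive, and has range exactly $\mathcal{L}$) will be the desired $L'$.

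The main obstacle I anticipate is exactly the Schwarz-type verification that the matrix-unit identities $E_{kl}f_j = \delta_{lj}f_k$ hold as stated (not merely up to kernel-of-$L$ corrections) and that the compression $L'$ thus defined is genuinely idempotent with range $\mathcal{L}$; the vanishing of the relevant $\ker L$ corrections will hinge on the fact that $f$ lies in $T_i$, which forces $E_{ij}E_{ji}f = f$ for every $j$. Once these identities are in place, Theorem 5.2 delivers $T_i \le F' \le P$, yielding $Q_1 \le P$, hence $Q \le P$, and completing the proof.
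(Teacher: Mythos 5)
Your overall architecture---$P \le Q$ by checking that $P$ satisfies the two defining conditions of $Q$, and $Q \le P$ via Lemma 5.18 by showing $Q_1 = T_1+\cdots+T_q \le P$, the latter by exhibiting a completely positive contractive idempotent with range $\mathcal{L}$ whose support projection contains the relevant vectors and then invoking Theorem 5.2---is the paper's architecture, and your verification of the first inclusion is correct. But your reduction for the second inclusion fails: the statements $E_{ii}^- \le P$ and $E_o \le P$ are false in general, so ``it is enough to prove $E_{ii}^- \le P$'' replaces the true target $T_i \le P$ by a false one. Concretely, take $p=3$ and $\mathcal{L}=\mathrm{span}\{E_{11},E_{12},E_{21},E_{22}\}\subset B(\mathbb C^3)$ with $E_{11}=e_{11}+e_{33}$, $E_{12}=e_{12}$, $E_{21}=e_{21}$, $E_{22}=e_{22}$, and $L(A)=\sum_{i,j=1}^2 (e_i,Ae_j)E_{ij}$. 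One checks that $L$ is a unital completely positive idempotent with range $\mathcal{L}$, that $\mathcal{L}$ is a Choi-Effros factor of type~I$_2$ with these matrix units and $I_o=I$, and that $F=P=e_{11}+e_{22}$. Here $E_{11}^-=e_{11}+e_{33}\not\le P$ and $E_o=I\not\le P$; indeed the minimal projection $E=\tfrac12(E_{11}+E_{12}+E_{21}+E_{22})$ satisfies $E-E^2=\tfrac14 e_{33}$, which is exactly what forces $Q\le e_{11}+e_{22}$. The inclusion $T_i\le E_{ii}^-$ can thus be strict ($T_1=(E_{12}E_{21})^-=e_{11}$ here), and only vectors in $T_i$ can be fed into your construction: for $f=e_3\in \mathrm{range}(E_{11}^-)$ one gets $f_2=E_{21}f=0$ and the orthonormal family collapses, as it must, since $e_3$ is not in the range of $P$.

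The salvage is already in your last paragraph: once you restrict to $f$ in the range of $T_i$, so that $E_{ij}E_{ji}f=f$ for every $j$, you are doing exactly what the paper does---take an orthonormal basis $g_1,\dots,g_r$ of the range of $T_1$, set $f_{ik}=E_{i1}g_k$, prove orthonormality and the identities $E_{ij}f_{mk}=\delta_{jm}f_{ik}$ (the paper gets these from positivity of $\sum_{i,j}E_{ij}$ and a determinant argument on the matrix $a_{ij}=(f_{ik},E_{ij}f_{jk})$, rather than your polarization/Schwarz argument, but both are viable), and then $L'(A)=\tfrac1r\sum_{i,j,k}(f_{ik},Af_{jk})E_{ij}$ is a completely positive contractive idempotent with range $\mathcal{L}$ and support projection $Q_1$, whence $Q_1\le P$ by Theorem 5.2. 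So the fix is to delete the two false intermediate reductions and prove $Q_1\le P$ directly; do not route the argument through $E_{ii}^-$ or through the eigenvalue-one spectral projection of $I_o$.
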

\begin{proof}  Assume the hypothesis and notation of the theorem.  Let
$E_{ij}$ be a complete set of matrix units for $\mathcal{L}$ and let
$T_i$ be the projection onto the intersection of the
ranges of $(E_{ij}E_{ji})^-$ for $j = 1,\cdots ,q$.  Let
$g_1,g_2,\cdots ,g_r$ be an orthonormal basis for the range of
$T_1$.  Let $f_{ik}= E_{i1}g_k$ for $i = 1,\cdots ,q$ and $k =
1,\cdots ,r$.
We show that if $i \neq j$ then $E_{mi}f_{jk} = 0$.
First note that since $T_1g_k = g_k$ we have
$(g_k,E_{1j}E_{j1}g_k) = 1$ so $E_{1j}E_{j1}g_k = g_k$ so
\begin{equation*}
1 = (E_{j1}g_k,E_{j1}E_{1j}E_{j1}g_k) \leq (E_{j1}g_k,E_{jj}E_{j1}g_k) \leq 1
\end{equation*}
so $(E_{j1}g_k,E_{jj}^-E_{j1}g_k) = 1$ so $E_{jj}^-E_{j1}g_k = E_{j1}g_k$.
Then we have
\begin{equation*}
E_{mi}f_{jk} = E_{mi}E_{j1}g_k = E_{mi}E_{jj}^-E_{j1}g_k
\end{equation*}
and since $i \neq j$ we have from Lemma 5.16 that $E_{mi}E_{jj}^- = 0$ and,
hence, $E_{mi}f_{jk} = 0$ for $i \neq j$.

Now since $f_{ik}= E_{i1}g_k$ we have for $i \neq j$ that
$$
(f_{ik},f_{jm}) = (E_{i1}g_k,f_{jm}) = (g_k,E_{1i}f_{jm}) = 0
$$
for $k,m=1,\cdots,r$ and for $i =j$ we have
$$
(f_{ik},f_{im}) = (E_{i1}g_k,E_{i1}g_m) = (g_k,E_{1i}E_{i1}g_m) = (g_k,g_m)
= \delta_{km}
$$
so the $f_{ik}$ form an orthonormal set of vectors.

Now let
\begin{equation*}
C = \sum_{i,j=1}^q E_{ij}.
\end{equation*}
Note $C$ is positive so if $f = z_1f_{1k} + z_2f_{2k} + \cdots  z_qf_{qk}$
for $z_i$ complex numbers then $(f,Cf) \geq 0$ and calculating this using
the above relations we find
\begin{equation*}
(f,Cf) = \sum_{i,j=1}^q \overline {z_i}z_j(f_{ik},E_{ij}f_{jk})
\end{equation*}
so $(f,Cf) \geq 0$ for all choices of the $z_i$ if and only if the matrix
$A$ with matrix elements
\begin{equation*}
a_{ij} = (f_{ik},E_{ij}f_{jk})
\end{equation*}
is positive.  Now $a_{1i} = a_{i1} = 1$ and $a_{ii} \leq 1$ for $i =
1,\cdots ,q$.  Looking at the $(2\times 2)$-matrix
\begin{equation*}
\left[\begin{matrix} a_{11}&a_{1i}
\\
a_{i1}&a_{ii}
\end{matrix} \right] =
\left[\begin{matrix} 1&1
\\
1&a_{ii}
\end{matrix} \right] \geq 0
\end{equation*}
and given $a_{ii} \leq 1$ we see the determinant is non negative if and
only if $a_{ii} = 1$ and looking at the matrix
\begin{equation*}
\left[\begin{matrix} a_{11}&a_{1i}&a_{1j}
\\
a_{i1}&a_{ii}&a_{ij}
\\
a_{j1}&a_{ji}&a_{jj}
\end{matrix} \right]=
\left[\begin{matrix} 1&1&1
\\
1&1&a_{ij}
\\
1&a_{ji}&1
\end{matrix} \right] \geq 0
\end{equation*}
we see this matrix has a non negative determinant if and only if $a_{ij} =
a_{ji} = 1$.  It then follows that since $A$ is positive we have $a_{ij} =
1$ for all $i$ and $j$.  Note this is true for all $k = 1,\cdots ,r$.
Hence, we have
\begin{equation*}
E_{ij}f_{mk} = \delta_{jm}f_{ik}
\end{equation*}
so if $Q_1$ is the projection onto the span of the $f_{ik}$ then $Q_1$
commutes with the $E_{ij}$ and $\mathcal{L}$ restricted to the range of
$Q_1$ is an actual type~I$_q$ factor.

Note we have $E_{2i}Q_1E_{i2} = Q_1E_{22}$ so $T_2$ which is the
largest projection so that $T_2 \leq (E_{2i}E_{i2})^-$ for $i =
1,\cdots ,q$ satisfies the relation $T_2 \geq Q_1E_{22}$.  Note
$\Vert E_{21}T_1f\Vert = \Vert T_1f\Vert$ so it follows that
$\dim(E_{21}T_1E_{12}) = \dim(T_1)$.  Since $T_2 \geq
E_{21}T_1E_{12}$ it follows that $\dim(T_2) \geq \dim(T_1)$.  But
we can repeat all of the above arguments with the indices 1 and 2
interchanged and reach the conclusion that $\dim(T_1) \geq
\dim(T_2)$ so we conclude that $\dim(T_1) = \dim(T_2)$.  Since
$T_2 \geq E_{21}T_1E_{12}$ and $\dim(E_{21}T_1E_{12}) = \dim(T_1)$
we have $T _2=E_{21}T_1E_{12}$. And replacing 1 and 2 by indices
$i$ and $j$ we have $T_i = E_{ij}T_jE_{ji}$.

Now recall that $Q_1$ is the projection onto the span of the $f_{ik}$ for
$i = 1,\cdots ,q$ and $k = 1,\cdots ,r$.  Now $T_i$ is the projection onto
the span of the $f_{ik}$ for $k = 1,\cdots ,r$ so
\begin{equation*}
T_1 + T_2 + \cdots  + T_q = Q_1
\end{equation*}
and $Q_1$ is the projection $Q_1$ of Lemma 5.18 so $Q_1 \geq Q$ and since
$P$ satisfies all the conditions defining $Q$ we have $Q \geq P$.  Now we
will produce a completely positive contractive idempotent map of $B(\mathbb
C^p)$ into itself with range $\mathcal{L}$ and support projection $Q_1$.
Let
\begin{equation*}
L(A) = \frac {1} {r}  \sum_{i,j=1}^q \sum_{k=1}^r (f_{ik},Af_{jk})E_{ij}
\end{equation*}
for $A \in B(\mathbb C^p)$.  Note $L(E_{ij}) = E_{ij}$ so it is clear that
the range of $L$ is $\mathcal{L}$
and $L(L(A)) = L(A)$, $L(I) = I_o$ and $Q_1$ is the smallest projection
so that $L(Q_1) = I_o$.  Now let $F_k$ be the projection onto the vectors
$f_{ik}$ for $i = 1,\cdots ,q$.  Now $F_kAF_k$ is a $(q\times q)$ matrix
with matrix elements $(f_{ik},Af_{jk })$.  Clearly the mapping $A
\rightarrow F_kAF_k$ is completely positive and, hence, the sum of the maps
\begin{equation*}
\phi (A) = \frac {1} {r} \sum_{k=1}^r F_kAF_k
\end{equation*}
is a completely positive map from $B(\mathbb C^p)$ to the $(q\times
q)$-matrices and since the map from $(q\times q)$ matrices $A = \{
a_{ij}\}$ given by
\begin{equation*}
\psi (A) = \sum_{i,j=1}^q a_{ij}E_{ij}
\end{equation*}
is completely positive it follows that the map $L = \psi\phi$ is completely
positive.  Hence, $Q_1$ is the support projection for a completely positive
contractive idempotent mapping of $B(\mathbb C^p)$ into itself with range
$\mathcal{L}$.  Hence, by Theorem 5.2 we have $P \geq Q_1$.  Hence, we
have $Q_1 \geq Q \geq P \geq Q_1$ so $P = Q.$ \end{proof}

\begin{thm}\label{5.20}
Suppose $\omega$ is a $q$-weight map over $\mathbb C^p$ of index zero and
$\mathcal{L}$ the range $\omega$ is a Choi-Effros factor of type~I$_q$ and
suppose $P$ is the maximal support projection for $\mathcal{L}$.  Then
$\omega (\Lambda (I-P))$ is finite meaning there is a constant $K$ so that
$\Vert\omega\vert_t(\Lambda (I-P))\Vert \leq K$ for all $t > 0$.
\end{thm}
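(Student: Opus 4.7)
The plan is to combine Theorem~\ref{5.19} (which identifies $P$ with the largest projection $Q$ such that $Q(I-I_o)=0$ and $Q(E-E^2)=0$ for every minimal $\mathcal{L}$-projection $E$) with the finiteness estimates of Theorem~\ref{5.9} and Theorem~\ref{5.12}, via a straightforward domination argument in $B(\mathbb C^p)$. The key observation is that the characterization $P=Q$ is equivalent to saying that $I-P$ is precisely the range projection of the positive operator
$$
A \;=\; (I-I_o) \;+\; \sum_{E \text{ minimal in } \mathcal{L}} (E - E^2),
$$
since the kernel of $A$ is exactly the largest subspace annihilated by $I-I_o$ and by every $E-E^2$.

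First I would reduce this to a finite sum. Because $B(\mathbb C^p)$ is finite dimensional, the union of the ranges of $I-I_o$ and of $E-E^2$ over all minimal $\mathcal{L}$-projections $E$ already equals the range of $I-P$ after selecting only finitely many minimal projections $E_1,\dots,E_n$. Setting $A_n = (I-I_o) + \sum_{i=1}^n (E_i - E_i^2)$, we have $A_n \geq 0$ with range projection exactly $I-P$. A standard finite-dimensional fact then gives a constant $\lambda > 0$ with $A_n \geq \lambda(I-P)$, i.e.
$$
I - P \;\leq\; \lambda^{-1}\Bigl[(I-I_o) + \sum_{i=1}^n (E_i - E_i^2)\Bigr].
$$

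Next, since $\omega\vert_t\Lambda$ is completely positive and hence monotone, applying it to both sides yields
$$
\omega\vert_t\bigl(\Lambda(I-P)\bigr) \;\leq\; \lambda^{-1}\Bigl[\,\omega\vert_t\bigl(\Lambda(I-I_o)\bigr) + \sum_{i=1}^n \omega\vert_t\bigl(\Lambda(E_i - E_i^2)\bigr)\Bigr].
$$
Each term on the right is uniformly bounded in $t$. For the first term, since $\Lambda \leq I$ we have $\Lambda(I - I_o) = \Lambda - \Lambda(I_o) \leq I - \Lambda(I_o)$, so Theorem~\ref{5.9} gives $\omega\vert_t(\Lambda(I-I_o)) \leq \omega\vert_t(I - \Lambda(I_o)) \leq I_o$ uniformly in $t$. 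For each of the remaining terms, Theorem~\ref{5.12} (together with its observation that $F_i - E_i \geq E_i - E_i^2$ where $F_i$ is the support projection of $E_i$) provides a constant $K_i$ with $\Vert \omega\vert_t(\Lambda(E_i - E_i^2))\Vert \leq K_i$ for all $t > 0$. Adding these bounds produces the required $K$.

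The only nontrivial step is the passage from the characterization of $P$ in Theorem~\ref{5.19} to the existence of the finite-sum inequality $A_n \geq \lambda(I-P)$; this is where the work of the previous section is essential, and once this reduction is made the rest of the argument is just positivity and finite-dimensional linear algebra. I do not anticipate a real obstacle beyond verifying carefully that finitely many minimal projections $E_i$ suffice to span (together with $I-I_o$) the full range of $I-P$, which is immediate from $\dim B(\mathbb C^p) < \infty$.
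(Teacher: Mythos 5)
Your proposal is correct and follows essentially the same route as the paper: both arguments use Theorem~\ref{5.19} to identify $I-P$ with the span of the ranges of $I-I_o$ and the $E-E^2$, dominate $I-P$ by a finite positive combination of these (the paper builds this iteratively via support projections and explicit eigenvalue constants, you via the smallest nonzero eigenvalue of $A_n$, which amounts to the same thing), and then apply Theorems~\ref{5.9} and~\ref{5.12} together with positivity of $\omega\vert_t\Lambda$. No gaps.
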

\begin{proof}  Assume the hypothesis and notation of the theorem.  Then from
the previous theorem we know that $P$ is the largest projection so that
$P(I-I_o) = 0$ and $P(E-E^2) = 0$ for all minimal projections.  This means
that $I-P$ is the projection onto the space spanned by the ranges of
$I-I_o$ and $E-E^2$ for all minimal projections $E \in \mathcal{L}$.  If
$I-I_o \neq 0$ let $\lambda$ be the smallest positive eigenvalue of $I-I_o$
otherwise let $\lambda = 1$.  Then
$\lambda^{-1}(I-I_o) \geq (I -I_o)^+ = R_o$.  If $E - E^2 \leq R_o$ for all
minimal projections then $R_o = I-P$.  If this is not the case then there
is a minimal projection $E_1$ so that this is not true.  Then let $R_1 =
(R_o+(E_1-E_1^2)^+)^+$ and note $R_1 \leq \lambda^{-1
}(I-I_o)+(E_1-E_1^2)^+$.  If $E-E^2 \leq R_1$ for all minimal projections
$E$ then $R_1 = I-P$ and if not we can find a minimal projection $E_2$ and
form the bigger projection $R_2 = (R_1+(E_2-E_2^2)^+)^+$ and note $R_2 \leq
\lambda^{-1}(I-I_o) + (E_1 -E_2^2)^+ + (E_2-E_2^2)^+$.  We can continue
this process each time increasing the dimension of $R_i$ by at least one
and since $\mathbb C^p$ is finite dimensional this process must terminate
after a finite number of steps.  Then we have
\begin{equation*}
I - P \leq \lambda^{-1}(I-I_o) + (E_1-E_1^2)^+ + (E_2-E_2^2)^++ \cdots + (E
_m-E_m^2)^+.
\end{equation*}

For the rest of this proof when write $\Vert\omega (\Lambda (A))\Vert \leq
K$ we mean $\Vert\omega\vert_t(\Lambda (A))\Vert \leq K$ for all $t > 0$.
Now from Theorem 5.9 we know $\omega (I-\Lambda (I_o)) \leq I_o$ so we know
$\omega (I-\Lambda (I)+\Lambda (I-I_o)) \leq I_o$ and since $I \geq \Lambda
(I)$ we have $\Vert\omega (\Lambda (I-I_o))\Vert \leq 1$.  From Theorem
5.12 we know there are constants $K_i$ so that $\Vert\omega (\Lambda
(E_i^+-E_i))\Vert \leq K_i$ for $i = 1,\cdots ,m$.  Let $\lambda_i$ be the
largest eigenvalue
of $E_i$ that is less than one and let
$C_i = K_i/(1-\lambda_i)$.   Since $(E_i-E_i^2)^+ \geq (1-\lambda_i)^{-1}
(E_i^+-E_i)$ it follows that $\Vert\omega (\Lambda((E_i-E_i^2)^+))\Vert
\leq C_i$ and, hence, we have
\begin{equation*}
\Vert\omega (\Lambda (I-P))\Vert \leq \lambda^{-1} + C_1 + C_2+ \cdots  + C
_m.
\end{equation*} \end{proof}

We remark that if we were being graded on how good a bound we have obtained
we would get a pretty low grade.  Fortunately we only need a bound as the
next lemma shows.

\begin{lem}\label{5.21}
Suppose $\omega$ is a $q$-weight map over $\mathbb C^p$ of index zero and
$\mathcal{L}$ the range $\omega$ is a Choi-Effros factor of type~I$_q$ and
suppose $P$ is the maximal support projection for $\mathcal{L}$.  Then
there is a $q$-subordinate $\eta$ (i.e. $\omega \geq_q\eta )$ with range
$\mathcal{L}$ so that $\Vert\eta (\Lambda (I-P))\Vert \leq \tfrac{1}{2}$.
\end{lem}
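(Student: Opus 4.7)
The plan is to apply Theorem~5.6 with $\psi_t^\# = \lambda\pi_t^\#$ for a fixed $\lambda \in (0,1)$ (for concreteness, $\lambda = \tfrac{1}{2}$), where $\pi_t^\#$ is the generalized boundary representation of $\omega$.  Since $\psi_t^\# \leq \pi_t^\#$, Theorem~5.6 says that $\eta_t = (\iota - \lambda\pi_t^\#\Lambda)^{-1}\lambda\pi_t^\#$ has the property that any weak subsequential limit $\eta$ as $t \rightarrow 0+$ is a $q$-subordinate of $\omega$.

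The crucial estimate is $\pi_t^\#\Lambda(I-P) \rightarrow 0$ in norm.  To establish this, I would note that $\omega\vert_t(\Lambda(I-P))$ lies in $\mathcal{L}$ and is uniformly bounded in $t$ by Theorem~5.20.  Because $\omega\vert_t\Lambda$ sends $\mathcal{L}$ into itself, the restriction of $(\iota + \omega\vert_t\Lambda)^{-1}$ to $\mathcal{L}$ agrees with $(\iota + \tilde\phi_t)^{-1}$ in the tilde notation of Section~5; by Theorem~5.10 this equals $v_t^{-1}\Theta_t^{-1}$, which tends to zero in norm since $v_t \rightarrow \infty$ while $\Theta_t^{-1}\rightarrow \Theta^{-1}$ remains finite.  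Therefore $\pi_t^\#\Lambda(I-P) = (\iota + \tilde\phi_t)^{-1}(\omega\vert_t\Lambda(I-P)) \rightarrow 0$, and combined with the uniform bound $\Vert (\iota - \lambda\pi_t^\#\Lambda)^{-1}\Vert \leq (1-\lambda)^{-1}$, this yields
$$
\Vert\eta_t(\Lambda(I-P))\Vert \leq \frac{\lambda}{1-\lambda}\Vert\pi_t^\#\Lambda(I-P)\Vert \rightarrow 0,
$$
so the limit in fact satisfies $\eta(\Lambda(I-P)) = 0$, stronger than the claimed bound of $\tfrac{1}{2}$.

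It remains to check that the range of $\eta$ is $\mathcal{L}$.  For $A \in \mathcal{L}$ we have $\pi_t^\#\Lambda(A) \rightarrow A$ by Theorem~4.3, and $\mathcal{L}$ is invariant under $\pi_t^\#\Lambda$, so taking termwise limits in the series $\eta_t = \lambda\pi_t^\# + \lambda^2\pi_t^\#\Lambda\pi_t^\# + \cdots$ gives $\eta_t(\Lambda(A)) \rightarrow \lambda(1-\lambda)^{-1}A$, which puts $\mathcal{L}$ inside the range of $\eta$.  Since the range is also contained in $\mathcal{L}$, the two are equal.  A weak subsequential limit of $\eta_t$ exists by a compactness argument parallel to the one in the proof of Theorem~5.6.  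The hard step in the whole plan is the key estimate $\pi_t^\#\Lambda(I-P) \rightarrow 0$; it depends on transferring the full-range analysis of Theorem~4.4 to the Choi-Effros factor setting via the tilde formalism, together with the uniform boundedness of $\omega\vert_t(\Lambda(I-P))$ supplied by Theorem~5.20.
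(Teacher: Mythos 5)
There is a genuine gap, and it is fatal to the construction as written: with a \emph{fixed} $\lambda\in(0,1)$ the weak limit $\eta$ of $\eta_t=(\iota-\lambda\pi_t^\#\Lambda)^{-1}\lambda\pi_t^\#$ is the zero map, so it cannot have range $\mathcal{L}$. To see this, apply the very mechanism you use for your key estimate to an arbitrary $B\in\mathfrak{A}(\mathbb C^p)$: since $\omega\vert_t(B)\rightarrow\omega(B)$ stays bounded and lies in $\mathcal{L}$, and $(\iota+\tilde\phi_t)^{-1}\rightarrow 0$ in norm on $\mathcal{L}$ (because $\pi_t^\#\Lambda(A)=A-(\iota+\tilde\phi_t)^{-1}(A)\rightarrow A$ for $A\in\mathcal{L}$), we get $\pi_t^\#(B)=(\iota+\tilde\phi_t)^{-1}(\omega\vert_t(B))\rightarrow 0$; combined with the uniform bound $\Vert(\iota-\lambda\pi_t^\#\Lambda)^{-1}\Vert\leq(1-\lambda)^{-1}$ this gives $\eta_t(B)\rightarrow 0$ for \emph{every} $B$ in the domain $\mathfrak{A}(\mathbb C^p)$, hence $\eta=0$. (One can check this concretely already for $p=1$: there $\eta_t=\lambda\,(1+(1-\lambda)\omega\vert_t(\Lambda))^{-1}\omega\vert_t$, and the scalar prefactor tends to $0$ because $\omega\vert_t(\Lambda)\rightarrow\infty$.) Your range argument does not detect this because $\Lambda(A)$ is not an element of $\mathfrak{A}(\mathbb C^p)$: the computation $\eta_t(\Lambda(A))\rightarrow\lambda(1-\lambda)^{-1}A$ is correct for the maps $\eta_t$, but the weak limit of Theorem~5.6 is taken pointwise on $\mathfrak{A}(\mathbb C^p)$ only, and for the truncations one has $\eta_t\vert_s(\Lambda(A))\rightarrow 0$ for each fixed $s>0$; the two limits do not commute, and the "mass" escapes to the boundary at $x=0$ rather than surviving in $\eta$. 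The suspiciously strong conclusion $\eta(\Lambda(I-P))=0$ that you noted is in fact a symptom of this collapse.

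The paper's proof is designed precisely to avoid this degeneration: it uses a $t$-dependent scaling $\lambda_t$, defined as the largest number in $(0,1]$ for which $\Vert\eta_t\vert_{t_o}\Lambda(I-P)\Vert\leq\tfrac14$ (with $t_o$ chosen so that $(\omega\vert_t\Lambda-\omega\vert_{t_o}\Lambda)(I-P)\leq\tfrac14 I$ for $t<t_o$). Since for fixed $\lambda$ the quantity $\Vert\eta_t\vert_{t_o}\Lambda(I-P)\Vert$ tends to $0$, this calibration forces $\lambda_t\rightarrow 1$ at exactly the right rate; it pins the limit so that either $\lambda_t=1$ eventually (in which case $K\leq\tfrac12$ and $\eta=\omega$ works) or $\Vert\eta\vert_{t_o}\Lambda(I-P)\Vert=\tfrac14$, which in particular shows $\eta\neq 0$, after which the purity of the identity map on $\mathcal{L}$ gives that the range of $\eta$ is all of $\mathcal{L}$. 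Your norm estimate $\pi_t^\#\Lambda(I-P)\rightarrow 0$ is correct and is a clean way to see why $\lambda_t$ must tend to $1$, but it cannot replace the calibration step; if you want to salvage your approach you must let $\lambda$ depend on $t$ and anchor it to a quantity that does not vanish in the limit.
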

\begin{proof}  Assume the hypothesis and notation of the lemma.  From the
previous theorem we there is a constant $K$ so that
$\Vert\omega\vert_t\Lambda (I-P)\Vert \leq K$ for all $t > 0$.  Assume $K$
is the least such constant.  If $K \leq \tfrac{1}{2}$ then $\eta = \omega$
satisfies the conclusion of the theorem so we assume $K > \tfrac{1}{2} $.
Note $\omega\vert_t\Lambda (I-P)$ is non increasing in $t$.  Hence, there
is a $t_o > 0$ so that $(\omega\vert_t\Lambda-\omega\vert_{t_o}\Lambda
)(I-P) \leq (1/4)I$ for $0 < t < t_o$.  Let $\pi_t^\#$ be the generalized
boundary representation of $\omega$.  For $t > 0$ let
\begin{equation*}
\eta_t = (\iota-\lambda_t\pi_t^\#\Lambda )^{-1}\lambda_t\pi_t^\#
\end{equation*}
where $\lambda_t$ for $0 < t < t_o$ is
the largest number in $(0,1]$ so that $\Vert\eta_t\vert_{t_o}\Lambda
(I-P)\Vert \leq 1/4$.  Let $\eta$ be a weak limit point of $\eta_t$ as $t
\rightarrow 0+$.  From Theorem 5.6 we know that $\eta$ is a $q$-subordinate
of $\omega$.  From the construction of $\eta$ we have
$\Vert\eta\vert_{t_o}\Lambda (I-P)\Vert \leq 1/4$.  Since $\eta \leq
\omega$ we have $(\eta\vert_t\Lambda-\eta\vert_{t_o}\Lambda )(I-P) \leq
(1/4)I$ for $0 < t < t_o$ so we have $\Vert\eta\vert_t\Lambda (I-P)\Vert
\leq \tfrac{1}{2}$ for all $t > 0$.

Note if $\eta = 0$ then $\Vert\eta\vert_t\Lambda (I-P)\Vert = 0$ for all $t
> 0$ and recalling how $\lambda_t$ were chosen this implies $K \leq
\tfrac{1}{2}$ and for this case the proof is trivial.  From the construction
of $\eta$ is it clear that the range of $\eta$ is contained in
$\mathcal{L}$.  Now let $\psi_t^\#$ be the generalized boundary
representation of $\eta$.  Since $L_t = \pi_t^\#\Lambda \geq
\psi_t^\#\Lambda = L_t^{\prime}$ for $t > 0$ so taking a sequence $t_k
\rightarrow 0+$ so that both $L_{t_k}$ and $L_{t_k}^{\prime}$ converge to
limits $L$ and $L^{\prime}$ we have $L \geq L^{\prime}$.  Now $L$
restricted to $\mathcal{L}$ is the identity map and since the identity map
is pure as a completely positive map we have $L^{\prime}(A) = \lambda L(A)$
for $A \in \mathcal{L}$ with $0 \leq \lambda \leq 1$ and since $L^{\prime}
\neq 0$ we have $\lambda >$ 0.  (Actually $\lambda = 1.)$.  Hence, the
range of $L^{\prime}$ is $\mathcal{L}$ so the range of $\eta$ contains
$\mathcal{L}$ so the range $\eta$ is $\mathcal{L}.$ \end{proof}

We remark in the previous lemma we did not specifically use the
fact that $P$ was the maximal support projection.  The properties of $P$
that we did use were the fact that $P \in \mathcal{L}^{\prime}$ and
$\Vert\omega (\Lambda (I-P))\Vert < \infty$.  This means the conclusions of
Lemma 5.21 remain valid if we only assume these weaker conditions on $P$.

\begin{thm}\label{5.22}
Suppose $\omega$ is a $q$-weight map of index zero over $\mathbb C^p$ with
range $\mathcal{L}$ which is a Choi-Effros factor of type~I$_q$ with $P$ the
maximal support projection for $\mathcal{L}$.  Then there is a $q$-subordinate
$\eta$ of $\omega$ so that the range of $\eta$ is $\mathcal{L}_\eta =
P\mathcal{L}$.  It follows that Choi-Effros product for $\mathcal{L}_\eta$
is the ordinary operator product.  Furthermore, it follows that if $\omega$
is a $q$-pure $q$-weight map of index zero over $\mathbb C^p$ then
$\mathcal{L}$ the range of $\omega$ is a factor and the unit $I_o$ of
$\mathcal{L}$ equals the maximal support projection of $\mathcal{L}$ (i.e.
the range of $\omega$ is a Choi-Effros factor and $A\star B = AB$ for $A,B
\in \mathcal{L}$ and $I_o = P)$.
\end{thm}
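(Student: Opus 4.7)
The plan is to produce $\eta$ by compressing the generalized boundary representation of a well-chosen $q$-subordinate by the maximal support projection $P$, mirroring the central-projection construction of Theorem 5.7, and then to deduce the $q$-pure consequence from an incomparability argument.

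First I would invoke Lemma 5.21 to obtain a $q$-subordinate $\eta_0$ of $\omega$ with range $\mathcal{L}$ satisfying the uniform bound $\Vert\eta_0(\Lambda(I-P))\Vert \leq \tfrac{1}{2}$. Writing $\pi_t^\#$ for its generalized boundary representation, Theorem 4.3 tells me that $\pi_t^\#$ takes values in $\mathcal{L}$ and that $\pi_t^\#\Lambda \to \iota$ on $\mathcal{L}$ as $t \to 0+$. Because $P \in \mathcal{L}^{\prime}\cap\mathcal{L}^{\prime\prime}$ commutes with every element of $\mathcal{L}$, the map
\[
\psi_t^\#(A) = P\pi_t^\#(A) = P\pi_t^\#(A)P
\]
is completely positive, and $\pi_t^\# - \psi_t^\# = (I-P)\pi_t^\#(\cdot)(I-P) \geq 0$, so $\psi_t^\#$ is subordinate to $\pi_t^\#$.

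Next I would apply Theorem 5.6 to $\eta_0$ and $\psi_t^\#$: set $\eta_t = (\iota - \psi_t^\#\Lambda)^{-1}\psi_t^\#$ and let $\eta$ be any weak limit of $\eta_t$ along $t \to 0+$. Theorem 5.6 then yields that $\eta$ is a $q$-subordinate of $\eta_0$, hence of $\omega$; the bound from Lemma 5.21 is used to control $\Vert\psi_t^\#\Lambda\Vert$ independently of $t$ so that the Neumann-series argument of Theorem 5.6 applies uniformly. To identify the range exactly, I would extract a weak-operator limit point $L^\prime$ of $\psi_t^\#\Lambda$ as $t \to 0+$; using $\pi_t^\#\Lambda \to \iota$ on $\mathcal{L}$ one computes that $L^\prime = P \cdot L$ where $L$ is the corresponding limit of $\pi_t^\#\Lambda$, and hence $L^\prime$ is a completely positive contractive idempotent with range $P\mathcal{L}$. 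As in Theorem 4.3, the range of $\eta$ is the range of $L^\prime$, which is $P\mathcal{L}$. The identification $A \star B = AB$ on $P\mathcal{L}$ is then automatic from Theorem 5.3: $P\mathcal{L} = P\mathcal{L}P$ is a genuine $*$-subalgebra of $B(\mathbb{C}^p)$ under ordinary multiplication, and since $L^\prime$ restricts to the identity on its range $P\mathcal{L}$, for $A,B \in P\mathcal{L}$ we have $AB \in P\mathcal{L}$ and $A\star B = L^\prime(AB) = AB$.

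For the $q$-pure consequence, Theorem 5.7 already forces $\mathcal{L}$ to be a Choi-Effros factor. To obtain $I_o = P$ I would argue by contradiction: assume $P \neq I_o$, so that the $\eta$ just constructed has range $P\mathcal{L}$ strictly different from $\mathcal{L}$ and hence is a proper $q$-subordinate of $\omega$. The main obstacle I anticipate is to leverage $q$-purity to rule this out, since by itself the existence of a proper subordinate does not contradict total ordering. The plan is to rerun the compression construction with a second projection $P^\prime \in \mathcal{L}^{\prime}\cap\mathcal{L}^{\prime\prime}$ distinct from $P$, obtained from the structure of $\mathcal{L}^{\prime\prime}$ in the presence of the nonzero positive operator $I_o - P$, thereby producing a second $q$-subordinate $\eta^\prime$ with range $P^\prime\mathcal{L}$ incomparable with $P\mathcal{L}$ and so violating the total ordering required by $q$-purity. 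A possibly cleaner route I would also try is to combine Theorem 4.10 with the tilde-formalism of Definition 5.8 and Theorem 5.10 applied to the restriction of $\omega$ to $\tilde{\mathfrak{A}}(\mathbb{C}^p)$, showing that the $q$-pure condition on the coefficient pair $(\psi,\vartheta)$ already forces $I_o$ to be a projection coinciding with $P$.
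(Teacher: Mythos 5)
Your construction of $\eta$ follows the paper's skeleton (Lemma 5.21, the compression $\psi_t^\# = P\pi_t^\#$, Theorem 5.6), but the step identifying the range of $\eta$ as all of $P\mathcal{L}$ has a genuine gap. You say the $\tfrac{1}{2}$ bound from Lemma 5.21 is there so that "the Neumann-series argument of Theorem 5.6 applies uniformly," but Theorem 5.6 needs no such bound; and your claim that "the range of $\eta$ is the range of $L'$," where $L' = \lim \psi_t^\#\Lambda$, is unjustified: $\psi_t^\#$ is not the generalized boundary representation of the limit $\eta$, and a priori $\eta$ could be zero or have range strictly smaller than $P\mathcal{L}$. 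One cannot simply pass to the limit in $\eta_t = (\iota - \psi_t^\#\Lambda)^{-1}\psi_t^\#$, since $P\pi_t^\#\Lambda$ tends to the identity on $P\mathcal{L}$ and so $(\iota - \psi_t^\#\Lambda)^{-1}$ blows up there. The paper's actual argument rewrites $\eta_t = (\xi - \zeta_t)\omega\vert_t$ with $\xi(A)=PAP$ and $\zeta_t$ completely positive satisfying $\Vert\zeta_t(I)\Vert \leq \Vert\omega\vert_t\Lambda(I-P)\Vert \leq \tfrac{1}{2}$ --- this is the real purpose of Lemma 5.21 --- and then inverts $\xi - \zeta$ on $P\mathcal{L}$ via the convergent series $\xi + \zeta + \zeta^2 + \cdots$, which yields surjectivity onto $P\mathcal{L}$. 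Unlike the central-projection construction of Theorem 5.7 that you are mirroring, here $(I-P)A(I-P)$ need not vanish for $A \in \mathcal{L}$, so there is no shortcut around this invertibility argument; your proposal is missing exactly this idea.

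The final step is also incomplete, and you acknowledge as much. Your plan to derive a contradiction from $P \neq I_o$ by manufacturing a second projection $P'$ with $P'\mathcal{L}$ incomparable to $P\mathcal{L}$ is speculative, and it is not clear such a $P'$ exists. The paper's argument is direct and needs no incomparability: since $\omega$ is $q$-pure, Theorem 2.2 gives that the range of every $q$-subordinate is contained in $\mathcal{L}$, so $P\mathcal{L} \subseteq \mathcal{L}$; since $A \mapsto PA$ is injective on $\mathcal{L}$ (a $*$-isomorphism onto $P\mathcal{L}$, by Theorem 5.2), a dimension count forces $P\mathcal{L} = \mathcal{L}$; then $I_o \in P\mathcal{L}$ together with $PI_o = P$ yields $I_o = P$. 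You should replace your contradiction scheme with this one-line deduction.
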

\begin{proof}  Assume the hypothesis and notation of the theorem.  We make
a change in notation.  We replace $\omega$ in the statement of the theorem
by $\omega^{\prime}$.  Then by the previous lemma we know that
$\omega^{\prime }$ has a $q$-subordinate $\omega$ with range $\mathcal{L}$
so that $\Vert\omega\Lambda (I-P)\Vert \leq \tfrac{1}{2} $.  Since any
$q$-subordinate of $\omega$ is a $q$-subordinate of $\omega^{\prime}$ we
only need to prove the theorem for $\omega$ where $\omega$ now satisfies the
additional the requirement that $\Vert\omega\Lambda(I-P)\Vert\leq\tfrac{1}{2}$.

We show how to construct $\eta$.  Suppose $\pi_t^\#$ is the generalized
boundary representation of $w$ and let $\psi_t^\# = P\pi_t^\#$ for $t > 0$
and let
\begin{equation*}
\eta_t = (\iota - \psi_t^\#\Lambda )^{-1}\psi_t^\#.
\end{equation*}
Then we have
\begin{align*}
\eta_t &= (\iota - P\pi_t^\#\Lambda )^{-1}P\pi_t^\#  = (\iota -
P\pi_t^\# \Lambda )^{-1}P(\iota + \omega_t\Lambda )^{-1}\omega\vert_t
\\
&= (\iota - P\pi_t^\#\Lambda )^{-1}P(\iota - \pi_t^\#\Lambda )
\omega\vert _t
\\
&= P(\iota - P\pi_t^\#\Lambda )^{-1}((\iota - P\pi_t^\#\Lambda ) - (I - P))
\omega\vert_t
\\
&= P\omega\vert_t - P(\iota - P\pi_t^\#\Lambda )^{-1}(I - P)
\omega\vert_t
\\
&= (P - P(\iota - P\pi_t^\#\Lambda )^{-1}(I - P))\omega\vert_t = (\xi -
\zeta _t)\omega\vert_t
\end{align*}
where for $A \in B(\mathbb C^p)$ the maps $\xi$ and $\zeta_t$ are given by
$\xi (A) = PAP$ and
\begin{align*}
\zeta_t(A) &= P(\iota - P\pi_t^\#\Lambda )^{-1}((I-P)A(I-P))
\\
&= P\pi_t^\#\Lambda (\iota - P\pi_t^\#\Lambda )^{-1}((I-P)A(I-P))
\\
&= (P\pi_t^\#\Lambda + (P\pi_t^\#\Lambda )^2 + \cdots )((I-P)A(I-P)).
\end{align*}
Note $\zeta_t$ is completely positive.  We estimate the norm
$\Vert\zeta_t\Vert = \Vert\zeta_t(I)\Vert$.  Since $\pi_t^\#\Lambda -
P\pi_t^\#\Lambda$ is a completely positive map if we replace $P\pi_t^\#$ by
$\pi_t^\#$ in the above formula for $\zeta_t(A)$ with $A = I$ we obtain the
estimate
\begin{align*}
\zeta_t(I) &= (P\pi_t^\#\Lambda +(P\pi_t^\#\Lambda )^2 +\cdots )((I-P)I(I-P))
\\
&\leq P(\pi_t^\#\Lambda + (\pi_t^\#\Lambda )^2 + \cdots )(I-P)
\\
&= P(\iota-\pi_t^\#\Lambda )^{-1}\pi_t^\#\Lambda (I-P) =
P\omega\vert_t\Lambda (I-P)
\end{align*}
so we have $\Vert\zeta_t(I)\Vert \leq \Vert P\omega\vert_t\Lambda
(I-P)\Vert \leq \Vert\omega\vert_t\Lambda (I-P)\Vert \leq
\tfrac{1}{2}$ for all $t > 0$.  Note that $\eta_t =
(\xi-\zeta_t)\omega\vert_t$ so the limit points of $\eta _t$ as $t
\rightarrow 0+$ are in one to one correspondence with limit points
of $\zeta_t$.  Then let $\zeta$ be a limit point of $\zeta_t$ as
$t \rightarrow 0+$.  Then from Theorem 5.6 we have $\eta =
(\xi-\zeta )\omega$ is a $q$-subordinate of $\omega$.  Since the
$\zeta_t$ are completely positive maps with $\Vert\zeta_t\Vert
\leq \tfrac{1}{2}$ it follows that $\zeta$ is completely positive
with $\Vert\zeta\Vert \leq \tfrac{1}{2}$.  We will show that the
range of $\eta$ is $P\mathcal{L}$.

Here is the tricky part.  We are given that the range of $\omega$ is
$\mathcal{L}$ so for each $A \in \mathcal{L}$ there is a $B \in \mathfrak{A}
(\mathbb C^p)$ so that $A = \omega (B)$.  Now $\eta (B) = (\xi-\zeta)\omega(B)
= (\xi-\zeta)(A)$ so $\eta (B)$ does not depend on the actual operator $B$ but
only on the fact that $\omega (B) = A$.  This means that $\zeta$ can be viewed
as a map from $\mathcal{L}$ to $P\mathcal{L}$ and since $A \in \mathcal{L}$ is
uniquely determined by $PA$ this means that $\zeta$ can be viewed as
a mapping of $P\mathcal{L}$ into itself.  Finally, since $\zeta$ is completely
positive we can view $\zeta$ as a completely positive mapping of
$P\mathcal{L}$ into itself.  The strange thing is that $\zeta (A)$ is computed
from $(I-P)A$ and $(I-P)A$ may be zero while $A$ is not zero.  This is not an
error since if $(I-P)A = 0$ then $\zeta (A) = 0$.  We have worked very hard to
arrange it so that $\Vert\zeta\Vert \leq \tfrac{1}{2}$ and without such an
estimate we could not conclude this proof.  Now as a mapping of $P\mathcal{L}$
into itself the mapping $\xi$ is just the unit mapping and the mapping under
consideration is $\xi - \zeta$.  Since even for $\zeta$ considered as a
mapping of $P\mathcal{L}$ into itself we still have $\Vert\zeta\Vert \leq
\tfrac{1}{2}$ the mapping $\xi - \zeta$ is invertible with
\begin{equation*}
(\xi - \zeta )^{-1} = \xi + \zeta + \zeta^2 + \zeta^3+ \cdots
\end{equation*}
where the series converges in norm.  Hence, for each $A \in \mathcal{L}$
there is a $B \in \mathcal{L}$ so that $(\xi - \zeta )(B) = PA$.  Since the
range of $\omega$ is $\mathcal{L}$ there is a $C \in \mathfrak{A} (\mathbb
C^p)$ so that $\omega (C) = B$ and then
\begin{equation*}
\eta (C) = (\xi-\zeta )(\omega (C)) = (\xi-\zeta )(B) = PA
\end{equation*}
so the range of $\eta$ is $P\mathcal{L}$.

Now we prove the last statement of the theorem.  Suppose $\omega$ is a
$q$-pure $q$-weight map over $\mathbb C^p$ of index zero.  From Theorem 5.5 we
know that the range $\mathcal{L}$ of $\omega$ is a factor with the
Choi-Effros multiplication.  Now suppose $\eta$ is the $q$-subordinate of
$\omega$ we just constructed.  From Theorem 2.2 we know that the range of
$\eta$ is contained in $\mathcal{L}$ and, therefore, $P\mathcal{L} \subset
\mathcal{L}$ and since the mapping $A \leftrightarrow$ PA is a
$*$-isomorphism in each direction it follows that $P\mathcal{L} =
\mathcal{L}$.  Hence, the range of $\omega$ is a factor and the unit $I_o$
of $\mathcal{L}$ is $PI_o = P.$ \end{proof}

We remark that if in the previous theorem all we wish to prove
is that there is a $q$-subordinate $\eta$ with range $\mathcal{L}_\eta =
P\mathcal{L}$ then all we need to assume about the projection $P$ is that
$P \in \mathcal{L}^{\prime}$ and $\Vert\omega (\Lambda (I-P))\Vert < \infty$.

\section{The factor case}

As we saw in the last section if $\omega$ is a $q$-pure $q$-weight map over
$\mathbb C ^p$ of index zero then the range of $\omega$ is a factor of
type~I$_q$ contained in $B(\mathbb C^p)$ with $q \leq p$.  For these range
algebras $\mathcal{L}$ the Choi-Effros product is just the ordinary product so
$A\star B = AB$ for $A,B \in \mathcal{L}$.  Also the maximal support projection
$P$ for $\mathcal{L}$ is the unit $I_o$ of $\mathcal{L} .  $ Since we are
primarily interested in finding the $q$-pure $q$-weight map over $\mathbb C^p$
we will restrict our attention to these $q$-weight maps.

As mentioned in the last section when computing the generalized boundary
representation $\pi_t^\# = (\iota+\omega \vert_t \Lambda )^{-1}\omega \vert_t$
we need only consider how $(\iota+\omega \vert_t \Lambda )^{-1}$ acts on
$\mathcal{L}$ the range of $\omega $.  Therefore, in our computation we only
need know the action of $\omega \vert_t \Lambda$ on $\mathcal{L}$ which can be
parameterized by matrix units $E_{ij}$ for $i,j =$ $1,\cdots ,q$.  Given
a mapping $\phi$ of $B(\mathbb C^p)$ into itself we denote by $\tilde \phi$
the restriction of $\phi$ to $\mathcal{L}$.  If $\phi$ maps $\mathcal{L}$ into
itself then we can think of $\tilde \phi$ as a mapping of the $(q \times
q)$-matrices into themselves.  So in what follows when we work with maps
$\tilde \phi$ we will parameterize them as mappings of the $(q \times
q)$-matrices into themselves.  Note the order relations on such maps is the
same as the order relations on the corresponding matrix maps.  Notice that
if $\phi_t = \omega \vert_t \Lambda$ then $\tilde \phi_t = \omega \vert_t
\tilde \Lambda$.

First we formalize these ideas with the following definition.

\begin{defn}
Suppose $\mathcal{L}$ is a type~I$_q$ factor with $q \leq p$ contained in
$B(\mathbb C^p)$ where the identity $I_o$ of $\mathcal{L}$
is a projection (not necessarily the unit of $B(\mathbb C^p))$.  We say
$\omega$ is a $q$-weight map over $\mathcal{L}$ if $\omega$ is a $q$-weight map
over $\mathbb C^p$ with values in $\mathcal{L}$.  We say $\omega$ is $q$-pure
over $\mathcal{L}$ if the $\mathcal{L}$ valued $q$-subordinates of $\omega$ are
totally ordered.
\end{defn}

The next theorem is Theorem 5.10 only instead of getting an expression for
$\tilde{\vartheta}$ we get an expression for $\vartheta$.  Note in our
decomposition of $\vartheta$ in terms of $g's$ and $h's$ we need the
$E_{ij}$ to be actual partial isometries which they are due to the fact that
$\mathcal{L}$ is an algebra with the ordinary operator product.

\begin{thm}
Suppose $\omega$ is a $q$-weight map over $\mathbb C^p$ of
index zero and the range of $\omega$ is $\mathcal{L}$ which is a factor
of type~I$_q$.  Then $\omega$ is of the form $\omega = \psi^{-1}\vartheta$
where $\psi$ is an invertible conditionally negative map of $\mathcal{L}$ into
itself with a completely positive inverse and $\vartheta$ is of the form
$$
\vartheta (A) = \sum_{i,j=1}^q E_{ij}\vartheta_{ij}(A)
$$
where the $E_{ij}$ are a complete set of matrix units for $\mathcal{L}$ and
$$
\vartheta_{ij}(A) = \sum_{k\in J} ((g_{ik}+h_{ik}),A(g_{jk}+h_{jk}))
$$
for $A \in \mathfrak A (\mathbb C^p)$ where the $g_{ik},\medspace h_{ik} \in
\mathbb C^p\otimes L_+^2(0,\infty )$ and
$$
g_{ik}(x) = E_{i1}g_k(x)\qquad \text{and} \qquad E_{11}g_k(x) = g_k(x)
$$
and
$$
\sum_{i=1}^q E_{1i}h_{ik}(x) = 0
$$
for $A \in \mathfrak A (\mathbb C^p),\medspace x \geq 0,\medspace i,j \in \{
1,\cdots ,q\}$ and $k \in J$ a countable index set and the $h_{ik} \in \mathbb
C^p \otimes L^2(0,\infty )$ and if
$$
w_t = \sum_{k\in J} (g_k,\Lambda \vert_t g_k)\qquad \rho_{ij}(A) =
\sum _{k\in J} (h_{ik},Ah_{jk})
$$
then $\rho$ is bounded so
$$
\sum_{k\in J} \Vert h_{ik}\Vert^2 < \infty\qquad \text{and} \qquad
\sum_{k\in J} (g_k,(I-\Lambda )g_k) < \infty
$$
and $1/w_t \rightarrow 0$ as $t \rightarrow 0+$ and $\psi$ satisfies the
conditions
$$
\psi (I_o) \geq \vartheta (I - \Lambda (I_o))\qquad \text{and} \qquad \psi
+ \rho \tilde \Lambda
$$
is conditionally negative. (Recall $\tilde \Lambda$ is $\Lambda$
restricted to $\mathcal{L} .)$

Conversely, if $\vartheta ,\medspace \rho$ and $\psi$ are as given above
then $\omega$ is a $q$-weight map over $\mathbb C^p$ of index zero and the
range of $\omega$ is $\mathcal{L}$.  Furthermore, if $\psi ^{\prime}$ is
a second map satisfying the conditions above and $\omega ^{\prime} = \psi
^{\prime -1}\vartheta$ then $\omega ^{\prime}$ is a $q$-subordinate of
$\omega$ (i.e. $\omega \geq_q \omega ^{\prime})$ if and only if $\psi \leq
\psi ^{\prime}$.
\end{thm}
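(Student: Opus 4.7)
\emph{Overall strategy.} The plan is to deduce both directions from the general machinery of Section~4 applied to the factor setting, by translating Theorem~4.7 and Theorem~5.10 into the type $\mathrm{I}_q$ factor case with $B(\mathbb C^p)$ replaced by $\mathcal{L}$ and $I$ by $I_o$ wherever appropriate. The forward direction starts from Theorem~5.10, which already yields $\omega = \psi^{-1}\vartheta$ with $\psi \colon \mathcal{L} \to \mathcal{L}$ invertible and conditionally negative (with completely positive inverse) and $\vartheta \in B(\mathbb C^p)\otimes\mathfrak A(\mathbb C^p)_*$ completely positive with range in $\mathcal{L}$. Because $\mathcal{L}$ is a genuine type $\mathrm{I}_q$ factor inside $B(\mathbb C^p)$, the matrix units $E_{ij}$ are honest partial isometries (of multiplicity $n = p/q$), so expressions such as $E_{i1}g$ have literal meaning for $g \in E_{11}\mathbb C^p \otimes L^2_+(0,\infty)$.

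\emph{Decomposition.} Expand $\vartheta(A) = \sum_{i,j} E_{ij}\vartheta_{ij}(A)$ with scalar matrix elements $\vartheta_{ij}$, and apply the Kraus representation to the positive matrix $[\vartheta_{ij}]$ to obtain $\vartheta_{ij}(A) = \sum_{k \in J}(F_{ik}, AF_{jk})$ with $F_{ik} \in \mathbb C^p\otimes L^2_+(0,\infty)$. Following the pattern of Lemma~4.5 but with standard matrix units replaced by the factor units $E_{ij}$, set
$$g_k(x) = \frac{1}{q}\sum_{i=1}^q E_{1i}F_{ik}(x), \qquad g_{ik}(x) = E_{i1}g_k(x), \qquad h_{ik}(x) = F_{ik}(x) - g_{ik}(x).$$
Using $E_{1i}E_{i1} = E_{11}$, one checks directly that $g_k \in E_{11}\mathbb C^p \otimes L^2_+$, $E_{11}g_k = g_k$, and $\sum_i E_{1i}h_{ik}(x) = 0$. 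The boundedness of $\rho$, the divergence $1/w_t \to 0$, the inequality $\psi(I_o) \geq \vartheta(I-\Lambda(I_o))$, and conditional negativity of $\psi + \rho\tilde\Lambda$ then transfer from the corresponding tilde statements in Theorem~5.10 combined with Theorem~5.9.

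\emph{Converse and subordinates.} For the converse, given $\psi, \vartheta, \rho$ as in the hypothesis, I would follow the proof of Theorem~4.7 verbatim on $\mathcal{L}$: write $\vartheta\vert_t\tilde\Lambda = w_t\iota + \zeta_t + \zeta_t^* + R_t$ and set $Z_t = \iota + w_t^{-1}(\zeta_t + \zeta_t^* + R_t + \psi)$. Lemma~4.6 gives $\Vert\zeta_t\Vert^2/w_t \to 0$ and $1/w_t \to 0$, hence $Z_t \to \iota$; combined with the conditional negativity of $R_t + \psi \leq R_o + \psi$ and Lemma~3.3, this produces $Z_t^{-1}$ completely positive for small $t$, so $\pi_t^\# = w_t^{-1}Z_t^{-1}\vartheta\vert_t$ is completely positive. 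Contractivity follows from
$$I - \pi_t^\#(I) = Z_t^{-1}w_t^{-1}(\psi(I_o) - \vartheta\vert_t(I-\Lambda(I_o)))$$
combined with Theorem~5.9 and the hypothesis $\psi(I_o) \geq \vartheta(I-\Lambda(I_o))$. The range is $\mathcal{L}$ since $\pi_t^\#\tilde\Lambda \to \iota_\mathcal{L}$, and the index vanishes because $w_t \to \infty$. The subordinate claim duplicates Theorem~4.7: $\psi \leq \psi^\prime$ yields $\pi_t^\# - \pi_t^{\prime\#} = w_t^{-1}Z_t^{-1}(\psi^\prime-\psi)Z_t^{\prime -1}\vartheta\vert_t \geq 0$, while $\omega \geq_q \omega^\prime$ forces $w_t(\pi_t^\#\tilde\Lambda - \pi_t^{\prime\#}\tilde\Lambda) \to \psi^\prime-\psi$, exhibiting $\psi^\prime-\psi$ as a limit of completely positive maps. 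The main obstacle is the decomposition in the second paragraph: the constraints $g_{ik} = E_{i1}g_k$ and $\sum_i E_{1i}h_{ik} = 0$ require alignment with the factor matrix units of $\mathcal{L}$ rather than the ambient basis of $\mathbb C^p$, which is where the honest factor structure (not merely the Choi--Effros structure of Section~5) is used essentially.
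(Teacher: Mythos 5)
Your proposal is correct and follows essentially the same route as the paper: the forward direction is obtained from Theorem~5.10 together with the finer Kraus decomposition using the honest partial isometries $E_{ij}$ (with $g_k = \tfrac{1}{q}\sum_i E_{1i}F_{ik}$), and the converse and subordinate statements are the translation of Theorem~4.7 with $I$ replaced by $I_o$ and Theorem~5.9 supplying $\omega(I-\Lambda(I_o))\leq I_o$. The only point you compress is the bridge justifying that boundedness of $\rho$ transfers — the paper makes explicit that the new $\mathbb{C}^p$-valued $h$'s differ from those of Theorem~5.10 but that $\tilde\rho\vert_t\Lambda$ is in both cases the uniquely determined internal part of $\tilde\vartheta\vert_t\Lambda$, which is exactly what licenses the transfer you invoke.
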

\begin{proof}  Assume the hypothesis and notation of the theorem.  Since
$\omega$ satisfies the hypothesis of Theorem 5.10 we have $\omega
= \Theta \vartheta = \psi^{-1}\vartheta$ where $\Theta, \psi$ and
$\vartheta$ are defined in Theorem 5.10.  Note $\vartheta$ is a
completely positive $\mathcal{L}$ valued $b$-weight map on $\mathfrak{A}
(\mathbb C^p)$.  In Theorem 5.10 there is a decomposition of
$\tilde{\vartheta}$ in terms of $g's$ and $h's$. Since the
$E_{ij}$ are actual partial isometries we can make a finer
decomposition of $\vartheta$ in terms of different $g's$ and $h's$
as follows. From the general theory of completely positive maps we
know that $\vartheta$ can be written in the form
$$
\vartheta_{ij}(A) = \sum_{k\in J} (F_{ik},AF_{jk})
$$
with the $F_{ik} \in \mathbb C^p\otimes L_+^2(0,\infty )$ for $k \in J$ a
countable index set.  Furthermore, we know the $F_{ik}$ can be chosen so
they are linearly independent over $\ell^2(J)$.  Now we define
\begin{equation*}
g_k = \frac {1} {q} \sum_{i=1}^q E_{1i}F_{ik}
\end{equation*}
and
\begin{equation*}
g_{ik}(x) = E_{i1}g_k(x)\qquad \text{and} \qquad h_{ik} = F_{ik} - g_{ik}.
\end{equation*}
One checks from the definition of the $h's$ that
\begin{equation*}
\sum_{i=1}^q E_{1i}h_{ik}(x) = 0.
\end{equation*}

We define the completely positive $B(\mathbb C^p)$ valued $b$-weight map
$\rho \in B(\mathbb C^p) \otimes \mathfrak A (\mathbb C ^p)_*$
given by
$$
\rho_{ij}(A) = \sum_{k\in J} (h_{ik},Ah_{jk})
$$
for $A \in \mathfrak A (\mathbb C^p)$ and for $t > 0$ we define
$$
w_t = \sum_{k\in J} (g_k,\Lambda \vert_t g_k), \qquad (R_t)_{ij}(A) =
\rho_{ij}\vert_t (\Lambda (A)),\qquad (Y_t)_{ij} = \sum_{k\in J}
((h_{ik})_j,\Lambda \vert_t g_k)
$$
and
$$
\zeta_t(A) = Y_tA
$$
for $A \in B(\mathbb C^p)$.  Then we have
$$
\vartheta \vert_t \Lambda (A) = w_tA + Y_tA + AY_t^* + R_t(A)
$$
for $A \in B(\mathbb C^p)$ or
$$
\vartheta \vert_t \Lambda = w_t\iota + \zeta_t + \zeta_t^* + R_t
$$

There is no simple relation between the $g's$ and $h's$ above and in Theorem
5.10.  In fact in Theorem 5.10 they are $\mathbb C^q$ valued functions of $x$
and here they are $\mathbb C^p$ valued functions of $x$.  What is unique is the
functionals they generate when restricted to $\mathcal{L}$ (i.e.
the tilde maps).  We see this by noting that in the decomposition
$$
\tilde{\vartheta}\vert_t \Lambda = w_t\iota + \tilde{\zeta}_t +
\tilde{\zeta}_t^* + \tilde{R}_t
$$
that the completely positive mapping $\tilde{\rho}\vert_t\Lambda =\tilde{R}_t$
is the internal part of $\tilde{\vartheta}\vert_t\Lambda$
where the internal part of a mapping was defined equation 3.4 of section 3 of
this paper.  We recall that in defining the internal part of a mapping we
decomposed operators in $B(\mathbb C^q)$ to a multiple of the unit $I$ plus an
operator of trace zero.  The trace zero conditions for the $h's$ corresponds
to the condition
\begin{equation*}
\sum_{i=1}^q E_{1i}h_{ik}(x) = 0.
\end{equation*}
In Theorem 5.10 we saw that $\tilde{\rho}\vert_t\Lambda$ is the internal part of
$\tilde{\vartheta}\vert_t\Lambda$.  Recall in Theorem 5.10 the $h's$ satisfied
the condition
\begin{equation*}
\sum_{i=1}^q (h_{ik})_i(x) = 0
\end{equation*}
which corresponds to the trace zero condition in the setting of Theorem 5.10.
We see then in both cases we have $\tilde{\rho}\vert_t \Lambda$ is the internal
part of the mapping $\tilde{\vartheta}\vert_t\Lambda$ so the results of
Theorem 5.10 apply to our $\tilde{\rho}\vert_t\Lambda$ and, hence,
there is a constant $K$ such that
$$
\tilde{\rho}\vert_t \Lambda(I_o)  = \rho\vert_t \Lambda(I_o) \leq KI_o
$$
for $t > 0$.  Now we have $\rho\vert_t(I) = \rho\vert_t
(\Lambda (I_o)) + \rho\vert_t(I-\Lambda (I_o)) \leq KI_o + \rho\vert_t
(I-\Lambda (I_o))$ and
\begin{align*}
\sum_{i=1}^q \rho_{ii}\vert_t(I-\Lambda (I_o)) &\leq \sum_{i=1}^q \vartheta
_{ii}\vert_t(I-\Lambda (I_o)) = \sum_{i=1}^q (\psi\omega
)_{ii}\vert_t(I-\Lambda (I_o))
\\
&\leq \Vert\psi\Vert \sum_{i=1}^q \Vert\omega\vert_t(I-\Lambda (I_o)\Vert \leq
\Vert\psi\Vert \sum_{i=1}^q \Vert I_o\Vert = q\Vert\psi\Vert
\end{align*}
so
$$
\Vert\rho\vert_t\Vert = \Vert \rho\vert_t(I)\Vert \leq K + q\Vert\psi\Vert
$$
for $t > 0$ so $\rho$ is bounded.

The fact that
\begin{equation*}
\psi (I_o) \geq \vartheta (I - \Lambda (I_o))\qquad \text{and} \qquad \psi
+ \rho \tilde \Lambda
\end{equation*}
is conditionally negative was established in Theorem 5.10.

Now for the proof of the last paragraph.  Here we return to the first part of
the proof of Theorem 4.7.  The proof in our case follows line by line the proof
of Theorem 4.7 making the following changes.  We replace $Z(t)$ by the
corresponding $\tilde{Z}(t)$.  This replacement is automatic because
$Z(t)^{-1}$ acts on $\vartheta$ which has range $\mathcal{L}$.  The unit
$I \in B(\mathbb C^p)$ is replaced by $I_o$.  Note the unit $I$ of $B(\mathbb
C^p \otimes L^2(0,\infty ))$ is not replaced so, for example, the unit $I$ in
$\pi^\#_t(I)$ is not replaced.  Expressions like $\pi^\#_t(\Lambda)$ are
replaced by $\pi^\#_t(\Lambda(I_o))$.  In proving $\pi^\#_t(I) \leq I$ one
proves the stronger inequality $\pi^\#_t(I) \leq I_o \leq I$ using the
inequality $\psi (I_o) \geq \vartheta (I - \Lambda (I_o))$ given in the
statement of the present theorem.  Note this inequality is the translation
of the inequality $\psi (I) \geq \vartheta (I - \Lambda)$ that was used in the
proof of Theorem 4.7.  The proof of the last sentence of this theorem follows
from the Theorem 4.7 with the replacements described.
\end{proof}

The next theorem shows how to find all $q$-subordinates in the case at hand
that have the same range $\mathcal{L}$.

\begin{thm}
Suppose $\omega$ is a $q$-weight map over $\mathbb C^p$ of
index zero and range $\mathcal{L}$ which is a factor of type~I$_q$ and $\omega
= \psi^{-1}\vartheta$ with $\psi$ a coefficient map of $\omega$ and
$\vartheta$ the corresponding limiting $b$-weight map.  Suppose there is a
bounded completely positive $\mathcal{L}$ valued weight map $\eta \in B(\mathbb
C^p)\otimes \mathfrak A (\mathbb C^p)_*$ so that $\vartheta \geq \eta \geq 0$.
Then if
$$
\psi ^{\prime} \geq \psi + \eta \tilde \Lambda \qquad \text{and} \qquad
\psi ^{\prime} + \rho_\vartheta \tilde \Lambda  - \eta \tilde \Lambda
$$
is conditionally negative (where $\tilde \Lambda$ is $\Lambda$ restricted to
$\mathcal{L}$) then $\omega ^{\prime} = \psi^{\prime -1}(\vartheta - \eta )$
is a $q$-subordinate of $\omega$.

Conversely, if $\omega ^{\prime}$ is $q$-subordinate of $\omega$ whose
range is contained in $\mathcal{L}$ there is a bounded completely positive
$\mathcal{L}$ valued $b$-weight map $\eta \in B(\mathbb C^p) \otimes \mathfrak A
(\mathbb C^p)_*$  with $\vartheta \geq \eta \geq 0$ and $\psi ^{\prime}$
satisfying the conditions above so that $\omega ^{\prime} = \psi ^{\prime -1}
(\vartheta - \eta )$.
\end{thm}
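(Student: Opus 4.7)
The plan is to follow the proof of Theorem 4.9 essentially line by line, with the range $B(\mathbb{C}^p)$ replaced by the factor $\mathcal{L}$ and with $\Lambda$ replaced by $\tilde{\Lambda}$ in all places where the map in question acts only on $\mathcal{L}$. The key point is that because $\mathcal{L}$ is now an honest factor of type~I$_q$ sitting inside $B(\mathbb{C}^p)$ with the ordinary operator product, the algebraic manipulations used in Theorem 4.9 transfer directly once one uses the decomposition provided by Theorem 6.2 instead of the one in Theorem 4.7.

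For the forward direction, I would set $\vartheta' = \vartheta - \eta$ and $T_t = \eta\vert_t\tilde{\Lambda}$, so that by Theorem 6.2 we have $\vartheta\vert_t\tilde{\Lambda} = w_t\iota + \zeta_t + \zeta_t^* + R_t$ and $\vartheta'\vert_t\tilde{\Lambda} = w_t\iota + \zeta_t + \zeta_t^* + R_t - T_t$. Write $\pi_t^{\#} = w_t^{-1}Z_t^{-1}\vartheta\vert_t$ and $\pi_t'^{\#} = w_t^{-1}(Z_t')^{-1}\vartheta'\vert_t$ with $Z_t = \iota + w_t^{-1}(\zeta_t + \zeta_t^* + R_t + \psi)$ and $Z_t' = \iota + w_t^{-1}(\zeta_t + \zeta_t^* + R_t - T_t + \psi')$. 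The standard algebraic manipulation then yields
$$\pi_t^{\#} - \pi_t'^{\#} = w_t^{-1}\bigl(Z_t^{-1}(\psi' - T_t - \psi)(Z_t')^{-1}\vartheta\vert_t + (Z_t')^{-1}\eta\vert_t\bigr).$$
The hypothesis $\psi' \geq \psi + \eta\tilde{\Lambda} \geq \psi + T_t$ makes the middle factor in the first term completely positive. Complete positivity of $Z_t^{-1}$ and $(Z_t')^{-1}$ follows from Lemma 3.3 applied to the tilde maps: the conditional negativity of $\psi + R_t$ (respectively $\psi' + R_t - T_t$, using that the difference $R_t - K_t^{\prime\prime}$ equals the internal part of $\vartheta'\vert_t\tilde{\Lambda}$, so that replacing $\rho_\vartheta\tilde{\Lambda} - \eta\tilde{\Lambda}$ by its cut-off is conditionally zero away from the cut-off) together with $Z_t(I_o), Z_t'(I_o) \geq sI_o$ for small $t$. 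Then $\omega' = \psi'^{-1}(\vartheta - \eta)$ is a $q$-subordinate of $\omega$, and positivity of $\pi_t'^{\#}(I) \leq I_o$ comes from $\psi'(I_o) \geq \psi(I_o) + \eta(\tilde{\Lambda}(I_o)) \geq \vartheta(I-\tilde{\Lambda}(I_o)) + \eta(\tilde{\Lambda}(I_o)) \geq \vartheta'(I-\tilde{\Lambda}(I_o))$, giving $\omega'$ the defining properties of a $q$-weight map by Theorem 6.2.

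For the converse, by Theorem 4.3 the subordinate $\omega'$ is automatically of index zero, and by Theorem 2.2 its range is contained in $\mathcal{L}$. I would first show the range of $\omega'$ equals $\mathcal{L}$ by imitating the opening argument of the converse half of Theorem 4.9: any limit point of $\pi_t'^{\#}\tilde{\Lambda}$ is dominated by the identity on $\mathcal{L}$, and since the identity on the factor $\mathcal{L}$ is pure as a completely positive map, any non-zero such limit equals the identity. Once this is established, Theorem 6.2 gives $\omega' = \psi''^{-1}\vartheta''$ with $\psi''$ and $\vartheta''$ normalized so $\tilde{tr}(\psi''^{-1}(I_o)) = 1$. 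Then the limsup ratio $\kappa = \limsup_{t\to 0+} \tilde{tr}(I_o + \tilde{\phi}_t(I_o))/\tilde{tr}(I_o + \tilde{\phi}_t'(I_o))$ is finite, and choosing a sequence $t_k \to 0+$ realizing $\kappa$, one obtains $\eta := \vartheta - \kappa\vartheta''$ as the limit of $\psi_{t_k}\omega\vert_{t_k} - \kappa_k\psi_{t_k}''\omega'\vert_{t_k}$. Setting $\psi' = \kappa\psi''$ and $\vartheta' = \kappa\vartheta''$ gives $\omega' = \psi'^{-1}(\vartheta - \eta)$, and the uniform bound on $\eta\vert_s\tilde{\Lambda}$ via $\kappa\psi'' - \psi \geq \eta\vert_s\tilde{\Lambda}$ (for all $s > 0$) combined with $\eta\vert_t(I - \tilde{\Lambda}(I_o)) \leq \vartheta(I - \tilde{\Lambda}(I_o)) \leq \psi(I_o)$ shows $\eta$ is bounded. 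Finally, $\psi' + \rho_\vartheta\tilde{\Lambda} - \eta\tilde{\Lambda}$ differs from $\psi' + \rho_{\vartheta'}\tilde{\Lambda}$ by a conditionally zero map (the internal-part identity $\rho_\vartheta\tilde{\Lambda} - \rho_{\vartheta'}\tilde{\Lambda} - \eta\tilde{\Lambda}$ is conditionally zero by Theorem 6.2 applied to both $\vartheta$ and $\vartheta'$), and the latter is conditionally negative by Theorem 6.2 applied to $\omega'$.

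The main obstacle is the converse direction, specifically establishing that the range of $\omega'$ is all of $\mathcal{L}$ and that the bounded $\eta$ extracted by the limit process has the correct interplay with the skeleton of $\vartheta$. Everything else is a direct transcription of Theorem 4.9 with the substitution $(B(\mathbb{C}^p),\Lambda,I) \rightsquigarrow (\mathcal{L},\tilde{\Lambda},I_o)$; the purity argument for $\iota$ restricted to $\mathcal{L}$, together with the Choi-Effros identification of $\mathcal{L}$ with $B(\mathbb{C}^q)$ that makes the internal-part decomposition of Theorem 6.2 available, is precisely what makes this substitution legitimate.
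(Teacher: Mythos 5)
Your proposal is correct and follows essentially the same route as the paper: the paper's own proof of this theorem is precisely the instruction to transcribe Theorem 4.9 under the substitution $(B(\mathbb C^p),\Lambda,I)\rightsquigarrow(\mathcal{L},\tilde\Lambda,I_o)$, using the decomposition of Theorem 6.2 in place of Theorem 4.7 and the purity of the identity on the factor $\mathcal{L}$ for the converse. Your write-up simply carries out that transcription in more explicit detail than the paper does.
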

\begin{proof}  The proof of this theorem follows from the proof of Theorem
4.9 where each of the equations and inequalities in 4.9 are
reinterpreted in our new setting.  Rather than write out the new
proof we will explain how to make the translation.  The first
lines of the proof of Theorem 4.9 produce formulae for
$\vartheta\vert_t\Lambda$ and $\vartheta^{\prime}\vert_t\Lambda$.
Instead those formulae should be replaced by the corresponding
formulae for $\vartheta\vert_t\tilde{\Lambda}$ and
$\vartheta^{\prime}\vert_t\tilde{\Lambda}$ and the reference to
Theorem 4.7 should be replaced by a reference to Theorem 6.2.  The
mapping $T_t = \eta\vert_t\Lambda$ should be define as $T_t =
\eta\vert_t\tilde{\Lambda}$.  This brings us to the first rule in
translating the proof of Theorem 4.9 to our new setting, namely
replace the mapping $\Lambda$ by $\tilde{\Lambda}$.  This
replacement is justified because in all the calculation involving
$\Lambda$ the calculated mappings act on the range of $\vartheta
,\medspace \vartheta^{\prime}$ or $\eta$. Note that since $\phi_t
= \omega\vert_t\Lambda$ and $\phi_t^{\prime} = \omega
^{\prime}\vert_t\Lambda$ the replacement $\Lambda$ by
$\tilde{\Lambda}$ means these mappings should be replace by
$\tilde{\phi}_t$ and $\tilde{\phi}_t^{\prime }$.

The next replacement is the unit $I$ of $B(\mathbb C^p)$ which
should be replaced by $I_o$ the unit of $\mathcal{L}$.  Note in
our case $I_o = P$ the maximal support projection.  To give an
example this means the expression $\psi (I)$ in the proof of
Theorem 4.9 should be replaced by $\psi (I_o)$.  Another example
is an expression like $\eta \Lambda$ in Theorem 4.9 where
$\Lambda$ is not the mapping $\Lambda$ but short for $\Lambda (I)$
which should be replaced by $\eta (\Lambda (I_o))$ or $\eta
(\tilde{\Lambda}(I_o))$.  Note $\Lambda (I_o)$ and
$\tilde{\Lambda}(I_o)$ are interchangeable since $I_o \in
\mathcal{L}$.

Next an important rule we noted earlier.  The unit $I$ of $B(\mathbb
C^p\otimes L^2(0,\infty ))$ is not changed so, for example, an expression
like $\pi_t^\# (I)$ in Theorem 4.9 remains $\pi_t^\# (I)$ in translation.
Note the inequality $\pi_t^\# (I) \leq I$ in Theorem 4.9 becomes $\pi_t^\#
(I) \leq I_o$ in our new setting.

To see how these rules apply consider the inequality $\omega (I-\Lambda )
\leq I$ in Theorem 4.9.  Applying the translation rules we have described
this expression becomes $\omega (I - \Lambda (I_o)) \leq I_o$ which was
proved in Theorem 5.9.  Notice then that an inequality like $\psi (I) \geq
\vartheta (I - \Lambda )$ in Theorem 4.9 becomes $\psi (I_o) \geq \vartheta
(I-\Lambda (I_o))$ in our new setting.  Also the references to previous
theorems in Theorem 4.9 should be updated to the corresponding theorems in
sections five and six in our new setting.  Then following these rules the
proof of Theorem 4.9 gives us a proof of the present theorem.
\end{proof}

\begin{thm}
Suppose $\omega$ is a $q$-weight map over $\mathbb C^p$ of
index zero and the range $\mathcal{L}$ the range of $\omega$ is a factor of
type~I$_q$ with $q \leq p$.  We say $\omega$ is $q$-pure over $\mathcal{L}$ if
the $\mathcal{L}$-valued $q$-subordinates are totally ordered.  Then $\omega$ is
$q$-pure over $\mathcal{L}$ if and only if $\omega$ can be written in the form
given in Theorem 6.2 so $\omega = \psi\vartheta$ where $\psi + \rho_\vartheta
\tilde \Lambda$ is conditionally zero and $\psi (I_o) \geq \vartheta
(I - \Lambda (I_o))$ and $\vartheta$ is strictly infinite $\mathcal{L}$-valued
$b$-weight map meaning that if $\eta$ is a finite completely positive
$\mathcal{L}$ valued $b$-weight map and $\vartheta \geq \eta \geq 0$
then $\eta = 0$.
\end{thm}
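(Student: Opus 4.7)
The plan is to mirror the argument given for Theorem 4.10 (the full range case) with the tilde replacements spelled out, invoking Theorem 6.3 in place of Theorem 4.9. The overall structure is forward: assume $q$-purity over $\mathcal{L}$ and extract the two conditions, then backward: assume the two conditions and show that every $\mathcal{L}$-valued $q$-subordinate is a trivial scalar shift of $\psi$.

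For the forward direction, write $\omega=\psi^{-1}\vartheta$ in the form provided by Theorem 6.2. By Theorem 6.3, if $\psi'$ is any conditionally negative invertible map of $\mathcal{L}$ into itself with $\psi'\geq\psi$ and $\psi'+\rho_\vartheta\tilde\Lambda$ conditionally negative, then $\omega'=\psi'^{-1}\vartheta$ is an $\mathcal{L}$-valued $q$-subordinate of $\omega$, and two such yield comparable subordinates exactly when the corresponding $\psi'$'s are comparable. So, if the set $S$ of admissible $\psi'$ is not reduced to $\{\psi+s\iota:s\geq0\}$, the $\mathcal{L}$-valued $q$-subordinates fail to be totally ordered. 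By the discussion following Lemma 3.2, $S$ reduces to this family precisely when $\psi+\rho_\vartheta\tilde\Lambda$ is conditionally zero, giving the first condition. For the second, suppose $\eta\neq0$ is a bounded completely positive $\mathcal{L}$-valued $b$-weight subordinate of $\vartheta$. Set $\psi'=\psi+\eta\tilde\Lambda$; then $\psi'+\rho_\vartheta\tilde\Lambda-\eta\tilde\Lambda=\psi+\rho_\vartheta\tilde\Lambda$ is conditionally zero (hence conditionally negative), so by Theorem 6.3 the map $\omega'=\psi'^{-1}(\vartheta-\eta)$ is a $q$-subordinate. Let $\psi''=\psi+s\iota$ for $s>0$ and $\omega''=\psi''^{-1}\vartheta$. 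Following the same computation as in the proof of Theorem 4.10, one finds
\begin{equation*}
w_t(\pi_t^{\prime\prime\#}-\pi_t^{\prime\#})\tilde\Lambda\longrightarrow \eta-s\vartheta
\end{equation*}
as $t\to0+$. Because $\eta$ is bounded while $\vartheta$ is unbounded, $\eta-s\vartheta$ cannot be positive; and by choosing a projection $E(r,\infty)$ with $\eta(E(r,\infty))\neq 0$ and $s$ small, $(s\vartheta-\eta)(E(r,\infty))$ also fails to be positive. Hence $\omega'$ and $\omega''$ are incomparable $\mathcal{L}$-valued $q$-subordinates, contradicting $q$-purity over $\mathcal{L}$.

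For the converse, suppose both conditions hold and let $\omega'$ be an $\mathcal{L}$-valued $q$-subordinate of $\omega$. By the converse part of Theorem 6.3 one has $\omega'=\psi'^{-1}(\vartheta-\eta)$ with $\eta$ a bounded completely positive $\mathcal{L}$-valued subordinate of $\vartheta$ and $\psi'\geq\psi+\eta\tilde\Lambda$ with $\psi'+\rho_\vartheta\tilde\Lambda-\eta\tilde\Lambda$ conditionally negative. Strict infiniteness forces $\eta=0$; then $\psi'\geq\psi$ and $\psi'+\rho_\vartheta\tilde\Lambda$ is conditionally negative. Subtracting the hypothesis that $\psi+\rho_\vartheta\tilde\Lambda$ is conditionally zero, $\psi'-\psi$ is simultaneously completely positive and conditionally negative, so by the analysis in Section~3 (completely positive plus conditionally negative of a map in $\mathcal{S}$ forces the form $s\iota$ with $s\geq 0$) we conclude $\psi'=\psi+s\iota$ for some $s\geq 0$. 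Thus the $\mathcal{L}$-valued $q$-subordinates are linearly parametrized by $s\in[0,\infty)$ with the obvious total ordering, and $\omega$ is $q$-pure over $\mathcal{L}$.

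The main technical obstacle is the forward direction's strictly-infinite step, specifically producing the incomparable pair $\omega',\omega''$: the convergence $w_t(\pi_t^{\prime\prime\#}-\pi_t^{\prime\#})\tilde\Lambda\to\eta-s\vartheta$ must be justified carefully using the $Z_t$, $Z_t'$, $Z_t''$ decompositions from the proof of Theorem 4.10, now recast with the tilde conventions of Section~6 and with $I$ replaced by $I_o$ throughout; once this limit is in hand, the rest is the qualitative observation that a bounded object cannot dominate a multiple of an unbounded one. Every other step is a direct transcription of the argument for Theorem 4.10, justified because Theorems 5.9, 6.2, and 6.3 are precisely the tilde analogues of the ingredients used there.
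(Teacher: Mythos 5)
Your proposal is correct and is essentially identical to the paper's proof, which simply states that Theorem 6.4 follows by translating the proof of Theorem 4.10 via the tilde conventions (replacing $\Lambda$ by $\tilde\Lambda$, $I$ by $I_o$, and Theorems 4.7/4.9 by Theorems 6.2/6.3); you have written out that translation explicitly. The only cosmetic quibble is that the characterization of subordinates of the form $\psi'^{-1}\vartheta$ used in your first step comes from the last paragraph of Theorem 6.2 rather than Theorem 6.3, but this does not affect the argument.
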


\begin{proof}  The proof of the theorem follows from the proof of Theorem 4.10
where we translate the proof of Theorem 4.10 following the procedure just
described.
\end{proof}

\begin{thm}
Suppose $\omega$ is a $q$-weight map over $\mathbb C^p$
of index zero and $\mathcal{L}$ is the range of $\omega$.  Then $\omega$ is
$q$-pure if and only if the following conditions are satisfied.
\begin{enumerate}[(i)]

\item $\mathcal{L}$ is a factor of type~I$_q$ with $1 \leq q \leq p$ and
$\omega$ is of the form $\omega = \psi\vartheta$ with $\psi ,\medspace
\vartheta$ and $\rho$ are as given in Theorem 6.2 and the map $\psi +
\rho\tilde \Lambda$ is conditionally zero.

\item The $b$-weight map $\vartheta$ is strictly infinite as an $\mathcal{L}$
valued $b$-weight map.

\item  If $e \in \mathcal{L}^{\prime}$ is a non zero hermitian projection and
$e\leq I_o$ the unit of $\mathcal{L}$ then $\Vert\omega (\Lambda (e))\Vert =
\infty$.
\end{enumerate}
\end{thm}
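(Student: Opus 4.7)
The plan is to prove both directions of the iff by combining Theorems 5.22, 6.3, and 6.4 with a contradiction argument specific to condition (iii).

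For the forward direction, I assume $\omega$ is $q$-pure. Invoking Theorem 5.22, we obtain that $\mathcal{L}$ is a factor of type~I$_q$ with $1 \leq q \leq p$ and the unit $I_o$ equals the maximal support projection. By Theorem 2.2 every $q$-subordinate of $\omega$ has range contained in $\mathcal{L}$, so the $q$-pure hypothesis implies that the $\mathcal{L}$-valued $q$-subordinates are totally ordered; Theorem 6.4 then delivers conditions (i) and (ii). For condition (iii), I argue by contradiction: assuming $\|\omega(\Lambda(e))\| < \infty$ for some non-zero hermitian projection $e \in \mathcal{L}'$ with $e \leq I_o$, set $P' := I_o - e \in \mathcal{L}'$. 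Using $\|\omega(\Lambda(I - I_o))\| \leq 1$ from Theorem 5.9 we get $\|\omega(\Lambda(I - P'))\| < \infty$, and the construction behind Theorem 5.22 (extended by the remark following Lemma 5.21 to any $P \in \mathcal{L}'$ with this bound) then produces a non-zero $q$-subordinate $\eta$ of $\omega$ with range $P'\mathcal{L} \subsetneq \mathcal{L}$. Combining $\eta$ with the parametric family $\omega_s := (\psi + s\iota)^{-1}\vartheta$ coming from (i) and (ii), I aim to exhibit two incomparable $q$-subordinates, contradicting $q$-purity.

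For the reverse direction, assume (i), (ii), and (iii) hold and let $\eta$ be a non-zero $q$-subordinate of $\omega$. Condition (iii) is used to rule out subordinates of strictly smaller range: if the range of $\eta$ were $Q\mathcal{L}$ for some $Q \in \mathcal{L}'$ with $Q \neq I_o$, then setting $e' := I_o - Q$ (non-zero in $\mathcal{L}'$ with $e' \leq I_o$), the existence of such an $\eta$ would force $\|\omega(\Lambda(e'))\| < \infty$ by an argument in the spirit of Theorem 5.22's construction run in reverse, contradicting (iii). Hence $\eta$ has range $\mathcal{L}$, and Theorem 6.3 writes $\eta = \psi'^{-1}(\vartheta - \xi)$ with $\xi$ bounded and $\vartheta \geq \xi \geq 0$. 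Condition (ii) forces $\xi = 0$, so $\eta = \psi'^{-1}\vartheta$. Combining $\psi' \geq \psi$ with the condition that $\psi' + \rho\tilde\Lambda$ is conditionally negative and (i)'s conditional zero property of $\psi + \rho\tilde\Lambda$ renders $\psi' - \psi$ simultaneously completely positive and conditionally negative, so by Lemma 3.2 one gets $\psi' = \psi + s\iota$ for some $s \geq 0$. Thus every $q$-subordinate is of the form $\omega_s$ with $s \geq 0$, and these are totally ordered.

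The main obstacle is producing two incomparable subordinates in the forward argument for (iii). Obtaining the small-range subordinate $\eta$ from the bounded-norm hypothesis is immediate from the material cited above, but arguing that $\eta$ fails to be comparable with some $\omega_s$ is technical: the relation $\eta \geq_q \omega_s$ cannot hold because $\omega_s$ has full range $\mathcal{L}$ while $\eta$ has range $P'\mathcal{L}$, whereas the harder direction $\omega_s \geq_q \eta$ should fail for a carefully chosen $s > 0$ by comparing the skeletons $\omega_s\vert_t\Lambda$ and $\eta\vert_t\Lambda$ and exploiting that $\omega_s$'s behaviour in the $e\mathcal{L}$ direction (where $\eta$ contributes nothing) cannot be simultaneously aligned with $\eta$'s behaviour in $P'\mathcal{L}$ as $s$ varies. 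Justifying the dual implication used in the reverse direction, namely that a small-range subordinate forces a corresponding finite-norm bound, likewise requires running Theorem 5.22's estimates backwards, which I expect to be the subtlest step of the entire argument.
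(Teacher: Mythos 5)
Your overall architecture matches the paper's: conditions (i) and (ii) come from Theorem 6.4 once you know the range is a factor, and condition (iii) is tied to subordinates whose range is a compression of $\mathcal{L}$ by a projection in $\mathcal{L}^{\prime}$. But both places where you defer the real work contain genuine gaps.

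In the forward direction your treatment of (iii) rests on a false containment and then stops short. With $P^{\prime} = I_o - e$ you assert that the subordinate $\eta$ has range $P^{\prime}\mathcal{L} \subsetneq \mathcal{L}$. In fact $P^{\prime}\mathcal{L} \not\subseteq \mathcal{L}$: if $P^{\prime}A \in \mathcal{L}$ for every $A \in \mathcal{L}$ then $eA \in \mathcal{L}$ for every $A$, so $e \in \mathcal{L} \cap \mathcal{L}^{\prime} = \mathbb{C}I_o$, forcing $e = I_o$ (excluded, since $\omega$ has index zero so $\omega(\Lambda(I_o))$ is infinite) or $e=0$. Once you see this, the incomparability argument you outline is unnecessary: since $\omega$ is assumed $q$-pure, Theorem 2.2 already guarantees that every $q$-subordinate has range contained in $\mathcal{L}$, so the mere existence of a non-zero subordinate with range $P\mathcal{L} \not\subseteq \mathcal{L}$ is the contradiction. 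This is exactly what the paper does (with $P = I - e$). As written, your route instead hinges on showing $\omega_s \not\geq_q \eta$ for a well-chosen $s$, a step you explicitly leave as "the subtlest step" without an argument; that is a gap, and it is a gap created by missing the direct appeal to Theorem 2.2.

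In the reverse direction the decisive implication --- that a $q$-subordinate whose range escapes $\mathcal{L}$ forces $\Vert\omega(\Lambda(e_o))\Vert < \infty$ for some non-zero projection $e_o \in \mathcal{L}^{\prime}$ with $e_o \leq I_o$ --- is only described as "Theorem 5.22's construction run in reverse." This is where essentially all of the paper's labor in Theorem 6.5 is spent, and it is not a routine reversal. One must first pass from the offending subordinate $\tau$ to a sub-subordinate $\nu$ whose range is a factor (Theorem 5.22) and identify that range as $P\mathcal{L}$ with $P \in \mathcal{L}^{\prime}$, $P \leq I_o$ (Theorem 5.4); then interpolate $\eta = (P - \zeta)\omega$ with $\pi_t^\# \geq \Theta_t^\# \geq \phi_t^\#$, where now only $\Vert\zeta\Vert \leq 1$ is available, so invertibility of $\iota - \zeta$ is not free --- it is extracted from the fact that $\eta \geq_q \nu \neq 0$ forces the range of $\eta$ to be all of $P\mathcal{L}$; finally one uses $\eta(I - \Lambda(P)) \leq P$ from Theorem 5.9 to bound $\Vert\omega(\Lambda(I_o - P))\Vert$ by $\Vert(\iota - \zeta)^{-1}\Vert$. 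You also assume the escaping subordinate's range is already of the form $Q\mathcal{L}$, which in general requires the reduction through Theorems 5.22 and 5.4 first. The remainder of your reverse direction (using (ii) to kill $\xi$ and (i) to force $\psi^{\prime} = \psi + s\iota$) is correct and consistent with Theorems 6.3 and 6.4.
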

\begin{proof}  Suppose $\omega$ is a $q$-pure $q$-weight map over $\mathbb C^p$
of index zero. Then from Theorem 6.4 we see that $\omega$ satisfies conditions
(i) and (ii).

Now suppose $\omega$ fails to satisfy condition (iii) so there is a
projection $e \in \mathcal{L} ^{\prime}$ with $e \leq I_o$ and $\Vert\omega
(\Lambda (e))\Vert < \infty$.  Note since $\omega$ is of index zero
$\omega(\Lambda(I_o))$ is infinite so $e \neq I_o$.  Repeating the argument of
Lemma 5.21 (see the remark after Lemma 5.21) we find there is a
$q$-subordinate $\omega^{\prime}$ of $\omega$ so that
$\Vert\omega^{\prime}(\Lambda(e))\Vert \leq \frac {1} {2}$. Let $P = I - e$,
let $\pi_t^\#$ be the boundary representation of $\omega^{\prime}$ and let
$\psi_t^\# = P\pi_t^\#$ for $t > 0$.  Since $P \in \mathcal{L} ^{\prime}$ we
have $\psi_t^\# \leq \pi_t^\#$ for $t > 0.  $ Let
$$
\eta_t = (\iota - \psi_t^\#\Lambda )^{-1}\psi_t^\#
$$
for $t > 0$ and by Theorem 5.6 we know that any weak limit point of $\eta_t$
as $t \rightarrow 0+$ is a $q$-subordinate of $\omega^{\prime}$ and since
$\omega \geq_q \omega^{\prime}$ it is a $q$-subordinate of $\omega$.
Calculating $\eta_t$ as we did in Theorem 5.22 (see the remark after
Theorem 5.22) we find
$$
\eta_t = (P - P(\iota - P\pi_t^\#\Lambda )^{-1}(I-P))\omega \vert_t = (P -
\zeta_t)\omega \vert_t =  P(\iota - \zeta_t) \omega \vert_t
$$
where
$$
\zeta_t(A) = P(\iota - P\pi_t\Lambda )^{-1}((I-P)A(I-P))
$$
and repeating the argument in the proof of Theorem 5.22 we find that
$\zeta_t$ is a completely positive contraction and we see that the limit
points of $\eta_t$ as $t \rightarrow 0+$ are of the form $\eta = (P -
\zeta )\omega$ where $\zeta$ is limit point of $\zeta_t$ as $t\rightarrow 0+$
and what is more important is $\Vert\zeta_t(I)\Vert \leq \tfrac{1}{2}$.  As in
the proof of Theorem 5.22 this bound insures that inverse
$$
(\iota - \zeta)^{-1} = \iota + \zeta + \zeta^2 + \cdots
$$
exists since the series converges.  Since the range of $\omega$ is
$\mathcal{L}$ the range of $\eta$ is $P\mathcal{L}$.  But this is a
contradiction since from Theorem 2.2 we know that the range of any
$q$-subordinate of $\omega$ is contained in $\mathcal{L}$.  Hence, the
assumption that $\Vert\omega (\Lambda (e))\Vert < \infty$ is false.

Now suppose $\omega$ satisfies the three conditions of the theorem.  Since
$\omega$ satisfies conditions (i) and (ii) then by Theorem 6.4 we have
$\omega$ is $q$-pure over $\mathcal{L}$.  Then if $\omega$ is not $q$-pure it
has a $q$-subordinate $\tau$ whose range is not contained in $\mathcal{L}$ and
by Theorem 5.22 there is a $q$-subordinate of $\nu$ of $\tau$ so that the
range of $\nu$ is a factor.  Since $\omega$ is of index zero it follows
that $\nu$ is of index zero.  Let $\pi_t^\#$ and $\phi _t^\#$ be the
generalized boundary representations of $\omega$ and $\nu$, respectively.
By routine compactness arguments we can find a decreasing sequence $t_k
\rightarrow 0+$ so that $\pi_{t_k}^\#\Lambda \rightarrow L$ and
$\phi_{t_k}^\#\Lambda \rightarrow L_2$ as $k \rightarrow \infty$.  Since
$\pi_t^\#\Lambda \geq \phi_t^\#\Lambda$ for $t > 0$ we have $L \geq L_2$
and $L$ and $L_2$ are completely positive contractive idempotent maps.  Let
$\mathcal{L}_1$ be the range of $\nu$.  Since $\mathcal{L}_1$ is a factor
the unit $P$ of $\mathcal{L}_1$ is a projection so $L_2(I) = P$ and by
Theorem 5.4 we have $P \in \mathcal{L} ^{\prime},\medspace P \leq I_o =
L(I)$ and $L_2(A) = PL(A)P$ for $A \in B(\mathbb C^p)$ and the range of
$\nu$ is $P\mathcal{L} $.

Notice that the mapping $A \rightarrow$ PA $= PAP$ is a $*$-isomorphism of
$\mathcal{L}$ onto $\mathcal{L}_1$.  Since $\pi_t^\# \geq \phi_t^\#$ and
the range of $\phi_t^ \#$ is in $P\mathcal{L} = \mathcal{L}_1$ it follows
that $P\pi_t^\# \geq \phi_t^\#$ for all $t > 0$.  Now let $\psi_t^\# = P\pi
_t^\#$ and as before let
$$ \eta_t = (\iota - \psi_t^\#\Lambda
)^{-1}\psi_t^\#.
$$
Since $\pi_t^\# \geq \psi_t^\# \geq \phi_t^\#$ it follows that
$\eta_t \geq_q \nu\vert_t$ for all $t > 0$ and by Theorem 5.6 we see that
if $\eta$ is a limit point of $\eta_t$ as $t \rightarrow 0+$ then $\omega
\geq_q \eta \geq_q \nu$ and since $\eta \geq \nu$ we see any limit point of
$\eta_t$ is not zero.  Let $\eta$ be a limit point of $\eta_t$ as
$t \rightarrow 0+$.  Repeating our earlier argument we see $\eta$ is of the
form  $\eta = (P -\zeta )\omega$ where $\zeta$ is a completely positive
contractive map of $\mathcal{L}$ into $P\mathcal{L}$.  (This time we only have
the bound $\Vert\zeta_t(I)\Vert \leq 1$ but we do not need smaller bound than
one since we know that $\eta \geq \nu$ so $\eta$ is not zero).

Now we need to look more closely at the map $(P - \zeta )$ which is a map
of $\mathcal{L}$ into $P\mathcal{L}$.  Now $\zeta$ is a completely positive
contractive map of $\mathcal{L}$ into $P\mathcal{L}$ and since $P \in
\mathcal{L}^{\prime}$ the mapping $A \rightarrow$ PA is a $*$-isomorphism
of $\mathcal{L}$ into $P\mathcal{L}$ so we can consider $\zeta$ to be a
completely positive contractive map of $P\mathcal{L}$ into itself.  Then
$(P - \zeta )$ restricted to $P\mathcal{L}$ is the mapping $(\iota-\zeta
)$.  Since $\Vert\zeta\Vert \leq 1$ we can not immediately conclude that
this mapping has an inverse.  But we are in luck since $\eta = (P-\zeta
)\omega \geq_q \nu$ and $\nu$ has range $P\mathcal{L}$.  Now let
$\Theta_t^\#$ be the generalized boundary representation of $\eta$.  Note
that $\pi_t^\# \geq \Theta_t^\# \geq \phi_t^\#$ for $t > 0$.  Recall that
for the sequence $t_k \rightarrow 0$ as $k \rightarrow \infty$ we have
$\pi_{t_k}^\#\Lambda \rightarrow L$ and $\phi_{t_k}^\#\Lambda \rightarrow
L_2$ as $k \rightarrow \infty$.  By a routine compactness argument we can
pass to a subsequence of $t_k$ (which we also denote by $t_k$) so that
$\Theta_{t_k}^\# \Lambda$ converges to a limit $L_1$ as $k \rightarrow
\infty$ and since we have passed to a subsequence $\pi_{t_k}^\#\Lambda
\rightarrow L$ and $\phi_{t_k}^\#\Lambda \rightarrow L_2$ as before.  Since
$\pi_t^\# \geq \Theta_t^\# \geq \phi_t^\#$ for $t > 0$ we have $L \geq L_1
\geq L_2$  and by Theorem 5.4 we have $L_1(A) = P_1 L(A) P_1 = P_1 L(A)$
for $A \in B(\mathbb C^p)$ with $P_1 \in \mathcal{L} ^{\prime}$.  Since $L
\geq L_1 \geq L_2$ we have $I_o \geq P_1 \geq P$.  Recalling the
construction of $\eta$ we see the range of $\eta$ is contained in
$P\mathcal{L}$.  Hence, we have $P_1 \leq P$ so $P_1 = P$.  Hence, the
range of $\eta$ is $P\mathcal{L}$ and thus the range of $(\iota-\zeta )$ is
$P\mathcal{L}$ and since $P\mathcal{L}$ is finite dimensional we conclude
the inverse $(\iota-\zeta )^{-1}$ exists.  Now for $0 < \lambda < 1$ we have
\begin{equation*}
(\iota - \lambda\zeta )^{-1} = \iota + \lambda\zeta + \lambda^2\zeta^2 +
\cdots
\end{equation*}
where the series converges in norm and since $(\iota-\zeta )^{-1}$ exists
and the resolvent set is open and the resolvent is continuous it follows
that $(\iota - \lambda\zeta )^{-1}$ converges to $(\iota - \zeta )^{-1}$ as
$\lambda \rightarrow 1$ and since $(\iota - \lambda\zeta )^{-1}$ is
completely positive being the sum of completely positive terms it follows
that $(\iota - \zeta )^{-1}$ is completely positive.   Since $\eta =
(P-\zeta )\omega$ we have $P\omega = (\iota-\zeta )^{-1}\eta$.  Now since
$\eta$ is a $q$-weight map we have from Theorem 5.9 that $\eta (I - \Lambda
(P)) \leq P$.  Hence
\begin{equation*}
P\omega (I - \Lambda (P)) = (\iota-\zeta )^{-1}\eta (I-\Lambda (P)) \leq
(\iota -\zeta )^{-1}(P).
\end{equation*}
Since the mapping $A \rightarrow$ PA is a $*$-isomorphism of $\mathcal{L}$
with $P\mathcal{L}$ for $A \in \mathcal{L}$ we have $\Vert A\Vert = \Vert
PA\Vert$ so we have
\begin{equation*}
\Vert\omega (I-\Lambda (P))\Vert = \Vert P\omega (I-\Lambda (P))\Vert  \leq
\Vert (\iota-\zeta )^{-1}(P)\Vert = \Vert (\iota-\zeta )^{-1}\Vert .
\end{equation*}
Now let $e_o = I_o-P$ and we have $e_o$ is projection in
$\mathcal{L}^{\prime }$ and $e_o \leq I_o$ and
\begin{equation*}
\omega (I - \Lambda (P)) = \omega (I - \Lambda (I_o)) + \omega (\Lambda
(e_o)) \leq \Vert (\iota-\zeta )^{-1}\Vert I
\end{equation*}
and since $\omega (I-\Lambda (I_o)) \geq 0$ we have $0 \leq \omega (\Lambda
(e_o)) \leq \Vert (\iota-\zeta )^{-1}\Vert I$ so $\Vert\omega (\Lambda
(e_o))\Vert$ is finite which violates condition (iii) of the theorem.
Hence, if $\omega$ is not $q$-pure then conditions (i), (ii) and (iii) can
not be satisfied.  \end{proof}

\begin{thm}
Suppose $\vartheta$ is a $\mathcal{L}$ valued $b$-weight map on
$\mathfrak A (\mathbb C^p)$ where $\mathcal{L}$ is a factor of type~I$_q$ of
the form given in Theorem 6.2. and
$$
\mu (A) = \sum_{k\in J} (g_k,Ag_k)
$$
for $A \in \mathfrak A (\mathbb C^p)$ where the $g_k$ are as given in
Theorem 6.2.  Notice that $\mu$ is supported on $E_{11}\otimes I$.  Then
$\vartheta$ is strictly infinite if and only if $\mu$ is strictly infinite and
the $h^{\prime}s$ (as given in Theorem 6.2) are linearly independent over the
$g^{\prime}s$ by which we mean that if $c \in \ell ^2(I)$ and
$$
\sum_{k\in J} c_kg_k = 0
$$
then
$$
\sum_{k\in J} c_kh_{ik}  = 0
$$
for each $i = 1,\cdots ,p$.  Furthermore, the $q$-weight map $\omega$
constructed from $\vartheta$ by Theorem 6.2 will satisfy condition (iii) of
Theorem 6.5 if and only if $\mu (\Lambda (f)) = \infty$ for every non zero
projection $f \leq E_{11}$.
\end{thm}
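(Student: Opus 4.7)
The plan is to split the theorem into its two assertions and handle each by direct translation of the corresponding $B(\mathbb C^p)$--setting argument, using the defining condition $\sum_i E_{1i}h_{ik}(x)=0$ (which here replaces the trace--zero condition $\sum_i(h_{ik})_i(x)=0$ used in Theorem 4.11). For the strict--infinite equivalence, I would argue exactly as in the proof of Theorem 4.11: $\vartheta$ is strictly infinite precisely when every $c\in\ell^2(J)$ for which each $F_i=\sum_k c_k(g_{ik}+h_{ik})$ lies in $\mathbb C^p\otimes L^2(0,\infty)$ must satisfy $F_i=0$ for all $i$. Since $\rho$ is bounded, the tails $\sum_k c_kh_{ik}$ are automatically in $L^2$, so the $F_i$--condition reduces to $\sum_k c_kg_k\in L^2(0,\infty)$; strict infiniteness of $\mu$ then forces $\sum_k c_kg_k=0$, and linear independence of the $h$'s over the $g$'s forces $\sum_k c_kh_{ik}=0$, giving $F_i=0$. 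Conversely, if $\mu$ is not strictly infinite pick $c$ with $g=\sum_k c_kg_k\in L^2$ nonzero; applying $\sum_i E_{1i}$ to $F_i$ and using $E_{1i}E_{i1}=E_{11}$ together with $\sum_i E_{1i}h_{ik}=0$ yields $\sum_i E_{1i}F_i=qE_{11}g\neq 0$, so not all $F_i$ vanish. If the $h$'s fail to be independent over the $g$'s, take $c$ with $\sum_k c_kg_k=0$ but $\sum_k c_kh_{ik}\neq 0$ for some $i$; then $F_i=\sum_k c_kh_{ik}\in L^2$ is nonzero.

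For the condition (iii) equivalence, the first reduction is $\|\omega|_t(\Lambda(e))\|=\infty$ iff $\|\vartheta|_t(\Lambda(e))\|=\infty$, which is immediate from $\omega=\psi^{-1}\vartheta$ since $\psi,\psi^{-1}$ are bounded invertible maps of the finite--dimensional $\mathcal L$. Next I would set up the bijection between nonzero projections $e\le I_o$ in $\mathcal L'$ and nonzero projections $f\le E_{11}$ by $f=E_{11}eE_{11}=E_{11}e=eE_{11}$ and $e=\sum_{i=1}^q E_{i1}fE_{1i}$; the matrix--unit calculus (using $E_{1i}E_{j1}=\delta_{ij}E_{11}$ and $E_{mn}E_{i1}=\delta_{ni}E_{m1}$) shows these two assignments are mutual inverses and that the projection $\sum_i E_{i1}fE_{1i}$ indeed lies in $\mathcal L'$ and is bounded above by $I_o$. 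With the bijection in place, I would compute the $(i,i)$--entry
\begin{equation*}
\vartheta_{ii}|_t(\Lambda(e))=\sum_k\int_t^\infty e^{-x}\bigl((g_{ik}+h_{ik})(x),\,e(g_{ik}+h_{ik})(x)\bigr)\,dx
\end{equation*}
and evaluate the three pieces: the $g$--$g$ term is $\sum_k(g_k,\Lambda|_t(E_{1i}eE_{i1})g_k)=\mu|_t(\Lambda(f))$ using $e\in\mathcal L'$; the $h$--$h$ term equals $\rho_{ii}|_t(\Lambda(e))$, which is bounded by $\|\rho\|$ since $\rho$ is bounded and $\|\Lambda(e)\|\le 1$; and the mixed terms are controlled by Cauchy--Schwarz as $|\text{mixed}|\le\sqrt{\mu|_t(\Lambda(f))\cdot\|\rho\|}$. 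Combining, I obtain
\begin{equation*}
\mu|_t(\Lambda(f))-2\sqrt{\mu|_t(\Lambda(f))\|\rho\|}\;\le\;\vartheta_{ii}|_t(\Lambda(e))\;\le\;\mu|_t(\Lambda(f))+2\sqrt{\mu|_t(\Lambda(f))\|\rho\|}+\|\rho\|,
\end{equation*}
so $\vartheta_{ii}|_t(\Lambda(e))\to\infty$ iff $\mu|_t(\Lambda(f))\to\infty$, and since $\vartheta(\Lambda(e))\in\mathcal L$ is positive with bounded off--diagonal control from the $2\times 2$ positivity inequality (3.7), $\|\vartheta|_t(\Lambda(e))\|=\infty$ iff some diagonal entry diverges iff $\mu(\Lambda(f))=\infty$.

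The main obstacle in writing the proof is verifying the projection bijection with the correct matrix--unit bookkeeping and then keeping the Cauchy--Schwarz estimate of the mixed $g$--$h$ terms tight enough that the leading asymptotic of $\vartheta_{ii}|_t(\Lambda(e))$ really is $\mu|_t(\Lambda(f))$. Once these two technical points are settled, the first assertion is a direct transcription of the Theorem 4.11 argument using the new identity $\sum_i E_{1i}h_{ik}=0$, and the second assertion follows from the sandwich inequality above.
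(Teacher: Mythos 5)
Your proposal is correct and follows essentially the same route as the paper: the first assertion is, as you say, a verbatim translation of Theorem 4.11 using $\sum_i E_{1i}h_{ik}=0$ in place of the trace-zero condition, and the second rests on the same correspondence $e=\sum_i E_{i1}fE_{1i}$, $f=E_{11}e$ together with the expansion of $\vartheta_{ii}(\Lambda(e))$ into $g$-, $h$-, and cross-terms with the $h$-contribution bounded by $\|\rho\|$. Your two-sided Cauchy--Schwarz sandwich is just a slightly more quantitative packaging of the paper's one-sided estimate $\vartheta_{ii}(\Lambda(e))\le\sum_k\Vert(e\otimes I)(g_{ik}+h_{ik})\Vert^2$ (which the paper obtains via $\mu(\Lambda(f))<\infty$ and $\mu(I-\Lambda)<\infty$ giving $\sum_k\Vert fg_k\Vert^2<\infty$), and it has the minor advantage of handling the converse direction, which the paper only asserts, in the same stroke.
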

\begin{proof}  The proof of the lemma excluding the last sentence is simply
a repeat of the proof of Theorem 4.11 in our new setting so all we need do
is prove the last sentence, the sentence beginning with the word
furthermore.  Suppose then that $\mu (\Lambda (f)) < \infty$ with $f$ non
zero and $f \leq E_{11}$.  Let
$$
e = \sum_{i=1}^q E_{i1}fE_{1i}.
$$
We have $e \in \mathcal{L} ^{\prime}$ and one calculates that
\begin{align*}
\vartheta_{ii}(\Lambda (e)) & = \sum_{k\in J} ((g_{ik}+h_{ik}),\Lambda
(e)(g_{ik}+h_{ik}))
\\
&= \sum_{k\in J} ((e\otimes I)(g_{ik}+h_{ik}),\Lambda (e)(e\otimes I)(g_{ik
}+h_{ik}))
\\
&\leq \sum_{k\in J} \Vert (e\otimes I)(g_{ik}+h_{ik})\Vert^2.
\end{align*}
Now since $\mu (\Lambda (f)) < \infty$ and $\mu (I-\Lambda ) < \infty$ we have
$\mu (f\otimes I) < \infty$ which means
$$
\sum_{k\in J} \Vert fg_k\Vert^2 < \infty
$$
and since $g_{ik} = (E_{i1}\otimes I)g_k$ we have
$$
\sum_{k\in J} \Vert (e\otimes I)g_{ik}\Vert^2 = \sum_{k\in J} \Vert
fg_k\Vert ^2 < \infty
$$
and since
$$
\sum_{k\in J} \Vert h_{ik}\Vert^2 < \infty
$$
we have
$$
\sum_{k\in J} \Vert (e\otimes I)h_{ik}\Vert^2 < \infty
$$
from which we conclude that $\vartheta_{ii}(\Lambda (e)) < \infty$ for each
$i = 1,\cdots ,q$.  Since $\omega = \psi\vartheta$ we conclude that $\omega
(\Lambda (e))$ is bounded.  So we have shown that if $\mu (\Lambda (f)) <
\infty$ for a non zero projection $f$ with $f \leq E_{11}$ then there is a
projection $e \in \mathcal{L} ^{\prime}$ with $\Vert\omega (\Lambda (e))\Vert
< \infty$.

Conversely, if there is a non zero projection $e \in \mathcal{L} ^{\prime}$
with $\Vert\omega (\Lambda (e))\Vert < \infty$ then if $f = E_{11}e$ then one
sees that $\mu (\Lambda (f)) < \infty .$ \end{proof}

We are now ready to prove the main result of this paper that every
unital $q$-pure $q$-weight map of index zero is cocycle conjugate
to a unital $q$-pure $q$-weight map of index zero of range rank
one.  We remark that the proof is mostly notation.  What is really going on
is that the real work in proving the theorem has already been done in
the characterization of $q$-pure $q$-weight maps.

\begin{thm}\label{6.7}
Suppose $\omega$ is a unital $q$-pure $q$-weight map over $\mathbb C^{qm}$
of index zero and the range $\mathcal{L}$ of $\omega$ is a factor of
type~I$_q.  $ Then $\omega$ is cocycle conjugate to a unital $q$-pure
$q$-weight map $\eta$ of range rank one over $\mathbb C^m$.
\end{thm}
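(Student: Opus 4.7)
The plan is to start from the Theorem 6.5 decomposition $\omega=\psi^{-1}\vartheta$ and extract the scalar measure hidden inside $\vartheta$, then build the range rank one $\eta$ from that scalar measure and exhibit a hyper maximal $q$-corner connecting the two. First, because $\omega$ is unital and $q$-pure with range $\mathcal{L}$ of type~I$_q$, we have $I_o=I_{qm}$ and $\mathcal{L}\subset B(\mathbb{C}^{qm})$ is a genuine type~I$_q$ factor containing the identity, giving the tensor decomposition $\mathbb{C}^{qm}\cong\mathbb{C}^q\otimes\mathbb{C}^m$ with $\mathcal{L}=M_q\otimes I_m$ and $\mathcal{L}'=I_q\otimes M_m$. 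I would then extract from the Theorem 6.2 decomposition of $\vartheta$ the scalar data $g_k\in E_{11}\mathbb{C}^{qm}\otimes L^2(0,\infty)$ and $h_{ik}\in\mathbb{C}^{qm}\otimes L^2(0,\infty)$; fixing a unitary $W:E_{11}\mathbb{C}^{qm}\to\mathbb{C}^m$ and setting $\tilde g_k=Wg_k$, the scalar weight $\tilde\mu(A)=\sum_k(\tilde g_k,A\tilde g_k)$ on $\mathfrak{A}(\mathbb{C}^m)$ is strictly infinite with $\tilde\mu(\Lambda(f))=\infty$ for every nonzero projection $f\leq I_m$, by Theorem 6.6 combined with condition (iii) of Theorem 6.5 for $\omega$. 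Rescaling $(\psi,\vartheta)\mapsto(s\psi,s\vartheta)$ (which leaves $\omega$ unchanged) so that $\tilde\mu(I-\Lambda)=1$, I define
\begin{equation*}
\eta(\rho)(A)=\rho(I_m)\tilde\mu(A),\qquad\rho\in B(\mathbb{C}^m)_*,\; A\in\mathfrak{A}(\mathbb{C}^m),
\end{equation*}
and Theorem 2.5 immediately certifies that $\eta$ is a unital $q$-pure range rank one $q$-weight map over $\mathbb{C}^m$ of index zero.

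The next step is to exhibit a hyper maximal $q$-corner. On $K=\mathbb{C}^{qm}\oplus\mathbb{C}^m$ I will build a type~I$_{q+1}$ factor $\mathcal{L}^{\mathrm{ext}}\subset B(K)$ by adjoining to $\{E_{ij}\}_{i,j\leq q}$ new matrix units $F_{i,q+1}=E_{i1}W^*$, $F_{q+1,j}=WE_{1j}$, and $F_{q+1,q+1}=I_m$ (on the second summand). Extending the scalar data by $g^{\mathrm{ext}}_{ik}=F_{i1}g_k$ (so $g^{\mathrm{ext}}_{q+1,k}=Wg_k$), $h^{\mathrm{ext}}_{ik}=h_{ik}$ for $i\leq q$, and $h^{\mathrm{ext}}_{q+1,k}=0$ preserves the constraint $\sum_iF_{1i}h^{\mathrm{ext}}_{ik}=0$ and produces a completely positive $\mathcal{L}^{\mathrm{ext}}$-valued $b$-weight map $\vartheta^{\mathrm{ext}}$ in exactly the form of Theorem 6.2. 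Lemma 3.2 will then be used to extend $\psi$ to a conditionally negative invertible $\psi^{\mathrm{ext}}:\mathcal{L}^{\mathrm{ext}}\to\mathcal{L}^{\mathrm{ext}}$ so that $\psi^{\mathrm{ext}}|_\mathcal{L}=\psi$, $\psi^{\mathrm{ext}}+\rho^{\mathrm{ext}}\tilde\Lambda^{\mathrm{ext}}$ is conditionally zero, and $\psi^{\mathrm{ext}}(I_o^{\mathrm{ext}})=\vartheta^{\mathrm{ext}}(I-\Lambda(I_o^{\mathrm{ext}}))$. By Theorem 6.2, $\tilde\omega=(\psi^{\mathrm{ext}})^{-1}\vartheta^{\mathrm{ext}}$ is then a unital $q$-weight map on $\mathfrak{A}(K)$; a direct verification shows its $(1,1)$-block reproduces $\omega$ while its $(q+1,q+1)$-block reproduces $\eta$ under the $\mathbb{C}I_m$-identification, so the off-diagonal $(1,q+1)$-block defines a $q$-corner $\gamma$ from $\omega$ to $\eta$.

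To close the argument I will verify that $\tilde\omega$ itself meets all three conditions of Theorem 6.5 and is therefore $q$-pure: (i) holds by construction; (ii) $\vartheta^{\mathrm{ext}}$ is strictly infinite by Theorem 6.6, since linear independence of the extended $h$'s over the $g$'s is inherited from $\omega$; (iii) every nonzero projection $e\in(\mathcal{L}^{\mathrm{ext}})'\simeq I_{q+1}\otimes M_m$ restricts through $F_{11}=E_{11}$ to a nonzero projection $f\leq E_{11}$ with $\mu(\Lambda(f))=\infty$, which forces $\tilde\omega(\Lambda(e))$ unbounded. Theorems 6.3 and 6.4 then say every $q$-subordinate of $\tilde\omega$ takes the form $(\psi^{\mathrm{ext}}+s\iota^{\mathrm{ext}})^{-1}\vartheta^{\mathrm{ext}}$ for some $s\geq 0$, and a direct matrix-unit computation will show that the $(1,q+1)$-block of this family is strictly monotone in $s$. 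Matching $\gamma$ therefore pins down $s=0$, leaves both diagonals untouched, and certifies $\gamma$ as a hyper maximal $q$-corner; Theorem 2.3 then delivers the cocycle conjugacy of $\omega$ and $\eta$. The hard part will be the simultaneous realization of the extension $\psi^{\mathrm{ext}}$ compatible with all four demands---the restriction to $\psi$, the conditional zero property on the enlarged $\rho^{\mathrm{ext}}\tilde\Lambda^{\mathrm{ext}}$, the unital normalization, and the factor-compatible matrix structure---together with the strict-monotonicity-in-$s$ calculation that the off-diagonal actually discriminates between the $q$-subordinates in the one-parameter family.
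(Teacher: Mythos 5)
Your proposal is correct and follows essentially the same route as the paper: extract the scalar weight $\mu$ from the $g_k$'s, form the range rank one $\eta$ on $\mathbb C^m$, enlarge $\mathcal L$ to a type~I$_{q+1}$ factor by adjoining one more row/column carrying a copy of the $g$'s with vanishing $h$'s, extend $\psi$ so that the enlarged map is a unital $q$-pure Shur-type $q$-weight map, and read off the hyper maximal $q$-corner from the $(1,q+1)$-block by observing that the one-parameter family $\psi'+\lambda\iota$ of coefficient maps changes the off-diagonal block for $\lambda>0$. The paper makes your appeal to Lemma 3.2 explicit by writing $\psi(A)=QA+AQ^*-\rho_\vartheta(\tilde\Lambda(A))$ and setting $Q'=Q+\tfrac12 s_oF$, and it settles your monotonicity step by computing the eigenvalues of $Z_\lambda=(s_o+\lambda)E+B+iC$, but these are the same argument.
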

\begin{proof}  Suppose $\omega$ is as stated in the theorem.  Then from Theorem
6.2 $\omega$ is of the form $\omega = \psi^{-1}\vartheta$ where $\psi$ is an
invertible conditionally negative map of $\mathcal{L}$ into itself with a
completely positive inverse and $\vartheta$ is of the form given in Theorem
6.2.  Since $\omega$ is unital $I_o = I$ and $\psi$ satisfies the additional
requirement that $\psi (I) = \vartheta (I-\Lambda )$.  Furthermore, we know
from Theorem 6.5 that $\psi + \rho (\tilde \Lambda )$ is conditionally zero
and $\vartheta$ is strictly infinite as an $\mathcal{L}$ valued $b$-weight map.
Let the $g_k$ be as given in Theorem 6.2 and let $\mu$ be the $b$-weight on
$\mathfrak{A} (\mathbb C^p)$ defined by
\begin{equation*}
\mu (A) = \sum_{k\in J} (g_k,Ag_k)
\end{equation*}
for $A \in \mathfrak{A} (\mathbb C^p)$.  Note $\mu$ is supported on $E_{11}
\otimes I$ so we can identify $\mu$ as a $b$-weight on $\mathfrak{A} (\mathbb
C^m)$ where $m$ is the rank of $E_{11}.  $ Let $\eta$ be the range rank one
$q$-weight map on $\mathfrak{A} (\mathbb C^m)$ given by $\eta (A) = s_o
^{-1}I_m\mu (A)$ for $A \in \mathfrak{A} (\mathbb C^m)$ where $s_o = \mu (I
- \Lambda )$ and $I_m$ is the unit of $B(\mathbb C^m).  $ Note we have
$\eta (I - \Lambda ) = I_m$ so $\eta$ is unital $q$-weight map over
$\mathbb C ^m$.  From Theorem 6.6 we know that $\mu$ is strictly infinite and
if $f$ is a non zero projection in $B(\mathbb C ^m)$ then $\mu (\Lambda (f))$
is infinite so $\eta$ satisfies the conditions of Theorem 2.5 so $\eta$ is a
unital $q$-pure range rank one $q$-weight map of index zero.

We introduce the notation we will be using in this proof.  The variable $q$
is fixed as $\mathcal{L}$ is a factor of type~I$_q$.  The variable $r =
q+1$ is fixed.  We think of $\mathcal{L}$ the factor of type~I$_q$ with matrix
units $E_{ij}$ as sitting in $\mathcal{L}_1$ the factor of type~I$_r$ with
matrix units $E _{ij}^{\prime}$ and the two sets of matrix units match up so
that $E_{ij}^{\prime} = E_{ij }$ for $i,j = 1,\cdots ,q$ (i.e. we think of
$\mathcal{L}$ as the top left corner of $\mathcal{L}_1$).

We introduce two important projections if $B(\mathbb C^{rm})$
\begin{equation*}
E = E_{11}^{\prime} + E_{22}^{\prime} + \cdots  + E_{qq}^{\prime}\qquad
\text{and } \qquad F = E_{rr}^{\prime}.
\end{equation*}
Note $E$ is the unit of $\mathcal{L}$ and $E+F$ is the unit of $\mathcal{L}
_1$.  We will often consider an operator $A \in B(\mathbb C^{rm})$ as a
$(2\times 2)$-matrix with entries
\begin{equation*}
A=\left[\begin{matrix} EAE&EAF
\\
FAE&FAF
\end{matrix} \right]
\end{equation*}
and we will call $EAE$ the top diagonal of $A$, we call $EAF$ the upper
right corner of $A$, we call $FAE$ the bottom left corner of $A$ and we
call $FAF$ the bottom diagonal of $A$.

We say a mapping $\phi$ of $B(\mathbb C^{rm})$ into itself is a Shur
mapping if $\phi$ preserves this structure meaning
\begin{gather*}
E\phi (A)E = \phi (EAE)\qquad E\phi (A)F = \phi (EAF)
\\
F\phi (A)E = \phi (FAE)\qquad F\phi (A)F = \phi (FAF)
\end{gather*}
for $A \in B(\mathbb C^{rm})$.  We say a mapping $\phi$ of $B(\mathbb C^{rm
}) \otimes \mathfrak{A} (\mathbb C )$ into $B(\mathbb C^{rm})$ is a Shur
mapping if $\phi$ preserves this structure meaning
\begin{gather*}
E\phi (A)E = \phi ((E\otimes I)A(E\otimes I))\qquad E\phi (A)F = \phi
((E\otimes I)A(F\otimes I))
\\
F\phi (A)E = \phi ((F\otimes I)A(E\otimes I))\qquad F\phi (A)F = \phi
((F\otimes I)A(F\otimes I))
\end{gather*}
for $A \in B(\mathbb C^{rm}) \otimes \mathfrak{A} (\mathbb C )$.

Now we will prove the theorem by constructing a unital $q$-pure $q$-weight map
$\omega^{\prime}$ of index zero over $\mathbb C^{rm}$ which is a Shur mapping
so that $E\omega^{\prime}E = \omega$ and $F\omega^{\prime}F = \eta$.  Note
since $\omega ^{\prime}(A^*) = (\omega^{\prime}(A))^*$ and the top and bottom
diagonals have been specified the mapping $\omega^{\prime}$ is determined
once we specify the upper right corner $E\omega^{\prime}F$.

We define $\omega^{\prime} = \psi^{\prime-1}\vartheta^{\prime}$ where
$\vartheta ^{\prime}$ is $b$-weight map over $\mathbb C^{rm}$ given by
\begin{equation*}
\vartheta^{\prime}(A) = \sum_{i,j=1}^r
E_{ij}^{\prime}\vartheta_{ij}^{\prime }(A)
\end{equation*}
for $A \in \mathfrak{A} (\mathbb C^{rm})$ and
\begin{equation*}
\vartheta_{ij}^{\prime}(A) = \sum_{k\in J} ((g_{ik}^{\prime}+h_{ik}^{\prime
}),A(g_{jk}^{\prime}+h_{jk}^{\prime}))
\end{equation*}
where $g_{ik}^{\prime} = g_{ik},\medspace h_{ik}^{\prime} = h_{ik}$ for $k
\in J$ and $i = 1,\cdots ,q$ where the $g_{ik}$ and $h_{ik}$ are from the
expression for $\vartheta$ in Theorem 6.2 and
\begin{equation*}
(g_{rk}^{\prime})_j(x) = E_{r1}^{\prime}g_k(x)\qquad \text{and} \qquad
h_{qk }^{\prime} = 0
\end{equation*}
for $x \geq 0$.  Note we have constructed $\vartheta^{\prime}$ so that
$\vartheta ^{\prime}$ is a Shur mapping so that the top left diagonal of
$\vartheta^{\prime}$ is $\vartheta$ and the bottom right diagonal of
$\vartheta^{\prime}$ is $\eta$.

Now we define $\psi^{\prime}$.  We have seen that $\psi (I) = \vartheta (I-
\Lambda )$ and $\psi+\rho_\vartheta (\tilde{\Lambda})$ is conditionally
zero.  This means that there is a $Q \in B(\mathbb C^{qm})$ so that
\begin{equation*}
\psi (A) = QA + AQ^* - \rho_\vartheta (\tilde{\Lambda}(A))
\end{equation*}
for $A \in \mathcal{L}$.  Calculating $\vartheta (I-\Lambda )$ we find
\begin{equation*}
\vartheta (I-\Lambda ) = I\mu (I-\Lambda ) + Y + Y^* + \rho_\vartheta
(I-\Lambda )  = s_oI + Y + Y^* + \rho_\vartheta (I-\Lambda )
\end{equation*}
where
\begin{equation*}
Y_{ij} = \sum_{q\in J} \int_0^\infty(1-e^{-x})(h_{ik}(x),E_{j1}g_k(x))\,dx
\end{equation*}
from which we find
\begin{equation*}
Q + Q^* - \rho_\vartheta (\Lambda ) = s_oI + Y + Y^* + \rho_\vartheta
(I-\Lambda )
\end{equation*}
so
\begin{equation*}
Q = \tfrac{1}{2} s_oI + B + iC\qquad \text{where} \qquad B = \tfrac{1}{2} (Y
+ Y^* + \rho_\vartheta (I))
\end{equation*}
and $C = C^*$.  Note the if you replace $C$ by $C + \lambda I$ for real
$\lambda$ the mapping $\psi$ unchanged so $C$ is only determined up to
adding a multiple real multiple of $I$.

Since the $h_{rk}$ are zero it follows that $\rho_{\vartheta^{\prime}}
(\tilde {\Lambda}(A))$ is a Shur mapping namely
\begin{equation*}
\rho_{\vartheta^{\prime}} (\tilde{\Lambda}(A)) = E\rho_{\vartheta^{\prime}} (
\tilde{\Lambda}(EAE))E.
\end{equation*}
Now it is clear how to define $\psi^{\prime}$.  We define
\begin{equation*}
\psi^{\prime}(A) = Q^{\prime}A + AQ^{\prime*} - \rho_{\vartheta^{\prime}}
(\tilde {\Lambda}(A))
\end{equation*}
and we define $Q^{\prime}$ so that
\begin{gather*}
EQ^{\prime}E = Q,\qquad EQ^{\prime}F = 0,
\\
FQ^{\prime}E = 0,\qquad FQ^{\prime}F = \tfrac{1}{2} s_oF
\end{gather*}
so
\begin{gather*}
\psi^{\prime}(I) = \vartheta^{\prime}(I-\Lambda (I))
\\
Q^{\prime} = Q + \tfrac{1}{2} s_oF = \tfrac{1}{2} s_oI + B  + iC
\end{gather*}
Note in the above equation $I = E + F$ the unit of $\mathcal{L}_1$
and $B = EBE$ and $C = ECE$.  We note that $\psi^{\prime}$ is a Shur
mapping meaning
\begin{gather*}
E\psi^{\prime}(A)E = \psi^{\prime}(EAE) = E\psi^{\prime}(EAE)E
\\
F\psi^{\prime}(A)F = \psi^{\prime}(FAF) = F\psi^{\prime}(FAF)F = s_oFAF
\\
E\psi^{\prime}(A)F = EQ^{\prime}AF + \tfrac{1}{2} s_oEAF = (s_oE+B+iC)AF  =
\psi^{\prime}(EAF) = E\psi^{\prime}(EAF)F
\\
F\psi^{\prime}(A)E = \tfrac{1}{2} s_oFAE + FAQ^{\prime*} E =
FA(s_oE+B-iC) = \psi^{\prime}(FAE) = F\psi^{\prime}(FAE)E
\end{gather*}
for $A \in \mathcal{L}_1$ so $\psi^{\prime}$ is a Shur mapping.
Now that we have specified $\omega^{\prime}$ we check from
Theorems 6.2 to 6.5 that $\omega^{\prime}$ is a unital $q$-pure
$q$-weight map over $\mathbb C^{rm}$.

To give it a name let $\gamma = E\omega^{\prime}F$ be the top right corner
of $\omega^{\prime}$.  We claim $\gamma$ is a hyper maximal $q$-corner from
$\omega$ to $\eta$.  Since $\omega ^{\prime}$ is Shur mapping and a
$q$-weight map $\gamma$ is a $q$-corner from $\omega$ to $\eta $.  To see
if $\gamma$ is hyper maximal we examine the $q$-subordinates of
$\omega^{\prime}$ whose top right corner equals $\gamma$.  But since
$\omega^{\prime}$ is $q$-pure the only $q$-subordinates of
$\omega^{\prime}$ are of the form $\omega_\lambda^{\prime\prime} =
\psi_\lambda^{\prime\prime -1}\vartheta^{\prime}$ where
$\psi_\lambda^{\prime\prime} = \psi^{\prime}+ \lambda\iota$ with $\lambda
\geq 0$.  Calculating $E\psi_\lambda^{\prime\prime}F$ we find
\begin{equation*}
E\psi_\lambda^{\prime\prime}(A)F = ((s_o+\lambda )E+B+iC)AF =
Z_\lambda AF
\end{equation*}
for $A \in \mathcal{L}_1$ so $E\psi_\lambda^{\prime\prime-1}F$ is given by
\begin{equation*}
E\psi_\lambda^{\prime\prime-1}(A)F = ((s_o+\lambda )E
+B+iC)^{-1}AF = Z_\lambda^{-1}AF
\end{equation*}
where by $Z_\lambda^{-1}$ we mean the inverse of this operator
$Z_\lambda$ considered as an operator from the range of $E$ to the
range of $E$. (The inverse on the larger space make no sense as
$Z_\lambda F = 0.)$  Let $z_i$ be the eigenvalues of $Z_o$.  Note
that Re(z$_i) > 0$ and the eigenvalues of $Z _\lambda$ are
$z_i+\lambda$ and the eigenvalues of $Z_\lambda^{-1}$ are
$(z_i+\lambda )^{-1}$.  In any event it is clear that
$E\psi_\lambda^{\prime\prime-1}F \neq E\psi^{\prime-1}F$ for
$\lambda > 0$ so the only $q$-subordinate of $\omega^{\prime}$
with corner $\gamma$ is $\omega^{\prime}$ and, hence, $\gamma$ is
a hyper maximal $q$-corner from $\omega$ to $\eta$.  Hence,
$\omega$ and $\eta$ are cocycle conjugate.  \end{proof}

In conclusion we have shown that every $q$-pure $q$-weight map over a
finite dimensional Hilbert space $K$ is cocycle conjugate to a $q$-pure
range rank one $q$-weight map and these are described in Theorem 2.5.
Theorem 2.6 gives necessary and sufficient conditions for two such
$q$-weight maps to be cocycle conjugate.  It follows that $q$-pure spatial
$E_0$-semigroups coming from $q$-weight maps over finite dimensional
Hilbert spaces are fairly well understood.  The next burning question is
whether this remains true in the infinite dimensional case.


\begin{thebibliography}{Pow03b}


\bibitem[Arv03]{arv-monograph}
W.~Arveson, \emph{Noncommutative dynamics and {$E$}-semigroups}, Springer
  Monographs in Mathematics, Sprin\-ger-Ve\-rlag, New York, 2003.


\bibitem[Bha96]{bhat-dilation} B.V.R. Bhat,
\emph{An index theory for quantum dynamical semigroups},
Trans. Amer. Math. Soc. \textbf{346} (1996), no. 2, 561-583.

\bibitem[CE77]{CE} M. D. Choi and E. G. Effros,
\emph{Injectivity and operator spaces},
J. Funct. Anal. {\bf 24} (1977), no. 2, 156-209.

\bibitem[EL77]{evans-lewis} D. E. Evans and J. T. Lewis,
\emph{Dilations of irreversible
evolutions in quantum field theory}, Comm. Dubl. Inst. Adv. Studies, Ser A,
{\bf 24}, 1977.


\bibitem[Jan10]{jankowski1}
C.~Jankowski, \emph{On type {$\rm II_0$} {$E_0$}-semigroups induced by boundary
  weight doubles}, J. Funct. Anal. \textbf{258} (2010), no.~10, 3413--3451.

\bibitem[JMP11]{jmp1}
C.~Jankowski, D.~Markiewicz, and R.~T. Powers, \emph{E$_0$-semigroups and
  $q$-purity: boundary weight maps of range rank one and two}, J. Funct. Anal.
  \textbf{262} (2012), no.~7, 3006-3061.



\bibitem[Pow88]{P1}  R.~T. Powers,
\emph{An index theory for semigroups of $*$-endomorphisms of $B(H)$
and type~II$_1$  factors}, Canadian Jour. of Math. 40 (1988), pp. 86-114.

\bibitem[Pow03a]{powers-holyoke}
\bysame, \emph{Construction of {$E\sb 0$}-semigroups of {${B}({H})$} from
  {CP}-flows}, Advances in quantum dynamics ({S}outh {H}adley, {MA}, 2002),
  Contemp. Math., vol. 335, Amer. Math. Soc., Providence, RI, 2003, pp.~57--97.

\bibitem[Pow03b]{powers-CPflows}
\bysame, \emph{Continuous spatial semigroups of completely positive maps of
  {$B(H)$}}, New York J. Math. \textbf{9} (2003), 165--269 (electronic).


\end{thebibliography}
\end{document}